\newtheorem{theorem}{Theorem}[section]
\newtheorem{lemma}[theorem]{Lemma}
\newtheorem{example}[theorem]{Example}
\newtheorem{proposition}[theorem]{Proposition}
\newcommand{\dom}{\mathbf{d}}
\newcommand{\ran}{\mathbf{r}}
\title[Inverse semigroups]{Introduction to inverse semigroups}
\author{Mark~V.~Lawson}
\address{Department of Mathematics and the
Maxwell Institute for Mathematical Sciences\\
Heriot-Watt University\\
Riccarton\\
Edinburgh~EH14~4AS\\
Scotland\\
\texttt{m.v.lawson@hw.ac.uk}} 
\begin{document}
\begin{abstract}
This is an account of the theory of inverse semigroups, assuming only that the reader knows
the basics of semigroup theory.
\end{abstract}
\maketitle

\section{Introduction: a little history}\setcounter{theorem}{0}

Inverse semigroups\footnote{The early development of inverse semigroup theory ocurred at a time
of acute tensions between East and West. See the book by Christopher Hollings \cite{H} for an in-depth analysis.}    were introduced in the 1950's
by Wagner \cite{Wagner1952} in the Soviet Union\footnote{The book \cite{HL} contains more information on Wagner and his approach to mathematics.
See also Wagner's obituary by Boris Schein \cite{Schein1981}.},
by Preston \cite{Preston1954} in the UK\footnote{The connection between Preston's introduction of what he termed `inverse semi-groups' 
and the theory of pseudogroups of transformations is the result of a conversation I had with Preston in Australia in the late 1980's.},
and by Ehresmann \cite{Ehresmann1957} in France\footnote{It is hard to single out a single paper of Ehresmann as the starting point for his approach to inverse semigroups --- which was via a class of ordered groupoids --- since he stayed faithful to his interest in pseudogroups as the way in which local structures were defined. 
In any event, Ehresmann's papers in this area reflect what might be called `an evolving corpus of work'. See \cite{E}.}
as algebraic analogues of pseudogroups of transformations. 
Inverse semigroups can be viewed as generalizations of groups.
Whereas group theory is based on the notion of a symmetry 
--- that is, a structure-preserving bijection ---
inverse semigroup theory is based on the notion of a partial symmetry
--- that is, a structure-preserving partial bijection.
The passage from bijections to partial bijections introduces a host of new algebraic phenomena.

In writing this chapter, I have assumed the reader is familiar with the basics of semigroup theory such as could be gleaned from 
the first few chapters of Howie \cite{Howie}.\footnote{On page~128, Howie quotes Clifford and Preston as saying that inverse semigroups were the most promising class of semigroups 
for future study. This prediction has turned out to be correct.}
Here are a few things you should know and which I take for granted.
The multiplication in a semigroup will usually be denoted by $\cdot$ or by concatenation.
If $S$ is any semigroup then $S^{0}$ is the semigroup $S$ with an adjoined zero,
and $S^{1}$ is the semigroup $S$ with an adjoined identity.
If $A$ and $B$ are any subsets of a semigroup $S$ then $AB$ is the set of all
products where $a \in A$ and $b \in B$. 
The study of congruences in semigroups is unavoidable
since there are not, in general, the analogues of normal subgroups as in groups or ideals as in rings.
If $\theta \colon S \rightarrow T$ is a homomorphism between semigroups
then $\mbox{\rm ker}(\theta)$ is the congruence that $\theta$ induces on $S$.
On the other hand, every congruence $\rho$ on a semigroup $S$ has an associated {\em natural homomorphism}
from $S$ to $S/\rho$ which maps $s$ to $\rho (s)$.
If both $S$ and $T$ are monoids then a {\em monoid homomorphism} maps the identity of $S$ to the identity of $T$;
if $S$ and $T$ are both semigroups with zero, then a homomorphism of such semigroups maps the zero to the zero.
If the semigroups $S$ and $T$ are isomorphic then we write $S \cong T$.
A subset $I \subseteq S$ is said to be a {\em right ideal} if $IS \subseteq I$
and a {\em left ideal} if $SI \subseteq I$.
It is said to be an {\em ideal} if $SI \subseteq I$ and $IS \subseteq I$.
If $a \in S$ then we write $a$ instead of $\{a\}$.
The right ideal $aS^{1}$ is called the {\em principal right ideal generated by $a$},
whereas the left ideal $S^{1}a$ is called the {\em principal left ideal generated by $a$}.
We call $S^{1}aS^{1}$ the {\em principal ideal generated by $a$}.
Our use of $S^{1}$ is just a trick to ensure that $a$ belongs to the ideal.
Define $a \,\mathscr{R}\, b$ if $aS^{1} = bS^{1}$ and
$a \,\mathscr{L}\, b$ if $S^{1}a = S^{1}b$.
Recall that $\mathscr{L} \circ \mathscr{R} = \mathscr{R} \circ \mathscr{L}$.
Put $\mathscr{D} = \mathscr{L} \circ \mathscr{R}$, another equivalence relation.
Put $\mathscr{H} = \mathscr{L} \cap \mathscr{R}$.
Define  $a \,\mathscr{J}\, b$ if $S^{1}aS^{1} = S^{1}bS^{1}$.
These are the familiar {\em Green's relations}.
As usual, if $\mathscr{G}$ is one of Green's relations, then the $\mathscr{G}$-class that contains the element $a$ is denoted by $G_{a}$.
Although ideals are useful in semigroup theory, the connection between ideals and congruences is weaker for semigroups than it is for rings.
If $\rho$ is a congruence on a semigroup with zero $S$, 
then the set $I = \rho (0)$ is an ideal of $S$;  however, examples show that the congruence is not determined by this ideal.
Nevertheless, ideals can be used to construct some congruences on semigroups.
Let $I$ be an ideal in the semigroup $S$.
Define a relation $\rho_{I}$ on $S$ by:
$$(s,t) \in \rho_{I} \Leftrightarrow \mbox{either } s,t \in I 
\mbox{ or } s = t.$$
Then $\rho_{I}$ is a congruence.
The quotient semigroup $S/\rho_{I}$ is isomorphic to the set
$(S \setminus I) \cup \{ 0\}$ 
(we may assume that $0 \notin S \setminus I$) 
equipped with the following product: 
if $s,t \in S\setminus I$ then their product is $st$ if 
$st \in S \setminus I$, all other products are defined to be $0$.
Such quotients are called {\em Rees quotients}.

There are currently two books entirely devoted to inverse semigroup theory:
Petrich's \cite{Petrich} and mine \cite{Lawson}.
Petrich's book is pretty comprehensive up to 1984 and is still a useful reference.
Its only drawback is the poor index which makes finding particular topics a bit of a chore.
My book is less ambitious;
its goal is to motivate the study of inverse semigroups by concentrating on concrete examples and was completed in 1998.

In writing this chapter, I have drawn mainly upon my own book \cite{Lawson}  
and some notes I wrote \cite{Lawson2020} for a workshop at the University of Ottawa in 2010,
but I have taken the opportunity to radically rethink what I wrote there
in the light of subsequent research.

Any results below that are stated but not proved should be treated as exercises.\\

\noindent
{\bf Acknowledgements.} I would like to thank Victoria Gould, Bernard Rybolowicz and Aidan Sims
for reading the first version of this chapter and giving me good notes.\\

\section{Motivation: pseudogroups of transformations}\setcounter{theorem}{0}

Good mathematics has to be based on good examples.
For example, finite semigroup theory grew out of the theory of finite-state automata
since a finite semigroup can be associated with every finite-state automaton.
Similarly, inverse semigroup theory grew out of the theory of pseudogroups of transformations,
which goes back to the works of Sophus Lie and Henri Cartan.
If you have ever taken an introductory course in differential geometry,
then you have been exposed to pseudogroups of transformations even if that phrase was
never once uttered. So, there I shall begin.
However, for those of you who have not taken such a course,
I shall quickly move on to the collection of all partial bijections on a set,
jettisoning any topology or differential structure, and this will take us to the core motivation for
studying inverse semigroups.

Let $X$ be a nice space (for example, $X$ should be Hausdorff but this will play no role in what follows).
Let $\mathbb{R}^{n}$ be the usual space of real $n$-vectors.
We shall coordinatize $X$ using $\mathbb{R}^{n}$ but we shall only do so {\em locally} not {\em globally}.
Thus we assume that for each point $x \in X$ there exists an open set $x \in U$ and a homeomorphism
$\phi \colon U \rightarrow V$, called a {\em chart}, where $V$ is an open subset of $\mathbb{R}^{n}$.
A chart gives one way of coordinatizing elements of $U$ by means of elements of $V$.
However,
it is quite possible that $x$ lies in another chart.
In other words, there could well exist another open subset of $X$, let's say $U'$,
which also contains $x$ and for which there is a homeomorphism $\psi \colon U' \rightarrow V'$ where
$V'$ is also an open subset of  $\mathbb{R}^{n}$.
This map gives another way of coordinatizing elements of $U'$ by means of elements of $V'$.
However, for any element in $y \in U \cap U'$, there are now two possible co-ordinate representations:
$\phi (y)$ and $\psi (y)$.
How are they to be related?
This depends on the nature of the map $\psi \phi^{-1} \colon \phi (U \cap U') \rightarrow \psi (U \cap U')$,
called the {\em change of co-ordinates map},
which connects them.
This map is a homeomorphism from the open subset $\phi (U \cap U')$ of $\mathbb{R}^{n}$ to the open subset $\psi (U \cap U')$ of $\mathbb{R}^{n}$.
If the map $\psi \phi^{-1}$ is always assumed to be {\em smooth} then we say that we have defined
on $X$ the structure of a {\em differential manifold} \cite{Sontz}.

We now dispense with $X$ and concentrate instead on the set $\Gamma (\mathbb{R}^{n})$ of all smooth maps between the open subsets of $\mathbb{R}^{n}$.
This is just the set of all possible changes of co-ordinates and is our first example of a pseudogroup.
Usually, it is the set $X$ which is the centre of attention,
but we have shifted focus to the set of allowable co-ordinate transformations;
if we change the nature of those, then we change the nature of the structure we are defining on $X$.

More generally, let $X$ be any topological space with topology $\tau$.
Denote the set of all homeomorphisms between open subsets of $X$ by $\mathcal{I}(X,\tau)$.
If $\tau$ is the usual topology on $\mathbb{R}^{n}$ then $\Gamma (\mathbb{R}^{n})$ is a subset of $\mathcal{I}(\mathbb{R}^{n},\tau)$
closed under certain operations.
The sets  $\mathcal{I}(X,\tau)$ are also examples of pseudogroups.
The terminology `pseudogroup' comes from the fact that, just like groups of transformations,
the sets $\mathcal{I}(X,\tau)$ are closed under the composition of partial functions 
--- 
don't forget the empty function! 
--- 
and are also closed under `inverses' where we interpret inverses as inverses of partial functions.
The set  $\Gamma (\mathbb{R}^{n})$ is likewise closed under composition of partial functions
and said inverses.
There is a further property that is needed to qualify as a pseudogroup.
It is this.
If $\{\phi_{i} \colon i \in I\}$ is any subset of $\mathcal{I}(X,\tau)$
such that $\bigcup_{i \in I} \phi_{i}$ is a partial bijection then, in fact, $\bigcup_{i \in I} \phi_{i} \in \mathcal{I}(X,\tau)$.
This is a completeness property.
We shall call semigroups of the form  $\mathcal{I}(X,\tau)$ {\em full pseudogroups of transformations}.

For modern applications of pseudogroups, see \cite{YC, MB}.

To make our lives easier, let's now take the non-empty set $X$ with the discrete topology.
We denote by $\mathcal{I}(X)$ the set of all bijections between subsets of $X$.
If $f \colon A \rightarrow B$, where $A,B \subseteq X$,
then we call $A$ the {\em domain (of definition)} of $f$
and $B$ the {\em range} of $f$.
We write $\mbox{\rm dom}(f) = A$ and $\mbox{\rm ran}(f) = B$.
The set $\mathcal{I}(X)$ is closed under the composition of partial functions
and so is a semigroup.
It is, in fact, a monoid since the identity function defined on $X$ belongs to  $\mathcal{I}(X)$.
By analogy with the symmetric group, it is called the {\em symmetric inverse monoid}.
What we mean by the term `inverse monoid' will be explained in the next section.
Clearly, the structure of the semigroup  $\mathcal{I}(X)$ depends only on the cardinality of $X$.
Thus, if $X$ is a finite set with $n$ elements, it makes sense to write $\mathcal{I}_{n}$ instead of $\mathcal{I}(X)$.

There are a number of structures we could look at in the semigroups  $\mathcal{I}(X)$ 
but we begin with the following which turns out to be the right place to start.
Let $f \in \mathcal{I}(X)$. 
Consider the following two equations in $\mathcal{I}(X)$:
$$f = fgf \text{ and } g = gfg.$$  
These have the unique solution $g = f^{-1}$.
This result is the basis for the definition of an inverse semigroup in the next section.

\section{Basic inverse semigroup theory}

A semigroup $S$ is said to be {\em inverse} if for each $s \in S$ there exists a unique element,
denoted by $s^{-1}$, such that the following two equations are satisfied: 
$$s = ss^{-1}s \text{ and } s^{-1} = s^{-1}ss^{-1}.$$

\begin{example}{\em The semigroups $\mathcal{I}(X)$ really are inverse monoids.
If $f \in \mathcal{I}(X)$ then $f^{-1}$ is the unique solution of the equations
$f = fgf$ and $g = gfg$ where $g$ is the unknown.}
\end{example}

Observe that if $s^{-1}$ is the inverse of $s$, then $s$ is the inverse of $s^{-1}$.
We have therefore proved the following.

\begin{lemma}\label{lem:inverse-of-inverse} Let $S$ be an inverse semigroup and let $s \in S$.
Then $(s^{-1})^{-1} = s$.
\end{lemma}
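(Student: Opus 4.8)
The plan is to exploit the manifest symmetry of the defining equations in the element and its inverse. By definition, $s^{-1}$ is characterised as the \emph{unique} element $t$ of $S$ satisfying the pair of equations $s = sts$ and $t = tst$. So to identify the inverse of $s^{-1}$, I would apply this same definition with $s^{-1}$ playing the role of the element whose inverse is sought: the inverse of $s^{-1}$ is the unique element $t$ with $s^{-1} = s^{-1} t s^{-1}$ and $t = t s^{-1} t$.

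The key step is then simply to observe that $t = s$ works. Indeed, the equations $s = s s^{-1} s$ and $s^{-1} = s^{-1} s s^{-1}$, which hold because $s^{-1}$ is the inverse of $s$, are exactly the two equations $s^{-1} = s^{-1} t s^{-1}$ and $t = t s^{-1} t$ after substituting $t = s$ (the first pair, read in the opposite order). Hence $s$ satisfies the defining equations for an inverse of $s^{-1}$.

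Finally I would invoke uniqueness: since $S$ is an inverse semigroup, $s^{-1}$ has exactly one inverse, and as $s$ satisfies the required equations we conclude $(s^{-1})^{-1} = s$. There is essentially no obstacle here; the only point to be careful about is that the argument uses the \emph{uniqueness} clause in the definition of an inverse semigroup, not merely the existence of inverses, so I would make sure that is stated explicitly.
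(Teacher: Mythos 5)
Your proposal is correct and is exactly the argument the paper uses: the two defining equations are symmetric in $s$ and $s^{-1}$, so $s$ is an inverse of $s^{-1}$, and uniqueness of inverses gives $(s^{-1})^{-1} = s$. Your explicit remark that the uniqueness clause (not mere existence) is what is being invoked is a worthwhile point that the paper leaves implicit.
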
  

The elements $s^{-1}s$ and $ss^{-1}$ are idempotents,
where an {\em idempotent} in a semigroup is an element $e$ such that $e^{2} = e$.
For example, $s^{-1}s$ is an idempotent because
$$(s^{-1}s)^{2} = (s^{-1}s)(s^{-1}s) = (s^{-1}ss^{-1})s = s^{-1}s.$$
If $e$ is an idempotent in an inverse semigroup $S$, then $e = ee = eee$.
We have therefore proved the following.

\begin{lemma}\label{lem:inverse-of-idempotent} Let $S$ be an inverse semigroup and let $e \in S$
be an idempotent.
Then $e^{-1} = e$.
\end{lemma}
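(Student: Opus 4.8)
The plan is to exhibit $e$ itself as a solution of the two defining equations for the inverse of $e$, and then invoke the uniqueness clause built into the definition of an inverse semigroup. So the proof will be a one-line verification dressed up with a reference to that uniqueness.

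First I would record the only fact about $e$ that is needed, namely that idempotency gives $e = ee$ and hence $e = eee$; this is exactly the computation displayed just before the statement. Next, in the defining equations $s = sgs$ and $g = gsg$ (with $s = e$ and $g$ the unknown), I would substitute $g = e$ and observe that both equations collapse to $eee = e$: indeed $e(e)e = eee = e$ and $(e)e(e) = eee = e$, so $g = e$ satisfies $e = ege$ and $g = geg$. Since $S$ is an inverse semigroup, the element $g$ satisfying this pair of equations is unique, and by definition that element is $e^{-1}$. Therefore $e^{-1} = e$.

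There is essentially no obstacle here; the single point that warrants a moment's attention is that one must check \emph{both} equations, and not merely $e = ege$, before appealing to uniqueness — it is the conjunction of the two equations that pins down the inverse. Once both are verified, the conclusion is immediate. (One could equally phrase the argument via Lemma~\ref{lem:inverse-of-inverse}: $e$ is \emph{an} inverse of itself, so by uniqueness it is \emph{the} inverse $e^{-1}$.)
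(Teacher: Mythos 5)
Your proof is correct and is essentially the paper's own argument: the identity $e = ee = eee$ shows that $g = e$ satisfies both defining equations $e = ege$ and $g = geg$, and uniqueness of inverses then gives $e^{-1} = e$. No issues.
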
  

It follows that every idempotent in an inverse semigroup is of the form $s^{-1}s$ for some elements $s$ or,
equivalently, of the form $ss^{-1}$, for some element $s$.

Idempotents play an important role in inverse semigroup theory.
For that reason we introduce some special notation. 
If $S$ is an inverse semigroup we denote its set of idempotents by $\mathsf{E}(S)$.
Two special idempotents are the {\em identity} element, if it exists, usually denoted by $1$, and the {\em zero} element, if it exists,
usually denoted by $0$. 
An inverse semigroup with identity is called an {\em inverse monoid}
and an inverse semigroup with zero is called an {\em inverse semigroup with zero}.\footnote{There is no one term in English for a `semigroup with zero'.}
It is important to be aware right at the beginning that even if $S$ is an inverse monoid,
it is not true in general that $s^{-1}s = 1 = ss^{-1}$.

\begin{example}{\em The idempotents in $\mathcal{I}(X)$ are precisely the identity functions
defined on the subsets of $X$. Thus if $A \subseteq X$ then a typical idempotent is $1_{A}$
which is the identity function on the set $A$.
The identity is $1_{X}$ and the zero is $1_{\varnothing}$.}
\end{example}

The difference between monoids and semigroups is sometimes viewed as trivial.
It isn't.
For example, a commutative unital $C^{\ast}$-algebra is constructed from a {\em compact} space whereas
a commutative non-unital $C^{\ast}$-algebra is constructed from a {\em locally compact} space.

If $S$ is any monoid then we can look at its {\em group of units} which we denote by $\mathsf{U}(S)$.
If $e$ is any idempotent in a semigroup $S$ then $eSe$ is a monoid.
I call these {\em local monoids} though you will find the misleading term `local submonoid' in the literature.
The group of units of $eSe$ is denoted by $H_{e}$.

\begin{example} {\em The units of $\mathcal{I}(X)$ are the bijective functions and these form a group $\mathcal{S}(X)$, called the
{\em symmetric group} on $X$.}
\end{example}

\begin{lemma}\label{lem:local} Let $S$ be an inverse semigroup and let $e$ be an idempotent in $S$.
Then $eSe$ is an inverse monoid.
\end{lemma}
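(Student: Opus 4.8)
The plan is to check directly that $eSe$ meets the definition of an inverse monoid given at the start of this section. First I would dispose of the easy structural facts. The set $eSe$ is closed under the multiplication of $S$: since $e^{2}=e$ we have $(ese)(ete)=e(set)e\in eSe$, so $eSe$ is a subsemigroup. Moreover $e$ is a two-sided identity for $eSe$, because $e\cdot ese=ese=ese\cdot e$ for every $s\in S$. Thus $eSe$ is at least a monoid with identity element $e$; note also that for $a\in eSe$ we have $ea=a=ae$, and hence $eae=a$, identities I will use repeatedly below.

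The substance of the proof is to produce, for each $a\in eSe$, a \emph{unique} inverse lying inside $eSe$. The natural candidate is $ea^{-1}e$, where $a^{-1}$ is the inverse of $a$ computed in the ambient inverse semigroup $S$. Using $ea=a=ae$ one computes $a(ea^{-1}e)a=aa^{-1}a=a$ and $(ea^{-1}e)a(ea^{-1}e)=ea^{-1}(eae)a^{-1}e=ea^{-1}aa^{-1}e=ea^{-1}e$, so $ea^{-1}e$ is an inverse of $a$ in $S$. Since inverses in $S$ are unique, $ea^{-1}e=a^{-1}$; in particular $a^{-1}\in eSe$. So $a$ has at least one inverse inside $eSe$, namely $a^{-1}$ itself. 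For uniqueness, if $c\in eSe$ also satisfies $aca=a$ and $cac=c$, then $c$ is an inverse of $a$ in $S$, so $c=a^{-1}$ by uniqueness of inverses in $S$. Hence $a$ has exactly one inverse in $eSe$, and $eSe$ is an inverse monoid.

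I do not anticipate a real obstacle here; the one point that deserves care is showing that the candidate $ea^{-1}e$ actually \emph{equals} $a^{-1}$ (equivalently, that $a^{-1}$ already lies in $eSe$), rather than merely being \emph{an} inverse of $a$ — and this is precisely the step where the uniqueness of inverses in $S$ is invoked. Everything else is routine bookkeeping with $e^{2}=e$ and $ea=ae=a$.
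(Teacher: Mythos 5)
Your proof is correct and follows essentially the same route as the paper's: show $eSe$ is a monoid with identity $e$, verify that $ea^{-1}e$ is an inverse of $a$ in $S$, and invoke uniqueness of inverses in $S$ to conclude $a^{-1}=ea^{-1}e\in eSe$. The extra details you supply (closure, the computation, and the explicit uniqueness-in-$eSe$ argument) are exactly the steps the paper leaves to the reader.
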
 
\begin{proof} It is clear that $eSe$ is a monoid.
We prove that it is inverse.
Let $a \in eSe$.
Then $a = aa^{-1}a$ and $a^{-1} = a^{-1}aa^{-1}$ in $S$.
You can check that $ea^{-1}e$ is also an inverse of $a$ since $ea = a = ae$.
So, by uniqueness, we have that $a^{-1} = ea^{-1}e$.
We have therefore proved that if $a \in eSe$ then $a^{-1} \in eSe$.
It follows that $eSe$ is an inverse monoid.
\end{proof}

The idempotents $s^{-1}s$ and $ss^{-1}$ are so important, that we have some special notation for them:
$$\mathbf{d}(s) = s^{-1}s \text{ and } \mathbf{r}(s) = ss^{-1}.$$
Observe that $a \mathbf{d}(a) = a$ and $\mathbf{r}(a)a = a$.
If the inverse semigroup has a zero then $a = 0$ if and only if $\mathbf{d}(a) = 0$ (respectively, $\mathbf{r}(a) = 0$.) 
You can easily check that in an inverse semigroup
$a \,\mathscr{R}\, b$ if and only if $\mathbf{r}(a) = \mathbf{r}(b)$ and
$a \,\mathscr{L}\, b$ if and only if $\mathbf{d}(a) = \mathbf{d}(b)$.
For this reason, the explicit use of Green's relations in inverse semigroups
is not very common.

An {\em inverse subsemigroup} of an inverse semigroup is a subsemigroup that is also closed under inverses.
If $S$ is an inverse subsemigroup of $T$ and $\mathsf{E}(S) = \mathsf{E}(T)$ we say that $S$ is a {\em wide}\footnote{You will often see the term `full' used
to mean `wide'. I prefer the term `wide' because, as we shall see, it fits in well with groupoid theory, whereas `full' has categorical connotations
which are not what we want.} inverse subsemigroup of $T$.

\begin{example}{\em The inverse semigroup $\Gamma (\mathbb{R}^{n})$ is, in fact,
a wide inverse submonoid of $\mathcal {I}(\mathbb{R}^{n},\tau)$ where $\tau$ is the usual
topology on $\mathbb{R}^{n}$.
}
\end{example}

It is now time for our first example.
We can, in fact, express this as a lemma.

\begin{lemma}\label{lem:unique-idempotents} \mbox{}
\begin{enumerate}
\item A group is an inverse semigroup having a unique idempotent.
\item An inverse semigroup having a unique idempotent is a  group.
\end{enumerate}
\end{lemma}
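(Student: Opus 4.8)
The plan is to prove the two directions separately, exploiting the uniqueness of idempotents that is built into the definition of an inverse semigroup.

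For part~(1), let $G$ be a group with identity $1$. First I would note that $1$ is an idempotent, since $1^2 = 1$. Conversely, if $e \in G$ satisfies $e^2 = e$, then multiplying on the left (or right) by $e^{-1}$ in the group sense gives $e = 1$; hence $1$ is the unique idempotent of $G$. It remains to check that $G$, viewed purely as a semigroup, is inverse in the sense of the definition: for each $s \in G$ the group inverse $s^{-1}$ satisfies $s s^{-1} s = 1 \cdot s = s$ and $s^{-1} s s^{-1} = 1 \cdot s^{-1} = s^{-1}$, so it is \emph{an} inverse; and if $t$ is any element with $sts = s$ and $tst = t$, then multiplying $sts = s$ on the left and right by $s^{-1}$ forces $t = s^{-1}$, giving uniqueness. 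So $G$ is an inverse semigroup with exactly one idempotent.

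For part~(2), let $S$ be an inverse semigroup whose only idempotent is some element $e$; I must show $S$ is a group. By Lemma~\ref{lem:inverse-of-idempotent} (or directly), $e$ is the unique idempotent, and for any $s \in S$ the elements $\dom(s) = s^{-1}s$ and $\ran(s) = ss^{-1}$ are idempotents, hence both equal $e$. The key step is then to show $e$ is a two-sided identity for $S$: from $\dom(s) = e$ we get $se = s(s^{-1}s) = s$, and from $\ran(s) = e$ we get $es = (ss^{-1})s = s$, using the associativity identities $s\,\dom(s) = s$ and $\ran(s)\,s = s$ noted in the text. So $S$ is a monoid with identity $e$. Finally, each $s \in S$ has $s s^{-1} = e = s^{-1}s$, so $s^{-1}$ is a genuine group inverse of $s$, and $S$ is a group.

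I do not anticipate a serious obstacle here; the proof is a short unwinding of definitions. The one point requiring a little care is recognizing that the uniqueness clause in the definition of ``inverse semigroup'' is exactly what turns a one-sided argument into a two-sided one in part~(1), and that in part~(2) the crucial move is identifying the unique idempotent with the identity by absorbing it against an arbitrary element from the correct side. Everything else is routine substitution.
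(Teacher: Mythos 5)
Your proof is correct and follows essentially the same route as the paper's: in part (1) you verify the two defining equations for the group inverse and use cancellation to get uniqueness of both the inverse and the idempotent, and in part (2) you identify $s^{-1}s = e = ss^{-1}$ and then show $e$ is a two-sided identity by absorbing it against an arbitrary element. The only slip is in your opening sentence, where you refer to ``the uniqueness of idempotents built into the definition'' when you mean the uniqueness of \emph{inverses}; the argument itself uses the right property throughout.
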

\begin{proof}
(1) Let $G$ be a group. Then for any element $g$ in $G$ we have that $g^{-1}g = 1 = gg^{-1}$.
Multiply the first equation on the left by $g$ to get $gg^{-1}g = g$
and multiply the second equation on the left by $g^{-1}$ to get $g^{-1}gg^{-1} = g^{-1}$.
On the other hand, suppose that $gxg = g$ and $xgx = x$.
Multiply the first equation by $g^{-1}$ on the left and by $g^{-1}$ on the right.
This gives us $x = g^{-1}$ which also satisfies the second equation.
It follows that every group is an inverse semigroup.
Let $g$ be an idempotent in $G$.
Then $gg = g$.
Multiply the left-hand side of this equation by $g^{-1}$ to get $g = 1$,
and $1$ is clearly an idempotent.
Thus the only idempotent in a group is the identity. 

(2) Let $S$ be an inverse semigroup with exactly one idempotent.
Call this idempotent $e$.
For any element $a \in S$, we already know that $a^{-1}a$ and $aa^{-1}$ are idempotents.
It follows that $a^{-1}a = e = aa^{-1}$ for any element $a \in S$.
On the other hand, $e$ is the identity.
To see why, observe that $ea = (aa^{-1})a = a$.
Similarly, $ae = a$.
We have therefore proved that $S$ is a group with identity $e$. 
\end{proof}

The most important fact about inverse semigroups,
and one that is not at all obvious from the definition,
is that if $e$ and $f$ are any idempotents in an inverse semigroup $S$ then $ef = fe$.
We say that the {\em idempotents commute}.
For the benefit of ring theorists, observe that we are not saying that the idempotents are central.
Inverse semigroups in which the idempotents are central are called {\em Clifford semigroups}.

\begin{proposition}\label{prop:idempotents-commute} 
Idempotents commute in an inverse semigroup.
\end{proposition}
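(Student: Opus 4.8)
The plan is to leverage the \emph{uniqueness} clause built into the definition of an inverse semigroup: once we exhibit two elements that both satisfy the pair of inverse equations with a given element, they must coincide. Fix idempotents $e, f \in \mathsf{E}(S)$ and set $x = (ef)^{-1}$, so that $(ef)x(ef) = ef$ and $x(ef)x = x$. The goal is to whittle down our knowledge of $x$ until $ef$ is forced to be idempotent, after which commutativity drops out immediately.

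First I would produce a second inverse of $ef$, namely $fxe$, and verify the two defining equations. Using only $e^{2} = e$, $f^{2} = f$ and the equations for $x$, one collapses $(ef)(fxe)(ef) = e(ff)x(ee)f = (ef)x(ef) = ef$ and, similarly, $(fxe)(ef)(fxe) = f\bigl(x(ef)x\bigr)e = fxe$. By uniqueness of inverses this forces $x = f x e$. From this identity a one-line computation gives $x^{2} = (fxe)(fxe) = f\bigl(x(ef)x\bigr)e = fxe = x$, so $x$ is itself an idempotent. Now Lemma~\ref{lem:inverse-of-idempotent} yields $x^{-1} = x$, while Lemma~\ref{lem:inverse-of-inverse} gives $x^{-1} = \bigl((ef)^{-1}\bigr)^{-1} = ef$; hence $ef = x = (ef)^{-1}$, i.e. $ef$ is an idempotent. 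Running the same argument with $e$ and $f$ interchanged shows $fe$ is an idempotent as well.

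With $ef$ and $fe$ both known to be idempotents, it remains to check that $fe$ is an inverse of $ef$: indeed $(ef)(fe)(ef) = e(ff)(ee)f = (ef)^{2} = ef$ and $(fe)(ef)(fe) = (fe)^{2} = fe$. Since we have already shown $(ef)^{-1} = ef$, uniqueness of inverses gives $ef = fe$, as required.

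The only step calling for genuine ingenuity is guessing the auxiliary inverse $fxe$ in the first paragraph; after that identity is secured, everything is routine manipulation of the idempotency relations. It is worth noting that this is precisely where the definition's uniqueness requirement earns its keep — in a merely regular semigroup, where inverses need not be unique, the conclusion is false, so no argument avoiding uniqueness can possibly work.
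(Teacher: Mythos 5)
Your proposal is correct and follows essentially the same route as the paper's proof: introduce $x=(ef)^{-1}$, show $fxe$ is an inverse of $ef$, deduce via uniqueness and Lemmas~\ref{lem:inverse-of-inverse} and~\ref{lem:inverse-of-idempotent} that $ef$ (and $fe$) is idempotent, then identify $fe$ as the inverse of $ef$. The only difference is cosmetic ordering — you derive idempotency of $x$ from $x=fxe$, while the paper checks $(fxe)^2=fxe$ directly first — so the two arguments are the same.
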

\begin{proof} This is an example of a proof that is elementary but not easy.
Let $e$ and $f$ be idempotents in the inverse semigroup $S$.
Put $x = (ef)^{-1}$.
We prove first that $fxe$ is an idempotent:
$$(fxe)^{2} = (fxe)(fxe) = f(x(ef)x)e = fxe.$$  
Next, we claim that $fxe$ is the inverse of $ef$:
$$ef(fxe)ef = ef^{2}xe^{2}f = efxef = ef$$
and
$$(fxe)ef(fxe) = fxe^{2}f^{2}xe = (fxe)^{2} = fxe,$$ 
by what we proved above.
It follows that 
$$(ef)^{-1} = fxe.$$
We have therefore proved that $(ef)^{-1}$ is an idempotent.
Now, we apply  
Lemma~\ref{lem:inverse-of-inverse} 
and 
Lemma~\ref{lem:inverse-of-idempotent} 
to deduce that $ef$ is an idempotent.
We have therefore proved that in an inverse semigroup
the product of any two idempotents is itself an idempotent.
It follows that both $ef$ and $fe$ are idempotents.
Next, we show that $fe$ is the inverse of $ef$:
$$(ef)(fe)(ef) = ef^{2}e^{2}f = efef = ef$$
and
$$(fe)(ef)(fe) = fe^{2}f^{2}e = fefe = fe.$$
We have therefore proved that $(ef)^{-1} = fe$.
But $ef$ is an idempotent and so, by Lemma~\ref{lem:inverse-of-idempotent}, it is its own inverse.
We have therefore proved that $ef = fe$, as required. 
\end{proof}

\begin{example}{\em In the symmetric inverse monoid, the product of the idempotents
$1_{A}$ and $1_{B}$ is the idempotent $1_{A \cap B}$.
Because the intersection operation on two sets is commutative, we see why the idempotents commute in this special case.}
\end{example}

Because we know that idempotents commute, we can now prove the following:

\begin{lemma}\label{lem:inverse-of-product} 
Let $S$ be an inverse semigroup.
Then $(st)^{-1} = t^{-1}s^{-1}$.
\end{lemma}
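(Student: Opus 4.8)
The plan is to exploit the uniqueness built into the definition of an inverse semigroup: to show $(st)^{-1} = t^{-1}s^{-1}$ it suffices to verify that the element $t^{-1}s^{-1}$ satisfies the two defining equations with $st$ in the role of $s$, namely $(st)(t^{-1}s^{-1})(st) = st$ and $(t^{-1}s^{-1})(st)(t^{-1}s^{-1}) = t^{-1}s^{-1}$. Once both are checked, uniqueness of inverses forces the claimed identity.

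First I would compute the first expression, regrouping by associativity:
$$(st)(t^{-1}s^{-1})(st) = s(tt^{-1})(s^{-1}s)t.$$
The crucial move is now to observe that $tt^{-1} = \mathbf{r}(t)$ and $s^{-1}s = \mathbf{d}(s)$ are idempotents, so by Proposition~\ref{prop:idempotents-commute} they commute. Swapping them gives
$$s(s^{-1}s)(tt^{-1})t = (ss^{-1}s)(tt^{-1}t) = st,$$
using the defining equations $ss^{-1}s = s$ and $tt^{-1}t = t$.

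Next I would treat the second expression in exactly the same way:
$$(t^{-1}s^{-1})(st)(t^{-1}s^{-1}) = t^{-1}(s^{-1}s)(tt^{-1})s^{-1} = t^{-1}(tt^{-1})(s^{-1}s)s^{-1} = (t^{-1}tt^{-1})(s^{-1}ss^{-1}) = t^{-1}s^{-1},$$
where the middle equality again uses that the idempotents $s^{-1}s$ and $tt^{-1}$ commute, and the last uses Lemma~\ref{lem:inverse-of-inverse} (so that $t^{-1}tt^{-1} = t^{-1}$ and $s^{-1}ss^{-1} = s^{-1}$, reading the defining equations for $t^{-1}$ and $s^{-1}$). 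Having exhibited $t^{-1}s^{-1}$ as \emph{an} inverse of $st$, uniqueness of inverses in $S$ yields $(st)^{-1} = t^{-1}s^{-1}$.

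There is no serious obstacle here; the only non-formal ingredient is the commutativity of idempotents, and the whole point is that this has already been established. The rest is bookkeeping with associativity and the two axioms, so the proof is genuinely short once Proposition~\ref{prop:idempotents-commute} is invoked at the two places indicated above.
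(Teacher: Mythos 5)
Your proof is correct and follows essentially the same route as the paper: verify that $t^{-1}s^{-1}$ satisfies both defining equations for an inverse of $st$ by regrouping and commuting the idempotents $tt^{-1}$ and $s^{-1}s$, then invoke uniqueness of inverses. The only difference is that you write out the second verification explicitly where the paper says ``similarly.''
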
  
\begin{proof} We prove that $t^{-1}s^{-1}$ is the inverse of $st$.
We have that
$$st(t^{-1}s^{-1})st = s(tt^{-1})(s^{-1}s)t = s(s^{-1}s)(tt^{-1})t = st$$
where we have used the fact that idempotents in an inverse semigroup commute.
Similarly,
$$t^{-1}s^{-1}(st)t^{-1}s^{-1} = t^{-1}s^{-1}.$$
By virtue of the fact that inverses are unique, we deduce that $(st)^{-1} = t^{-1}s^{-1}$.
\end{proof}

We also see that the analogue of conjugation holds.
We shall use the term `conjugation' below.

\begin{lemma}\label{lem:conjugation-of-idpts} 
If $e$ is an idempotent of an inverse semigroup and $s$ is any element of that inverse semigroup then $ses^{-1}$ is an idempotent.
\end{lemma}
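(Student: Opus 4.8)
The plan is to verify directly that $ses^{-1}$ equals its own square. Expanding, $(ses^{-1})^{2} = ses^{-1}ses^{-1} = se(s^{-1}s)es^{-1}$. The key observation is that $s^{-1}s$ is an idempotent (noted immediately after the definition of an inverse semigroup), and $e$ is an idempotent by hypothesis, so by Proposition~\ref{prop:idempotents-commute} these two idempotents commute. Hence the central block $e(s^{-1}s)e$ equals $(s^{-1}s)ee = (s^{-1}s)e$, and the whole expression collapses: $se(s^{-1}s)es^{-1} = s(s^{-1}s)es^{-1} = (ss^{-1}s)es^{-1} = ses^{-1}$, using $ss^{-1}s = s$ at the last step. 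Thus $(ses^{-1})^{2} = ses^{-1}$, which is exactly the assertion.

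I would also record a shorter route as an alternative. By Lemma~\ref{lem:inverse-of-product} and Lemma~\ref{lem:inverse-of-idempotent} we have $(se)^{-1} = e^{-1}s^{-1} = es^{-1}$, so, since $e^{2}=e$, $ses^{-1} = se^{2}s^{-1} = (se)(es^{-1}) = (se)(se)^{-1}$. For any element $t$ of an inverse semigroup the element $tt^{-1}$ is an idempotent, since $(tt^{-1})^{2} = t(t^{-1}tt^{-1}) = tt^{-1}$. Applying this with $t = se$ gives the claim at once, without ever computing a square by hand.

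There is really no hard part here: the statement is routine once one knows that idempotents commute. The one thing to be careful about is to invoke Proposition~\ref{prop:idempotents-commute} at the right moment — a naive attempt to simplify $ses^{-1}ses^{-1}$ without it goes nowhere, because $s^{-1}s$ need not be an identity. Note in particular that observing $ses^{-1}$ is its own inverse (via the same two lemmas) is \emph{not} by itself enough, since that only says $ses^{-1}$ is a cube root of itself; one genuinely needs the commuting-idempotents input, or the $tt^{-1}$ observation, to upgrade this to idempotency.
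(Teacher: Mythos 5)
Your main argument is correct and is essentially the paper's own proof: a direct computation of $(ses^{-1})^{2}$ that collapses once one invokes Proposition~\ref{prop:idempotents-commute} to commute $e$ past $s^{-1}s$. Your alternative route via $ses^{-1} = (se)(se)^{-1}$ is also valid (and slicker), though it hides the same commuting-idempotents input inside Lemma~\ref{lem:inverse-of-product}.
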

\begin{proof} This follows by a direct calculation
$$(ses^{-1})^{2} = ses^{-1}ses^{-1} = se^{2}(s^{-1}s)s^{-1} = ses^{-1},$$
where we have again used the fact that idempotents commute.
\end{proof}

Let $e$ and $f$ be idempotents in an inverse semigroup.
Then we may define a relation on them by $e \leq f$ precisely when $e = ef$.
Because idempotents commute, it is not hard to show that this defines a partial order 
on the set of idempotents. We can now characterize Clifford semigroups.

\begin{lemma}\label{lem:characterization-clifford}
Let $S$ be an inverse semigroup.
Then $S$ is a Clifford semigroup if and only if 
$\mathbf{d}(s) = \mathbf{r}(s)$
for every $s \in S$.
\end{lemma}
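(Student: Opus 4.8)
The plan is to unwind both sides into equations and then play the two implications against each other, in the style of the proofs of Proposition~\ref{prop:idempotents-commute} and Lemma~\ref{lem:inverse-of-product}. Recall that ``$S$ is a Clifford semigroup'' unwinds to: $es = se$ for every idempotent $e$ and every $s \in S$, whereas ``$\mathbf{d}(s) = \mathbf{r}(s)$'' unwinds to $s^{-1}s = ss^{-1}$. Throughout I would use, without further comment, that idempotents commute (Proposition~\ref{prop:idempotents-commute}), that a product of two idempotents is an idempotent (proved inside that proposition), that $(xy)^{-1} = y^{-1}x^{-1}$ (Lemma~\ref{lem:inverse-of-product}), that $e^{-1} = e$ for an idempotent $e$ (Lemma~\ref{lem:inverse-of-idempotent}), and the basic identities $a\,\mathbf{d}(a) = a = \mathbf{r}(a)\,a$.

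For the direction ($\Rightarrow$): assume the idempotents are central and fix $s$. Since $ss^{-1}$ is a central idempotent, $(ss^{-1})s = s(ss^{-1})$; the left-hand side is just $s$, so $s = s^{2}s^{-1}$, and symmetrically centrality of $s^{-1}s$ gives $s = s^{-1}s^{2}$. Left-multiplying the first equation by $s^{-1}$ gives $s^{-1}s = (s^{-1}s)(ss^{-1})$, and right-multiplying the second by $s^{-1}$ gives $ss^{-1} = (s^{-1}s)(ss^{-1})$; comparing the two, $\mathbf{d}(s) = s^{-1}s = ss^{-1} = \mathbf{r}(s)$.

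For the direction ($\Leftarrow$): assume $x^{-1}x = xx^{-1}$ for all $x \in S$, fix an idempotent $e$ and an arbitrary $s$, and aim to show $es = se$. The key is to look at the two products $es$ and $se$ and compute their $\mathbf{d}$ and $\mathbf{r}$. Using $(es)^{-1} = s^{-1}e$ and the commuting of idempotents, one finds $\mathbf{d}(es) = s^{-1}es$ and $\mathbf{r}(es) = (ss^{-1})e$, so the hypothesis applied to $es$ forces $s^{-1}es = (ss^{-1})e =: f$, which is an idempotent. Similarly, using $(se)^{-1} = es^{-1}$, one gets $\mathbf{d}(se) = (s^{-1}s)e$ and $\mathbf{r}(se) = ses^{-1}$, and the hypothesis applied to $se$, combined with the standing assumption $s^{-1}s = ss^{-1}$, gives $\mathbf{d}(se) = \mathbf{r}(se) = f$ as well. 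One then checks the single identity $(es)(se)^{-1} = (es)(es^{-1}) = e(ses^{-1}) = ef = f$. Finally, chain the basic identities: $es = es\,\mathbf{d}(es) = es\cdot f = es\,(se)^{-1}(se) = \bigl((es)(se)^{-1}\bigr)(se) = f\,(se) = \mathbf{r}(se)\,(se) = se$, as required.

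The only genuine obstacle is this converse, and inside it the difficulty is not any one calculation but choosing what to compute: manipulating $es$ and $se$ head-on tends to run in circles, whereas the observation that $es$ and $se$ have the same $\mathbf{d}$ and the same $\mathbf{r}$ (namely $f$) --- equivalently, that both lie in the group $H_{f}$ --- together with $(es)(se)^{-1} = f$, immediately yields $es = se$; the last display is just an elementary repackaging of this, avoiding any explicit appeal to the group structure of $H_{f}$. The remaining steps are routine applications of the facts recalled in the first paragraph.
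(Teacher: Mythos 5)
Your proof is correct, and it follows essentially the same route as the paper: the forward direction extracts $\mathbf{d}(s)=\mathbf{r}(s)$ from centrality of $s^{-1}s$ and $ss^{-1}$, and the converse hinges, exactly as in the paper, on applying the hypothesis to the product $es$ to obtain $s^{-1}es=(ss^{-1})e$. The only difference is cosmetic: where the paper finishes by substituting $ss^{-1}=s^{-1}s$ and left-multiplying by $s$, you finish by observing that $es$ and $se$ share domain and range idempotent $f$ with $(es)(se)^{-1}=f$ — a slightly longer but equally valid repackaging of the same computation.
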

\begin{proof} We use the partial order we have defined on the idempotents of an inverse semigroup.
Let $S$ be a Clifford semigroup and let $s \in S$.
Since the idempotents are central
$s = s(s^{-1}s) = (s^{-1}s)s$.
Multiply the last equality on the right by $s^{-1}$
to deduce that $\mathbf{r}(s) \leq \mathbf{d}(s)$.
It now follows by symmetry that $\mathbf{d}(s) = \mathbf{r}(s)$.

Suppose now that $\mathbf{d}(s) = \mathbf{r}(s)$ for all $s \in S$.
Let $e$ be any idempotent and $s$ arbitrary.
We shall prove that $se = es$.
By assumption, $\mathbf{d}(es) = \mathbf{r}(es)$.
That is $(es)^{-1}es = es(es)^{-1}$.
We now use Lemma~\ref{lem:inverse-of-product} and Lemma~\ref{lem:inverse-of-idempotent}
and the fact that idempotents commute to get
$s^{-1}es = ss^{-1}e$.
Because of our assumption, $ss^{-1} = s^{-1}s$.
Thus 
$s^{-1}es = s^{-1}se$.
Multiplying on the left by $s$, 
and using the fact that idempotents commute and our assumption, gives $es = se$, as required.
\end{proof}

An inverse semigroup $S$ is said to be a {\em union of groups} if
$S = \bigcup_{e \in \mathsf{E}(S)} H_{e}$. 
By Lemma~\ref{lem:characterization-clifford}, we may now deduce the following

\begin{lemma}\label{lem:union-of-groups} 
Let $S$ be an inverse semigroup.
Then $S$ is a Clifford semigroup if and only if it is a union of groups.
\end{lemma}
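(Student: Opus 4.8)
The plan is to deduce this from Lemma~\ref{lem:characterization-clifford}, so the work reduces to translating between the condition $\mathbf{d}(s) = \mathbf{r}(s)$ and membership in some maximal subgroup $H_{e}$.

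First I would prove the forward direction. Suppose $S$ is a Clifford semigroup, and let $s \in S$. By Lemma~\ref{lem:characterization-clifford} the idempotent $e := \mathbf{d}(s) = \mathbf{r}(s)$ is well defined. Since $s\mathbf{d}(s) = s$ and $\mathbf{r}(s)s = s$, we get $es = s = se$, and likewise $es^{-1} = s^{-1} = s^{-1}e$ because $\mathbf{d}(s^{-1}) = \mathbf{r}(s) = e = \mathbf{d}(s) = \mathbf{r}(s^{-1})$. Hence both $s$ and $s^{-1}$ lie in the local monoid $eSe$, whose identity is $e$, and $ss^{-1} = e = s^{-1}s$ shows that $s$ is a unit of $eSe$. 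Therefore $s \in H_{e}$, and since $s$ was arbitrary, $S = \bigcup_{e \in \mathsf{E}(S)} H_{e}$.

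For the converse, assume $S = \bigcup_{e \in \mathsf{E}(S)} H_{e}$ and take $s \in S$, say $s \in H_{e}$. Then $e$ is the identity of $eSe$, so $es = s = se$, and there is an element $t \in H_{e}$ with $st = e = ts$. The one point that needs a small check is that this $t$ is the inverse of $s$ \emph{in $S$}: indeed $sts = es = s$ and $tst = et = t$, so by uniqueness of inverses in an inverse semigroup $t = s^{-1}$. Consequently $\mathbf{d}(s) = s^{-1}s = ts = e = st = ss^{-1} = \mathbf{r}(s)$. As $s$ was arbitrary, Lemma~\ref{lem:characterization-clifford} gives that $S$ is a Clifford semigroup.

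There is no serious obstacle here; the only thing requiring care is the identification of the group inverse in $H_{e}$ with the semigroup inverse $s^{-1}$, which is exactly where uniqueness of inverses is used, and the bookkeeping that $s^{-1}$ has the same domain and range idempotent as $s$.
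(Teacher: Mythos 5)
Your proof is correct and follows essentially the same route as the paper: both directions go through Lemma~\ref{lem:characterization-clifford} and the identification of the elements with $\mathbf{d}(s) = \mathbf{r}(s) = e$ as the units of the local monoid $eSe$. The paper states these steps more tersely; your extra care in checking that the group inverse in $H_{e}$ coincides with the semigroup inverse $s^{-1}$ is exactly the detail the paper leaves implicit.
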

\begin{proof}
Let $S$ be a Clifford semigroup and let $s \in S$.
Put $e = \mathbf{d}(s) = \mathbf{r}(s)$.
Then $s \in eSe$.
But, in fact, $s$ belongs to the group of units of $eSe$ which is $H_{e}$.
Thus $s \in H_{e}$ and so we have proved that every Clifford semigroup
is a union of groups.

Conversely, suppose that $S$ is an inverse semigroup which is a union of groups.
Then for each $s \in S$, we have that $s \in H_{e}$ for some idempotent $e$.
We have that $\mathbf{d}(s) = e = \mathbf{r}(s)$.
It follows that $S$ Clifford.
\end{proof}

I have included Lemma~\ref{lem:union-of-groups}  
because group theorists sometimes tend to see
inverse semigroups as unions of groups.
These are, in fact, a very special class of inverse semigroups.
Clifford semigroups can be described explicitly as `strong semilattices of groups'
or, what amounts to the same thing, as presheaves of groups over meet semilattices.
This applies to, in particular, abelian inverse semigroups.
For strong semilattices of groups, see \cite[Chapter IV, Section 2]{Howie}.

We can obtain another characterization of Clifford semigroups
which is analogous to part of \cite[part (c) of Theorem 3.2]{Goodearl} but at the price of an extra assumption on the semilattice of idempotents.
We say that a meet semilattice is {\em $0$-disjunctive} if 
for all $0 \neq f < e$ there exists $0 \neq g \leq e$ such that $fg = 0$.
Thus, semilattices which are $0$-disjunctive have a weak notion of complement.
A non-zero element $a$ of an inverse semigroup is said to be an {\em infinitesimal} if $a^{2} = 0$.

\begin{lemma}\label{lem:clifford-infinitesimal} 
Let $S$ be an inverse semigroup, the semilattice of idempotents of which is $0$-disjunctive.
Then $S$ is a Clifford semigroup if and only if $S$ contains no infinitesimals.
\end{lemma}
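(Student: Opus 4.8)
The plan rests on one observation about infinitesimals: for any element $a$ of an inverse semigroup with zero (which the statement presupposes $S$ to be), $a^{2} = 0$ if and only if $\mathbf{d}(a)\mathbf{r}(a) = 0$. One direction is immediate from $a^{2} = (a\,\mathbf{d}(a))(\mathbf{r}(a)\,a) = a\,\mathbf{d}(a)\mathbf{r}(a)\,a$. For the other, $\mathbf{d}(a^{2}) = a^{-1}a^{-1}aa = a^{-1}\mathbf{d}(a)a$, and conjugating $\mathbf{d}(a^{2}) = 0$ by $a$ on the left and then by $a^{-1}$ on the right, using that idempotents commute, yields $\mathbf{r}(a)\mathbf{d}(a) = 0$. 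So a non-zero $a$ is an infinitesimal exactly when $\mathbf{d}(a)$ and $\mathbf{r}(a)$ are orthogonal, which is what links the statement to Lemma~\ref{lem:characterization-clifford} (``$S$ is Clifford iff $\mathbf{d}(s) = \mathbf{r}(s)$ for all $s$'').

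The forward implication needs neither $0$-disjunctivity nor the observation in full. If $S$ is Clifford and $0 \neq a \in S$, then $\mathbf{d}(a) = \mathbf{r}(a)$ by Lemma~\ref{lem:characterization-clifford}, so $\mathbf{d}(a^{2}) = a^{-1}\mathbf{d}(a)a = a^{-1}(aa^{-1})a = (a^{-1}a)(a^{-1}a) = \mathbf{d}(a) \neq 0$; hence $a^{2} \neq 0$ and $S$ has no infinitesimals. (One may also argue that by Lemma~\ref{lem:union-of-groups} $a$ lies in a group $H_{e}$ with $e \neq 0$, so $a^{2} \in H_{e}$ and thus $a^{2} \neq 0$.)

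For the converse I would argue contrapositively: assuming $S$ is not Clifford, I construct an infinitesimal. By Lemma~\ref{lem:characterization-clifford} there is an $s$ with $e := \mathbf{d}(s) \neq f := \mathbf{r}(s)$; note $e \neq 0$, else $s = 0$ and $f = 0$. If $ef = 0$, then $s$ itself is an infinitesimal by the opening observation. Otherwise $0 \neq ef$, and since $e \neq f$ at least one of the inequalities $ef \leq e$, $ef \leq f$ is strict, so after possibly replacing $s$ by $s^{-1}$ I may assume $0 \neq ef < e$. Now I invoke $0$-disjunctivity of $\mathsf{E}(S)$ on the pair $ef < e$ to get an idempotent $g$ with $0 \neq g \leq e$ and $(ef)g = 0$; since $g \leq e$ this simply says $fg = 0$. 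Put $a := sg$. A direct check gives $\mathbf{d}(a) = gs^{-1}sg = geg = g \neq 0$, so $a \neq 0$; and $\mathbf{r}(a) = sgs^{-1}$, which is an idempotent by Lemma~\ref{lem:conjugation-of-idpts} and satisfies $sgs^{-1} \leq ss^{-1} = f$ (again because $g \leq e$). Hence $\mathbf{d}(a)\mathbf{r}(a) = g\cdot sgs^{-1} \leq gf = 0$, so $a^{2} = 0$ by the opening observation --- $a$ is an infinitesimal, contradicting the hypothesis.

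The only genuinely creative step is choosing $a = sg$: to convert the equality $\mathbf{d}(s) = \mathbf{r}(s)$ into orthogonality one must restrict $s$ to a non-zero piece $g$ of its domain idempotent that avoids the overlap $ef$, and $0$-disjunctivity is exactly the hypothesis guaranteeing such a $g$ exists when there are no ``points'' available to pick it by hand. After that, the identities $\mathbf{d}(sg) = g$, $\mathbf{r}(sg) = sgs^{-1} \leq f$ and $fg = 0$ are routine manipulations using only that idempotents commute; the minor bookkeeping is the degenerate case $ef = 0$ and the symmetry reduction $s \mapsto s^{-1}$ to the case $ef < e$.
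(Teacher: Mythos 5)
Your proof is correct and follows essentially the same route as the paper: the forward direction via $\mathbf{d}(a)=\mathbf{r}(a)$ from Lemma~\ref{lem:characterization-clifford}, and the converse by using $0$-disjunctivity to cut a non-zero piece $g$ of the domain idempotent orthogonal to the overlap $\mathbf{d}(s)\mathbf{r}(s)$ and checking that $sg$ is an infinitesimal. The only difference is cosmetic: where the paper splits into the two cases $\mathbf{d}(a)\mathbf{r}(a)=\mathbf{d}(a)$ and $\mathbf{d}(a)\mathbf{r}(a)<\mathbf{d}(a)$ (multiplying on the left or on the right accordingly), you collapse them into one case via the symmetry $s\mapsto s^{-1}$.
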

\begin{proof} Suppose that $S$ is a Clifford semigroup.
Let $a$ be an infinitesimal.
Then $a^{2} = 0$ thus $aa = 0$.
It follows that $\mathbf{d}(a)\mathbf{r}(a) = 0$.
But, in a Clifford semigroup $\mathbf{d}(a) = \mathbf{r}(a)$ by Lemma~\ref{lem:characterization-clifford}.
It follows that $\mathbf{d}(a) = 0$ and so $a = 0$ which is a contradiction.
Thus there are no infinitesimals.

To prove the converse, suppose that there are no infinitesmials.
Let $a \in S$ be arbitrary and non-zero.
We shall prove that $\mathbf{d}(a) = \mathbf{r}(a)$.
Suppose not.
Put $e = \mathbf{d}(a)\mathbf{r}(a)$.
If $e = 0$ then $a^{-1}aaa^{-1} = 0$ and so $a^{2} = 0$.
But this means that $a$ is an infinitesimal.
It follows that $e \neq 0$.
There are now two cases.
Suppose, first, that $e = \mathbf{d}(a)$.
Then $\mathbf{d}(a) < \mathbf{r}(a)$. 
By assumption, there exists $0 < f \leq \mathbf{r}(a)$ such that
$f\mathbf{d}(a) = 0$.
Put $b = fa$.
If $b = 0$ then it is easy to see that $f = 0$, which is a contradiction.
It follows that $b \neq 0$.
You can easily check that $b$ is an infinitesimal.
This is a contradiction.
We can now deal with the second case where $e < \mathbf{d}(a)$.
By assumption, there is a non-zero idempotent $f$ such that
$ef = 0$ and $f \leq \mathbf{d}(a)$.
Put $b = af$.
If $b = 0$ then $(af)^{-1}af = 0$ which implies that $f = 0$.
It follows that $b \neq 0$.
However, $b^{2} = afaf = a \mathbf{d}(a)f \mathbf{r}(a)af = aefaf = 0$.
But this contradicts the assumption that there are no infinitesimals.
It follows that $\mathbf{d}(a) = \mathbf{r}(a)$ for all $a \in S$.
Thus by Lemma~\ref{lem:characterization-clifford}, we have shown that $S$ is a Clifford semigroup.
\end{proof}

Although the idempotents in an inverse semigroup are not in general central, 
we do have the following result, which tells us how idempotents `pass through' non-idempotent elements.

\begin{lemma}\label{lem:pass-through} Let $S$ be an inverse semigroup in which $e$ is an idempotent and $s$ is any element.
\begin{enumerate}
\item $es = sf$ for some idempotent $f$.
\item $se = gs$ for some idempotent $g$.
\end{enumerate}
\end{lemma}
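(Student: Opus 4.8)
The plan is to prove part (1) by exhibiting an explicit idempotent $f$ that does the job, and then obtain part (2) by a symmetric argument (or by applying part (1) to $s^{-1}$ and taking inverses). For part (1), the natural candidate is $f = \mathbf{d}(s)e\mathbf{d}(s) = s^{-1}ses^{-1}s$, or more simply $f = s^{-1}es$. First I would check that $f = s^{-1}es$ is indeed an idempotent: this is immediate from Lemma~\ref{lem:conjugation-of-idpts} applied to the element $s^{-1}$, since $s^{-1}e(s^{-1})^{-1} = s^{-1}es$ using Lemma~\ref{lem:inverse-of-inverse}. So $f \in \mathsf{E}(S)$ with no extra work.

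Next I would verify that $sf = es$. Computing, $sf = s(s^{-1}es) = (ss^{-1})es = \mathbf{r}(s)es$. Now I invoke the fact that idempotents commute (Proposition~\ref{prop:idempotents-commute}) to rewrite $\mathbf{r}(s)e = e\mathbf{r}(s)$, giving $sf = e\mathbf{r}(s)s = e(ss^{-1}s) = es$, using $\mathbf{r}(s)s = s$. That establishes part (1). For part (2), the symmetric choice is $g = ses^{-1}$, which is an idempotent directly by Lemma~\ref{lem:conjugation-of-idpts}, and then $gs = ses^{-1}s = se\mathbf{d}(s) = s\mathbf{d}(s)e = se$, again using commutativity of idempotents and $s\mathbf{d}(s) = s$. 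Alternatively, one applies part (1) to the element $s^{-1}$ with idempotent $e$ to get $es^{-1} = s^{-1}f$ for some idempotent $f$, then takes inverses via Lemma~\ref{lem:inverse-of-product} and Lemma~\ref{lem:inverse-of-idempotent} to obtain $se = fs$.

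There is essentially no serious obstacle here: the whole proof is a short calculation once the right idempotent is guessed. The only thing to be careful about is making sure each rewriting step is justified — specifically, that every place where I swap the order of two elements is genuinely swapping two \emph{idempotents} (so that Proposition~\ref{prop:idempotents-commute} applies), and not illegitimately swapping $s$ past something. So the "hard part", such as it is, is purely bookkeeping: tracking which sub-expressions are idempotents ($\mathbf{r}(s) = ss^{-1}$, $\mathbf{d}(s) = s^{-1}s$, $e$, $s^{-1}es$) and only commuting those. I would present the argument as the two explicit computations above, citing Lemma~\ref{lem:conjugation-of-idpts} for idempotency and Proposition~\ref{prop:idempotents-commute} for the commuting step.
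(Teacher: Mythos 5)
Your proof is correct and is essentially the paper's own argument run in the reverse direction: the paper starts from $es = e(ss^{-1})s$, commutes the idempotents $e$ and $ss^{-1}$, and reads off $f = s^{-1}es$, while you guess $f = s^{-1}es$ first and verify $sf = es$ by the same two ingredients (Lemma~\ref{lem:conjugation-of-idpts} for idempotency and Proposition~\ref{prop:idempotents-commute} for the swap). No substantive difference.
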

\begin{proof} We prove only (1) since the proof of (2) is similar.
We have that $es = ess^{-1}s = e(ss^{-1})s$.
Now we use the fact that idempotents commute to get $es = (ss^{-1})es$.
But this is equal to $s(s^{-1}es)$ and now we use Lemma~\ref{lem:conjugation-of-idpts} 
to deduce that $f = s^{-1}es$ is an idempotent.
We have therefore proved that $es = sf$.
\end{proof}

We have described one extreme example of inverse semigroup.
We now describe another.
By a {\em meet semilattice}, we mean a partially ordered set $(P,\leq)$ with the property that
every pair of elements $a,b \in P$ has a {\em greatest lower bound}, denoted by $a \wedge b$.
Thus, $a \wedge b \leq a,b$ and if $c \leq a,b$ then $c \leq a \wedge b$.
We call $a \wedge b$ the {\em meet} of $a$ and $b$.\footnote{We shall usually denote the meet by $\cdot$ or concatenation.} 
The proofs of the following are immediate from the definitions.

\begin{lemma}\label{lem:meet-semilattice}
Let $(P,\leq)$ be a meet semilattice and let $a,b,c \in P$.
Then 
\begin{enumerate}
\item $a \wedge (b \wedge c) = (a \wedge b) \wedge c$.
\item $a \wedge b = b \wedge a$.
\item $a \wedge a = a$
\end{enumerate}
\end{lemma}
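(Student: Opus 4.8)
The statement to prove is Lemma~\ref{lem:meet-semilattice}, which asserts that in a meet semilattice $(P,\le)$ the meet operation $\wedge$ is associative, commutative, and idempotent. The paper itself flags these as ``immediate from the definitions,'' so the plan is simply to unwind the universal property of greatest lower bounds in each case. The only tool needed is the defining property: $a \wedge b$ is the unique element that is $\le a$ and $\le b$ and is above every common lower bound of $a$ and $b$; together with antisymmetry of $\le$, this pins down $a \wedge b$ uniquely, so each identity can be proved by showing both sides satisfy the same universal property.

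For commutativity (2), I would observe that ``being a lower bound of $\{a,b\}$'' and ``being a lower bound of $\{b,a\}$'' are literally the same condition, so $a \wedge b$ and $b \wedge a$ are greatest lower bounds of the same set; by uniqueness (which follows from antisymmetry) they are equal. For idempotence (3), the set of common lower bounds of $a$ and $a$ is just $\{x : x \le a\}$, whose greatest element is $a$ itself, so $a \wedge a = a$. These two are genuinely one-line arguments.

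Associativity (1) is the only part with any content, and I would prove it by showing that both $a \wedge (b \wedge c)$ and $(a \wedge b) \wedge c$ are the greatest lower bound of the three-element set $\{a,b,c\}$, i.e.\ the largest element $x$ with $x \le a$, $x \le b$, and $x \le c$. First I would check such a greatest lower bound of $\{a,b,c\}$ exists and equals $a \wedge (b \wedge c)$: this element is $\le a$ and $\le b \wedge c \le b, c$, so it is a common lower bound of all three; conversely any $x \le a,b,c$ satisfies $x \le b \wedge c$ (by the universal property of $b \wedge c$) and $x \le a$, hence $x \le a \wedge (b \wedge c)$. By the symmetric argument, $(a \wedge b) \wedge c$ is also the greatest lower bound of $\{a,b,c\}$. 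Since greatest lower bounds are unique up to the antisymmetry of $\le$, the two expressions coincide.

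The main (and only) obstacle is purely bookkeeping: being careful to invoke antisymmetry of $\le$ to conclude that a ``greatest lower bound'' is actually unique, rather than merely a maximal common lower bound, and making sure the chain of inequalities $x \le a \wedge (b\wedge c)$ really does follow from the universal property applied twice. There is no deeper difficulty here, which is why the paper simply asserts the result; I would keep the write-up to a few lines per item, or even just remark that each follows by comparing universal properties of greatest lower bounds.
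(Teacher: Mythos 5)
Your proof is correct and is exactly the argument the paper has in mind: the paper omits the proof as ``immediate from the definitions,'' and your verification via the universal property of greatest lower bounds (with associativity handled by identifying both sides with the greatest lower bound of $\{a,b,c\}$, and uniqueness secured by antisymmetry) is the standard way to fill it in. No gaps.
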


A semigroup is called a {\em band} if every element is an idempotent.
Observe that we have proved in Lemma~\ref{lem:meet-semilattice} that if $(P,\leq)$ is a meet semilattice then $(P,\wedge)$ is a commutative band.

\begin{lemma}\label{lem:commutative-band} Let $S$ be a commutative band.
Define $a \leq b$ by $a = ab$.
Then $(S,\leq)$ is a partially ordered set and, in fact, a meet semilattice in which $a \wedge b = ab$.
\end{lemma}
\begin{proof} We first check that $\leq$ is a partial order.
By assumption, each element of $S$ is an idempotent.
It follows that $a \leq a$.
Suppose that $a \leq b$ and $b \leq a$.
Then $a = ab$ and $b = ba$.
But the semigroup is commutative and so $a = b$.
Finally, suppose that $a \leq b$ and $b \leq c$.
Then $a = ab$ and $b = bc$.
It follows that $a = abc = ac$.
Thus $a \leq c$.
We have verified that $(S,\leq)$ is a partially ordered set.
We now show that $a \wedge b = ab$.
Observe first that $(ab)a = a^{2}b = ab$, using commutativity and the fact that every element is idempotent.
Also, $(ab)b = ab^{2} = ab$.
We have show that $ab \leq a,b$.
Suppose, now that $c \leq a,b$.
Then $c = ca = cb$, and so $c(ab) = cb = c$.
This shows that $c \leq ab$ and thus proves that $ab = a \wedge b$.
\end{proof}

As a result of Lemma~\ref{lem:meet-semilattice} and Lemma~\ref{lem:commutative-band},
we can think about meet semilattices in two equivalent ways:
either as partially ordered sets in which each pair of elements has a greatest lower bound
or as commutative bands.

Observe that if $S$ is any inverse semigroup then, since the idempotents commute, $\mathsf{E}(S)$ is a subsemigroup of $S$. 
It follows that $\mathsf{E}(S)$ is always a commutative band and so it becomes a meet semilattice when we define $e \wedge f = ef$.
For this reason, it is usual to refer to $\mathsf{E}(S)$ as the {\em semilattice of idempotents} of $S$.

We now have the following companion to Lemma~\ref{lem:unique-idempotents}. 

\begin{lemma}\label{lem:all-idempotents} \mbox{}
\begin{enumerate}
\item A meet semilattice is an inverse semigroup with respect to the meet operation in which every element is an idempotent.
\item An inverse semigroup in which every element is an idempotent is a meet semilattice.  
\end{enumerate}
\end{lemma}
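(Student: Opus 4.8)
The plan is to reduce both parts to results already in hand: Lemma~\ref{lem:meet-semilattice}, Lemma~\ref{lem:commutative-band}, and Proposition~\ref{prop:idempotents-commute}. Neither direction requires a genuinely new idea; the content is recognising that an ``inverse band'' and a ``meet semilattice'' are two descriptions of the same object, paralleling the group case in Lemma~\ref{lem:unique-idempotents}.

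For part (1), I would start from a meet semilattice $(P,\leq)$. By Lemma~\ref{lem:meet-semilattice} the operation $\wedge$ is associative, commutative and idempotent, so $(P,\wedge)$ is a commutative band and in particular a semigroup in which every element is an idempotent. It remains to see it is inverse. Fix $a \in P$ and look at the equations $a = axa$ and $x = xax$ with $x$ unknown. Using commutativity and idempotency, $axa = a^{2}x = ax$, so the first equation reads $a = ax$, i.e.\ $a \leq x$; likewise the second reads $x \leq a$. Hence any solution satisfies $x = a$, and $x = a$ genuinely is a solution since $a = aaa$. So the inverse exists and is unique, with $a^{-1} = a$, and $(P,\wedge)$ is an inverse semigroup of the required kind.

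For part (2), let $S$ be an inverse semigroup with $S = \mathsf{E}(S)$. By Proposition~\ref{prop:idempotents-commute} the idempotents commute, so $S$ is a commutative band, and Lemma~\ref{lem:commutative-band} then tells us that $(S,\leq)$, where $a \leq b$ means $a = ab$, is a meet semilattice with $a \wedge b = ab$. This is exactly the claim.

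The only point needing any care — and it is not really an obstacle — is the uniqueness step in (1); once $axa$ is rewritten as $ax$ via commutativity it is immediate, and everything else is bookkeeping with the cited lemmas. I would also remark, for continuity with the surrounding text, that the two constructions are mutually inverse, so that meet semilattices are ``the same as'' inverse semigroups consisting entirely of idempotents.
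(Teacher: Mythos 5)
Your proof is correct and follows essentially the same route as the paper: part (1) via Lemma~\ref{lem:meet-semilattice} (the paper leaves the inverse-semigroup verification as ``immediate,'' which you usefully spell out by checking that $a$ is the unique solution of $a=axa$, $x=xax$), and part (2) via commutativity of idempotents and Lemma~\ref{lem:commutative-band}.
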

\begin{proof} The proof of (1) is immediate by Lemma~\ref{lem:meet-semilattice}.
(2) Suppose that $S$ is an inverse semigroup in which every element is an idempotent.
Then $S = \mathsf{E}(S)$.
The result now follows by Lemma~\ref{lem:commutative-band},
since $\mathsf{E}(S)$ is a commutative band.
\end{proof}

Our next result is often useful in showing that a semigroup is or is not inverse.
We need a definition first.
A semigroup $S$ is said to be {\em regular} if for each $a \in S$ there exists an element $b$ such that $a = aba$ and $b = bab$.
The element $b$ is called {\em an inverse} of $a$.
Observe that both $ab$ and $ba$ are idempotents.
In showing that a semigroup is regular, it is enough to check that for each element $a$ there is an element $x$
such that $a = axa$.
The reason is that $b = xax$ is then an inverse of $a$.
Inverse semigroups are the regular semigroups in which every element has a unique inverse. 

\begin{proposition}\label{prop:regular-commuting-idempotents} 
A regular semigroup is inverse if and only if its idempotents commute.
\end{proposition}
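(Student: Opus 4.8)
The plan is as follows. The forward implication is essentially a restatement of what we already know: if $S$ is inverse then it is regular straight from the definition (taking $b = s^{-1}$ gives $s = sbs$ and $b = bsb$), and its idempotents commute by Proposition~\ref{prop:idempotents-commute}. So all the real work is in the converse.

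Assume $S$ is regular and that its idempotents commute, and fix $a \in S$. Regularity already furnishes at least one inverse of $a$, so it suffices to prove that this inverse is unique. Suppose then that $b$ and $c$ are both inverses of $a$, so that $a = aba = aca$, $b = bab$ and $c = cac$. As observed just before the statement, each of $ab$, $ba$, $ac$, $ca$ is an idempotent. The idea is to take the identity $b = bab$, insert $a = aca$ in the middle to obtain $b = b(aca)b$, then regroup this five-letter word and use commutativity of idempotents to collapse it; carrying out the mirror-image computation for $c$ lands both $b$ and $c$ on the same element.

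Concretely, I would argue
$$b = bab = b(aca)b = b(ac)(ab) = b(ab)(ac) = (bab)(ac) = b(ac) = bac,$$
where the crucial step uses $(ac)(ab) = (ab)(ac)$. Symmetrically,
$$c = cac = c(aba)c = (ca)(ba)c = (ba)(ca)c = (ba)(cac) = bac,$$
this time using $(ca)(ba) = (ba)(ca)$. Hence $b = bac = c$, so $a$ has a unique inverse; as $a$ was arbitrary, $S$ is inverse.

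The only point requiring care is the bookkeeping: one must bracket the word $b\,a\,c\,a\,b$ (respectively $c\,a\,b\,a\,c$) so that the two idempotent factors that appear lie among $ab$, $ba$, $ac$, $ca$ --- the ones we know to commute --- rather than something like $bc$ or $cb$, which need not be idempotent. Beyond choosing the brackets correctly, there is no genuine obstacle: the substitution $a = aca$ and the commutativity of idempotents do all of the work.
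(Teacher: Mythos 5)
Your proposal is correct and follows essentially the same route as the paper: regularity supplies existence of an inverse, and uniqueness is obtained by substituting one inverse's defining identity into the other, bracketing the resulting five-letter word into idempotent factors of the form $ab$, $ba$, $ac$, $ca$, and commuting them. The only (cosmetic) difference is that you run two symmetric computations meeting at the common value $bac$, whereas the paper runs a single chain $u = \cdots = v$; both use exactly the same two applications of commutativity.
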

\begin{proof}
Let $S$ be a regular semigroup in which the idempotents commute and  
let $u$ and $v$ be inverses of $x$.  
Then
$$u = uxu = u(xvx)u = (ux)(vx)u,$$
where both $ux$ and $vx$ are idempotents.  
Thus, since idempotents commute, we have that
$$u = (vx)(ux)u = vxu = (vxv)xu = v(xv)(xu).$$
Again, $xv$ and $xu$ are idempotents and so
$$u = v(xu)(xv) = v(xux)v = vxv = v. $$ 
Hence $u = v$.
The converse follows by Proposition~\ref{prop:idempotents-commute}. 
\end{proof}

\begin{example}{\em As an example of Proposition~\ref{prop:regular-commuting-idempotents} 
in action, observe that the full transformation monoid $\mathcal{T}(X)$, of all functions from $X$ to itself where $X$ has at least two elements, is regular but not inverse.
We show first that it is regular.
Let $f \in \mathcal{I}(X)$.
There are two cases.
Suppose first that $f$ is surjective;
for each $y \in X$ choose $x_{y} \in f^{-1}(y)$.
Define $g \colon X \rightarrow X$ by $g(y) = x_{y}$.
That is, $g$ maps an element to one of its pre-images under $f$.
We calculate $fgf$.
Let $x \in X$ and $y = f(x)$.
Then 
$$(fgf)(x) = (fg)(f(x)) = f(g(y)) = f(x_{y}) = y.$$  
It follows that $f = fgf$.
Suppose now that $f$ is not surjective.
Choose any element $x_{0} \in X$.
For each element $y$ in the image of $f$ choose  $x_{y} \in f^{-1}(y)$.
Now define $g \colon X \rightarrow X$ as follows:
if $y$ is in the image of $f$, then define $g(y) = x_{y}$,
and if $y$ is not in the image of $f$, then define $g(y) = x_{0}$.
A similar calculation to the one above shows that $f = fgf$.
We have therefore shown that the full transformation monoid is regular.
Choose two distinct elements from $X$ that we call $x_{1}$ and $x_{2}$.
Define two functions $c_{1},c_{2} \colon X \rightarrow X$
where $c_{1}$ is the constant function to $x_{1}$
and $c_{2}$ is the constant function to $x_{2}$.
The functions $c_{1}$ and $c_{2}$ are distinct and both are idempotents.
But $c_{1}c_{2} = c_{1}$ whereas $c_{2}c_{1} = c_{2}$.
We have shown that the idempotents do not commute.
Thus $\mathcal{T}(X)$ is regular but not inverse.}
\end{example}

We now consider homomorphisms between inverse semigroups.
Our first result is actually slightly more general.

\begin{lemma}\label{lem:homomorphisms} Let $S$ be an inverse semigroup and let $T$ be any semigroup.
Suppose that $\theta \colon S \rightarrow T$ is a semigroup homomorphism.
Then $\theta (S)$, the image of $S$, is an inverse semigroup.
\end{lemma}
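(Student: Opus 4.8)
The plan is to show that $\theta(S)$ is a regular semigroup whose idempotents commute, and then to invoke Proposition~\ref{prop:regular-commuting-idempotents}. Since $\theta(S)$ is the image of a semigroup homomorphism it is at once a subsemigroup of $T$, so there is nothing to check there.

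Regularity is immediate. Given $\theta(s) \in \theta(S)$, apply $\theta$ to the defining identities $s = ss^{-1}s$ and $s^{-1} = s^{-1}ss^{-1}$ in $S$ to obtain $\theta(s) = \theta(s)\theta(s^{-1})\theta(s)$ and $\theta(s^{-1}) = \theta(s^{-1})\theta(s)\theta(s^{-1})$; since $\theta(s^{-1}) \in \theta(S)$, this exhibits $\theta(s^{-1})$ as an inverse of $\theta(s)$ \emph{inside} $\theta(S)$. Hence every element of $\theta(S)$ has an inverse in $\theta(S)$.

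The substantive step is to show that the idempotents of $\theta(S)$ commute, and the crux of this — what makes the lemma more than a triviality — is that every idempotent of $\theta(S)$ is in fact the image of an idempotent of $S$. The obstacle is that if $f = \theta(a)$ is idempotent, the chosen preimage $a$ need not itself be an idempotent, so one must manufacture an idempotent preimage. I would take $e = aa^{-1}a^{-1}a$, the product of the two commuting idempotents $aa^{-1}$ and $a^{-1}a$ and hence itself an idempotent of $S$. To see that $\theta(e) = f$: first, $f^2 = f$ forces $\theta(a^2) = \theta(a)^2 = f$; next, applying $\theta$ to $a^2 = a^2(a^2)^{-1}a^2$ gives $f = f\,\theta\big((a^2)^{-1}\big)\,f$; finally, writing $(a^2)^{-1} = a^{-1}a^{-1}$ by Lemma~\ref{lem:inverse-of-product}, we get $\theta(e) = \theta(a)\,\theta(a^{-1})\theta(a^{-1})\,\theta(a) = f\,\theta\big((a^2)^{-1}\big)\,f = f$.

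With that in place the remainder is routine: if $f$ and $g$ are idempotents of $\theta(S)$, write $f = \theta(e)$ and $g = \theta(e')$ with $e, e' \in \mathsf{E}(S)$; then $fg = \theta(ee') = \theta(e'e) = gf$ since idempotents commute in $S$ by Proposition~\ref{prop:idempotents-commute}. Thus $\theta(S)$ is a regular semigroup with commuting idempotents, so Proposition~\ref{prop:regular-commuting-idempotents} shows that $\theta(S)$ is an inverse semigroup. To reiterate, the one delicate point is the claim that idempotents in the image lift to idempotents of $S$; everything else is a direct calculation.
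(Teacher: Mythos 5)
Your proposal is correct and follows essentially the same route as the paper: prove regularity by applying $\theta$ to the defining identities, lift each idempotent of the image to the idempotent $(aa^{-1})(a^{-1}a)$ of $S$ (the paper writes this same element as $ss^{-2}s$), deduce that idempotents in the image commute, and conclude via Proposition~\ref{prop:regular-commuting-idempotents}. The verification that $\theta\bigl((aa^{-1})(a^{-1}a)\bigr)=\theta(a)$ is also the same computation, so there is nothing to add.
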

\begin{proof} We shall use Proposition~\ref{prop:regular-commuting-idempotents}  to prove that $\theta (S)$ is inverse.
We show first that it is regular.
Let $\theta (s) \in \theta (S)$.
Then, since $s \in S$ inverse, there is an element $s^{-1} \in S$ such that
$s = ss^{-1}s$ and $s^{-1} = s^{-1}ss^{-1}$.
It follows that 
$\theta (s) = \theta (s) \theta (s^{-1}) \theta (s)$
and 
$\theta (s^{-1}) = \theta (s^{-1}) \theta (s) \theta (s^{-1})$.
This shows that $\theta (S)$ is regular.

We now prove that the idempotents in $\theta (S)$ commute.
To begin with, we have to show that idempotents in the image come from idempotents 
in the domain.\footnote{This is an instance of what is often termed {\em Lallement's Lemma}.}
Let $\theta (s)$ be an idempotent in $\theta (S)$.
We shall prove that there is an idempotent $e$ in $S$ such that $\theta (e) = \theta (s)$.
Consider the element $e = ss^{-2}s = (ss^{-1})(s^{-1}s)$.
It is the product of two idempotents in $S$ and so it is an idempotent.
In particular, $e$ is an idempotent in $S$.
We calculate $\theta (e)$.
We have that
$$\theta (e) = \theta (ss^{-2}s) = \theta (s)\theta (s^{-2})\theta (s).$$ 
But $\theta (s) = \theta (s)^{2} = \theta (s^{2})$.
We therefore have that 
$$\theta (e) = \theta (s^{2}s^{-2}s^{2}) = \theta (s^{2}) = \theta (s)^{2} = \theta (s).$$
Now, let $\theta (s)$ and $\theta (t)$ be idempotents in $\theta (S)$.
By our result above, there are idempotents $e$ and $f$ in $S$ such that
$\theta (e) = \theta (s)$ and $\theta (f) = \theta (t)$.
It follows that $\theta (ef) = \theta (s)\theta (t)$.
But idempotents in inverse semigroups commute and so $ef = fe$.
We have therefore shown that $\theta (s) \theta (t) = \theta (t) \theta (s)$.
Thus, the idempotents in $\theta (S)$ commute.
We have therefore proved that the image of $\theta$ is an inverse semigroup by Proposition~\ref{prop:regular-commuting-idempotents}. 
\end{proof}

{\em Homomorphisms of inverse semigroups} are just semigroup homomorphisms.
{\em Isomorphisms of inverse semigroups} are just semigroup isomorphisms.

\begin{lemma}\label{lem:hm-inverse} 
Let $\theta \colon S \rightarrow  T$ be a homomorphism  between inverse semigroups. 
Then: 
\begin{enumerate}

\item $\theta (s^{-1}) = \theta (s)^{-1}$ for all $s \in S$. 

\item If $e$ is an idempotent then $\theta (e)$ is an idempotent. 

\item If $\theta (s)$ is an idempotent then there is 
an idempotent $e$ in $S$
such that $\theta (s) = \theta (e)$. 

\item $\theta (S)$ is an inverse subsemigroup of $T$. 

\item If $U$ is an inverse subsemigroup of $T$
then $\theta^{-1}(U)$ is an inverse subsemigroup of $S$.

\end{enumerate}
\end{lemma}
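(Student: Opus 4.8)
The plan is to dispatch the five parts in order, since each later part rests on the earlier ones, and to lean on the two facts already established: Lemma~\ref{lem:homomorphisms}, that the image of an inverse semigroup under a homomorphism is inverse, and Proposition~\ref{prop:idempotents-commute}, that idempotents commute.

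For (1), I would apply $\theta$ to the defining equations $s = ss^{-1}s$ and $s^{-1} = s^{-1}ss^{-1}$, which shows that $\theta(s^{-1})$ satisfies the inverse axioms for $\theta(s)$ inside $T$; since $T$ is inverse, inverses there are unique, so $\theta(s^{-1}) = \theta(s)^{-1}$. Part (2) is then immediate from $\theta(e)^{2} = \theta(e^{2}) = \theta(e)$ (or, alternatively, by combining (1) with Lemma~\ref{lem:inverse-of-idempotent}). Part (3) is Lallement's Lemma, and the argument is precisely the one already carried out inside the proof of Lemma~\ref{lem:homomorphisms}: one sets $e = ss^{-2}s = (ss^{-1})(s^{-1}s)$, observes that this is a product of commuting idempotents and hence an idempotent of $S$, and then computes $\theta(e) = \theta(s)\theta(s)^{-2}\theta(s) = \theta(s)$ using part (1) together with the fact that the idempotent $\theta(s)$ is its own inverse; I would simply reproduce that computation.

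For (4), Lemma~\ref{lem:homomorphisms} already gives that $\theta(S)$ is an inverse semigroup, and it is plainly a subsemigroup of $T$; the one remaining point is that it is closed under the inverse operation \emph{of $T$}, which is exactly what (1) says, once one notes that by uniqueness of inverses in $T$ the inverse taken in $\theta(S)$ coincides with the inverse taken in $T$. For (5), write $V = \theta^{-1}(U)$: closure under products is immediate since $U$ is a subsemigroup and $\theta$ a homomorphism, and closure under inverses follows because $a \in V$ forces $\theta(a) \in U$, hence $\theta(a)^{-1} \in U$, and $\theta(a)^{-1} = \theta(a^{-1})$ by (1), so $a^{-1} \in V$.

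There is no real obstacle here: the one genuinely non-trivial ingredient, Lallement's Lemma in part (3), has already been proved inside Lemma~\ref{lem:homomorphisms}, and everything else is bookkeeping. The only places asking for a word of care are the pedantic ones in (4) and (5) — namely that the inverse operation of an inverse subsemigroup is forced to be the restriction of the inverse operation of the ambient inverse semigroup, and the harmless question of whether $\theta^{-1}(U)$ might be empty, for which I would either note that it is an inverse subsemigroup whenever nonempty or adopt the convention that permits the empty subsemigroup.
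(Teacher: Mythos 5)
Your proposal is correct and follows essentially the same route as the paper: parts (1), (2), (4) and (5) are argued identically, with (4) resting on Lemma~\ref{lem:homomorphisms}. The only cosmetic difference is in part (3), where the paper takes the simpler idempotent $e = s^{-1}s$ and computes $\theta(s^{-1}s) = \theta(s)^{-1}\theta(s) = \theta(s)^{2} = \theta(s)$ directly, while you reuse the idempotent $ss^{-2}s$ from the proof of Lemma~\ref{lem:homomorphisms}; both work.
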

\begin{proof}
(1) Clearly, 
$\theta (s) \theta (s^{-1}) \theta (s) = \theta (s)$ and 
$\theta (s^{-1}) \theta (s) \theta (s^{-1}) = \theta (s^{-1})$.   
Thus, by uniqueness of inverses, we have that 
$\theta (s^{-1}) = \theta (s)^{-1}$.

(2) $\theta (e)^{2} = \theta (e) \theta (e) = \theta (e^{2}) = \theta (e)$. 

(3) If $\theta (s)^{2} = \theta (s)$, then 
$\theta (s^{-1}s) = \theta (s^{-1}) \theta (s) 
= \theta (s)^{-1} \theta (s) = \theta (s)^{2} = \theta (s)$,
where we have used the fact that every idempotent in an inverse semigroup is its own inverse.

(4) This is immediate by Lemma~\ref{lem:homomorphisms}.

(5) Straightforward.
\end{proof}

Let $\theta \colon S \rightarrow T$ be a homomorphism of inverse semigroups.
Then the restriction map, $(\theta \mid \mathsf{E}(S)) \colon \mathsf{E}(S) \rightarrow \mathsf{E}(T)$,
is well-defined since homomorphisms map idempotents to idempotents by part (2) of Lemma~\ref{lem:hm-inverse} 
The homomorphism $\theta$ is said to be {\em idempotent-separating}
if  $(\theta \mid \mathsf{E}(S))$ is actually injective.
We say that a congruence is idempotent-separating if its associated natural homomorphism 
is idempotent-separating.
We say that $S$ is an {\em idempotent-separating cover} of $T$
if $\theta$ is both surjective and idempotent-separating.

It remains to be shown that inverse semigroups really do encode partial bijections.
Given an inverse semigroup $S$ our goal is therefore to construct an injective homomorphism
$\lambda \colon S \rightarrow \mathcal{I}(X)$ for some set $X$.
We shall do just this by generalizing the famililar Cayley's Theorem from group theory.
As a starting point, we follow the group theory and take as our underlying set $X$ the set $S$ itself.
We now have to associate an element of $\mathcal{I}(S)$ with each element $a \in S$.
This is delivered by the following lemma.

\begin{lemma}\label{lem:WP-one} Let $S$ be an inverse semigroup and let $a \in S$.
Then $\lambda_{a} \colon \mathbf{d}(a)S \rightarrow  \mathbf{r}(a)S$ defined by
by $\lambda_{a}(x) = ax$ is a well-defined partial bijection.
\end{lemma}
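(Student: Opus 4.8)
The plan is to exhibit a two-sided inverse for $\lambda_{a}$, namely $\lambda_{a^{-1}}$, and thereby avoid checking injectivity and surjectivity as separate tasks. The one structural fact I will lean on throughout is that, for an idempotent $e$, the set $eS$ consists precisely of those $x \in S$ with $ex = x$: if $x = es$ then $ex = e^{2}s = es = x$, and conversely $ex = x$ already displays $x$ as a member of $eS$. Combined with Lemma~\ref{lem:inverse-of-inverse}, which gives $\mathbf{d}(a^{-1}) = (a^{-1})^{-1}a^{-1} = aa^{-1} = \mathbf{r}(a)$ and dually $\mathbf{r}(a^{-1}) = \mathbf{d}(a)$, this shows that $\lambda_{a^{-1}}$ is a map whose prospective domain is $\mathbf{r}(a)S$ and whose prospective range is $\mathbf{d}(a)S$ — exactly interchanging the two sets attached to $\lambda_{a}$.

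First I would confirm that $\lambda_{a}$ really lands in $\mathbf{r}(a)S$: for any $x \in \mathbf{d}(a)S$ we have $\mathbf{r}(a)\cdot ax = (aa^{-1}a)x = ax$, so $ax \in \mathbf{r}(a)S$ by the characterization above. The symmetric computation (using $\mathbf{d}(a^{-1})a^{-1} = a^{-1}$) shows $\lambda_{a^{-1}}$ maps $\mathbf{r}(a)S$ into $\mathbf{d}(a)S$, so both maps are well defined between the claimed sets. Then I would compose: for $x \in \mathbf{d}(a)S$,
$$\lambda_{a^{-1}}(\lambda_{a}(x)) = a^{-1}(ax) = (a^{-1}a)x = \mathbf{d}(a)x = x,$$
the last step because $x \in \mathbf{d}(a)S$ is fixed by left multiplication by $\mathbf{d}(a)$; symmetrically $\lambda_{a}(\lambda_{a^{-1}}(z)) = \mathbf{r}(a)z = z$ for $z \in \mathbf{r}(a)S$. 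Hence $\lambda_{a}$ and $\lambda_{a^{-1}}$ are mutually inverse bijections between $\mathbf{d}(a)S$ and $\mathbf{r}(a)S$, which is the assertion.

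There is no serious obstacle; the proof is pure book-keeping. The only point demanding attention is matching up which idempotent fixes which principal right ideal — that is, remembering that $\mathbf{d}(a)$ acts as a left identity on $\mathbf{d}(a)S$ while $\mathbf{r}(a)$ does so on $\mathbf{r}(a)S$, and that passing to $a^{-1}$ swaps these roles via Lemma~\ref{lem:inverse-of-inverse}. If one preferred not to introduce $\lambda_{a^{-1}}$ at all, injectivity is immediate from $ax = ay \Rightarrow \mathbf{d}(a)x = \mathbf{d}(a)y \Rightarrow x = y$ and surjectivity from $z = \mathbf{r}(a)z = a(a^{-1}z)$ with $a^{-1}z \in \mathbf{d}(a)S$; but the two-sided-inverse route is cleaner and sets up the notation for later use.
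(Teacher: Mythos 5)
Your proof is correct and follows essentially the same route as the paper: the paper likewise verifies well-definedness (via the set identity $aS = aa^{-1}aS \subseteq aa^{-1}S \subseteq aS$, where you instead use the characterization $eS = \{x : ex = x\}$) and then exhibits $\lambda_{a^{-1}}$ as a two-sided inverse. The computations all check out.
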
  
\begin{proof}
This is well-defined because $aS = aa^{-1}S$ as the following set inclusions show
$$aS = aa^{-1}aS \subseteq aa^{-1}S \subseteq aS.$$  
Also
$\lambda_{a^{-1}} \colon \mathbf{r}(a)S \rightarrow \mathbf{d}(a)S$,
$\lambda_{a^{-1}} \lambda_{a}$ is the identity on $\mathbf{d}(a)S$,
and $\lambda_{a} \lambda_{a^{-1}}$ is the identity on $\mathbf{r}(a)S$.
Thus $\lambda_{a}$ is a bijection and   
$\lambda_{a}^{-1} = \lambda_{a^{-1}}$.
\end{proof}

We now define a function $\lambda \colon S \rightarrow \mathcal{I}(S)$
by $\lambda (a) = \lambda_{a}$.
The following theorem completes the circle of ideas begun in Section~2
and justifies the claim that inverse semigroups are the abstract
theory of partial bijections.

\begin{theorem}[Wagner-Preston representation theorem]\label{them:W-P}
Every inverse semigroup can be embedded in a symmetric inverse monoid.
\end{theorem}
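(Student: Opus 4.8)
The plan is to prove the Wagner--Preston theorem by producing an explicit embedding $\lambda \colon S \to \mathcal{I}(S)$ using the partial bijections $\lambda_a$ already constructed in Lemma~\ref{lem:WP-one}. There are three things to verify: that $\lambda$ is a homomorphism (i.e.\ $\lambda_{ab} = \lambda_a \lambda_b$ as partial bijections, meaning both equality of the composite function and, crucially, equality of domains), and that $\lambda$ is injective. Closing the circle with Section~2 then amounts to observing that $\lambda(S)$ is an inverse subsemigroup of the symmetric inverse monoid $\mathcal{I}(S)$, which follows from Lemma~\ref{lem:homomorphisms}.

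First I would check the homomorphism property. The composite $\lambda_a \lambda_b$ sends $x \mapsto b x \mapsto a b x$ wherever it is defined, so on the overlap of the relevant domains it agrees with $\lambda_{ab}$; the only real work is the bookkeeping of domains. The domain of $\lambda_{ab}$ is $\dom(ab)S = \mathbf{d}(ab)S$, while the domain of $\lambda_a \lambda_b$ consists of those $x \in \mathbf{d}(b)S$ for which $\lambda_b(x) = bx$ lands in $\dom(\lambda_a) = \mathbf{d}(a)S$. One shows these coincide by a short manipulation using $\mathbf{d}(ab) = (ab)^{-1}(ab) = b^{-1}a^{-1}ab = b^{-1}\mathbf{r}(b)\mathbf{d}(a)b$ together with Lemma~\ref{lem:inverse-of-product}, Lemma~\ref{lem:pass-through}, and the commuting of idempotents --- exactly the tools established above. (Equivalently, one can note $\dom(\lambda_a\lambda_b) = \lambda_b^{-1}(\operatorname{ran}\lambda_b \cap \dom\lambda_a)$ and compute directly.)

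Next, injectivity. If $\lambda_a = \lambda_b$ then in particular $\dom(\lambda_a) = \dom(\lambda_b)$, so $\mathbf{d}(a)S = \mathbf{d}(b)S$; hence $\mathbf{d}(a)$ and $\mathbf{d}(b)$ are $\mathscr{R}$-related idempotents and therefore equal, so $\mathbf{d}(a) = \mathbf{d}(b) =: e$. Then $e \in \dom(\lambda_a)$ since $e = e\cdot e \in eS$, and evaluating both maps at $e$ gives $ae = be$. But $a\mathbf{d}(a) = a$ and $b\mathbf{d}(b) = b$, so $a = ae = be = b$. Thus $\lambda$ is injective.

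Finally, by Lemma~\ref{lem:homomorphisms} the image $\lambda(S)$ is an inverse subsemigroup of $\mathcal{I}(S)$ (alternatively, Lemma~\ref{lem:WP-one} already shows $\lambda_a^{-1} = \lambda_{a^{-1}} \in \lambda(S)$, so the image is closed under inverses directly), and $\lambda$ is an injective homomorphism of inverse semigroups, i.e.\ an embedding. Taking $X = S$ completes the proof; I expect the domain-chasing in the homomorphism step to be the only genuinely fiddly part, and everything else to be routine given the lemmas already proved.
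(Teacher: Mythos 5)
Your proposal is correct and follows essentially the same route as the paper: verify that $\lambda_{ab}=\lambda_a\lambda_b$ by matching domains via $\mathrm{dom}(\lambda_a\lambda_b)=\lambda_b^{-1}(\mathrm{ran}\,\lambda_b\cap\mathrm{dom}\,\lambda_a)$, and then check injectivity by evaluating the equal partial maps on the domain idempotent. Your injectivity argument (first deducing $\mathbf{d}(a)=\mathbf{d}(b)$ from equality of domains, then computing $a=ae=be=b$) is a slight, equally valid variant of the paper's computation $a=ba^{-1}a=ab^{-1}b=b$.
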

\begin{proof} We prove first that $\lambda$ is a semigroup homomorphism.
To that end, we prove that $\lambda_{a} \lambda_{b} = \lambda_{ab}$.
If $e$ and $f$ are any idempotents of an inverse semigroup $S$ then
$$eS \cap fS = efS.$$
Thus
$$\mbox{dom}(\lambda_{a}) \cap \mbox{im}(\lambda_{b}) 
= a^{-1}aS \cap bb^{-1}S = a^{-1}abb^{-1}S.$$
Hence 
$$\mbox{dom}(\lambda_{a}  \lambda_{b}) 
= \lambda_{b}^{-1}(a^{-1}abb^{-1}S) = b^{-1}a^{-1}aS
= b^{-1}a^{-1}abS$$
where we use the following subset inclusions
$$b^{-1}a^{-1}aS = b^{-1}bb^{-1}a^{-1}aS = b^{-1}a^{-1}abb^{-1}S \subseteq b^{-1}a^{-1}abS \subseteq b^{-1}a^{-1}aS.$$
Thus $\mbox{dom}(\lambda_{a} \lambda_{b}) = \mbox{dom}(\lambda_{ab})$.
It is immediate from the definitions that 
$\lambda_{a} \lambda_{b}$ and $\lambda_{ab}$ 
have the same effect on elements,
and so $\lambda$ is a homomorphism.
It remains to prove that $\lambda$ is injective.
Suppose that $\lambda_{a} = \lambda_{b}$.
Then $a = ba^{-1}a$ and $b = ab^{-1}b$.
Observe that $ab^{-1}b = (ba^{-1}a)b^{-1}b = b(b^{-1}b)a^{-1}a = ba^{-1}a = a$,
where we have used the fact that idempotents commute.
It follows that $a = b$.
\end{proof}

We conclude this section by stating a deeper result that we shall not prove.
Let $S$ be a {\em finite} semigroup.
Then for each $s \in S$ there exists some power $s^{n}$ which is an idempotent.
Let $\theta \colon S \rightarrow T$ be a homomorphism between finite semigroups.
Suppose that $\theta (s)$ is an idempotent in $T$.
Then for some natural number $n \geq 1$,
the element $s^{n}$ is also an idempotent.
But $\theta (s^{n}) = \theta (s)^{n} = \theta (s)$.
Thus, for finite semigroups, if $\theta (s)$ is an idempotent then there is an idempotent $e$ 
such that $\theta (e) = \theta (s)$.
If $S$ is an inverse semigroup then every subsemigroup (notice: I did not say {\em inverse} subsemigroup on purpose)
has commuting idempotents.
We say that a semigroup $T$ {\em divides} a semigroup $S$ if there is a subsemigroup
$S'$ of $S$ such that $T$ is a homomorphic image of $S'$.
Suppose now that $S$ is a finite inverse semigroup.
Then every subsemigroup $S'$ of $S$ is a finite semigroup that has commuting idempotents.
Thus every homomorphic image of $S'$ is a finite semigroup with commuting idempotents.
We have therefore proved the following:

\begin{lemma}\label{lem:finite-commuting-idempotents}
Every semigroup that divides a finite inverse semigroup has commuting idempotents.
\end{lemma}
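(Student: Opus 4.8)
The plan is to unwind the definition of division and reduce everything to the finite Lallement-type observation recorded in the paragraph immediately preceding the statement. Suppose $T$ divides the finite inverse semigroup $S$, so that there is a subsemigroup $S'$ of $S$ together with a surjective homomorphism $\theta \colon S' \to T$. First I would record two facts about $S'$: it is finite, being a subset of $S$; and any two idempotents of $S'$ commute, since they are idempotents of $S$ and idempotents commute in an inverse semigroup by Proposition~\ref{prop:idempotents-commute}. The point to stress here is that one does \emph{not} need $S'$ itself to be an inverse subsemigroup — all that is used is that $\mathsf{E}(S') \subseteq \mathsf{E}(S)$.

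Next I would take an arbitrary idempotent $f$ of $T$ and, using surjectivity of $\theta$, write $f = \theta(a)$ with $a \in S'$. Since $S'$ is finite, the subsemigroup generated by $a$ contains an idempotent, say $a^{m}$ with $m \geq 1$. Because $f$ is idempotent we have $\theta(a^{m}) = \theta(a)^{m} = f^{m} = f$, so $f$ is the image of the \emph{idempotent} $a^{m}$ of $S'$. This is precisely the finite Lallement argument sketched before the statement, and it is the one ingredient that requires any thought, though even it is immediate once one remembers that a power of every element of a finite semigroup is idempotent.

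Finally, given two idempotents $f_{1}, f_{2}$ of $T$, I would choose idempotents $e_{1}, e_{2}$ of $S'$ with $\theta(e_{i}) = f_{i}$ as above. Then $e_{1}e_{2} = e_{2}e_{1}$ in $S'$, and applying $\theta$ gives $f_{1}f_{2} = \theta(e_{1}e_{2}) = \theta(e_{2}e_{1}) = f_{2}f_{1}$. Hence the idempotents of $T$ commute, as required. Everything apart from the lifting of idempotents through $\theta$ is routine bookkeeping, so that is where I would expect the only real content of the proof to sit.
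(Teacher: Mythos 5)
Your proof is correct and follows essentially the same route as the paper: lift each idempotent of $T$ to an idempotent of the finite subsemigroup $S'$ using the fact that some power of every element of a finite semigroup is idempotent, and then use that the idempotents of $S'$ commute because they are idempotents of the inverse semigroup $S$. The paper presents exactly this argument in the paragraph preceding the statement, including your remark that $S'$ need not be an \emph{inverse} subsemigroup.
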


The following theorem was proved by Chris Ash \cite{Ash} and is deep.
It is the converse of the above lemma.

\begin{theorem}[Ash]\label{them:ash-deep-theorem} Every finite semigroup with commuting idempotents
divides a finite inverse semigroup.
\end{theorem}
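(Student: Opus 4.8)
The plan is to recast the statement through the theory of relational morphisms and derived categories, reduce it to a purely group-theoretic assertion, and then confront that assertion with a Ramsey-type combinatorial argument, which is where the real difficulty lies. First I would reduce to semidirect products. It is known that the pseudovariety of finite semigroups generated by the finite inverse semigroups coincides with $\mathbf{Sl} \ast \mathbf{G}$, the semidirect product of the pseudovarieties of semilattices and of groups: every semidirect product $E \rtimes G$ of a finite semilattice $E$ by a finite group $G$ is itself an inverse semigroup (the idempotents are the pairs $(e,1)$, and a short calculation yields the unique inverse $({}^{g^{-1}}e,\, g^{-1})$ of $(e,g)$), while conversely every finite inverse semigroup divides such a product --- pass to a finite $E$-unitary cover and apply the finite form of McAlister's $P$-theorem. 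So it suffices to show that a finite semigroup $S$ whose idempotents commute divides some $E \rtimes G$. By Tilson's derived category theorem this holds exactly when there is a surjective relational morphism $\tau \colon S \to G$ onto a finite group whose derived category $D_{\tau}$ divides a finite semilattice; and Tilson's description of the finite categories dividing a semilattice says this means that every loop of $D_{\tau}$ (arrow from an object to itself) is idempotent and any two coterminal loops commute. Hence the theorem is equivalent to: \emph{if the idempotents of $S$ commute, then some finite group $G$ and some relational morphism $S \to G$ make every loop of $D_{\tau}$ idempotent and every pair of coterminal loops commute.}

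Next I would set up the combinatorial core. Unwinding $D_{\tau}$, the loops to be controlled are classes of products $s_{1}\cdots s_{n}$ in $S$ whose $\tau$-images contain $1$ in $G$ and which are coterminal when they share the relevant group coordinate; what is needed is an \emph{inevitability} statement, namely that a product forced to be idempotent by every group image really is idempotent, and that two such coterminal products commute. This is precisely the semilattice-local form of Rhodes's Type~II problem, with the identity $ef = fe$ for $e,f \in \mathsf{E}(S)$ as the only algebraic input. Following Ash, one encodes a hypothetical counterexample as a finite directed graph whose edges are labelled by elements of $S$ and whose consistency is witnessed by vertex labellings in finite groups, and then the task is to prove that no such \emph{bad} labelled graph exists, using only commutation of idempotents as a rewriting rule.

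The hard step, and the reason the paper declines to prove the theorem, is exactly this last point. Ash's argument is a delicate induction --- on the number of $\mathscr{J}$-classes of $S$, or on a comparable complexity parameter --- interleaved with a pigeonhole device: along any sufficiently long path in the graph some configuration of labels must repeat, and the repetition, combined with commuting idempotents, either collapses the repeated segment to an idempotent or forces two segments to commute, thereby shrinking the putative counterexample. Choosing the right graph invariants, the right induction hypothesis, and the right zigzag rewriting so that this process actually terminates is genuinely intricate, and essentially the whole proof lives here; I would expect no meaningful savings elsewhere. I would not try to bypass it with a direct construction, such as a Munn-style representation of $S$ on $\mathsf{E}(S)$, since $S$ need not be regular and its elements carry no canonical inverses with which to act.
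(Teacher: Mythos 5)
Your proposal is not a proof: it is a (largely accurate) roadmap whose final and only substantive leg is missing. The reductions you describe are correct and standard: a semidirect product $E \rtimes G$ of a finite semilattice by a finite group is inverse; every finite inverse semigroup divides such a product via a finite $E$-unitary cover and the $P$-theorem; and Tilson's derived category theorem converts ``$S$ divides a member of $\mathbf{Sl} \ast \mathbf{G}$'' into the existence of a relational morphism onto a finite group whose derived category is locally a commutative band. But you then observe, correctly, that the resulting inevitability statement is the semilattice form of Rhodes's Type~II problem and that ``essentially the whole proof lives here'' --- and you do not supply it. The paragraph about ``a delicate induction \ldots interleaved with a pigeonhole device'' names no induction parameter that is shown to decrease, gives no precise definition of the labelled graphs or of what makes one bad, states no Ramsey-type lemma, and offers no termination argument for the rewriting. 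The combinatorial core, which \emph{is} the theorem, is asserted rather than proved.

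For what it is worth, the paper does not prove this result either: it is stated as Ash's theorem with a citation to \cite{Ash} and explicitly flagged as deep, so there is no in-paper argument against which to compare your route. Your reduction does match how the result is packaged in the modern literature (it is equivalent to the statement that Ash's inevitable-graph machinery settles), so the outline is a fair account of where the difficulty lies. But as a proof attempt it should either carry out that combinatorial argument in full or be presented honestly as a reduction together with a citation, not as a proof.
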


This is a beautiful result, and a stunning piece of mathematics.
It uses finite combinatorics, in the guise of Ramsey Theory, as part of the proof.
See \cite{Lallement} for the connections between combinatorics and semigroup theory.

\section{The natural partial order}

Partial bijections can be compared with each other: we can say that one partial bijection is the restriction of another.
This leads to a partial order on the set of partial bijections of a set.
It might be thought that this partial order has to be imposed on an inverse semigroup but,
remarkably, this partial order can be defined in purely algebraic terms.
It is therefore called the natural partial order.
It will follow that every inverse semigroup is, in fact, a partially ordered semigroup
with respect to this order.

On an inverse semigroup, define $s \leq t$ iff $s = ts^{-1}s$. 
This looks one-sided because the idempotent appears on the right-hand side.
It is here that we invoke Lemma~\ref{lem:pass-through}.

\begin{lemma}\label{lem:carol} In an inverse semigroup, the following are equivalent:
\begin{enumerate}

\item $s \leq t$.

\item $s = te$ for some idempotent $e$.

\item $s = ft$ for some idempotent $f$.

\item $s = ss^{-1}t$.

\end{enumerate}
\end{lemma}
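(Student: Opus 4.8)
The plan is to prove a cycle of implications linking all four conditions, using nothing beyond the defining equations for inverses, Lemma~\ref{lem:inverse-of-product}, the commutativity of idempotents (Proposition~\ref{prop:idempotents-commute}), and—crucially—Lemma~\ref{lem:pass-through} for the step that moves an idempotent from one side of $t$ to the other. The implication $(1) \Rightarrow (2)$ is immediate: taking $e = s^{-1}s = \mathbf{d}(s)$ turns the defining relation $s = ts^{-1}s$ into $s = te$. The real point, and the reason the author flags Lemma~\ref{lem:pass-through} here, is that the ostensibly one-sided condition is genuinely two-sided: if $s = te$ for an idempotent $e$, then by part~(2) of Lemma~\ref{lem:pass-through} we have $te = gt$ for some idempotent $g$, which is $(3)$; symmetrically, part~(1) of that lemma gives $(3) \Rightarrow (2)$. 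So $(2) \Leftrightarrow (3)$, and it remains to thread these through $(1)$ and $(4)$.

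Next I would close the loop by direct computation. For $(3) \Rightarrow (4)$: from $s = ft$ we get $s^{-1} = t^{-1}f$ by Lemma~\ref{lem:inverse-of-product}, so $ss^{-1} = ft t^{-1} f$; since $f$ and $tt^{-1}$ commute this collapses to $(tt^{-1})f$, and then $ss^{-1}t = (tt^{-1})ft = f(tt^{-1})t = ft = s$, which is $(4)$. For $(4) \Rightarrow (1)$: from $s = ss^{-1}t$, substitute $s^{-1} = (ss^{-1}t)^{-1} = t^{-1}ss^{-1}$ into $ts^{-1}s$, and again use that $tt^{-1}$ and $ss^{-1}$ commute to reduce everything back to $s$, giving $s = ts^{-1}s$. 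Combined with the first paragraph, the chain $(1) \Rightarrow (2) \Leftrightarrow (3) \Rightarrow (4) \Rightarrow (1)$ is complete, so all four statements are equivalent. (One could equally run the cycle as $(1) \Rightarrow (4) \Rightarrow (3) \Rightarrow (2) \Rightarrow (1)$ with the same bookkeeping.)

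I expect no conceptual obstacle—the whole content lies in the algebra of the $\mathbf{d}$- and $\mathbf{r}$-idempotents, and each individual rewrite is a one-line manipulation. The only point requiring care is keeping track of which products are idempotents, so that Proposition~\ref{prop:idempotents-commute} is applied exactly where it is licensed and factors are not reordered illegitimately; I would therefore flag each use of Lemma~\ref{lem:inverse-of-product} and of commuting idempotents explicitly rather than silently.
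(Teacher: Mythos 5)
Your proof is correct and follows essentially the same route as the paper: the cycle $(1)\Rightarrow(2)\Rightarrow(3)\Rightarrow(4)\Rightarrow(1)$, with Lemma~\ref{lem:pass-through} doing the work of moving the idempotent across $t$ and commuting idempotents handling the computations. The only (immaterial) difference is in $(4)\Rightarrow(1)$, where the paper invokes Lemma~\ref{lem:pass-through} once more to write $s=ti$ and you instead substitute $s^{-1}=t^{-1}ss^{-1}$ directly; both verifications are one line.
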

\begin{proof} 

(1)$\Rightarrow$(2). This is immediate.

(2)$\Rightarrow$(3). This is immediate by Lemma~\ref{lem:pass-through}. 

(3)$\Rightarrow$(4). Suppose that $s = ft$.
Then $fs = s$ and so $fss^{-1} = ss^{-1}$.
It follows that $s = ss^{-1}t$.

(4)$\Rightarrow$(1). Suppose that $s = ss^{-1}t$.
By Lemma~\ref{lem:pass-through}, we know that $s = ti$ for some idempotent $i$. 
Observe that $si = s$ and so $s^{-1}si = s^{-1}s$.
It readily follows that $s = ts^{-1}s$ giving $s \leq t$.
\end{proof}

Let $S$ be a semigroup equipped with a partial order $\leq$.
We say that $S$ is a {\em partially ordered semigroup}
if $a \leq b$ and $c \leq d$ imply that $ac \leq bd$. 
We may now establish the main properties of the relation we have defined.

\begin{lemma}\label{lem:po} Let $S$ be an inverse semigroup.
\begin{enumerate}

\item The relation $\leq$ is a partial order.

\item If $s \leq t$ then $s^{-1} \leq t^{-1}$.

\item The semigroup $S$ is partially ordered with respect to $\leq$.

\item If $e$ and $f$ are idempotents then $e \leq f$ if and only if $e = ef = fe$.

\item $s \leq e$, where $e$ is an idempotent, implies that $s$ is an idempotent.

\item Let $T$ be an inverse semigroup and let $\theta \colon S \rightarrow T$ be a homomorphism.
Then $a \leq b$ in $S$ implies that $\theta (a) \leq \theta (b)$ in $T$.

\end{enumerate}
\end{lemma}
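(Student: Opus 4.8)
The plan is to reduce all six clauses to Lemma~\ref{lem:carol}, together with two facts already in hand: a product of two idempotents is an idempotent, and idempotents commute (Proposition~\ref{prop:idempotents-commute}). With these, five of the six parts become essentially one-liners, and the only part needing genuine thought is antisymmetry inside part (1).

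I would begin with a preliminary observation that makes part (1) painless: if $s \leq t$ then in fact $s = t\,\mathbf{d}(s)$ and $\mathbf{d}(s) \leq \mathbf{d}(t)$. To see this, write $s = te$ with $e$ an idempotent by Lemma~\ref{lem:carol}(2); then, computing $\mathbf{d}(s) = s^{-1}s$ with $s^{-1} = et^{-1}$ (Lemma~\ref{lem:inverse-of-product} and Lemma~\ref{lem:inverse-of-idempotent}) and using commutativity of idempotents, one gets $\mathbf{d}(s) = \mathbf{d}(t)e$, from which both claims follow at once. Given this, part (1) is routine: reflexivity is $s = ss^{-1}s$; transitivity uses Lemma~\ref{lem:carol}(2), since $s = te$ and $t = uf$ give $s = u(fe)$ with $fe$ idempotent; and antisymmetry follows because $s \leq t$ and $t \leq s$ force $\mathbf{d}(s) \leq \mathbf{d}(t) \leq \mathbf{d}(s)$, hence $\mathbf{d}(s) = \mathbf{d}(t)$ by antisymmetry of the order on idempotents recorded just before Lemma~\ref{lem:characterization-clifford}, and then $s = t\,\mathbf{d}(s) = t\,\mathbf{d}(t) = t$.

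The remaining clauses I would dispatch as follows. For (2): from $s = ss^{-1}t$ (Lemma~\ref{lem:carol}(4)) take inverses to get $s^{-1} = t^{-1}(ss^{-1})$, which has an idempotent on the right, so $s^{-1} \leq t^{-1}$ by Lemma~\ref{lem:carol}(2). For (3): write $a = be_{1}$ and $c = e_{2}d$ using Lemma~\ref{lem:carol}(2) and (3); then $ac = b(e_{1}e_{2})d$, and Lemma~\ref{lem:pass-through}(1) rewrites $(e_{1}e_{2})d$ as $d$ times an idempotent, so $ac = (bd)g$ with $g$ idempotent and therefore $ac \leq bd$. For (4): specialising the defining relation $s \leq t \Leftrightarrow s = ts^{-1}s$ to idempotents and using $e^{-1} = e$ gives $e \leq f \Leftrightarrow e = fe = ef$, the last equality by commutativity; this simultaneously reconciles the natural partial order with the ad hoc order on $\mathsf{E}(S)$ used earlier. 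For (5): $s \leq e$ gives $s = ef$ for some idempotent $f$ by Lemma~\ref{lem:carol}(2), and a product of two idempotents is an idempotent. For (6): $a = be$ with $e$ idempotent yields $\theta(a) = \theta(b)\theta(e)$ with $\theta(e)$ idempotent by Lemma~\ref{lem:hm-inverse}(2), so $\theta(a) \leq \theta(b)$ in $T$ by Lemma~\ref{lem:carol}(2).

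The main obstacle, such as it is, is antisymmetry in part (1): since there is no cancellation available, one cannot simply strip the idempotent witnesses off $s = te$ and $t = sf$. The remedy is to pass to the canonical witness $\mathbf{d}(s)$ and appeal to antisymmetry of the already-established order on the semilattice of idempotents; symmetrically one could instead compare $\mathbf{r}(s)$ and $\mathbf{r}(t)$, and either route works.
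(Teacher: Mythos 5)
Your proposal is correct and follows essentially the same route as the paper: everything is reduced to Lemma~\ref{lem:carol}, commutativity of idempotents, and Lemma~\ref{lem:pass-through}, with the paper leaving most clauses as ``immediate'' and supplying for antisymmetry a direct substitution ($b\mathbf{d}(a) = a\mathbf{d}(b)\mathbf{d}(a) = a\mathbf{d}(b) = b$, hence $a = b\mathbf{d}(a) = b$) where you instead pass through $\mathbf{d}(s)=\mathbf{d}(t)$ --- an equivalent amount of work. All the details you fill in check out.
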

\begin{proof} (1) Observe that $a \leq a$ since $a = a\mathbf{d}(a)$.
Suppose that $a \leq b$ and $b \leq a$.
Then $a = b\mathbf{d}(a)$ and $b = a\mathbf{d}(b)$.
Then $b\mathbf{d}(a) = b$ from which it follows that $a = b$.
Suppose that $a \leq b$ and $b \leq c$.
Using Lemma~\ref{lem:carol}, it is easy to show that $a \leq c$.

(2) This is immediate using the definition and Lemma~\ref{lem:carol}.

(3) This is immediate using the definition, Lemma~\ref{lem:carol} and Lemma~\ref{lem:pass-through}.

(4) Straightforward.

(5) This is immediate from the definition and the fact that the product of idempotents is an idempotent.

(6) This is immediate from the fact that the natural partial order is algebraically defined, Lemma~\ref{lem:carol} and the fact
that idempotents are mapped to idempotents by homomorphisms.
\end{proof}

Part (1) of Lemma~\ref{lem:po} leads us to dub $\leq$ the {\em natural partial order} on $S$.
If a partial order is studied in relation to an inverse semigroup, then it is always this one.

Part (2) of Lemma~\ref{lem:po}  needs to be highlighted since readers familiar with lattice-ordered groups
might have been expecting something different.

Part (4) of Lemma~\ref{lem:po} tells us that when the natural partial order is restricted to the semilattice of idempotents
we get back the usual ordering on the idempotents that we have already defined.

\begin{example}{\em The natural partial order in the symmetric inverse monoids $\mathcal{I}(X)$
is precisely the usual restriction order on partial functions.
Observe that by the Wagner-Preston representation theorem Theorem~\ref{them:W-P},
we have that $a \leq b$ if and only if $\lambda_{a} \subseteq \lambda_{b}$.}
\end{example}

The natural partial order is used to define a class of inverse monoids.
We say that an inverse monoid $S$ is {\em factorizable} if for each element $s \in S$ there exists a unit $g$
such that $s \leq g$,

\begin{example}{\em The symmetric inverse monoids $\mathcal{I}(X)$ are factorizable if $X$ is finite.
Let $f \in \mathcal{I}(X)$.
Then not only do $\mbox{dom}(f)$ and $\mbox{ran} (f)$ have the same cardinality
but so too do the sets $X \setminus \mbox{dom}(f)$ and  $X \setminus \mbox{ran}(f)$.
Choose any bijection $g$ from $X \setminus \mbox{dom}(f)$ to  $X \setminus \mbox{ran}(f)$.
Then $f \cup g$ is a bijection and so is an element of the group of units of $\mathcal{I}(X)$.
However, there are many choices for $g$ and so there are many units that extend $f$.
It is precisely this lack of uniqueness in the unit that implies that we cannot reduce
inverse monoid theory to group theory.}
\end{example}

Our next result tells us that the partial order encodes how far from being a group an inverse semigroup is.

\begin{lemma}\label{lem:group-po} 
An inverse semigroup is a group if and only if the natural partial order is the equality relation.
\end{lemma}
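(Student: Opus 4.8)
The plan is to prove both directions of the biconditional, using Lemma~\ref{lem:unique-idempotents}, which already characterizes groups among inverse semigroups as those with a unique idempotent.

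First I would handle the forward direction. Suppose $S$ is a group. By part (1) of Lemma~\ref{lem:unique-idempotents}, $S$ has a unique idempotent, namely the identity $1$. Now suppose $s \leq t$ in $S$. By the definition of the natural partial order, $s = ts^{-1}s$, and $s^{-1}s = 1$ since $1$ is the only idempotent. Hence $s = t1 = t$, so $\leq$ is contained in the equality relation; since $\leq$ is reflexive (it is a partial order by part (1) of Lemma~\ref{lem:po}), it equals the equality relation.

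For the converse, suppose the natural partial order on $S$ is the equality relation. By Lemma~\ref{lem:unique-idempotents}(2), it suffices to show $S$ has a unique idempotent. Let $e$ and $f$ be idempotents. Their product $ef$ is an idempotent (this was established in the proof of Proposition~\ref{prop:idempotents-commute}), and $ef \leq e$ by part (4) of Lemma~\ref{lem:po}, since $ef$ is an idempotent with $e(ef) = ef = (ef)e$ using that idempotents commute. Because $\leq$ is equality, $ef = e$; symmetrically $fe = f$, and since idempotents commute, $e = ef = fe = f$. Thus $S$ has exactly one idempotent (note $S$ is nonempty, being a semigroup, so at least one idempotent exists, e.g. $\mathbf{d}(s)$ for any $s$), and $S$ is a group by Lemma~\ref{lem:unique-idempotents}(2).

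There is no real obstacle here; the result is essentially a reformulation of Lemma~\ref{lem:unique-idempotents}. The only point needing a little care is the converse: one must observe that collapsing the natural partial order forces all idempotents to coincide, for which the cleanest route is to compare $ef$ with $e$ via part (4) of Lemma~\ref{lem:po} rather than trying to argue directly from the one-sided definition $s = ts^{-1}s$.
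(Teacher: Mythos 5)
Your proof is correct and follows essentially the same route as the paper: both directions reduce to the unique-idempotent characterization of groups (Lemma~\ref{lem:unique-idempotents}), and the key step in the converse is exactly the paper's observation that $ef \leq e,f$ forces $e=f$ when the order collapses to equality. You merely spell out the forward direction, which the paper dismisses as immediate.
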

\begin{proof} Let $S$ be an inverse semigroup in which the natural partial order is equality.
If $e$ and $f$ are any idempotents then $ef \leq e,f$ and so $e = f$.
It follows that there is exactly one idempotent. 
We deduce that $S$ is a group by Lemma~\ref{lem:unique-idempotents}.
The proof of the converse is immediate.
\end{proof}

The natural partial order is very important in studying inverse semigroups.
For this reason, it is appropriate here to introduce some terminology and notation from the theory of 
partially ordered sets.

In any partially ordered set $(X,\leq)$, a subset $Y \subseteq X$ is said to be an {\em order ideal}
if $x \leq y \in Y$ implies that $x \in Y$.
More generally, if $Y$ is any subset of $X$ then define
$$Y^{\downarrow} = \{x \in X \colon x \leq y \text{ for some } y \in Y \}.$$
This is the {\em order ideal generated by $Y$}.
If $y \in X$ then we denote $\{y \}^{\downarrow}$ by $y^{\downarrow}$
and call it the {\em principal order ideal generated by $y$}.
If $Y$ is any subset of $X$, define 
$$Y^{\uparrow} = \{x \in X \colon x \geq y \text{ for some } y \in Y \}.$$
If $Y = \{ y\}$ we denote $\{ y\}^{\uparrow}$ by $y^{\uparrow}$.
If $P$ and $Q$ are partially ordered sets then a function $\theta \colon X \rightarrow Y$ is said
to be {\em isotone} if $x \leq y$ in $X$ implies that $\theta (x) \leq \theta (y)$ in $Y$.
An {\em order-isomorphism} between two partially ordered sets is a bijective isotone function 
whose inverse is also isotone.

Part (5) of Lemma~\ref{lem:po} tells us that the semilattice of idempotents of an inverse semigroup $S$ is an order ideal in $S$
with respect to the natural partial order.

Part (6) of Lemma~\ref{lem:po} tells us that homomorphisms between inverse semigroups are isotone.

Idempotents and non-idempotents are closely related in an inverse semigroup.

\begin{lemma}\label{lem:trump} Let $S$ be an inverse semigroup.
Then there is an order-isomorphism between the set $a^{\downarrow}$ and the set $\mathbf{d}(a)^{\downarrow}$ (and, likewise, with the set $\mathbf{r}(a)^{\downarrow}$).
\end{lemma}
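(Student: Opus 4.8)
The plan is to exhibit an explicit order-isomorphism between $a^{\downarrow}$ and $\dom(a)^{\downarrow}$ and check it works. The natural candidate is the map $\phi \colon a^{\downarrow} \to \dom(a)^{\downarrow}$ sending $s \mapsto s^{-1}s = \dom(s)$, with candidate inverse $\psi \colon \dom(a)^{\downarrow} \to a^{\downarrow}$ sending an idempotent $e \leq \dom(a)$ to $ae$. The dual statement for $\ran(a)^{\downarrow}$ is handled symmetrically using $s \mapsto ss^{-1}$ and $e \mapsto ea$.

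First I would check the maps are well-defined. If $s \leq a$, then $s^{-1} \leq a^{-1}$ by part (2) of Lemma~\ref{lem:po}, so $\dom(s) = s^{-1}s \leq a^{-1}a = \dom(a)$ by part (3); hence $\phi$ lands in $\dom(a)^{\downarrow}$. Conversely, if $e$ is an idempotent with $e \leq \dom(a)$, then $ae \leq a\dom(a) = a$ by part (3) of Lemma~\ref{lem:po} (multiplying $e \leq \dom(a)$ on the left by $a \leq a$), so $\psi$ lands in $a^{\downarrow}$; note $ae$ need not be idempotent, which is exactly the point. Both maps are isotone: $\phi$ by parts (2) and (3) of Lemma~\ref{lem:po}, and $\psi$ because multiplication on the left by $a$ is isotone, again by part (3).

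Next I would verify the two composites are identities. For $e \leq \dom(a)$ an idempotent, $\phi(\psi(e)) = (ae)^{-1}(ae) = e a^{-1} a e = e \dom(a) e = e$, using Lemma~\ref{lem:inverse-of-product}, the fact that idempotents commute (Proposition~\ref{prop:idempotents-commute}), and $e \leq \dom(a)$ meaning $e\dom(a) = e$. For the other composite, take $s \leq a$; I must show $a(s^{-1}s) = s$. Since $s \leq a$ means precisely $s = as^{-1}s$ by the definition of the natural partial order (before Lemma~\ref{lem:carol}), this is immediate: $\psi(\phi(s)) = a\dom(s) = as^{-1}s = s$. So $\phi$ and $\psi$ are mutually inverse bijections, both isotone, hence an order-isomorphism.

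The only mild subtlety — and the closest thing to an obstacle — is being careful that $\psi(e) = ae$ really lands in $a^{\downarrow}$ and that $\phi$ really lands among \emph{idempotents} below $\dom(a)$; both are settled by the observation above that $ae \leq a$ and that $s^{-1}s$ is always idempotent. Everything else is a routine application of the facts that inverses reverse products, that idempotents commute, that the semigroup is partially ordered (part (3) of Lemma~\ref{lem:po}), and the very definition of $\leq$. The $\ran$ version is the mirror image, using right multiplication by $a$ and the map $s \mapsto ss^{-1}$, and needs no new ideas.
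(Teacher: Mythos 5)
Your proof is correct and follows essentially the same route as the paper: the map $b \mapsto \mathbf{d}(b)$ from $a^{\downarrow}$ to $\mathbf{d}(a)^{\downarrow}$ with inverse $e \mapsto ae$, checked to be mutually inverse and isotone. You have merely written out the "routine" verifications that the paper leaves to the reader, and those verifications are accurate.
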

\begin{proof} Define a map from $a^{\downarrow}$ to $\mathbf{d}(a)^{\downarrow}$
by $b \mapsto \mathbf{d}(b)$.
This is well-defined since if $b \leq a$ then $\mathbf{d}(b) \leq \mathbf{d}(a)$.
It is isotone since if $b_{2} \leq b_{1} \leq a$ then $\mathbf{d}(b_{2}) \leq \mathbf{d}(b_{1}) \leq \mathbf{d}(a)$.
From the definition of the natural partial order,
it is immediate that this map is injective.
We define a map from  $\mathbf{d}(a)^{\downarrow}$ to  $a^{\downarrow}$ 
by $e \mapsto ae$.
It is routine to check that this is well-defined and isotone.
These two maps are mutually inverse.
It follows that the partially ordered sets are order-isomorphic.
\end{proof}

Looking below an idempotent we see only idempotents, but what happens if we look up?
The answer is that we don't necessarily see only idempotents.
The symmetric inverse monoid is an example.
This leads us to the following definition.
An inverse semigroup $S$ is said to be {\em $E$-unitary} if $e \leq s$, where $e$ is an idempotent, implies that $s$ is an idempotent.
An inverse semigroup with zero $S$ is said to be {\em $E^{\ast}$-unitary}\footnote{The term {\em $0$-$E$-unitary} is also used.} if 
$0 \neq e \leq s$ where $e$ is an idempotent implies that $s$ is an idempotent.

\begin{lemma} Let $S$ be an inverse semigroup with zero.
Then it is $E$-unitary if and only if it is a meet semilattice.
\end{lemma}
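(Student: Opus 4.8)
The plan is to exploit the fact that the zero element lies below everything in the natural partial order, which makes this equivalence essentially a triviality once unpacked. First I would record the observation that in any inverse semigroup with zero $S$, one has $0 \leq s$ for every $s \in S$: indeed $0$ is an idempotent, so $0^{-1} = 0$ by Lemma~\ref{lem:inverse-of-idempotent}, and hence $s \cdot 0^{-1}0 = s0 = 0$, which is precisely the defining condition $0 \leq s$.

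For the forward implication, I would assume $S$ is $E$-unitary and fix an arbitrary $s \in S$. Since $0 \leq s$ and $0$ is an idempotent, the definition of $E$-unitary forces $s$ to be an idempotent. As $s$ was arbitrary, $S = \mathsf{E}(S)$, so by Lemma~\ref{lem:all-idempotents} the semigroup $S$ is a meet semilattice under its multiplication. For the converse, if $S$ is a meet semilattice then by Lemma~\ref{lem:all-idempotents} every element of $S$ is already an idempotent; so whenever $e \leq s$ with $e$ an idempotent, $s$ is (vacuously) an idempotent, and $S$ is $E$-unitary.

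There is no genuine obstacle here; the only thing worth flagging is the phenomenon itself — the presence of an actual zero collapses $E$-unitarity to the degenerate case where $S$ consists entirely of idempotents. This is exactly the reason that, for inverse semigroups with zero, the correct analogue is the notion of $E^{\ast}$-unitary introduced just above, in which the hypothesis $e \leq s$ explicitly excludes $e = 0$. I would likely append a one-line remark making this point, since otherwise a reader might wonder why $E^{\ast}$-unitary was defined at all.
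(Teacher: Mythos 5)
Your proof is correct and follows exactly the paper's argument: the zero lies below every element in the natural partial order, so $E$-unitarity forces every element to be idempotent, and the converse is immediate. The only difference is that you spell out the verification that $0 \leq s$ and the appeal to Lemma~\ref{lem:all-idempotents}, both of which the paper leaves implicit.
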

\begin{proof} Suppose that $S$ is $E$-unitary.
If $a \in S$ then $0 \leq a$.
But $0$ is an idempotent, and so $a$ is an idempotent.
The proof of the converse is imediate.
\end{proof}

The above lemma explains why we have made two definitions above, depending on whether the inverse semigroup does not or does have a zero.
This bifurcation between inverse semigroups-without-zero and inverse semigroups-with-zero permeates the subject.

There was a time when the study of $E$-unitary inverse semigroups was the centre of attention.
The two papers by Don McAlister \cite{M1, M2} are probably the most significant in that they describe the structure of $E$-unitary
inverse semigroups in terms of simpler building blocks and relate them to arbitrary inverse semigroups. 
The theory of $E^{\ast}$-unitary inverse semigroups has also been pursued.
There are both interesting examples of such inverse semigroups and the analogue of McAlister's theory can be developed in the case of the so-called
{\em strongly $E^{\ast}$-unitary inverse semigroups}.
See \cite{BFFG, Lawson1999, Lawson2002}, for example.

The following lemma tells us that in an inverse semigroup,
there is a relationship between two elements that have a common upper bound.

\begin{lemma}\label{lem:bounded-above}
Let $S$ be an inverse semigroup
and suppose that 
$a,b \leq c$.
then $a^{-1}b$ and $ab^{-1}$ are idempotents.
\end{lemma}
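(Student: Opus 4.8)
The plan is to rewrite each of $a^{-1}b$ and $ab^{-1}$ as a product of idempotents, since by Proposition~\ref{prop:idempotents-commute} (and the fact, established in its proof, that a product of two idempotents is again an idempotent) any such product is an idempotent. The raw material is Lemma~\ref{lem:carol}, which repackages the hypotheses $a \leq c$ and $b \leq c$ in several convenient forms.

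First I would record two descriptions of each of $a$ and $b$. From the definition of $\leq$ we get $a = c\mathbf{d}(a)$ and $b = c\mathbf{d}(b)$, while part (4) of Lemma~\ref{lem:carol} gives $a = \mathbf{r}(a)c$ and $b = \mathbf{r}(b)c$. Taking inverses using Lemma~\ref{lem:inverse-of-product}, together with Lemma~\ref{lem:inverse-of-idempotent} (an idempotent is its own inverse), yields $a^{-1} = \mathbf{d}(a)c^{-1}$ and $b^{-1} = c^{-1}\mathbf{r}(b)$.

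Now I would simply multiply, choosing the two halves so that the $c$ and the $c^{-1}$ end up adjacent. For $a^{-1}b$, substitute $a^{-1} = \mathbf{d}(a)c^{-1}$ and $b = c\mathbf{d}(b)$:
$$a^{-1}b = \mathbf{d}(a)\,(c^{-1}c)\,\mathbf{d}(b) = \mathbf{d}(a)\,\mathbf{d}(c)\,\mathbf{d}(b),$$
a product of three idempotents, hence an idempotent. Symmetrically, substituting $a = \mathbf{r}(a)c$ and $b^{-1} = c^{-1}\mathbf{r}(b)$ gives
$$ab^{-1} = \mathbf{r}(a)\,(cc^{-1})\,\mathbf{r}(b) = \mathbf{r}(a)\,\mathbf{r}(c)\,\mathbf{r}(b),$$
again an idempotent.

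There is no real obstacle here beyond bookkeeping: the one point requiring a moment's care is selecting the correct pair of expressions for $a$ and $b$ out of Lemma~\ref{lem:carol} --- one with its witnessing idempotent on the left and one on the right --- so that the middle factor collapses to $\mathbf{d}(c) = c^{-1}c$ (respectively $\mathbf{r}(c) = cc^{-1}$) rather than to something still involving $c$ itself. After that it is only Lemma~\ref{lem:inverse-of-product} and Proposition~\ref{prop:idempotents-commute}.
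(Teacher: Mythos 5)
Your proof is correct, but it follows a different route from the paper's. The paper argues entirely at the level of the natural partial order: by part (2) of Lemma~\ref{lem:po} it gets $a^{-1},b^{-1} \leq c^{-1}$, by part (3) it multiplies to get $a^{-1}b \leq c^{-1}c$ and $ab^{-1} \leq cc^{-1}$, and then invokes part (5) (anything below an idempotent is an idempotent) to conclude. You instead unfold everything into an explicit computation: using Lemma~\ref{lem:carol} to write $a = c\mathbf{d}(a)$, $b = c\mathbf{d}(b)$, $a = \mathbf{r}(a)c$, $b = \mathbf{r}(b)c$, and Lemma~\ref{lem:inverse-of-product}, you exhibit $a^{-1}b = \mathbf{d}(a)\mathbf{d}(c)\mathbf{d}(b)$ and $ab^{-1} = \mathbf{r}(a)\mathbf{r}(c)\mathbf{r}(b)$ as products of idempotents. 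Both are sound; the paper's version is shorter because it treats the order-theoretic machinery of Lemma~\ref{lem:po} as a black box, whereas yours is more self-contained and even yields a sharper conclusion, namely an explicit formula for the idempotents $a^{-1}b$ and $ab^{-1}$ (which, since $\mathbf{d}(a) \leq \mathbf{d}(c)$, simplifies further to $a^{-1}b = \mathbf{d}(a)\mathbf{d}(b)$). Your closing remark about choosing the two expansions so that $c^{-1}$ and $c$ land adjacent is exactly the right point of care.
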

\begin{proof} By part (2) of Lemma~\ref{lem:po}, we have that $a^{-1},b^{-1} \leq c^{-1}$.
By part (3) of Lemma~\ref{lem:po}, we have that $a^{-1}b \leq c^{-1}c$ and $ab^{-1} \leq cc^{-1}$.
It follows by part (5) of Lemma~\ref{lem:po}, that both $a^{-1}b$ and $ab^{-1}$ are idempotents.
\end{proof}

Lemma~\ref{lem:bounded-above} leads us to the following definition.
Let $a,b \in S$, an inverse semigroup.
Define $a \sim b$ if and only if $a^{-1}b,ab^{-1} \in \mathsf{E}(S)$.
This is called the {\em compatibility relation}.
If $a \sim b$ and if their least upper bound exists, we denote it by $a \vee b$
and call it the {\em join} of $a$ and $b$.
A subset of an inverse semigroup is said to be {\em compatible} if the elements are pairwise compatible.
For example, for each element $a \in S$ in an inverse semigroup,
the set $a^{\downarrow}$ is compatible.

In an inverse semigroup with zero, there is a refinement of the compatibility relation which is important.
In such a semigroup, a pair of idempotents $e$ and $f$ is said to be {\em orthogonal}, denoted by $e \perp f$,
if and only if $ef = 0$.
We define an arbitrary pair of elements $a$ and $b$ to be {\em orthogonal}, denoted by $a \perp b$,
precisely when $\mathbf{d}(a) \perp \mathbf{d}(b)$ and $\mathbf{r}(a) \perp \mathbf{r}(b)$
You can easily check that $a \perp b$ precisely when  $a^{-1}b = 0 = ab^{-1}$.
We call $\perp$ the {\em orthogonality relation}. 
Observe that $a \perp b$ implies that $a \sim b$.
If an orthogonal subset has a least upper bound then it is said to have an {\em orthogonal join}.

\begin{example}{\em Let $f,g \in \mathcal{I}(Y)$.
Then $f \sim g$ if and only if $f \cup g$ is a partial bijection,
and $f \perp g$ if and only if the domain of $f$ is disjoint from the domain of $g$
and the range of $f$ is disjoint from the range of $g$.}
\end{example}

\begin{lemma}\label{lem:gill} Let $S$ be an inverse semigroup with zero.
If  $a \perp b$ and $c \in S$ then $ac \perp bc$ and $ca \perp cb$.
\end{lemma}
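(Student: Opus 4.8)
The plan is to unwind each of the two orthogonality assertions into its pair of defining equations and, using Lemma~\ref{lem:inverse-of-product} to distribute inverses over products, reduce everything to the hypotheses $a^{-1}b = 0 = ab^{-1}$.

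First I would handle $ac \perp bc$. Since $(ac)^{-1} = c^{-1}a^{-1}$ by Lemma~\ref{lem:inverse-of-product}, one gets $(ac)^{-1}(bc) = c^{-1}(a^{-1}b)c = c^{-1}0c = 0$ straight away. The other equation, $(ac)(bc)^{-1} = a(cc^{-1})b^{-1}$, is the one place where a small idea is needed, because the idempotent $cc^{-1}$ is wedged between $a$ and $b^{-1}$ and cannot simply be cancelled. Here I would use Lemma~\ref{lem:carol}: since $cc^{-1}$ is an idempotent, $a(cc^{-1}) \leq a$; then part (3) of Lemma~\ref{lem:po} (the natural partial order makes $S$ a partially ordered semigroup) gives $a(cc^{-1})b^{-1} \leq ab^{-1} = 0$, and anything below the idempotent $0$ is itself $0$ (again Lemma~\ref{lem:carol}). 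Hence $(ac)(bc)^{-1} = 0$ and so $ac \perp bc$.

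For $ca \perp cb$ the computation is the mirror image. This time $(ca)(cb)^{-1} = c(ab^{-1})c^{-1} = 0$ is immediate, while $(ca)^{-1}(cb) = a^{-1}(c^{-1}c)b$ is dealt with by observing $(c^{-1}c)b \leq b$ (Lemma~\ref{lem:carol}, now with an idempotent on the left), whence $a^{-1}(c^{-1}c)b \leq a^{-1}b = 0$ and therefore equals $0$. This completes both claims.

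The only genuine obstacle is the ``stuck idempotent'' term $a(cc^{-1})b^{-1}$; an equally short alternative there is to apply Lemma~\ref{lem:pass-through} to move $cc^{-1}$ past $a$, converting it into an idempotent that left-multiplies $ab^{-1}$, which then annihilates it. Either route makes the whole statement a routine verification.
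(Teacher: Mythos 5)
Your proof is correct and is essentially the paper's argument: both amount to a direct verification of the two defining conditions of orthogonality for the products, the paper phrasing it as the routine check that $\mathbf{d}(ac)\mathbf{d}(bc)=0$ (with the range condition following by symmetry), and you phrasing it via the equivalent characterization $a^{-1}b=0=ab^{-1}$. The only difference is the device used to dispose of the sandwiched idempotent $cc^{-1}$: the paper's computation rearranges commuting idempotents inside the conjugate $c^{-1}(\cdots)c$, whereas you invoke monotonicity of multiplication for the natural partial order (or, alternatively, Lemma~\ref{lem:pass-through}); both work equally well.
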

\begin{proof} We prove that $ac \perp bc$; the proof of the other case is similar.
It is routine to check that  $\mathbf{d}(ac)\mathbf{d}(bc) = 0$.
The result now follows by symmetry.
\end{proof}

The compatibility relation is reflexive and symmetric but not, in general, transitive, as the symmetric inverse monoid shows.
However, we do have an exact criterion for when the compatibility relation is transitive.

\begin{proposition}\label{prop:Eunitary} 
The compatibility relation is transitive if and only if the semigroup is $E$-unitary.
\end{proposition}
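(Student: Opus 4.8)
The plan is to prove both implications by hand, working only with the algebraic descriptions of $\le$ (Lemma~\ref{lem:carol}) and of $\sim$, and relying on the commutativity of idempotents (Proposition~\ref{prop:idempotents-commute}) together with the conjugation lemma (Lemma~\ref{lem:conjugation-of-idpts}).

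For the direction ``$E$-unitary $\Rightarrow$ transitive'', suppose $a \sim b$ and $b \sim c$, so that $a^{-1}b$, $ab^{-1}$, $b^{-1}c$, $bc^{-1}$ are all idempotents; I must show $a^{-1}c$ and $ac^{-1}$ are idempotents. The strategy is to produce, for each of these, an idempotent lying beneath it in the natural partial order and then appeal to $E$-unitarity. The witness I would use for $a^{-1}c$ is $e := (a^{-1}b)(b^{-1}c)$, which is a product of idempotents, hence an idempotent; rewriting $e = a^{-1}\mathbf{r}(b)c$ and inserting $\mathbf{r}(c)$ (permissible since $\mathbf{r}(c)c = c$ and idempotents commute) yields $e = (a^{-1}c)\bigl(c^{-1}\mathbf{r}(b)c\bigr)$, in which $c^{-1}\mathbf{r}(b)c$ is an idempotent by Lemma~\ref{lem:conjugation-of-idpts}; hence $e \le a^{-1}c$ by Lemma~\ref{lem:carol}(2), and $E$-unitarity forces $a^{-1}c \in \mathsf{E}(S)$. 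The mirror-image computation with $e' := (ab^{-1})(bc^{-1}) = a\,\mathbf{d}(b)\,c^{-1} = (ac^{-1})\bigl(c\,\mathbf{d}(b)\,c^{-1}\bigr) \le ac^{-1}$ gives $ac^{-1} \in \mathsf{E}(S)$. Thus $a \sim c$.

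For the converse, suppose $\sim$ is transitive and $e \le s$ with $e$ an idempotent; the goal is that $s$ be an idempotent. Since $e^{-1} = e$ (Lemma~\ref{lem:inverse-of-idempotent}), the inequality $e \le s$ says exactly $e = se$, and from this $se^{-1} = e \in \mathsf{E}(S)$ while $s^{-1}e = \mathbf{d}(s)e \in \mathsf{E}(S)$; that is, $s \sim e$. Also $e \sim \mathbf{d}(s)$, because a product of two idempotents is an idempotent. Transitivity now gives $s \sim \mathbf{d}(s)$, so in particular $s = s\,\mathbf{d}(s) = s\,\mathbf{d}(s)^{-1} \in \mathsf{E}(S)$, and $S$ is $E$-unitary.

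The one genuinely delicate point is the first half: one has to hit on $(a^{-1}b)(b^{-1}c)$ as the correct lower bound for $a^{-1}c$ and then verify carefully that it really is a lower bound, i.e.\ exhibit the explicit idempotent $f$ with $e = (a^{-1}c)f$ --- this is where Lemma~\ref{lem:conjugation-of-idpts} and the usual shuffling of $\mathbf{d}$ and $\mathbf{r}$ past idempotents do the work. The converse needs no such cleverness once one notices that $s \sim \mathbf{d}(s)$ by itself already says that $s$ is an idempotent.
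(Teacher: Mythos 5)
Your proof is correct and follows essentially the same route as the paper: the converse direction uses exactly the chain $s \sim e$, $e \sim \mathbf{d}(s)$, hence $s \sim \mathbf{d}(s)$ and $s$ is idempotent, while the forward direction uses the same witness $(a^{-1}b)(b^{-1}c) = a^{-1}\mathbf{r}(b)c \leq a^{-1}c$ and invokes $E$-unitarity. The only difference is that you verify the inequality $a^{-1}\mathbf{r}(b)c \leq a^{-1}c$ explicitly by exhibiting the conjugated idempotent, where the paper simply asserts it.
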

\begin{proof}
Suppose that $\sim$ is transitive.  
Let $e \leq  s$, where $e$  is  an  idempotent.  
Then $se^{-1}$ is an idempotent because $e = se = se^{-1}$, 
and $s^{-1}e$ is an idempotent because $s^{-1}e \leq  s^{-1}s$. 
Thus $s \sim e$.  
Clearly $e \sim s^{-1}s$, and so, by our assumption that
the compatibility relation is transitive,
we have that $s \sim s^{-1}s$.  
But $s(s^{-1}s)^{-1} = s$, so that $s$ is an idempotent.

Conversely, suppose that $S$ is $E$-unitary 
and that $s \sim t$ and $t \sim u$.
Clearly $(s^{-1}t)(t^{-1}u)$ is an idempotent and
$$(s^{-1}t)(t^{-1}u) = s^{-1}(tt^{-1})u \leq s^{-1}u.$$
But $S$ is $E$-unitary and so $s^{-1}u$ is an idempotent.
Similarly, $su^{-1}$ is an idempotent.
Hence $s \sim u$.
\end{proof}

There is a connection between compatible elements and certain kinds of meets.
We shall examine meets in greater generality later on in this section.

\begin{lemma}\label{lem:compatibility-meets} Let $S$ be an inverse semigroup.
\begin{enumerate}
\item $s \sim t$ if and only if $s \wedge t$ exists and 
$\dom (s \wedge t) = \dom (s) \wedge \dom (t)$
and
$\ran (s \wedge t) = \ran (s) \wedge \ran (t)$.
\item If $s \sim t$ then 
$$s \wedge t = t \mathbf{d}(s) = s \mathbf{d}(t) = \mathbf{r}(s)t = \mathbf{r}(t)s.$$
\end{enumerate}
\end{lemma}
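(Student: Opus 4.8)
The plan is to establish (2) first and then derive (1) from it, since once we know that the candidate element $t\mathbf{d}(s)$ actually is the meet, the characterization in (1) follows by computing its domain and range. First I would assume $s \sim t$, so that by definition $s^{-1}t$ and $st^{-1}$ are idempotents. The key observation is that the four expressions $t\mathbf{d}(s)$, $s\mathbf{d}(t)$, $\mathbf{r}(s)t$, $\mathbf{r}(t)s$ are all equal: for instance $t\mathbf{d}(s) = ts^{-1}s = (ts^{-1})s = (st^{-1})^{-1}s$, and since $st^{-1}$ is idempotent it equals its own inverse by Lemma~\ref{lem:inverse-of-idempotent}, so $t\mathbf{d}(s) = st^{-1}s$; a symmetric manipulation using $s^{-1}t \in \mathsf{E}(S)$ handles the other identifications, with Lemma~\ref{lem:pass-through} available to move idempotents past elements if needed. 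Call this common value $m$.

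Next I would check that $m \leq s$ and $m \leq t$. Since $m = s\mathbf{d}(t) = s(t^{-1}t)$ and $t^{-1}t$ is an idempotent, Lemma~\ref{lem:carol}(2) gives $m \leq s$; since $m = t\mathbf{d}(s)$, the same lemma gives $m \leq t$. Then I would verify $m$ is the greatest lower bound: suppose $u \leq s$ and $u \leq t$. By Lemma~\ref{lem:carol} write $u = \mathbf{r}(u)s$ (using clause (3)/(4) with $u = uu^{-1}t$ as well), and then $u = uu^{-1}t = u u^{-1} t \mathbf{d}(u)$; the cleanest route is: from $u \leq s$ we get $u = u\mathbf{d}(u)$ with $\mathbf{d}(u) \leq \mathbf{d}(s) = \dom$ direction, and from $u \leq t$, $u = t\mathbf{d}(u)$, hence $u = t\mathbf{d}(u) = t\,\mathbf{d}(s)\mathbf{d}(u) = m\,\mathbf{d}(u) \leq m$, where I used $\mathbf{d}(u) \leq \mathbf{d}(s)$ so that $\mathbf{d}(s)\mathbf{d}(u) = \mathbf{d}(u)$ (idempotents commute, Proposition~\ref{prop:idempotents-commute}). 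This shows $u \leq m$, so $m = s \wedge t$, proving (2).

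For (1), the forward direction: if $s \sim t$ then by (2) the meet exists and equals $m = t\mathbf{d}(s)$, so $\mathbf{d}(s \wedge t) = \mathbf{d}(t\mathbf{d}(s)) = (t\mathbf{d}(s))^{-1}t\mathbf{d}(s) = \mathbf{d}(s)t^{-1}t\mathbf{d}(s) = \mathbf{d}(s)\mathbf{d}(t)$ using commuting idempotents and Lemma~\ref{lem:inverse-of-product}; symmetrically $\mathbf{r}(s\wedge t) = \mathbf{r}(s)\mathbf{r}(t)$, i.e. $\dom(s)\wedge\dom(t)$ and $\ran(s)\wedge\ran(t)$ respectively. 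For the converse, suppose $s \wedge t$ exists with the stated domain and range. Put $v = s \wedge t$; then $v \leq s$ and $v \leq t$, so by Lemma~\ref{lem:bounded-above} $s^{-1}t$ and $st^{-1}$ are idempotents, which is exactly $s \sim t$ — and in fact this direction needs no hypothesis on $\dom$ and $\ran$ at all, only that some common lower bound (here the meet) exists. The main obstacle is organizing the chain of idempotent manipulations in step two (the four-way equality and the verification that an arbitrary lower bound $u$ factors through $m$) so that each step cites commutativity of idempotents or Lemma~\ref{lem:pass-through} correctly; none of it is deep, but it is easy to slip on which idempotent absorbs which.
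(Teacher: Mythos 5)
The converse half of part (1) contains a genuine error. Lemma~\ref{lem:bounded-above} concerns a common \emph{upper} bound: if $a,b \leq c$ then $a^{-1}b$ and $ab^{-1}$ are idempotents. You apply it to $v = s \wedge t$ with $v \leq s$ and $v \leq t$, which is a common \emph{lower} bound, and the implication you want --- that elements with a common lower bound are compatible --- is simply false. In any inverse semigroup with zero, every pair of elements has $0$ as a common lower bound, and in a meet-monoid such as $\mathcal{I}(X)$ every pair has a meet, yet two partial bijections whose union is not a partial bijection are not compatible. Your aside that ``this direction needs no hypothesis on $\dom$ and $\ran$ at all'' should have been a warning: if mere existence of the meet implied compatibility, the domain and range conditions in part (1) would be redundant and compatibility would be automatic in every inverse meet-semigroup. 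The hypothesis $\mathbf{d}(s \wedge t) = \mathbf{d}(s)\mathbf{d}(t)$ is exactly what the paper's converse uses: writing $z = s \wedge t$, one has $z = s\mathbf{d}(z) = t\mathbf{d}(z)$ with $\mathbf{d}(z) = s^{-1}st^{-1}t$, which gives $st^{-1}t = ts^{-1}s$ and hence $st^{-1} = ts^{-1}st^{-1}$, an idempotent by Lemma~\ref{lem:conjugation-of-idpts}; the range condition dually forces $s^{-1}t$ to be an idempotent.

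The remainder of your argument is sound and matches the paper's: you exhibit $t\mathbf{d}(s)$ as a lower bound of $s$ and $t$ via Lemma~\ref{lem:carol}, show any lower bound $u$ satisfies $u = t\mathbf{d}(u) = t\mathbf{d}(s)\mathbf{d}(u) \leq t\mathbf{d}(s)$, and compute $\mathbf{d}(t\mathbf{d}(s)) = \mathbf{d}(s)\mathbf{d}(t)$ for the forward half of (1). One small caution on part (2): the manipulation $t\mathbf{d}(s) = st^{-1}s$ does not by itself produce the other three expressions; the clean route, which the paper takes, is to note that each of the four is shown to be the greatest lower bound by the same symmetric argument, hence all four coincide.
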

\begin{proof}
(1) We prove first that $st^{-1}$ is an idempotent
if and only if 
the greatest  lower bound $s \wedge  t$ of $s$ and $t$ exists and 
$(s \wedge  t)^{-1}(s \wedge  t) = s^{-1}st^{-1}t$.  
The full result then follows by the dual argument.
Suppose that $st^{-1}$ is an idempotent. 
Put $z = st^{-1}t$.  
Then $z \leq  s$ and $z \leq  t$, since $st^{-1}$ is an idempotent.  
Let $w \leq  s,t$.  
Then $w^{-1}w \leq  t^{-1}t$ and so $w \leq  st^{-1}t = z$.  
Hence $z = s \wedge  t$.  
Also
$$z^{-1}z = (st^{-1}t)^{-1}(st^{-1}t) 
= t^{-1}ts^{-1}st^{-1}t = s^{-1}st^{-1}t.$$

Conversely, suppose that $s \wedge t$ exists and 
$(s \wedge  t)^{-1}(s \wedge  t) = s^{-1}st^{-1}t$.
Put $z = s \wedge t$.
Then $z = sz^{-1}z$ and $z = tz^{-1}z$.
Thus $sz^{-1}z = tz^{-1}z$, and so
$st^{-1}t = ts^{-1}s$.
Hence $st^{-1} = ts^{-1}st^{-1}$, which is an idempotent.

(2) We shall prove that $s \wedge t = t \mathbf{d}(s)$ since the other equalities follow by symmetry.
Observe that $t \mathbf{d}(s) = ts^{-1}s \leq s,t$ since $ts^{-1} = (st^{-1})^{-1}$ is an idempotent.
Suppose that $x \leq s,t$. Then $x = xx^{-1}x \leq ts^{-1}s$.
It follows that $s \wedge t = t \mathbf{d}(s)$.
\end{proof}

The following result is useful since it enables us to deduce that two elements
are equal from apparently weaker conditions.

\begin{lemma}\label{lem:karen} Let $S$ be an inverse semigroup.
If $a \sim b$ and $\mathbf{d}(a) = \mathbf{d}(b)$ (respectively, $\mathbf{r}(a) = \mathbf{r}(b)$) then $a = b$.
\end{lemma}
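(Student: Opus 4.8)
The plan is to exploit the characterization of $s \wedge t$ given in Lemma~\ref{lem:compatibility-meets}. Since $a \sim b$, part (1) of that lemma tells us that $a \wedge b$ exists, and part (2) gives us the explicit formula $a \wedge b = b\,\mathbf{d}(a)$. So the whole problem reduces to showing that $a \wedge b = a$ (equivalently $= b$), because then $a = b\,\mathbf{d}(a) = b$ once we use the hypothesis $\mathbf{d}(a) = \mathbf{d}(b)$ together with $b\,\mathbf{d}(b) = b$.

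First I would invoke Lemma~\ref{lem:compatibility-meets}(1) to get that $z := a \wedge b$ exists and satisfies $\mathbf{d}(z) = \mathbf{d}(a) \wedge \mathbf{d}(b)$. Under the hypothesis $\mathbf{d}(a) = \mathbf{d}(b)$, this meet of idempotents is just $\mathbf{d}(a)$ itself, so $\mathbf{d}(z) = \mathbf{d}(a)$. Now $z \leq a$ by definition of the meet, so by the definition of the natural partial order $z = a\,\mathbf{d}(z) = a\,\mathbf{d}(a) = a$. Hence $a = z = a \wedge b \leq b$; but now $a \leq b$ together with $\mathbf{d}(a) = \mathbf{d}(b)$ forces $a = b$, since $a \leq b$ means $a = b\,\mathbf{d}(a) = b\,\mathbf{d}(b) = b$. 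The case where $\mathbf{r}(a) = \mathbf{r}(b)$ is handled by the symmetric (dual) argument, using the $\mathbf{r}$-version of Lemma~\ref{lem:compatibility-meets}, or simply by passing to inverses via part (2) of Lemma~\ref{lem:po}: if $\mathbf{r}(a) = \mathbf{r}(b)$ then $\mathbf{d}(a^{-1}) = \mathbf{d}(b^{-1})$ and $a^{-1} \sim b^{-1}$, so the first case gives $a^{-1} = b^{-1}$, hence $a = b$ by Lemma~\ref{lem:inverse-of-inverse}.

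There is really no hard part here; the content has already been extracted into Lemma~\ref{lem:compatibility-meets}, and what remains is the short computation that a meet which agrees with one of its arguments on domains must equal that argument. The only point requiring a little care is making sure the reduction is presented cleanly: one wants to avoid circularity by noting that $z \leq a$ is part of what it means for $z$ to be the greatest lower bound, and that the identity $\mathbf{d}(a) \wedge \mathbf{d}(b) = \mathbf{d}(a)$ under the hypothesis is just the statement that $\mathbf{d}(a) \leq \mathbf{d}(b)$, which is immediate from $\mathbf{d}(a) = \mathbf{d}(b)$. If one prefers an even more elementary route that bypasses Lemma~\ref{lem:compatibility-meets} entirely, one can argue directly: $a \sim b$ means $a^{-1}b \in \mathsf{E}(S)$, so $a^{-1}b = (a^{-1}b)(b^{-1}a)$ — wait, more simply, $b\,\mathbf{d}(a) = b\,a^{-1}a$, and one checks $b a^{-1}$ is an idempotent (it is $(ab^{-1})^{-1}$), so $b a^{-1} a = a a^{-1} b a^{-1} a$; using commuting idempotents and $\mathbf{d}(a) = \mathbf{d}(b) = b^{-1}b$ one rearranges to get $a$. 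But invoking Lemma~\ref{lem:compatibility-meets} is cleaner and is the route I would take.
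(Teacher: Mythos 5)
Your proof is correct and follows essentially the same route as the paper's: both deduce from Lemma~\ref{lem:compatibility-meets} that $a \wedge b$ exists with $\mathbf{d}(a\wedge b) = \mathbf{d}(a)\mathbf{d}(b) = \mathbf{d}(a)$, and then conclude $a \wedge b = a$ (and likewise $= b$) from the definition of the natural partial order. The only cosmetic difference is that the paper says ``similarly $a \wedge b = b$'' where you instead pass through $a \leq b$; both are the same one-line computation.
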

\begin{proof}
Suppose that $a \sim b$ then by Lemma~\ref{lem:compatibility-meets} 
the meet $a \wedge b$ exists and $\mathbf{d}(a \wedge b) = \mathbf{d}(a)\mathbf{d}(b)$.
By assumption,  $\mathbf{d}(a) = \mathbf{d}(b)$.
Thus $a \wedge b \leq a$ and $\mathbf{d}(a \wedge b) = \mathbf{d}(a)$.
It follows that $a \wedge b = a$.
Similarly, $a \wedge b = b$.
We have therefore proved that $a = b$.
\end{proof}

Inverse semigroups generalize groups, but we can also construct groups from inverse semigroups.
The idea is this.
Groups are abstract versions of groups of bijections,
and bijections can be constructed by glueing together compatible sets of partial bijections.
Thus, we could construct groups from inverse semigroups by glueing together suitable compatible subsets.
We show first how to construct groups from arbitrary inverse semigroups.
The motivation for how to do this comes from Lemma~\ref{lem:group-po}, which tells us that groups are those inverse semigroups
in which the natural partial order is equality,
and part (6) of Lemma~\ref{lem:po}, which tells us that homomorphisms between inverse semigroups are isotone. 
These two results lead us to make the following definition.
On an inverse semigroup $S$, with $s,t \in S$, define the relation $\sigma$ by 
$s \, \sigma \, t$
if and only if there is an element $u$ such that
$u \leq  s,t$.

\begin{theorem}\label{them:marin} Let $S$ be an inverse semigroup. 
\begin{enumerate}

\item $\sigma $ is a congruence on $S$.

\item $S/\sigma $ is a group. 

\item If $\rho$ is any congruence on $S$ such that $S/\rho$ 
is a group then $\sigma  \subseteq  \rho $. 

\end{enumerate}
\end{theorem}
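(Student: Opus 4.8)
The plan is to establish the three parts in order, using the characterization of $\sigma$ via common lower bounds together with the algebraic machinery built up in this section, especially Lemma~\ref{lem:carol} (the four equivalent descriptions of $\leq$), Lemma~\ref{lem:po} (the natural partial order is compatible with multiplication and preserved by homomorphisms), and Lemma~\ref{lem:compatibility-meets} together with Lemma~\ref{lem:karen}.

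First I would show that $\sigma$ is an equivalence relation. Reflexivity and symmetry are immediate from the definition, so the work is transitivity. Suppose $s\,\sigma\,t$ via $u\leq s,t$ and $t\,\sigma\,w$ via $v\leq t,w$. The naive guess $uv^{-1}v$ or similar need not work directly; the clean approach is to use Lemma~\ref{lem:bounded-above} and the compatibility relation. Since $u,v\leq t$, we have $u\sim v$, so by Lemma~\ref{lem:compatibility-meets} the meet $u\wedge v$ exists, and one checks $u\wedge v\leq s$ (since $u\wedge v\leq u\leq s$) and $u\wedge v\leq w$ (since $u\wedge v\leq v\leq w$), giving $s\,\sigma\,w$. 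This is the step I expect to be the main obstacle, since it requires noticing that transitivity of $\sigma$ — unlike transitivity of $\sim$, which needs $E$-unitarity by Proposition~\ref{prop:Eunitary} — holds unconditionally precisely because passing through a common \emph{lower} bound $t$ forces $u\sim v$, and compatible elements with a common upper bound always have a meet lying below both.

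Next I would verify that $\sigma$ is a congruence. Suppose $s\,\sigma\,t$ via $u\leq s,t$ and $s'\,\sigma\,t'$ via $u'\leq s',t'$. By part (3) of Lemma~\ref{lem:po}, the natural partial order is compatible with multiplication, so $uu'\leq ss'$ and $uu'\leq tt'$, whence $ss'\,\sigma\,tt'$. This handles both left and right compatibility simultaneously. For part (2), I would show $S/\sigma$ is a group by exhibiting that it has a unique idempotent and invoking Lemma~\ref{lem:unique-idempotents}. If $e,f\in\mathsf{E}(S)$ then $ef\leq e$ and $ef\leq f$, so $e\,\sigma\,f$; thus all idempotents of $S$ lie in a single $\sigma$-class, and since homomorphic images of inverse semigroups are inverse (Lemma~\ref{lem:homomorphisms}) and idempotents of $S/\sigma$ come from idempotents of $S$ (Lemma~\ref{lem:hm-inverse}(3)), $S/\sigma$ has exactly one idempotent and is therefore a group.

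Finally, for part (3), let $\rho$ be any congruence with $S/\rho$ a group, and suppose $s\,\sigma\,t$, say $u\leq s,t$. By Lemma~\ref{lem:carol}, $u = se$ for some idempotent $e$, so $\rho(u) = \rho(s)\rho(e)$; but $S/\rho$ is a group, so its unique idempotent $\rho(e)$ is the identity, giving $\rho(u) = \rho(s)$. Symmetrically $\rho(u) = \rho(t)$, hence $\rho(s) = \rho(t)$, i.e. $s\,\rho\,t$. Therefore $\sigma\subseteq\rho$, which completes the proof and identifies $S/\sigma$ as the maximal group homomorphic image of $S$.
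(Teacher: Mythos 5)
Your proposal is correct and follows essentially the same route as the paper: transitivity of $\sigma$ via the meet of two elements lying below a common element (Lemma~\ref{lem:compatibility-meets}), the congruence property from compatibility of the natural partial order with multiplication, part (2) from all idempotents collapsing to a single $\sigma$-class, and part (3) by pushing a common lower bound through $\rho$ and using that a group has trivial natural partial order. Your unrolling of the last step via $u = se$ and $\rho(e)$ being the identity is just the paper's isotonicity argument made explicit, so there is nothing further to add.
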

\begin{proof}
(1)
We begin by showing that $\sigma $ is an equivalence relation.  
Reflexivity and symmetry are immediate.  
To prove transitivity, let $(a,b),(b,c) \in \sigma$. 
Then there exist elements
$u,v \in S$ such that $u \leq a,b$ and $v \leq b,c$.
Thus $u,v \leq b$.
The set $b^{\downarrow}$ is a compatible subset and so $u \wedge v$ exists
by Lemma~\ref{lem:compatibility-meets}.
But $u \wedge v \leq a,c$ and so $(a,c) \in \sigma$.
The fact that $\sigma$ is a congruence 
follows from the fact that the natural 
partial order is compatible with the multiplication.  

(2) Clearly, all idempotents are contained in a single 
$\sigma$-class (possibly with non-idempotent elements).  
Consequently, $S/\sigma $ is an inverse semigroup with a single idempotent.  
Thus $S/\sigma $ is a group by Lemma~\ref{lem:group-po}. 

(3) Let $\rho $ be any congruence such that $S/\rho $ is a group.  
Let $(a,b) \in  \sigma$.  
Then $z \leq  a,b$, for some $z$, by definition.  
Hence $\rho (z) \leq  \rho (a),\rho (b)$ since homomomorphisms between inverse semigroups are isotone.
But $S/\rho $ is a group and so its natural partial order is equality.  
Hence $\rho (a) = \rho (b)$.
\end{proof}

In the light of Theorem~\ref{them:marin},
it is natural to call the congruence $\sigma$ the {\em minimum group congruence}
and the group $S/\sigma$ the {\em maximum group image} of $S$.

A homomorphism $\theta \colon S \rightarrow T$ is said to be {\em idempotent-pure}
if $\theta (a)$ an idempotent implies that $a$ is an idempotent.

\begin{lemma}\label{lem:splott} Let $S$ be an inverse semigroup.
\begin{enumerate}
\item $\sim\, \subseteq \sigma$.
\item The congruence $\rho$ is idempotent-pure if and only if $\rho\, \subseteq \, \sim$.
\end{enumerate}
\end{lemma}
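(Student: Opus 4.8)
The plan is to prove the two parts in order, using the characterizations of the natural partial order from Lemma~\ref{lem:carol} and the definition of $\sigma$ from Theorem~\ref{them:marin}.

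For part (1), I would take $a \sim b$, so that $a^{-1}b$ and $ab^{-1}$ are idempotents. The natural candidate for an element below both $a$ and $b$ is $b\mathbf{d}(a) = ba^{-1}a$; indeed this is exactly the element appearing in Lemma~\ref{lem:compatibility-meets}(2), where it is shown to equal $a \wedge b$. So the cleanest route is simply: by Lemma~\ref{lem:compatibility-meets}, $a \wedge b$ exists, and $a \wedge b \leq a$ and $a \wedge b \leq b$, hence $a \,\sigma\, b$ by the definition of $\sigma$. If one wants to avoid invoking the meet, one checks directly that $z = ba^{-1}a$ satisfies $z \leq b$ (it is $b$ times an idempotent, so apply Lemma~\ref{lem:carol}(2)) and $z \leq a$ (using that $ba^{-1} = (ab^{-1})^{-1}$ is an idempotent, so $z = (ba^{-1})a \leq a$ by Lemma~\ref{lem:carol}(3)); either way part (1) is short.

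For part (2), I would argue both directions. Suppose $\rho$ is idempotent-pure and $(a,b) \in \rho$. I want $a \sim b$, i.e. $a^{-1}b, ab^{-1} \in \mathsf{E}(S)$. From $(a,b) \in \rho$ and the congruence property, $(a^{-1}b, b^{-1}b) \in \rho$; since $b^{-1}b$ is an idempotent and $\rho$ is a congruence, $\rho(a^{-1}b) = \rho(b^{-1}b)$ is an idempotent in $S/\rho$, so $\rho(a^{-1}b)$ is idempotent, and by idempotent-purity $a^{-1}b$ is an idempotent in $S$. Symmetrically $ab^{-1}$ is an idempotent, so $a \sim b$. Conversely, suppose $\rho \subseteq\, \sim$ and $\rho(a)$ is an idempotent in $S/\rho$. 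Then $\rho(a) = \rho(a)^{-1}\rho(a) = \rho(a^{-1}a)$, so $(a, a^{-1}a) \in \rho$, hence $a \sim a^{-1}a$ by hypothesis. But $a \sim e$ for an idempotent $e$ forces $a$ to be an idempotent: indeed $a(a^{-1}a)^{-1} = a(a^{-1}a) = a$ must be an idempotent, which is precisely the computation used at the end of the proof of Proposition~\ref{prop:Eunitary}. Thus $a \in \mathsf{E}(S)$ and $\rho$ is idempotent-pure.

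I do not expect a serious obstacle here; the statement is a bookkeeping exercise once one has Lemma~\ref{lem:carol}, Lemma~\ref{lem:compatibility-meets}, and the observation that $a \sim e$ with $e$ idempotent implies $a$ is idempotent. The only mild subtlety is keeping the congruence manipulations straight — remembering that $\rho(a^{-1}b)$ being equal to an idempotent of $S/\rho$ means it \emph{is} an idempotent of $S/\rho$, and that idempotent-purity is a statement about preimages of idempotents under the natural map — but these are routine. I would present part (1) in one or two lines citing Lemma~\ref{lem:compatibility-meets}, and part (2) as the two short implications above.
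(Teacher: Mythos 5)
Your proof is correct and follows essentially the same route as the paper: part (1) is exactly the paper's appeal to Lemma~\ref{lem:compatibility-meets}, and in part (2) the paper likewise multiplies $(a,b)\in\rho$ by $b^{-1}$ to land on an idempotent and, for the converse, shows $a$ is $\rho$-related (hence $\sim$-related) to one of its associated idempotents and concludes $a$ is idempotent by the same one-line computation. The only differences are cosmetic (the paper uses $ab^{-1}\,\rho\,bb^{-1}$ and $a\sim aa^{-1}$ where you use $a^{-1}b\,\rho\,b^{-1}b$ and $a\sim a^{-1}a$).
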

\begin{proof} (1) Suppose that $a \sim b$.
Then $a \wedge b$ exists by Lemma~\ref{lem:compatibility-meets}.
It follows that $a \, \sigma \, b$.

(2) Suppose that $\rho$ is idempotent-pure and that $a \, \rho \, b$.
Then $ab^{-1} \, \rho \, bb^{-1}$.
But $\rho$ is idempotent-pure and so $ab^{-1}$ is an idempotent.
Similarly, $a^{-1}b$ is an idempotent.
We have proved that $a \sim b$.
Conversely, suppose that  $\rho$ is a congruence such that $\rho\, \subseteq \, \sim$
and $a \, \rho\, e$, where $e$ is an idempotent.
Observe that $a^{-1} \, \rho\, e$.
Thus $aa^{-1} \, \rho \, e$.
It follows that $aa^{-1} \, \rho \, a$.
Thus $(aa^{-1})a$ is an idempotent and so $a$ is an idempotent.
\end{proof}

Inverse semigroups and their homomorphisms form a category (of structures).
The category of groups and their homomorphisms is a subcategory.
The properties of the minimum group congruence lead naturally to
the following result on the category of inverse semigroups.

\begin{proposition}\label{prop:groups-reflexive} 
The category of groups is a reflective subcategory of
the category of inverse semigroups.
\end{proposition}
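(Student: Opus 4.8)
The plan is to unpack the definition of a reflective subcategory and check the universal property directly, using the minimum group congruence $\sigma$ from Theorem~\ref{them:marin}. Recall that a subcategory $\mathcal{C}$ of $\mathcal{D}$ is \emph{reflective} if the inclusion functor $\mathcal{C} \hookrightarrow \mathcal{D}$ has a left adjoint; concretely, for each object $D$ of $\mathcal{D}$ there should be an object $R(D)$ of $\mathcal{C}$ and a $\mathcal{D}$-morphism $\eta_D \colon D \rightarrow R(D)$ (the reflection unit) such that every $\mathcal{D}$-morphism from $D$ into an object of $\mathcal{C}$ factors uniquely through $\eta_D$. Here $\mathcal{D}$ is the category of inverse semigroups, $\mathcal{C}$ is the category of groups (a subcategory by the remark preceding the proposition), and the obvious candidate is $R(S) = S/\sigma$ with $\eta_S = \sigma^{\natural} \colon S \rightarrow S/\sigma$ the natural homomorphism. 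By part (2) of Theorem~\ref{them:marin}, $S/\sigma$ is indeed a group, so $R$ lands in $\mathcal{C}$.

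First I would fix an inverse semigroup $S$, an arbitrary group $G$, and a semigroup homomorphism $\theta \colon S \rightarrow G$, and show that $\theta$ factors uniquely as $\theta = \bar{\theta} \circ \sigma^{\natural}$ for a group homomorphism $\bar{\theta} \colon S/\sigma \rightarrow G$. For existence: a group $G$, regarded as an inverse semigroup, has $\ker(\theta)$ a congruence on $S$ with $S/\ker(\theta) \cong \theta(S) \le G$ a group (using Lemma~\ref{lem:homomorphisms} and Lemma~\ref{lem:unique-idempotents}, an inverse semigroup that embeds in a group is a group). Hence by part (3) of Theorem~\ref{them:marin}, $\sigma \subseteq \ker(\theta)$, which is exactly the statement that $\theta$ is constant on $\sigma$-classes, so it descends to a well-defined map $\bar{\theta} \colon S/\sigma \rightarrow G$; that $\bar{\theta}$ is a homomorphism is immediate since $\sigma^{\natural}$ is surjective. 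Uniqueness of $\bar{\theta}$ is forced by surjectivity of $\sigma^{\natural}$. This establishes the universal property object-by-object.

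Next I would package these units into an adjunction, i.e.\ check naturality. Given a homomorphism $\phi \colon S \rightarrow S'$ of inverse semigroups, the composite $S \xrightarrow{\phi} S' \xrightarrow{\sigma^{\natural}_{S'}} S'/\sigma'$ is a homomorphism into a group, so by the universal property just proved it factors through $\sigma^{\natural}_S$ as $R(\phi) \circ \sigma^{\natural}_S$ for a unique group homomorphism $R(\phi) \colon S/\sigma \rightarrow S'/\sigma'$; this defines $R$ on morphisms, and functoriality ($R(\mathrm{id}) = \mathrm{id}$, $R(\psi\phi) = R(\psi)R(\phi)$) follows from the uniqueness clause. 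The family $(\sigma^{\natural}_S)_S$ is then automatically a natural transformation $\mathrm{Id} \Rightarrow \iota \circ R$ (where $\iota$ is the inclusion), and the universal property says precisely that it is the unit of an adjunction $R \dashv \iota$. Therefore the inclusion has a left adjoint, which is the definition of $\mathcal{C}$ being a reflective subcategory.

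The only real content — and the step I would flag as the heart of the argument — is invoking part (3) of Theorem~\ref{them:marin} to get $\sigma \subseteq \ker(\theta)$; everything else is formal diagram-chasing. The one subtlety worth stating explicitly is that a homomorphism of inverse semigroups into a group $G$ is the same thing as a semigroup homomorphism into $G$ (there is no compatibility-with-inverses condition to impose, since by Lemma~\ref{lem:hm-inverse}(1) it holds automatically), so the morphism sets match up on the nose and no adjunction-unit calculation beyond the above is needed.
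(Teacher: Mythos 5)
Your proposal is correct and follows essentially the same route as the paper: take $R(S)=S/\sigma$, use part (3) of Theorem~\ref{them:marin} to get $\sigma\subseteq\ker(\theta)$ for any homomorphism $\theta$ into a group, and factor uniquely through $\sigma^{\natural}$. The extra material on functoriality and naturality is exactly what the paper defers to ``standard category theory'' in the remark following the proposition, and your observation that the image of $\theta$ is a group (via Lemma~\ref{lem:homomorphisms} and Lemma~\ref{lem:unique-idempotents}) correctly fills in the one detail the paper leaves implicit.
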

\begin{proof}
Let $S$ be an inverse semigroup and let
$\sigma^{\natural} \colon S \rightarrow S/\sigma$ be the associated natural
homomorphism.
Let $\theta \colon S \rightarrow G$ be a homomorphism to a group $G$.
Then $\mbox{\rm ker}(\theta)$ is a group congruence on $S$ and so
$\sigma \subseteq \mbox{\rm ker}(\theta)$ by Theorem~\ref{them:marin}.
Thus by standard semigroup theory, there is a unique homomorphism $\theta^{\ast}$ from $S/\sigma$ to $G$ 
such that $\theta = \theta^{\ast} \sigma^{\natural}$.
\end{proof}

It follows by standard category theory, such as \cite[Chapter IV, Section 3]{Maclane},
that there is a functor from the category of 
inverse semigroups to the category of groups which takes each 
inverse semigroup $S$ to $S/\sigma$
and if $\theta \colon S \rightarrow  T$ is a homomorphism 
of inverse semigroups then the function  
$\psi \colon  S/\sigma  \rightarrow  T/\sigma $  
defined by $\psi(\sigma (s)) = \sigma (\theta (s))$  
is the corresponding group homomorphism (this can be checked directly).

There is another characterization of $E$-unitary inverse semigroups
which is interesting in this context.

\begin{lemma}\label{lem:another-characterization} Let $S$ be an inverse semigroup.  
Then the following conditions are equivalent: 
\begin{enumerate}

\item $S$ is $E$-unitary. 
 
\item $\sim \, = \sigma $. 
 
\item $\sigma$ is idempotent-pure.

\item $\sigma (e) = \mathsf{E}(S)$ for any idempotent $e$.

\end{enumerate}
\end{lemma}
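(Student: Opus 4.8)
The plan is to use condition (3) --- that $\sigma$ is idempotent-pure --- as a hub, and to prove (1)$\Leftrightarrow$(3), (2)$\Leftrightarrow$(3), and (4)$\Leftrightarrow$(3), drawing on three facts already established: part (5) of Lemma~\ref{lem:po} (anything below an idempotent is itself an idempotent), Lemma~\ref{lem:splott} (which ties idempotent-pure congruences to the compatibility relation $\sim$), and Theorem~\ref{them:marin} (which tells us $S/\sigma$ is a group in which the class of every idempotent of $S$ is the identity). The observation I would keep in play throughout is that $\sigma(a)$ is an idempotent of the group $S/\sigma$ precisely when $\sigma(a) = \sigma(e)$ for an, equivalently any, idempotent $e$ of $S$.

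For (1)$\Rightarrow$(3): assuming $\sigma(a)$ is idempotent in $S/\sigma$, I would write $\sigma(a) = \sigma(e)$ with $e$ idempotent, so by definition of $\sigma$ there is a $u$ with $u \leq a$ and $u \leq e$; part (5) of Lemma~\ref{lem:po} makes $u$ an idempotent, and then $E$-unitarity applied to $u \leq a$ forces $a$ to be an idempotent, so $\sigma$ is idempotent-pure. Conversely, for (3)$\Rightarrow$(1): given $e \leq s$ with $e$ an idempotent, note $e\,\sigma\,s$ (witnessed by $u = e$, since $e \leq e$ and $e \leq s$), so $\sigma(s) = \sigma(e)$ is idempotent in $S/\sigma$, and idempotent-purity then yields that $s$ is an idempotent; hence $S$ is $E$-unitary.

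The equivalence (2)$\Leftrightarrow$(3) I expect to be immediate from Lemma~\ref{lem:splott}: part (1) gives $\sim\,\subseteq\,\sigma$ unconditionally, part (2) says $\sigma$ is idempotent-pure exactly when $\sigma\,\subseteq\,\sim$, and combining these shows $\sigma$ is idempotent-pure iff $\sim\,=\,\sigma$. For (4)$\Leftrightarrow$(3): Theorem~\ref{them:marin} packs all of $\mathsf{E}(S)$ into a single $\sigma$-class, which is the identity of the group $S/\sigma$, so the inclusion $\mathsf{E}(S) \subseteq \sigma(e)$ holds automatically for every idempotent $e$; hence (4) is equivalent to $\sigma(e) \subseteq \mathsf{E}(S)$, i.e.\ to every member of the idempotent class being an idempotent, and via the running observation this is exactly the assertion that $\sigma$ is idempotent-pure.

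I do not foresee a genuine obstacle: each implication collapses to unwinding the definitions against results already in hand. The one spot needing care is the translation between ``$\sigma(a)$ is an idempotent of $S/\sigma$'' and ``$\sigma(a) = \sigma(e)$ for some idempotent $e$ of $S$'' --- it is precisely this that lets conditions (1), (3), and (4) speak to one another so directly, and it rests on $S/\sigma$ being a group (hence having a unique idempotent) together with homomorphisms preserving idempotents.
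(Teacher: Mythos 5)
Your proof is correct. The ingredients are the same as the paper's --- Lemma~\ref{lem:splott}, part (5) of Lemma~\ref{lem:po}, and the fact from Theorem~\ref{them:marin} that all idempotents lie in a single $\sigma$-class --- but the decomposition differs: the paper proves the cycle (1)$\Rightarrow$(2)$\Rightarrow$(3)$\Rightarrow$(4)$\Rightarrow$(1), whereas you use (3) as a hub. The only implication where the arguments genuinely diverge is the one out of (1): the paper shows (1)$\Rightarrow$(2) by a direct computation with the compatibility relation (from $z \leq a,b$ it deduces $z^{-1}z \leq a^{-1}b$ and $zz^{-1} \leq ab^{-1}$, then invokes $E$-unitarity to conclude $a \sim b$), while you show (1)$\Rightarrow$(3) by taking a common lower bound $u$ of $a$ and an idempotent $e$, noting $u$ is idempotent by part (5) of Lemma~\ref{lem:po}, and applying $E$-unitarity to $u \leq a$. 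Your route then gets (2) for free from Lemma~\ref{lem:splott}, so it trades the paper's small computation for an extra appeal to that lemma; both are equally short. Your ``running observation'' --- that $\sigma(a)$ is idempotent in $S/\sigma$ iff $a\,\sigma\,e$ for some (equivalently any) idempotent $e$ --- is correctly justified by part (3) of Lemma~\ref{lem:hm-inverse} together with $S/\sigma$ being a group, and it is exactly what makes the hub organization work.
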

\begin{proof}
(1)$\Rightarrow$(2).  
By part (1) of Lemma~\ref{lem:splott}, 
the compatibility relation is contained in $\sigma$.
Let $(a,b) \in  \sigma $.  
Then $z \leq  a,b$ for some $z$.  
It follows that $z^{-1}z \leq  a^{-1}b$ and $zz^{-1} \leq  ab^{-1}$.  
But $S$ is $E$-unitary and so $a^{-1}b$ and $ab^{-1}$ are both idempotents.  
Hence $a \sim b$. 

(2)$\Rightarrow$(3). 
By part (2) of Lemma~\ref{lem:splott}, 
 a congruence is idempotent pure precisely
when it is contained in the compatibility relation.

(3) $\Rightarrow$ (4).  
This is immediate from the definition of an idempotent-pure congruence.

(4) $\Rightarrow (1)$
Suppose that $e \leq a$ where $e$ is an idempotent.
Then $(e,a) \in \sigma$.
But by our assumption, the element $a$ is an idempotent.
\end{proof} 

For inverse semigroups with zero the minimum group congruence is not very interesting
since the group degenerates to the trivial group.
The following example shows one concrete way to deal with this issue.

\begin{example}{\em Let $G$ be a group and let $S$ be the inverse monoid of all isomorphisms between
the subgroups of $G$. The semilattice of idempotents is isomorphic to the partially ordered set of subgroups of $G$.
The trivial group is a subgroup of every group. 
It follows that the isomorphism from the trivial subgroup to itself is the zero of $S$.
Suppose now that $G$ is infinite.
Consider the inverse subsemigroup $\Omega (G)$ of $S$ which consists of all the isomorphisms 
between the subgroups of $G$ of finite index.
The group $\mbox{Comm}(G) = \Omega (G)/\sigma$ is called the {\em abstract commensurator} of $G$ \cite{BB}.
See also \cite{Nek2002}.
The elements of $\mbox{Comm}(G)$ are `hidden symmetries' to use the terminology of Farb and Weinberger \cite{FW2004}.
}
\end{example}

The above example shows that we may form groups from inverse semigroups by looking at
the `large' elements of the inverse semigroup (and so excluding the zero).
Here is another example.
Let $S$ be an inverse semigroup.
A non-zero idempotent $e$ of an inverse semigroup $S$ is said to be {\em essential} if $ef \neq 0$ for all non-zero
idempotents $f$ of $S$.
An element $s$ is said to be {\em essential} if both $s^{-1}s$ and $ss^{-1}$ are essential.
Essential elements were first defined in \cite{Birget} and applied to construct groups in \cite{Lawson2007, Lawson2007b}.
Denote by $S^{e}$ the set of all essential elements of $S$.
We regard the elements of $S^{e}$ as being `large' elements of $S$. 
We call $S^{e}$ the {\em essential part of $S$.}

\begin{lemma}\label{lem:essential-elements}
Let $S$ be an inverse semigroup.
Then, if non-empty, $S^{e}$ is an inverse subsemigroup of $S$.
\end{lemma}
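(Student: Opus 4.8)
The plan is to show that $S^{e}$ is closed under inverses and under products; since $S^{e}$ is assumed non-empty, this exhibits it as an inverse subsemigroup of $S$.

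Closure under inverses is routine: for any $s \in S$ we have $\mathbf{d}(s^{-1}) = \mathbf{r}(s)$ and $\mathbf{r}(s^{-1}) = \mathbf{d}(s)$ (using Lemma~\ref{lem:inverse-of-inverse}), so $s^{-1}$ has the same pair of associated idempotents as $s$, merely interchanged; hence $s \in S^{e}$ forces $s^{-1} \in S^{e}$. This observation also halves the work for products: since $\mathbf{d}(st) = \mathbf{r}\bigl((st)^{-1}\bigr) = \mathbf{r}(t^{-1}s^{-1})$ by Lemma~\ref{lem:inverse-of-product}, it is enough to prove that $\mathbf{r}(uv)$ is essential whenever $u,v \in S^{e}$; applying that to $u = t^{-1}$, $v = s^{-1}$ then handles $\mathbf{d}(st)$, while $\mathbf{r}(st)$ is just the case $u = s$, $v = t$.

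So the core of the argument is the claim: if $s,t \in S^{e}$ then $\mathbf{r}(st) = s\,\mathbf{r}(t)\,s^{-1}$ is essential. I would fix a non-zero idempotent $f$, suppose for contradiction that $s\,\mathbf{r}(t)\,s^{-1}f = 0$, and transport $f$ successively back through $s$ and then $t$. Right-multiplying by $s$ rewrites the equation as $s\,\mathbf{r}(t)\,g = 0$ with $g = s^{-1}fs$; by Lemma~\ref{lem:conjugation-of-idpts} $g$ is an idempotent, and a one-line check (using $\mathbf{r}(s)s = s$) shows $g \leq \mathbf{d}(s)$. Left-multiplying $s\,\mathbf{r}(t)\,g = 0$ by $s^{-1}$, and using $g \leq \mathbf{d}(s)$ together with commutativity of idempotents (Proposition~\ref{prop:idempotents-commute}), collapses this to $\mathbf{r}(t)\,g = 0$; since $\mathbf{r}(t)$ is essential, $g = 0$. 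Conjugating $g = s^{-1}fs = 0$ by $s$ gives $\mathbf{r}(s)\,f = 0$, and essentiality of $\mathbf{r}(s)$ now forces $f = 0$ --- contradicting the choice of $f$. Hence $\mathbf{r}(st)$ meets every non-zero idempotent non-trivially, i.e. it is essential, and therefore $st \in S^{e}$.

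The only delicate point --- and the step I would watch most carefully --- is the bookkeeping in these conjugation manipulations: one must verify that $g = s^{-1}fs$ really is an idempotent lying below $\mathbf{d}(s)$, and choose the side on which to multiply by $s$ or $s^{-1}$ so that each step is an honest implication in the direction needed (the zero propagating forward). These are all short computations using only $\mathbf{r}(s)s = s$, $s\,\mathbf{d}(s) = s$, the fact that a product of idempotents is an idempotent, and that idempotents commute; no new idea is required beyond keeping the order of multiplication straight.
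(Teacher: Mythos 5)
Your proof is correct and follows essentially the same route as the paper's: both arguments conjugate an arbitrary non-zero test idempotent through one factor and invoke essentiality of the two constituent idempotents in turn, the only differences being that you argue contrapositively with $\mathbf{r}$ where the paper argues directly with $\mathbf{d}$, and you add the (valid) reduction via closure under inverses. All the conjugation steps check out, and non-vanishing of $\mathbf{r}(st)$ itself follows by taking $f=\mathbf{r}(s)$ in your claim.
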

\begin{proof} It is clear that $S^{e}$ is closed under inverse.
Let $a,b \in S^{e}$.
We prove that $\mathbf{d}(ab)$ is essential;
the proof that $\mathbf{r}(ab)$ is essential is similar.
Observe, first, that $b^{-1}a^{-1}ab \neq 0$ since both $bb^{-1}$ and $a^{-1}a$ are essential.
Let $e$ be a non-zero idempotent. 
We calculate $\mathbf{d}(ab)e$.
Observe that $beb^{-1} \neq 0$ and is an idempotent, since it is the conjugate of an idempotent.
Thus $a^{-1}abeb^{-1} \neq 0$.
It follows that  $a^{-1}abe \neq 0$ and so  $b^{-1}a^{-1}abe \neq 0$.
\end{proof}

We expect the groups $S^{e}/\sigma$ to be interesting,
which indeed they are if we choose $S$ carefully.
See \cite{Lawson2007b,LV,LSV} for applications of the essential part of an inverse semigroup
in constructing groups.


We have seen that there is a precondition that must be satisfied in order that
a pair of elements have a join.
The same is not true for meets.
If every pair of elements of an inverse semigroup has meets we say that
it is a {\em meet-semigroup}.
Such semigroups were first studied by Leech \cite{Leech}.
There is an alternative way to characterize inverse meet-semigroups which is often useful.

\begin{lemma}\label{lem:meets} Let $S$ be an inverse semigroup.
\begin{enumerate}
\item $S$ has all binary meets if and only if
for each element $a \in S$ there is an idempotent, denoted by $\phi (a)$, such that $\phi (a) \leq a$ and 
if $e \leq a$, where $e$ is an idempotent, then $e \leq \phi (a)$.
Thus, $\phi (a)$ is the largest idempotent below $a$.
\item The map $\phi \colon S \rightarrow \mathsf{E}(S)$, defined in part (1) above, is an order-preserving idempotent function with $\mathsf{E}(S)$ as its fixed-point set
such that $\phi (ae) = \phi(a)e$ and $\phi(ea) = e \phi (a)$ for all $e \in \mathsf{E}(S)$.
\end{enumerate}
\end{lemma}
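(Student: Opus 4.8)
The plan is to prove the two parts in sequence, with part (1) being the conceptual heart of the argument. For part (1), the forward direction is easy: if $S$ has all binary meets, then for each $a \in S$ I would simply set $\phi(a) = a \wedge \mathbf{d}(a)$. Since $\mathbf{d}(a)$ is an idempotent, part (5) of Lemma~\ref{lem:po} guarantees that $a \wedge \mathbf{d}(a)$, being below the idempotent $\mathbf{d}(a)$, is itself an idempotent. It lies below $a$ by definition of meet. And if $e \leq a$ is any idempotent, then $e \leq \mathbf{d}(e) \leq \mathbf{d}(a)$ (using that $e = \mathbf{d}(e)$ and that $\mathbf{d}$ is isotone), so $e \leq a$ and $e \leq \mathbf{d}(a)$, whence $e \leq a \wedge \mathbf{d}(a) = \phi(a)$. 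So $\phi(a)$ is indeed the largest idempotent below $a$.

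The converse direction of part (1) is where the real work lies, and I expect it to be the main obstacle. Given the function $\phi$ producing the largest idempotent below each element, I must manufacture $a \wedge b$ for arbitrary $a, b$. The natural guess, motivated by Lemma~\ref{lem:compatibility-meets}(2), is to consider $a \cdot \phi(a^{-1}b)$, or symmetrically $b \cdot \phi(b^{-1}a)$. The idea is that $\phi(a^{-1}b)$ should behave like the idempotent $\mathbf{d}$-part of the putative meet. I would set $z = a\phi(a^{-1}b)$ and check: first that $z \leq a$ (clear, since $\phi(a^{-1}b)$ is an idempotent); then that $z \leq b$ (this needs an argument — one computes $\mathbf{d}(z) = \phi(a^{-1}b)\mathbf{d}(a^{-1}b)\phi(a^{-1}b) = \phi(a^{-1}b)$ and uses that $a\phi(a^{-1}b) = \mathbf{r}(a)\cdot a\phi(a^{-1}b)$, relating it to $b$ via $\phi(a^{-1}b) \leq a^{-1}b$ so $a\phi(a^{-1}b) \leq aa^{-1}b = \mathbf{r}(a)b \leq b$, the last step using part (3) of Lemma~\ref{lem:po} since $\mathbf{r}(a) \leq \mathbf{r}(a)$). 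Finally, given any lower bound $w \leq a, b$, I must show $w \leq z$: from $w \leq a$ we get $w = a\mathbf{d}(w)$, and $\mathbf{d}(w)$ is an idempotent with $a\mathbf{d}(w) = w \leq b$, which forces $\mathbf{d}(w) \leq a^{-1}b$ (compute $a^{-1}w = a^{-1}a\mathbf{d}(w) = \mathbf{d}(a)\mathbf{d}(w)$, and since $w \leq b$ one gets $a^{-1}w \leq a^{-1}b$, then one extracts that $\mathbf{d}(w) \leq a^{-1}b$), hence $\mathbf{d}(w) \leq \phi(a^{-1}b)$ by the defining property of $\phi$, and therefore $w = a\mathbf{d}(w) \leq a\phi(a^{-1}b) = z$. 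The delicate points are the exact manipulations showing $\mathbf{d}(w) \leq a^{-1}b$ and $z \leq b$; I would lean on Lemma~\ref{lem:carol} and the isotonicity of multiplication throughout.

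For part (2), with $\phi$ now in hand, the remaining claims are comparatively routine and I would dispatch them quickly. That $\phi$ is order-preserving: if $a \leq b$ then $\phi(a) \leq a \leq b$ is an idempotent below $b$, so $\phi(a) \leq \phi(b)$. That $\phi$ is idempotent as a map and has $\mathsf{E}(S)$ as fixed-point set: if $e$ is an idempotent then $e$ is the largest idempotent below itself, so $\phi(e) = e$; conversely any fixed point is visibly an idempotent since $\phi$ takes values in $\mathsf{E}(S)$; and then $\phi(\phi(a)) = \phi(a)$ since $\phi(a) \in \mathsf{E}(S)$. Finally, the formulas $\phi(ae) = \phi(a)e$ and $\phi(ea) = e\phi(a)$ for $e \in \mathsf{E}(S)$: I would verify $\phi(a)e \leq ae$ and that it is an idempotent, then show any idempotent $f \leq ae$ satisfies $f \leq \phi(a)e$ — here one notes $f \leq ae \leq a$ so $f \leq \phi(a)$, and also $f = f\mathbf{d}(f) \leq ae$ gives, after right-multiplying/absorbing, $f \leq fe$ hence $f = fe \leq \phi(a)e$; uniqueness of the largest idempotent below $ae$ then yields $\phi(ae) = \phi(a)e$, and the other identity follows by applying inverses and Lemma~\ref{lem:po}(2) together with $\phi(s^{-1}) = \phi(s)$... though in fact it is cleanest to just repeat the symmetric argument directly.
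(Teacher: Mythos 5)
Your proposal is correct and follows essentially the same route as the paper: the forward direction via $\phi(a) = a \wedge \mathbf{d}(a)$, the converse by exhibiting the meet as a product of one element with $\phi$ applied to a suitable quotient-like term (you use $a\phi(a^{-1}b)$ where the paper uses the mirror-image $\phi(ab^{-1})b$), and part (2) by showing $\phi(a)e$ is the largest idempotent below $ae$. The only blemish is the parenthetical computation $\mathbf{d}(z) = \phi(a^{-1}b)\mathbf{d}(a^{-1}b)\phi(a^{-1}b)$, where $\mathbf{d}(a^{-1}b)$ should be $\mathbf{d}(a)$ --- but that aside is not needed, since your chain $z = a\phi(a^{-1}b) \leq aa^{-1}b \leq b$ already does the job.
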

\begin{proof} (1) Suppose first that $S$ has all binary meets.
For each $a \in S$ define $\phi (a) = a \wedge \mathbf{d}(a)$.
Clearly, $\phi (a)$ is an idempotent (because it is beneath an idempotent).
Let $e$ be an idempotent such that $e \leq a$.
Then $e \leq \mathbf{d}(a)$ and so $e \leq \phi (a)$.
We have therefore proved that if all binary meet exist, then the function $\phi$ exists.
Conversely, suppose that such a function $\phi$ exists.
Let $a,b \in S$ and consider the element $\phi (ab^{-1})b$.
Clearly, $\phi (ab^{-1})b \leq b$.
But, by definition, $\phi (ab^{-1}) \leq ab^{-1}$ and so  $\phi (ab^{-1})b \leq ab^{-1}b \leq a$.
Let $c \leq a,b$.
Then $cc^{-1} \leq ab^{-1}$ and so $cc^{-1} \leq \phi (ab^{-1})$.
Now $c \leq b$ and so $c = (cc^{-1})c \leq \phi (ab^{-1})b$.
It follows that $a \wedge b =  \phi (ab^{-1})b$.

(2) It is easy to prove that $\phi$ is order-preserving.
We prove that $\phi (ae) = \phi(a)e$ for all $e \in \mathsf{E}(S)$.
By definition $\phi (a) \leq a$.
Thus $\phi (a)e \leq ae$.
But $\phi (a)e$ is an idempotent.
It follows that $\phi (a)e \leq \phi (ae)$.
We have that $\phi (ae) \leq ae \leq a$.
But $\phi (ae)$ is an idempotent.
It follows that $\phi (ae) \leq \phi(a)$ and so $\phi (ae)e  \leq \phi (a)e$.
But $\phi (ae) \leq ae$, and so $\phi (ae)e = \phi (ae)$.
Thus $\phi (ae) \leq \phi (a)e$.
We have therefore proved that $\phi (ae) = \phi (a)e$. 
\end{proof}

The function $\phi$ is called a {\em fixed-point operator}.

\begin{example}{\em We can deduce right away that the symmetric inverse monoids are meet-monoids
because if $f \in \mathcal{I}(X)$ then we can define the idempotent
$1_{A}$ where $A$ is the set of all points that $f$ fixes
(and if $f$ does not fix any points then $A = \varnothing$.) 
}
\end{example}

The $E^{\ast}$-unitary semigroups also enjoy a property that is more significant than it looks.
The following was first noted by \cite{Lenz}.

\begin{proposition} 
An $E^{\ast}$-unitary inverse semigroup has meets of all pairs of elements.
\end{proposition}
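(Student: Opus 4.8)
The plan is to invoke the characterization of inverse meet-semigroups in Lemma~\ref{lem:meets}. By part (1) of that lemma, $S$ has all binary meets if and only if every element $a \in S$ has a \emph{largest} idempotent below it in the natural partial order, so it suffices to exhibit such an idempotent $\phi(a)$ for each $a$.

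First I would note that in any inverse semigroup with zero we have $0 \leq a$ for every $a$, since $0 = a \cdot 0 = a\mathbf{d}(0)$; thus the set of idempotents below $a$ is always nonempty. Next I would split into two cases. If $a \in \mathsf{E}(S)$, then by part (5) of Lemma~\ref{lem:po} every element below $a$ is an idempotent, so $a$ is itself the largest idempotent below $a$, and we take $\phi(a) = a$. If $a \notin \mathsf{E}(S)$, I claim the only idempotent below $a$ is $0$: for if $e \leq a$ with $e$ an idempotent and $e \neq 0$, then $0 \neq e \leq a$ forces $a$ to be an idempotent by the definition of $E^{\ast}$-unitary, a contradiction. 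Hence in this case $\phi(a) = 0$ is the largest idempotent below $a$.

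In either case the idempotent $\phi(a)$ demanded by Lemma~\ref{lem:meets}(1) exists, so $S$ is a meet-semigroup, which is exactly the assertion.

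There is essentially no obstacle here beyond the one observation that drives everything: the $E^{\ast}$-unitary hypothesis forces the poset of idempotents lying below a non-idempotent element to collapse to $\{0\}$, which makes a largest such idempotent trivially available; combined with Lemma~\ref{lem:meets}, that is the whole proof.
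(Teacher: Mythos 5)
Your proof is correct, but it takes a genuinely different route from the paper's. The paper argues directly on the pair $s,t$: if some non-zero $u$ satisfies $u \leq s,t$, then $uu^{-1}$ is a non-zero idempotent below $st^{-1}$ (and dually for $s^{-1}t$), so the $E^{\ast}$-unitary hypothesis forces $st^{-1}$ and $s^{-1}t$ to be idempotents, whence $s \sim t$ and Lemma~\ref{lem:compatibility-meets} supplies the meet; if $0$ is the only common lower bound, then $s \wedge t = 0$ by definition. You instead reduce everything to the single-element criterion of Lemma~\ref{lem:meets}(1), observing that $E^{\ast}$-unitarity collapses the set of idempotents below a non-idempotent element to $\{0\}$ (and that $0 \leq a$ always holds, via Lemma~\ref{lem:carol}), so the fixed-point operator exists trivially: $\phi(a) = a$ for idempotent $a$ and $\phi(a) = 0$ otherwise. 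Both arguments are sound. The paper's version is self-contained at the level of the pair and makes visible exactly when the meet is non-zero, tying the result to the compatibility relation; yours is shorter once Lemma~\ref{lem:meets} is available, and if one unwinds the formula $a \wedge b = \phi(ab^{-1})b$ from the proof of that lemma, it recovers the same dichotomy (the meet is $ab^{-1}b$ when $ab^{-1}$ is an idempotent, and $0$ otherwise) as a corollary.
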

\begin{proof}
Let $s$ and $t$ be any pair of elements.
Suppose that there exists a non-zero element $u$ such that $u \leq s,t$.
Then $uu^{-1} \leq st^{-1}$ and $uu^{-1}$ is a non-zero idempotent.
Thus $st^{-1}$ is an idempotent.
Similarly $s^{-1}t$ is an idempotent.
It follows that $s \wedge t$ exists by Lemma~\ref{lem:compatibility-meets}.
If the only element below $s$ and $t$ is $0$ then $s \wedge t = 0$.
\end{proof}

To conclude this section, we state another deep result about finite inverse monoids.
We need a definition first.
An inverse monoid $S$ is said to be {\em $F$-inverse} if every $\sigma$-class contains a greatest element.

\begin{lemma}\label{lem:Finverse} 
Every $F$-inverse monoid is $E$-unitary.
\end{lemma}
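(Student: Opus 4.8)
The plan is to take an arbitrary $F$-inverse monoid $S$, suppose that $e \leq s$ with $e$ an idempotent, and deduce that $s$ is itself an idempotent. The key observation is that $F$-inverse means each $\sigma$-class has a greatest element; I want to show that the greatest element of the $\sigma$-class of $1$ (the identity) is exactly the identity $1$, and then use the fact that $e \leq s$ forces $s \, \sigma \, e \, \sigma \, 1$, so $s$ lies in that $\sigma$-class and hence $s \leq 1$. By part (5) of Lemma~\ref{lem:po}, anything below an idempotent is an idempotent, so $s \leq 1$ gives that $s$ is an idempotent, which is what $E$-unitary demands.

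More concretely, first I would let $m$ denote the greatest element of the $\sigma$-class $\sigma(1)$. Since all idempotents lie in a single $\sigma$-class (this is used in the proof of Theorem~\ref{them:marin}(2)), and $1$ is an idempotent, that class is $\sigma(1) = \mathsf{E}(S)^{\uparrow}$ in the sense that it contains every idempotent; in particular every idempotent is $\leq m$. Now I claim $m = 1$. The cleanest route: $m^{-1} \, \sigma \, m$ by Theorem~\ref{them:marin}(1) (or directly, since $m \, \sigma \, 1 \, \sigma \, m^{-1}$ as $1^{-1}=1$), so $m^{-1}$ also lies in $\sigma(1)$ and hence $m^{-1} \leq m$; but then $m^{-1} = (m^{-1})^{-1} \cdot$ — more simply, $m^{-1} \leq m$ together with $m \leq m^{-1}$ (obtained by applying part (2) of Lemma~\ref{lem:po} to $m^{-1} \leq m$) and antisymmetry of the natural partial order give $m = m^{-1}$. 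Then $m$ is its own inverse; combined with $m \leq 1$... wait — actually the point is $m = m^{-1}$ makes $m$ an element equal to its inverse, and $m \cdot m = m \cdot m^{-1} = \mathbf{r}(m)$ is an idempotent, and also $m \cdot m \leq 1 \cdot 1 = 1$ so this is consistent; I then need one more step to pin $m = 1$. That step: $\mathbf{d}(m) \leq m$ (as $\mathbf{d}(m)$ is an idempotent, hence in $\sigma(1)$, hence $\leq m$), but also $m \leq 1$ means $m = m\mathbf{d}(m) = \mathbf{d}(m)\cdot$ — so $m$ is an idempotent below the idempotent $m$, and being the greatest idempotent it equals the identity only if $1$ is an idempotent of $S$, which it is; since $1 \in \sigma(1)$ and $m$ is the greatest element, $1 \leq m$, and since $m$ is an idempotent we get $m \leq 1$ too (every idempotent is $\leq 1$), so $m = 1$.

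Having established $m = 1$, the finish is quick: if $e \leq s$ with $e$ idempotent, then by definition of $\sigma$ we have $e \, \sigma \, s$, and since $e$ is an idempotent, $e \, \sigma \, 1$, so $s \in \sigma(1)$. As $m = 1$ is the greatest element of $\sigma(1)$, we get $s \leq 1$, and then part (5) of Lemma~\ref{lem:po} tells us $s$ is an idempotent. Hence $S$ is $E$-unitary.

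The main obstacle I anticipate is the argument that the top element $m$ of the identity's $\sigma$-class actually equals $1$ — the antisymmetry/self-inverse juggling above needs to be arranged carefully, and there may be a slicker one-line argument (for instance: $1 \leq m$ since $1 \in \sigma(1)$; and $m \leq 1$ would follow immediately if we knew $m$ is an idempotent, which we get because $m^{-1} \leq m$ combined with $m \leq m^{-1}$ forces $m = m^{-1}$, whence $m^2 = mm^{-1}$ is idempotent and, being $\sigma$-related to $1$ and below $m$... ). I would look for the cleanest phrasing, but the structural skeleton — top of $\sigma(1)$ is $1$, therefore $e \leq s \Rightarrow s \in \sigma(1) \Rightarrow s \leq 1 \Rightarrow s$ idempotent — is robust.
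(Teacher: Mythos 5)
Your overall skeleton is exactly the paper's: show that the maximum element of $\sigma(1)$ is $1$ itself, then note that $e \leq s$ forces $s \in \sigma(e) = \sigma(1)$, hence $s \leq 1$, hence $s$ is idempotent by part (5) of Lemma~\ref{lem:po}. The final stage of your argument is fine. The problem is the middle stage, where you try to pin down $m = 1$: as written it does not close. You correctly obtain $1 \leq m$ (since $1 \in \sigma(1)$ and $m$ is the greatest element of that class), and you correctly obtain $m = m^{-1}$, but the step ``since $m$ is an idempotent we get $m \leq 1$'' is unjustified --- nothing you have proved up to that point shows $m$ is an idempotent, and $m = m^{-1}$ alone does not give this (a self-inverse element of an inverse semigroup need not be idempotent; think of an involution in a group). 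The surrounding sentences are circular: you invoke ``$m \leq 1$'' to argue that $m$ is an idempotent, and then invoke ``$m$ is an idempotent'' to argue that $m \leq 1$.

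The repair is the one-line argument you suspected exists, and it is what the paper does: from $1 \leq m$ and the definition of the natural partial order ($s \leq t$ iff $s = ts^{-1}s$) you get $1 = m \cdot 1^{-1} \cdot 1 = m$ directly, because $1$ is the monoid identity. No detour through $m = m^{-1}$ or through idempotency of $m$ is needed. With that substitution your proof becomes the paper's proof.
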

\begin{proof} Let $S$ be an $F$-inverse monoid.
Suppose that $e \leq a$.
Then $\sigma (e) = \sigma (a)$.
Each $\sigma$-class contains a maximum element.
Since we are working in a monoid, $\sigma (e) = \sigma (1)$.
Let the maximum element in $\sigma (1)$ be $x$.
Then $1 \leq x$.
It follows that $1 = x1 = x$.
Thus the maximum element of $\sigma (1)$ is $1$ itself.
Thus $a \leq 1$ and so $a$ is an idempotent.
\end{proof}

You can find out a lot more about $F$-inverse monoids, here \cite{AKS}.
In \cite[Theorem 2.7]{ABO}, the following deep theorem is proved
and the background to it explained.

\begin{theorem} Every finite inverse monoid
has a finite $F$-inverse monoid cover.
\end{theorem}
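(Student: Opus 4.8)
The plan is to reduce the problem, in two steps, to a question about finite quotients of free groups, and then to invoke deep results on the profinite topology. Recall that a \emph{cover} of an inverse monoid $S$ means a surjective, idempotent-separating homomorphism $\theta \colon T \rightarrow S$; by parts (2) and (3) of Lemma~\ref{lem:hm-inverse} such a $\theta$ restricts to an isomorphism $\mathsf{E}(T) \rightarrow \mathsf{E}(S)$, and an \emph{$F$-inverse cover} is such a $\theta$ with $T$ an $F$-inverse monoid. The first reduction is the following reformulation. Call a map $\psi \colon H \rightarrow S$ from a group $H$ a \emph{prehomomorphism} if $\psi(1) = 1$, $\psi(h^{-1}) = \psi(h)^{-1}$ and $\psi(g)\psi(h) \leq \psi(gh)$ for all $g,h \in H$, and say $\psi$ \emph{dominates} $S$ if every $s \in S$ satisfies $s \leq \psi(h)$ for some $h$. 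I claim $S$ has a finite $F$-inverse cover if and only if some finite group $H$ admits a prehomomorphism $\psi \colon H \rightarrow S$ dominating $S$. For the non-trivial direction, put
\[
M(\psi) = \{(s,h) \in S \times H : s \leq \psi(h)\}, \qquad (s,h)(t,k) = (st,hk);
\]
this is well-defined because $s \leq \psi(h)$, $t \leq \psi(k)$ give $st \leq \psi(h)\psi(k) \leq \psi(hk)$ by part (3) of Lemma~\ref{lem:po} and the prehomomorphism inequality. One checks that $(s,h)^{-1} = (s^{-1},h^{-1})$ (using part (2) of Lemma~\ref{lem:po}), that $M(\psi)$ is an inverse monoid with identity $(1,1)$ and $\mathsf{E}(M(\psi)) = \{(e,1) : e \in \mathsf{E}(S)\}$, and that $(s,h) \leq (t,k)$ iff $h = k$ and $s \leq t$; hence each $\sigma$-class of $M(\psi)$ has the form $\{(s,h) : s \leq \psi(h)\}$ for fixed $h$, with maximum $(\psi(h),h)$, so $M(\psi)$ is $F$-inverse and is finite since $H$ is. Then $(s,h) \mapsto s$ is a homomorphism onto $S$ (surjectivity is precisely domination) and is idempotent-separating by the description of $\mathsf{E}(M(\psi))$. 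Conversely, given a finite $F$-inverse cover $\theta \colon T \rightarrow S$, let $H = T/\sigma$ and let $\psi$ send the $\sigma$-class $h$ to $\theta(m_h)$, $m_h$ its greatest element; $m_g m_h \leq m_{gh}$, isotonicity of $\theta$ (part (6) of Lemma~\ref{lem:po}) and surjectivity make $\psi$ a prehomomorphism dominating $S$, using that $1_T$ is the greatest element of its own $\sigma$-class exactly as in the proof of Lemma~\ref{lem:Finverse}.

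The second step reduces the existence of a dominating prehomomorphism to a statement about free groups. Since $S$ is finite it is generated, as an inverse monoid, by a finite set $A$; write $\pi \colon (A \cup A^{-1})^{*} \rightarrow S$ for the induced surjection and let $FG(A)$ be the free group on $A$. Evaluating in $S$ the \emph{reduced} word representing a free group element gives a map $\delta_{0} \colon FG(A) \rightarrow S$; cancelling a syllable $aa^{-1}$ only inserts the idempotent $\pi(a)\pi(a)^{-1}$, and using Lemma~\ref{lem:pass-through} to move that idempotent aside one finds that $\delta_{0}$ is a prehomomorphism dominating $S$ — but $FG(A)$ is infinite. To finitise, one seeks a normal subgroup $N$ of finite index in $FG(A)$ such that the pushed-forward map
\[
\psi(wN) = \bigvee \{\delta_{0}(w') : w' \in wN\}
\]
is well-defined, i.e. each set $\{\delta_{0}(w') : w' \in wN\}$ is a compatible subset of $S$ admitting a least upper bound. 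When such an $N$ exists, $H = FG(A)/N$ with this $\psi$ is a finite group carrying a dominating prehomomorphism into $S$ (that joins behave well under the operations makes $\psi$ a prehomomorphism; domination is inherited from $\delta_{0}$), so the first step produces the desired finite $F$-inverse cover $M(\psi)$.

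Everything therefore comes down to producing such an $N$, and this is the hard part. Because $S$ is finite, "compatible subset admitting a join" is a finitary condition, and unwinding it shows that the constraint on $N$ is that it be contained in finitely many subsets of $FG(A)$ — built from the sets $\delta_{0}^{-1}(X)$, $X \subseteq S$, and ultimately from products of finitely generated subgroups — each containing $1$ and open in the profinite topology on $FG(A)$. A finite-index normal subgroup lying inside a prescribed open neighbourhood of $1$ is then available precisely because the relevant subsets of a free group are \emph{closed} in the profinite topology; this is the content of the Ribes--Zalesskii theorem that a product of finitely many finitely generated subgroups of a free group is profinitely closed, equivalently of Ash's theorem on the Rhodes conjecture --- the same circle of ideas as Theorem~\ref{them:ash-deep-theorem} above --- together with the additional combinatorial group theory needed to satisfy all of the finitely many constraints inside a single quotient.

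The main obstacle is this last step. The two reductions are essentially bookkeeping with the natural partial order, with joins, and with prehomomorphisms; the genuinely hard content is the assertion that the profinite topology on a free group is fine enough to realise all of the (finitely many) compatibility-and-join constraints simultaneously inside one finite quotient, and it is there that the deep results on the profinite topology of free groups must be brought in. This is why the finite $F$-inverse cover problem resisted solution for so long.
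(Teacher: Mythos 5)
The paper does not prove this theorem at all: it is stated as a deep result and attributed to \cite{ABO} (Auinger--Bitterlich--Otto, the solution of the Henckell--Rhodes problem), so there is no ``paper's proof'' to compare against line by line. Your first reduction is the standard and correct one: a finite inverse monoid has a finite $F$-inverse cover if and only if some finite group admits a dominating prehomomorphism into it, and your $M(\psi)$ construction, together with the converse via $T/\sigma$ and the maximum elements of $\sigma$-classes, is sound bookkeeping with the natural partial order. That part is fine.

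The genuine gap is the last step, and it is not merely that you leave the hard part to a citation --- it is that you cite the \emph{wrong} deep theorems. You assert that the needed property of the profinite topology on the free group ``is the content of the Ribes--Zalesskii theorem \ldots equivalently of Ash's theorem on the Rhodes conjecture.'' It is not. Ash's theorem (Theorem~\ref{them:ash-deep-theorem} and its stronger form on inevitable graphs) and the Ribes--Zalesskii closedness of products of finitely generated subgroups were both available from around 1990, and the finite $F$-inverse cover problem remained open for roughly three more decades precisely because those results do \emph{not} suffice: the compatibility-and-join constraints you extract do not reduce to the closedness of finitely many products of finitely generated subgroups. The known correct reductions lead instead to a question about finite groups acting on finite graphs with prescribed acyclicity properties, and the actual proof in \cite{ABO} rests on Otto's model-theoretic construction of finite hypergraph covers of arbitrary acyclicity --- a genuinely new ingredient, not a corollary of the profinite-topology circle of ideas. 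So as written your argument terminates in an appeal to theorems that do not deliver the required conclusion, and the theorem is not proved.
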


The implications of this theorem for finite inverse monoid theory are explained in \cite{Lawson1994}, 
but it is enough to say that this is another example of a beautiful piece of mathematics
and is related to Ash's deep result Theorem~\ref{them:ash-deep-theorem}.

\section{Inverse semigroups as non-commutative lattices}

It is only a slight exaggeration to say that in the four decades after inverse semigroups
were introduced the main focus of researchers was on the purely algebraic properties of inverse semigroups.\footnote{This is a tendancy rather than an absolute. 
For example, the papers \cite{Rinow} and \cite{Schein} deal with the natural partial order.}
Subsequently, the properties of the natural partial order have come to the fore.
In this section, we shall study inverse semigroups with respect to this natural partial order.
From this point of view, inverse semigroups can themselves be regarded as `non-commutative meet semilattices'.
However, we shall find it fruitful to regard certain inverse semigroups
as being `non-commutative lattices' of various kinds, always with the proviso
that, because of Lemma~\ref{lem:bounded-above}, the join will not always be defined.
Meets of idempotents will always usually be denoted by concatenation.
Specifically, we shall take as our points of departure the various classes of lattice (each with top $1$ and bottom $0$):
a {\em frames} are the complete infinitely distributive lattices;
{\em distributive lattices} have all binary meets and binary joins
with binary meets distributing over binary joins and vice-versa;
{\em Boolean algebras} are those distibutive lattices in which each element $x$ has a {\em complement} $\bar{x}$
such that $x\bar{x} = 0$ and $x \vee \bar{x} = 1$.
Observe that homomorphisms of distributive lattices
automatically preserve complements.
Boolean algebras are particularly important
so it will be useful to have a way of defining them
which involves only products and complements.
For the standard axioms for a Boolean algebra see \cite[Chapter 2]{LawsonLog}.
The following lemma contains some axioms due to Frink  \cite{Frink}.
If you have trouble proving that this really is a Boolean algebra, see \cite{Pad}.

\begin{lemma}\label{lem:Boolean algebras} Consider the following structure
$(B, \cdot, a \mapsto \bar{a}, 0)$
where $(B,\cdot)$ is a commutative band and $ab = a$ if and only if $a\bar{b} = 0$.
Define $a + b = \overline{(\bar{a} \cdot \bar{b})}$.
Then $(B,\cdot,+,0,\bar{0})$ is a Boolean algebra.
\end{lemma}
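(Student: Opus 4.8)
The plan is to read the given data as an order structure and then check the Boolean-algebra axioms in a fixed order, squeezing everything out of the single biconditional. By Lemma~\ref{lem:commutative-band}, the commutative band $(B,\cdot)$ carries the partial order $a \leq b \Leftrightarrow ab = a$, for which it is a meet-semilattice with $a \wedge b = ab$; under this order the hypothesis ``$ab = a$ iff $a\bar b = 0$'' becomes simply
$$a \leq b \Leftrightarrow a\bar b = 0 .$$
First I would collect the formal consequences. Taking $b = a$ gives $a\bar a = 0$ for all $a$ (since $aa = a$); hence $0\cdot a = (a\bar a)a = a\bar a = 0$, so $0$ is the least element. And if $\bar a = \bar b$ then $x \leq a \Leftrightarrow x\bar a = 0 \Leftrightarrow x\bar b = 0 \Leftrightarrow x \leq b$ for every $x$, so $a$ and $b$ have the same down-sets and therefore $a = b$; thus $a \mapsto \bar a$ is injective.

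The first step that is not purely formal is the involution law $\bar{\bar a} = a$. I would get it in three moves: the biconditional at the pair $(\bar{\bar a}, a)$ together with $\bar a\,\bar{\bar a} = 0$ (the identity $x\bar x = 0$ at $x = \bar a$) gives $\bar{\bar a} \leq a$; next, $a \mapsto \bar a$ is antitone, because $u \leq v$ forces $\bar{\bar u} \leq u \leq v$, hence $\bar{\bar u}\,\bar v = 0$, hence $\bar v \leq \bar u$; and combining $\bar{\bar a} \leq a$ with antitonicity yields $\bar a \leq \bar{\bar{\bar a}} \leq \bar a$, so $\overline{\bar{\bar a}} = \bar a$, and injectivity of the bar strips the outer one. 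With the involution in hand, $\bar 0$ is the top (as $a \leq \bar 0 \Leftrightarrow a\cdot 0 = 0$, which holds), and $a + b := \overline{\bar a\,\bar b}$ is recognised as the least upper bound of $a$ and $b$: an order-reversing bijection of the poset carries the meet $\bar a \wedge \bar b = \bar a\bar b$ to the join of $\bar{\bar a} = a$ and $\bar{\bar b} = b$. Thus $(B,\cdot,+,0,\bar 0)$ is already a bounded lattice, and $a\bar a = 0$ together with $a + \bar a = \overline{\bar a\,\bar{\bar a}} = \overline{\bar a\,a} = \bar 0$ shows that $\bar a$ is a complement of $a$.

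The main obstacle is distributivity: a uniquely complemented lattice need not be distributive, so this cannot be formal and must come from the biconditional itself. I would prove $a(b+c) \leq ab + ac$ (the opposite inequality holds in every lattice), writing $u = a(b+c)$. One checks $u \wedge b = ab$ (since $(b+c)b = b$) and $u \wedge \bar b \leq ac$ (since $(b+c)\bar b\cdot\bar c = \overline{\bar b\bar c}\cdot(\bar b\bar c) = 0$, so $(b+c)\bar b \leq c$ by the biconditional); hence $(u\wedge b)\vee(u\wedge\bar b) \leq ab + ac$, and it remains to see $u \leq (u\wedge b)\vee(u\wedge\bar b)$. By the biconditional and the involution this is the identity $u\cdot\overline{ub}\cdot\overline{u\bar b} = 0$; using $\overline{ub} = \bar u + \bar b$ and $\overline{u\bar b} = \bar u + b$ (immediate from the definition of $+$ and the involution), two further applications of the biconditional give $u(\bar u + \bar b) \leq \bar b$ and $\bar b(\bar u + b) \leq \bar u$ — each time because a factor of the form $x\bar x$ materialises — so the triple product lies below $\bar u$ as well as below $u$, hence below $u\bar u = 0$. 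This yields one distributive law; the other follows by applying the bar. Finally, a bounded distributive lattice in which every element has a complement is a Boolean algebra, with $+$ and $a \mapsto \bar a$ as its join and complement, which is what was to be shown; the remaining routine verifications can be found in \cite{Pad}.
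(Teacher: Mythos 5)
Your proof is correct. Note that the paper itself does not prove this lemma: it explicitly defers the verification to Padmanabhan \cite{Pad} (and, per the introduction, results stated without proof are left as exercises), so there is no in-text argument to compare yours against. Your route is a clean order-theoretic one: you convert the single biconditional into ``$a\leq b$ iff $a\bar{b}=0$'' via Lemma~\ref{lem:commutative-band}, extract $a\bar{a}=0$, least element, injectivity and antitonicity of the bar, the involution $\bar{\bar{a}}=a$, and hence that $+$ is the join transported through the anti-isomorphism. The one genuinely non-formal step, distributivity, is where such arguments usually founder (complemented lattices need not be distributive), and you handle it correctly: the identities $u\cdot\overline{ub}\cdot b=(ub)\overline{ub}=0$ and $\bar{b}\cdot\overline{u\bar{b}}\cdot u=(u\bar{b})\overline{u\bar{b}}=0$ do force $u\cdot\overline{ub}\cdot\overline{u\bar{b}}$ below both $u$ and $\bar{u}$, giving $u\leq ub+u\bar{b}$, and the rest assembles into $a(b+c)=ab+ac$. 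This is essentially a self-contained version of the Frink--Padmanabhan verification, somewhat longer than the first-order equational proof in \cite{Pad} but more transparent about where each axiom is used; the only cosmetic quibble is that the displayed chain in your distributivity step silently uses monotonicity of the product in a meet-semilattice, which you should state once since you use it twice.
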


We call these {\em Frink's axioms for a Boolean algebra}.

We begin with frames.
The lattice of open sets of a topological space is an example of a frame
The theory of frames can be viewed as the theory of topological spaces in which the open sets, and not the points, are taken as primary.
As well as being an interesting theory in its own right \cite{J} with important applications, it is also a key ingredient in topos theory \cite{MM}. 
Johnstone discusses the origins of frame theory in his book on this subject \cite[Chapter II]{J}.
One sentence is significant for the goals of this section.
He writes on page~76:
\begin{quote}
It was Ehresmann \ldots and his student B\'enabou \ldots who first took the decisive step in regarding complete
Heyting algebras as `generalized topological spaces'.
\end{quote}
However, Johnstone does not say {\em why} Ehresmann was led to his frame-theoretic viewpoint of topological spaces.
In fact, it was Ehresmann's paper \cite{Ehresmann1957}, 
which we have cited above as being one of the origins of inverse semigroup theory,
which led to the theory of frames.
Ehresmann was interested in pseudogroups of transformations.
Amongst those are the full transformation pseudogroups $\mathcal{I}(X,\tau)$
of homeomorphisms between the open subsets of $\tau$.
The idempotents of such pseudogroups are the identity functions defined on the 
open subsets of $\tau$.
Of course, these form frames.
More generally, 
we define a {\em pseudogroup} to be an inverse semigroup in which all compatible subsets have joins
and in which multiplication distributes over such joins.
The semilattice of idempotents of a pseudogroup is a frame and so we can regard
pseudogroups as being non-commutative generalizations of frames.
Using this language, we can say that Schein \cite{Schein} proved that associated with every inverse semigroup 
is a universal pseudogroup.
Pseudogroups themselves are the subject of \cite{Resende2007}. 

We can continue in this vein, but consider instead only finite joins.
We say that an inverse monoid is {\em distributive} if it has all finite joins
and multiplication distributes over such joins.
The semilattice of idempotents of a distributive inverse monoid is a distributive lattice.
We can therefore regard distributive inverse monoids as being non-commutative distributive lattices.
Distributive inverse monoids were first studied in \cite{KM}.

An inverse monoid is said to be {\em Boolean} if it is distributive and its semilattice of idempotents
is in fact a Boolean algebra.
We can therefore regard Boolean inverse monoids as being non-commutative Boolean algebras.
Boolean inverse monoids have shown themselves to be particularly interesting.

The relationships between these various classes of inverse semigroup is described in \cite{LL}.
Boolean inverse monoids were introduced in \cite{Lawson2010},
and the fact that associated with every inverse semigroup is a universal Boolean inverse semigroup
is proved in \cite{Lawson2020a}.
Associated with every Boolean inverse monoid is its {\em type monoid}:
this is always an abelian refinement monoid \cite{KLLR}.
The type monoid is then studied in great detail in \cite{Wehrung}.
The following table summarizes this whole approach to inverse semigroups:

\vspace{0.5cm}
\begin{center}
\begin{tabular}{|c||c|}\hline
{\bf Commutative} & {\bf Non-commutative}  \\ \hline \hline
{\small Meet semilattice} & {\small Inverse semigroup} \\ \hline
{\small Frame} & {\small Pseudogroup}  \\ \hline
{\small Distributive lattice} & {\small Distributive inverse monoid}  \\ \hline 
{\small Boolean algebra} & {\small Boolean inverse monoid} \\ \hline
\end{tabular}
\end{center}
\vspace{0.5cm}

In the remainder of this section, we shall be particularly interested in Boolean inverse monoids,
but we shall prove some results in greater generality.
Our first results hold in any inverse semigroup

\begin{lemma}\label{lem:stuff} Let $S$ be an inverse semigroup.
\begin{enumerate}

\item Here, we shall need the fact that the inverse semigroup has a zero.
Suppose that $a,b \leq c$.
If 
$\mathbf{d}(a) \perp \mathbf{d}(b)$ 
then
$\mathbf{r}(a) \perp \mathbf{r}(b)$.

\item If $a \wedge b$ exists then $c(a \wedge b) = ca \wedge cb$ and $(a \wedge b)c = ac \wedge bc$.

\end{enumerate}
\end{lemma}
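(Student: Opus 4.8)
The plan is to prove both parts by exploiting Lemma~\ref{lem:bounded-above} together with the meet formulas from Lemma~\ref{lem:compatibility-meets}, since in both parts the elements in question are bounded above (in (1), $a,b \leq c$; in (2), $a \wedge b \leq a,b$ so $a,b$ have a common upper bound, namely any common upper bound — but more simply $a \wedge b$ itself serves as a lower bound and we can work directly).

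For part (1), suppose $a,b \leq c$ and $\mathbf{d}(a) \perp \mathbf{d}(b)$, i.e.\ $\mathbf{d}(a)\mathbf{d}(b) = 0$, equivalently $a^{-1}a b^{-1}b = 0$. By Lemma~\ref{lem:bounded-above}, $a^{-1}b$ and $ab^{-1}$ are idempotents. The key computation is to show $\mathbf{r}(a)\mathbf{r}(b) = aa^{-1}bb^{-1} = 0$. I would write $aa^{-1}bb^{-1} = (ab^{-1})(ba^{-1})$ — wait, that needs $b^{-1} \cdot b$ in the middle, so more carefully: since $ab^{-1}$ is idempotent it equals its own inverse, hence $ab^{-1} = (ab^{-1})^{-1} = ba^{-1}$ by Lemma~\ref{lem:inverse-of-product} and Lemma~\ref{lem:inverse-of-idempotent}. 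Then $aa^{-1}bb^{-1} = (ab^{-1})^2 \cdot$(something)? Let me instead use: $ab^{-1}$ idempotent means $ab^{-1} = ab^{-1}ab^{-1}$, so inserting $\mathbf{d}(a) = a^{-1}a$ and $\mathbf{d}(b) = b^{-1}b$ appropriately via $a = a\mathbf{d}(a)$, $b = \mathbf{r}(b)b$, I can route the product $aa^{-1}bb^{-1}$ through $\mathbf{d}(a)\mathbf{d}(b)$. Concretely, since $a \leq c$ we have $a = \mathbf{r}(a)c$ and $b = \mathbf{r}(b)c$ by Lemma~\ref{lem:carol}, and also $a = c\mathbf{d}(a)$, $b = c\mathbf{d}(b)$; then $aa^{-1} = ab^{-1}ba^{-1}$? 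I expect the cleanest route is: from $a,b \leq c$ we get $ab^{-1} = aa^{-1}$ is false in general, but $ab^{-1}$ \emph{is} an idempotent $\leq cc^{-1}$, and $ab^{-1} = (c\mathbf{d}(a))(c\mathbf{d}(b))^{-1} = c\mathbf{d}(a)\mathbf{d}(b)c^{-1} = c \cdot 0 \cdot c^{-1} = 0$. So $ab^{-1} = 0$, hence $\mathbf{r}(a)\mathbf{r}(b) = aa^{-1}bb^{-1} = a(a^{-1}b)b^{-1}$; and $a^{-1}b = (c\mathbf{d}(a))^{-1}(c\mathbf{d}(b)) = \mathbf{d}(a)c^{-1}c\mathbf{d}(b) = \mathbf{d}(a)\mathbf{d}(c)\mathbf{d}(b)$, and since $\mathbf{d}(a) \leq \mathbf{d}(c)$ (as $a \leq c$), this is $\mathbf{d}(a)\mathbf{d}(b) = 0$. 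Thus $\mathbf{r}(a)\mathbf{r}(b) = a \cdot 0 \cdot b^{-1} = 0$, giving $\mathbf{r}(a)\perp\mathbf{r}(b)$. So the main step is just writing $a = c\mathbf{d}(a)$ and $b = c\mathbf{d}(b)$ and computing; the hypothesis that $c$ is an upper bound is what makes this work.

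For part (2), I would use the explicit description of the meet. The inequalities $c(a \wedge b) \leq ca$ and $c(a\wedge b) \leq cb$ follow immediately from part (3) of Lemma~\ref{lem:po} (the natural partial order is compatible with multiplication), so $c(a\wedge b)$ is a lower bound for $\{ca, cb\}$; symmetrically for $(a\wedge b)c$. The content is that it is the \emph{greatest} lower bound. For this I would invoke Lemma~\ref{lem:compatibility-meets}: since $a \wedge b$ exists, $a \sim b$, and then $ca \sim cb$ (one should check compatibility is preserved under left multiplication — $a^{-1}b \in \mathsf{E}(S)$ gives $(ca)^{-1}(cb) = a^{-1}c^{-1}cb = a^{-1}\mathbf{d}(c)b$, and I'd massage this using Lemma~\ref{lem:pass-through} / commuting idempotents to an idempotent; similarly $(ca)(cb)^{-1} = ca\mathbf{d}(c)? $ no — $(ca)(cb)^{-1} = cab^{-1}c^{-1} = c(ab^{-1})c^{-1}$, a conjugate of an idempotent, hence idempotent by Lemma~\ref{lem:conjugation-of-idpts}). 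So $ca \sim cb$, whence $ca \wedge cb$ exists and by part (2) of Lemma~\ref{lem:compatibility-meets} equals $\mathbf{r}(ca)(cb)$ — but it is cleaner to compare directly: $ca \wedge cb = (cb)\mathbf{d}(ca)$ and also $c(a\wedge b) = c \cdot t\mathbf{d}(s)$ with $s=a,t=b$, i.e.\ $= cb\mathbf{d}(a)$; then show $\mathbf{d}(ca) \cdot \mathbf{d}(a) = \mathbf{d}(ca)$ and $\mathbf{d}(a)\mathbf{d}(ca)? $... Actually the slickest finish: let $x \leq ca, cb$; I want $x \leq c(a\wedge b)$. Hmm, this is the genuinely fiddly direction. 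The cleaner approach: by Lemma~\ref{lem:compatibility-meets}(2), $c(a\wedge b) = c \cdot \mathbf{r}(a)b = c\mathbf{r}(a)b$; meanwhile $ca \wedge cb = \mathbf{r}(ca)(cb) = cac^{-1}? $ no, $\mathbf{r}(ca) = ca a^{-1}c^{-1} = c\mathbf{r}(a)c^{-1}$, so $ca\wedge cb = c\mathbf{r}(a)c^{-1}cb = c\mathbf{r}(a)\mathbf{d}(c)b$. And $c\mathbf{r}(a)b = c\mathbf{r}(a)\mathbf{r}(c)b$? — since $\mathbf{r}(a) \leq \mathbf{r}(c)$ when... no wait, $a \leq c$ need not hold here! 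In part (2) there is no hypothesis $a,b \leq c$; $c$ is arbitrary. So I must be careful: $c$ is just any element. Then $c(a\wedge b) = c\mathbf{r}(a)b$ using $a\wedge b = \mathbf{r}(a)b$, and I need this to equal $ca \wedge cb$. Since $ca \sim cb$ (shown above), $ca \wedge cb = \mathbf{r}(ca)cb = c\mathbf{r}(a)c^{-1}cb = c\mathbf{r}(a)\mathbf{d}(c)b$. Now $\mathbf{r}(a)\mathbf{d}(c)$ vs $\mathbf{r}(a)$: we have $c\mathbf{r}(a) = caa^{-1} = ca\cdot a^{-1}$ and $c\mathbf{r}(a)\mathbf{d}(c) = ca a^{-1}c^{-1}c$; by Lemma~\ref{lem:pass-through}, $a^{-1}c^{-1}c = a^{-1}\mathbf{d}(c) = g a^{-1}$ for some idempotent $g$, giving $caa^{-1}\mathbf{d}(c) = c\cdot a(a^{-1}\mathbf{d}(c)) $ — I'll instead observe $c\mathbf{r}(a)\mathbf{d}(c)b = c\mathbf{r}(a)b$ iff $\mathbf{d}(c)$ can be absorbed, which holds because $c\mathbf{r}(a) = c\mathbf{r}(a)$ already "lands in" $c$'s range-side; precisely $c = \mathbf{r}(c)c = c\mathbf{d}(c)$, so $c\mathbf{r}(a)\mathbf{d}(c) = (c\mathbf{d}(c))\mathbf{r}(a)\mathbf{d}(c)$ and using commuting idempotents $\cdots$. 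I expect this absorption to come out cleanly; if not, the robust fallback is to prove the universal property of the meet directly: show $c(a\wedge b)$ is a lower bound (done) and that any lower bound $x$ of $ca, cb$ satisfies $x \leq c(a\wedge b)$, by noting $\mathbf{d}(x) \leq \mathbf{d}(ca), \mathbf{d}(cb)$ and lifting back along $c$ via Lemma~\ref{lem:trump}.

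The main obstacle, then, is entirely in part (2): part (1) is a two-line conjugation computation once one writes $a = c\mathbf{d}(a)$, $b = c\mathbf{d}(b)$, but part (2) requires either a careful bookkeeping of idempotent absorptions (using Proposition~\ref{prop:idempotents-commute} and Lemma~\ref{lem:pass-through} repeatedly) or a direct verification of the universal property of the greatest lower bound for $ca \wedge cb$. I would present part (2) via the universal-property route: the lower-bound direction is Lemma~\ref{lem:po}(3), and for maximality, given $x \leq ca$ and $x \leq cb$, I would use $x \leq ca$ to write $x = ca \cdot \mathbf{d}(x)$, deduce $c^{-1}x \leq a$ and $c^{-1}x \leq b$ (checking $c^{-1}x$ is $\leq$ each via Lemma~\ref{lem:carol}), hence $c^{-1}x \leq a \wedge b$, hence $c c^{-1} x \leq c(a\wedge b)$, and finally absorb $cc^{-1} = \mathbf{r}(c)$ using $x \leq ca$ which forces $\mathbf{r}(x) \leq \mathbf{r}(ca) \leq \mathbf{r}(c)$, so $\mathbf{r}(c)x = x$. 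The symmetric statement $(a\wedge b)c = ac \wedge bc$ then follows by applying the first to $S^{\mathrm{op}}$, or by a parallel argument using $\mathbf{r}$ in place of $\mathbf{d}$.
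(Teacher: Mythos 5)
Your final committed route is correct and is essentially the paper's own proof: part (1) by writing $a = c\mathbf{d}(a)$, $b = c\mathbf{d}(b)$ and computing $\mathbf{r}(a)\mathbf{r}(b)$ using commuting idempotents, and part (2) by the universal-property argument (lower bound from compatibility of $\leq$ with multiplication; maximality by passing from $x \leq ca, cb$ to $c^{-1}x \leq a \wedge b$ and absorbing $cc^{-1}$ via $x = ca\mathbf{d}(x)$). The exploratory detours through Lemma~\ref{lem:compatibility-meets} are unnecessary, but the argument you settle on is sound.
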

\begin{proof} (1) We are given that $a = c\mathbf{d}(a)$ and $b = c \mathbf{d}(b)$.
Using these equations and the fact that idempotents commute,
it is easy to check that $\mathbf{r}(a)\mathbf{r}(b) = 0$.

(2) We are given that $a \wedge b$ exists.
We prove that $ca \wedge cb$ exists and that  $(a \wedge b)c = ac \wedge bc$.
The proof of the other case is similar.
Observe that $c(a \wedge b) \leq ca,cb$.
Now let $y \leq ca,cb$.
Then $c^{-1}y \leq c^{-1}ca, c^{-1}cb$ and so $c^{-1}y \leq a,b$.
It follows that $c^{-1}y \leq a \wedge b$.
Thus $cc^{-1}y \leq c(a \wedge b)$.
But $y \leq ca$ and so $y = ca\mathbf{d}(y)$.
It follows that $cc^{-1}y = y$.
Thus $y \leq ca, cb$ implies that $y \leq c(a \wedge b)$.
It follows that $c(a \wedge b) = ca \wedge cb$
\end{proof}

We now specialize to distributive inverse monoids.
Result (2) below is what remains of one of the distributive laws.

\begin{lemma}\label{lem:domains} Let $S$ be a distributive inverse monoid.
\begin{enumerate}

\item Suppose that $a \sim b$.
Then
$\mathbf{d}{(a \vee b)} = \mathbf{d}(a) \vee \mathbf{d}(b)$ 
and 
$\mathbf{r}{(a \vee b)} = \mathbf{r}(a) \vee \mathbf{r}(b)$ 

\item Suppose that both $a \vee b$ and $c \wedge (a \vee b)$ exist.
Then both $c \wedge a$ and $c \wedge b$ exist, the join $(c \wedge a) \vee (c \wedge b)$ exists
and  $c \wedge (a \vee b) = (c \wedge a) \vee (c \wedge b)$.

\end{enumerate}
\end{lemma}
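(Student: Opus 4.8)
The plan is to handle the two parts separately, in each case turning the claimed (in)equalities into short computations with the distributive law for multiplication over finite compatible joins, combined with the identity $s \leq t \Leftrightarrow s = t\mathbf{d}(s)$ and the monotonicity of $\mathbf{d}$ and $\mathbf{r}$ noted in the proof of Lemma~\ref{lem:trump}.

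For part (1), note first that $a \vee b$ exists since $S$ is distributive and $a \sim b$, and that $\mathbf{d}(a) \vee \mathbf{d}(b)$ exists since any two idempotents are compatible. Because $a, b \leq a \vee b$ and $\mathbf{d}$ is monotone, $\mathbf{d}(a), \mathbf{d}(b) \leq \mathbf{d}(a \vee b)$, so $\mathbf{d}(a) \vee \mathbf{d}(b) \leq \mathbf{d}(a \vee b)$. For the reverse inequality I would compute $(a \vee b)(\mathbf{d}(a) \vee \mathbf{d}(b))$ by distributing twice, obtaining $a \vee a\mathbf{d}(b) \vee b\mathbf{d}(a) \vee b$; since $a\mathbf{d}(b) \leq a$ and $b\mathbf{d}(a) \leq b$, the two inner terms are absorbed, so this join is just $a \vee b$. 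Writing $s = a \vee b$ and $f = \mathbf{d}(a) \vee \mathbf{d}(b)$, from $sf = s$ we get $\mathbf{d}(s) = s^{-1}sf = \mathbf{d}(s)f$, i.e.\ $\mathbf{d}(a \vee b) \leq f$, hence equality. The statement for $\mathbf{r}$ then follows by applying the $\mathbf{d}$-statement to the pair $a^{-1} \sim b^{-1}$, using that inversion is an order-automorphism (part (2) of Lemma~\ref{lem:po}) so that $(a \vee b)^{-1} = a^{-1} \vee b^{-1}$, together with $\mathbf{r}(x) = \mathbf{d}(x^{-1})$.

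For part (2), set $m = c \wedge (a \vee b)$. The first task is the existence of $c \wedge a$. Since $m \leq a \vee b$ and $a \leq a \vee b$, the elements $m$ and $a$ have a common upper bound, so $m \sim a$ by Lemma~\ref{lem:bounded-above}, and $m \wedge a$ exists by Lemma~\ref{lem:compatibility-meets}. One then checks that an element lies below both $c$ and $a$ exactly when it lies below both $m$ and $a$ (using $m \leq c$, and that $x \leq c, a$ forces $x \leq a \vee b$, hence $x \leq m$), so $c \wedge a = m \wedge a$; symmetrically $c \wedge b$ exists. Both $c \wedge a$ and $c \wedge b$ are $\leq m$, so they have the common upper bound $m$, are therefore compatible, $(c \wedge a) \vee (c \wedge b)$ exists, and it is $\leq m$. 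For the reverse inequality, write $m = (a \vee b)\mathbf{d}(m) = a\mathbf{d}(m) \vee b\mathbf{d}(m)$ by distributivity; since $a\mathbf{d}(m) \leq a$ and $a\mathbf{d}(m) \leq m \leq c$, we get $a\mathbf{d}(m) \leq c \wedge a$, and likewise $b\mathbf{d}(m) \leq c \wedge b$, so $m \leq (c \wedge a) \vee (c \wedge b)$. Combining the two inequalities gives the desired identity.

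The step I expect to be the main obstacle is the existence claims in part (2): rather than trying to build $c \wedge a$ by hand, the clean route is to notice that $m = c \wedge (a \vee b)$ is automatically compatible with $a$ (they share the upper bound $a \vee b$), which puts us squarely in the hypotheses of Lemma~\ref{lem:compatibility-meets}; the one point needing care there is verifying that the set of lower bounds of $\{c,a\}$ really does coincide with that of $\{m,a\}$, so that the meet we obtain is the one we want. Everything after that is bookkeeping with distributivity and with $s \leq t \Leftrightarrow s = t\mathbf{d}(s)$.
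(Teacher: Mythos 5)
Your proof is correct and follows essentially the same route as the paper's: both parts come down to distributing multiplication over the compatible join together with the characterization $s\le t \Leftrightarrow s = t\mathbf{d}(s)$, and your $c\wedge a = m\wedge a$ is the paper's $(c\wedge(a\vee b))\mathbf{d}(a)$ in disguise (via part (2) of Lemma~\ref{lem:compatibility-meets}). The one place you genuinely streamline matters is the final inequality of part (2), where applying distributivity directly to $m=(a\vee b)\mathbf{d}(m)$ and bounding $a\mathbf{d}(m)\le c\wedge a$, $b\mathbf{d}(m)\le c\wedge b$ replaces the paper's longer computation with an arbitrary lower bound $x$ and its detour through Lemma~\ref{lem:stuff}.
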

\begin{proof} (1) We prove the first result; the proof of the second is similar.
We calculate $(a \vee b)^{-1}(a \vee b)$.
We first use the fact that $a \mapsto a^{-1}$ is an order-isomorphism.
It follows that 
$(a \vee b)^{-1}(a \vee b) 
=
(a^{-1} \vee b^{-1})(a \vee b)$.
Now multiply out to get
$$a^{-1}a \vee a^{-1}b \vee b^{-1}a \vee b^{-1}b.$$
But $a \sim b$ and so both $a^{-1}b$ and $b^{-1}a$ are idempotents.
Morover, $a^{-1}b \leq a^{-1}a$
and
$b^{-1}a \leq b^{-1}b$.
The result now follows.

(2) Let $x \leq c,a$.
Then $x \leq c \wedge (a \vee b)$. 
It follows that $x \mathbf{d}(a) \leq (c \wedge (a \vee b))\mathbf{d}(a)$. 
But $x \leq a$ implies that $x \mathbf{d}(a) = x$.
It follows that $x \leq (c \wedge (a \vee b))\mathbf{d}(a)$. 
On the other hand,
$(c \wedge (a \vee b))\mathbf{d}(a) \leq c,a$. 
It follows that
$$c \wedge a = (c \wedge (a \vee b))\mathbf{d}(a).$$
Similarly,
$$c \wedge b = (c \wedge (a \vee b))\mathbf{d}(b).$$
Observe that since $c \wedge a, c \wedge b \leq a \vee b$
it follows that $(c \wedge a) \sim (c \wedge b)$.
Thus $(c \wedge a) \vee (c \wedge b)$ exists.
Observe that
$(c \wedge a) \vee (c \wedge b) \leq c \wedge a, c \wedge b$.
It follows that  $(c \wedge a) \vee (c \wedge b) \leq c, a \vee b$.
Thus,  
$$(c \wedge a) \vee (c \wedge b) \leq c \wedge (a \vee b).$$
Let $x \leq a \vee b, c$.
Then $x = (a \vee b)\mathbf{d}(x)$. 
From $x \leq a \vee b$ we get that $x\mathbf{d}(a) \leq (a \vee b)\mathbf{d}(a)$.
Now, $(a \vee b)\mathbf{d}(a) = a$.
It follows that $x\mathbf{d}(a) \leq a$.
But $x \leq c$ and so $x\mathbf{d}(a) \leq c\mathbf{d}(a)$.
Thus we have proved that $x\mathbf{d}(a) \leq a, c\mathbf{d}(a)$.
We now apply 
Lemma~\ref{lem:stuff},
and deduce that $(a \wedge c)\mathbf{d}(a) = a \wedge c\mathbf{d}(c)$.
Thus $x\mathbf{d}(a) \leq a \wedge c\mathbf{d}(c)$.
Similarly, $x \mathbf{d}(b) \leq c \mathbf{d}(b) \wedge b$.
It follows that 
$$x \mathbf{d}(a) \vee x \mathbf{d}(b) \leq (c\mathbf{d}(a) \wedge a) \vee (c\mathbf{d}(b) \wedge b)$$
and so 
$$x\mathbf{d}(a \vee b) \leq  (c\mathbf{d}(a) \wedge a) \vee (c\mathbf{d}(b) \wedge b).$$
By the above, we deduce that
$$x \leq (c\mathbf{d}(a) \wedge a) \vee (c\mathbf{d}(b) \wedge b).$$
But
$(c\mathbf{d}(a) \wedge a) \vee (c\mathbf{d}(b) \wedge b) \leq (c \wedge a) \vee (c \wedge b)$
and the result follows.
\end{proof}

By induction, the second result above can be generalized to any number of joins.

We now specialize to Boolean inverse monoids.
Let $S$ be a Boolean inverse monoid.
If $y \leq x$, define
$$x \setminus y = x\overline{\mathbf{d}(y)}.$$

\begin{lemma}\label{lem:chicken} Let $S$ be a Boolean inverse monoid.
\begin{enumerate}
\item $\mathbf{d}(x \setminus y) = \mathbf{d}(x)\overline{\mathbf{d}(y)}$.
\item If $y \leq x$ then $y \perp (x \setminus y)$ and $x = y \vee (x \setminus y)$.
\item Suppose that $a \leq x$ is such that $a \perp y$ and $x = y \vee a$ then
$a = x \setminus y$. 
\item $\mathbf{r}(x \setminus y) = \mathbf{r}(x)\overline{\mathbf{r}(y)}$.
\end{enumerate}
\end{lemma}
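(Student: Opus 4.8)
The plan is to prove the four parts in the order stated, since part (2) is what powers part (4), and a single Boolean-algebra cancellation identity does most of the work in parts (3) and (4). For part (1) I would just compute: writing $x\setminus y = x\overline{\mathbf{d}(y)}$ and using that $\overline{\mathbf{d}(y)}$ is an idempotent, hence its own inverse, one has $\mathbf{d}(x\setminus y) = (x\overline{\mathbf{d}(y)})^{-1}(x\overline{\mathbf{d}(y)}) = \overline{\mathbf{d}(y)}\,x^{-1}x\,\overline{\mathbf{d}(y)} = \overline{\mathbf{d}(y)}\,\mathbf{d}(x)\,\overline{\mathbf{d}(y)} = \mathbf{d}(x)\overline{\mathbf{d}(y)}$, using only Lemma~\ref{lem:inverse-of-product} and the fact that idempotents commute. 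For part (2), first note $x\setminus y \leq x$ by Lemma~\ref{lem:carol}. Since $\mathbf{d}(y)\overline{\mathbf{d}(y)} = 0$ in the Boolean algebra $\mathsf{E}(S)$, part (1) gives $\mathbf{d}(y)\,\mathbf{d}(x\setminus y) = 0$, i.e.\ $\mathbf{d}(y)\perp\mathbf{d}(x\setminus y)$; as $y$ and $x\setminus y$ both lie below $x$, Lemma~\ref{lem:stuff}(1) upgrades this to $\mathbf{r}(y)\perp\mathbf{r}(x\setminus y)$, so $y\perp(x\setminus y)$. For the join, note $\mathbf{d}(y)\leq\mathbf{d}(x)$ (the assignment $b\mapsto\mathbf{d}(b)$ is isotone), so distributivity in $\mathsf{E}(S)$ gives $\mathbf{d}(y)\vee\mathbf{d}(x\setminus y) = \mathbf{d}(x)\mathbf{d}(y)\vee\mathbf{d}(x)\overline{\mathbf{d}(y)} = \mathbf{d}(x)\bigl(\mathbf{d}(y)\vee\overline{\mathbf{d}(y)}\bigr) = \mathbf{d}(x)$. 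Applying the distributive law of $S$ over this (orthogonal, hence compatible) join of idempotents yields $x = x\mathbf{d}(x) = x\mathbf{d}(y)\vee x\mathbf{d}(x\setminus y) = y\vee(x\setminus y)$, since $y = x\mathbf{d}(y)$ and $x\mathbf{d}(x\setminus y) = x\overline{\mathbf{d}(y)} = x\setminus y$.

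For part (3), the hypotheses are $a\leq x$, $a\perp y$ and $x = y\vee a$. By Lemma~\ref{lem:domains}(1), $\mathbf{d}(x) = \mathbf{d}(y)\vee\mathbf{d}(a)$, while $a\perp y$ gives $\mathbf{d}(y)\mathbf{d}(a) = 0$. The elementary Boolean-algebra fact that $p = q\vee r$ together with $qr = 0$ forces $r = p\bar{q}$ (indeed $p\bar{q} = q\bar{q}\vee r\bar{q} = r\bar{q} = r$, the last step since $qr = 0$ means $r\leq\bar{q}$) then gives $\mathbf{d}(a) = \mathbf{d}(x)\overline{\mathbf{d}(y)} = \mathbf{d}(x\setminus y)$ by part (1); hence $a = x\mathbf{d}(a) = x\mathbf{d}(x)\overline{\mathbf{d}(y)} = x\overline{\mathbf{d}(y)} = x\setminus y$. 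For part (4), I would feed the decomposition $x = y\vee(x\setminus y)$ from part (2) back into Lemma~\ref{lem:domains}(1) to get $\mathbf{r}(x) = \mathbf{r}(y)\vee\mathbf{r}(x\setminus y)$, and the orthogonality $y\perp(x\setminus y)$ from part (2) gives $\mathbf{r}(y)\mathbf{r}(x\setminus y) = 0$; the same Boolean cancellation identity then gives $\mathbf{r}(x\setminus y) = \mathbf{r}(x)\overline{\mathbf{r}(y)}$.

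The one genuinely non-trivial step is the second half of part (2): proving that the join $y\vee(x\setminus y)$ exists and equals $x$. This is exactly where the Boolean (distributive) structure, rather than mere inverse-semigroup structure, is needed — one uses both the complementation identity $\mathbf{d}(y)\vee\overline{\mathbf{d}(y)} = 1$ in $\mathsf{E}(S)$ and the fact that multiplication in $S$ distributes over the orthogonal join $\mathbf{d}(y)\vee\mathbf{d}(x\setminus y)$. Everything else is routine idempotent bookkeeping with repeated appeals to Lemmas~\ref{lem:carol}, \ref{lem:stuff} and \ref{lem:domains}.
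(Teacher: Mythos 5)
Your proof is correct and follows essentially the same route as the paper: a direct computation for (1), then Lemma~\ref{lem:stuff}(1) for the orthogonality in (2) together with the identity $\mathbf{d}(y)\vee\mathbf{d}(x)\overline{\mathbf{d}(y)}=\mathbf{d}(x)$ for the join, and the Boolean cancellation $p=q\vee r,\ qr=0\Rightarrow r=p\bar q$ for (3). The only minor divergence is part (4), which you derive from part (2) and Lemma~\ref{lem:domains}(1) applied to ranges, whereas the paper deduces it from part (3) (applied to inverses); both are valid and equally short.
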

\begin{proof} (1) Straightforward from the definition.  

(2) By definition $(x \setminus y) \leq x$,
and from part (1), we have that
$\mathbf{d}(x \setminus y) = \mathbf{d}(x)\overline{\mathbf{d}(y)}$.
It follows that $\mathbf{d}(y) \perp \mathbf{d}(x \setminus y)$.
We now apply Lemma~\ref{lem:stuff}, to deduce that
$y \perp (x \setminus y)$.
Clearly, $y \vee (x \setminus y) \leq x$.
But $\mathbf{d}(y \vee (x \setminus y)) = \mathbf{d}(y) \vee \mathbf{d}(x)\overline{\mathbf{d}(y)}) = \mathbf{d}(x)$.
It follows that $x = y \vee (x \setminus y)$.

(3) Observe that 
$\mathbf{d}(x) = \mathbf{d}(y) \vee \mathbf{d}(a)$
and
$\mathbf{d}(a) \perp \mathbf{d}(y)$.
It follows that $\mathbf{d}(a) = \mathbf{d}(x) \overline{\mathbf{d}(y)}$.
But $a = x \mathbf{d}(a) = x \overline{\mathbf{d}(y)} = x \setminus y$.

(4) This follows by part (3) above.
\end{proof}

A {\em morphism} between distributive inverse monoids
is a homomorphism of monoids with zero that preserves binary joins.

In working with Boolean inverse monoids, it is often much easier to calculate with orthogonal joins than arbitrary ones.
The following result provides circumstances under which we lose nothing by doing this.

\begin{lemma}\label{lem:queen}  Let $S$ be an inverse semigroup.
\begin{enumerate}
\item We make the following assumptions about $S$:
the semilattice of idempotents of $S$ forms a Boolean algebra under the natural partial order;
all finite orthogonal joins exist;
multiplication distributes over finite orthoginal joins.
Then $S$ is a Boolean inverse monoid.

\item Let $S$ and $T$ be Boolean inverse monoids.
Let $\theta \colon S \rightarrow T$ be a monoid homomorphism of monoid with zero that preserves orthogonal joins.
Then $\theta$ preserves all binary compatible joins.

\end{enumerate}
\end{lemma}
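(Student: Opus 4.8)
The plan is to reduce both parts to \emph{orthogonal} joins, via one observation I would establish first: in any inverse semigroup whose idempotents form a Boolean algebra and in which all finite orthogonal joins exist, a binary compatible join is automatically a three-fold orthogonal join. Precisely, if $a \sim b$, put $c = a \wedge b$ (which exists by Lemma~\ref{lem:compatibility-meets}); as the idempotents are Boolean the differences $a\setminus c$ and $b\setminus c$ are defined, and I would check — using $\mathbf{d}(c) = \mathbf{d}(a)\mathbf{d}(b)$, $\mathbf{r}(c) = \mathbf{r}(a)\mathbf{r}(b)$ from Lemma~\ref{lem:compatibility-meets} and parts (1), (2), (4) of Lemma~\ref{lem:chicken} (whose proofs use only that $\mathsf{E}(S)$ is Boolean, that finite orthogonal joins exist, and that multiplication distributes over them) — that $c$, $a\setminus c$, $b\setminus c$ are pairwise orthogonal. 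Their orthogonal join $j = c\vee(a\setminus c)\vee(b\setminus c)$ then exists; since $a = c\vee(a\setminus c)$ and $b = c\vee(b\setminus c)$ by Lemma~\ref{lem:chicken}(2), $j$ is an upper bound of $\{a,b\}$, and any upper bound of $\{a,b\}$ dominates each of the three summands, so $j = a\vee b$. Note also, via Lemma~\ref{lem:gill}, that this three-summand form survives any left or right multiplication.

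For part (1), I would first note that $S$ is an inverse monoid with zero — the top $1$ and bottom $0$ of $\mathsf{E}(S)$ act as identity and zero (from $\mathbf{d}(s)\leq 1$, $s1 = s\mathbf{d}(s)1 = s$; from $\mathbf{r}(0s) = 0\,\mathbf{r}(s)\,0 = 0$, $0s = 0$; symmetrically on the other side). The core step is to prove by induction on $n$ that every finite compatible set $\{a_1,\dots,a_n\}$ has a join which is, moreover, an orthogonal join of ``pieces'' each lying below some $a_i$; the cases $n\leq 2$ are the observation above. For the inductive step, with $b$ the join of $\{a_1,\dots,a_n\}$ in pieces form and $a_{n+1}$ compatible with every $a_i$, I would first show $b\sim a_{n+1}$: expanding $b^{-1}a_{n+1}$ over the orthogonal pieces of $b$ (multiplication distributes over orthogonal joins by hypothesis; the summands stay orthogonal by Lemma~\ref{lem:gill}) gives an orthogonal join of idempotents — each summand sits below some idempotent $a_i^{-1}a_{n+1}$, hence is an idempotent by Lemma~\ref{lem:po}(5), and an orthogonal join of idempotents is idempotent — and symmetrically for $ba_{n+1}^{-1}$. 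Then $b\wedge a_{n+1}$ exists, the three-fold decomposition applies to $b$ and $a_{n+1}$, and expanding its two difference summands over the pieces of $b$ exhibits the join $b\vee a_{n+1}$ of $\{a_1,\dots,a_{n+1}\}$ again in pieces form. Once all finite compatible joins are in hand, distributivity of multiplication over them follows from distributivity over orthogonal joins together with Lemma~\ref{lem:gill}; and $\mathsf{E}(S)$ is Boolean by hypothesis, so $S$ is a Boolean inverse monoid.

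For part (2) I would first check that $\theta$ preserves orthogonality: $\theta(s^{-1}) = \theta(s)^{-1}$ by Lemma~\ref{lem:hm-inverse}(1), so $\theta$ commutes with $\mathbf{d}$ and $\mathbf{r}$, and as it preserves products and the zero, $u\perp v$ forces $\theta(u)\perp\theta(v)$. Given $a\sim b$ in the Boolean inverse monoid $S$, the join $a\vee b$ exists and equals $c\vee(a\setminus c)\vee(b\setminus c)$ with $c = a\wedge b$, by the first paragraph. Applying $\theta$, which preserves orthogonal joins, yields $\theta(a\vee b) = \theta(c)\vee\theta(a\setminus c)\vee\theta(b\setminus c)$, an orthogonal join in $T$. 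Since also $\theta(a) = \theta(c)\vee\theta(a\setminus c)$ and $\theta(b) = \theta(c)\vee\theta(b\setminus c)$, the right-hand side is an upper bound of $\{\theta(a),\theta(b)\}$; and $\theta$ being isotone (Lemma~\ref{lem:po}(6)) gives $\theta(a),\theta(b)\leq\theta(a\vee b)$, so $\theta(a)\sim\theta(b)$ by Lemma~\ref{lem:bounded-above}, while any upper bound of $\{\theta(a),\theta(b)\}$ dominates the three images; hence $\theta(a)\vee\theta(b) = \theta(a\vee b)$, as wanted.

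The step I expect to be the main obstacle is the bookkeeping in part (1): setting up the induction so that ``is an orthogonal join of pieces each below some $a_i$'' is precisely the right hypothesis to push through, and making sure every fact about the difference operation taken from Lemma~\ref{lem:chicken} genuinely follows from the three hypotheses of part (1) rather than from the conclusion we are trying to prove.
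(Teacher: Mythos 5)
Your proof is correct and rests on the same key idea as the paper's: converting a binary compatible join into an orthogonal join by Boolean complementation of domain idempotents. The paper uses the asymmetric two-piece decomposition $a \vee b = a\overline{\mathbf{d}(b)} \vee b$ (reached, as in your argument, by first forming $a \setminus (a \wedge b)$), where you use the symmetric three-piece form $(a \wedge b) \vee (a \setminus (a\wedge b)) \vee (b \setminus (a \wedge b))$; associating your three pieces as $a \vee (b \setminus (a \wedge b))$ recovers the paper's formula with the roles of $a$ and $b$ exchanged, so the two decompositions are interchangeable and your part (2) is the paper's argument in lightly different clothing. Where you genuinely go beyond the paper is in part (1): the paper verifies only binary joins and binary distributivity and is silent both on the monoid-with-zero structure and on $n$-fold compatible joins, whereas your identity/zero check and your induction on ``pieces form'' (with the compatibility of $b$ and $a_{n+1}$ obtained by expanding $b^{-1}a_{n+1}$ over the orthogonal pieces) deliver the full definition of a distributive, hence Boolean, inverse monoid. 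Your insistence on checking that the cited parts of Lemma~\ref{lem:chicken} depend only on the three hypotheses of part (1), and not on the conclusion, is also warranted, since the paper invokes that lemma --- stated for Boolean inverse monoids --- inside the proof that $S$ is one.
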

\begin{proof} (1) We have to prove that all binary joins exist and that multiplication distributes over such joins.
Let $e,f \in \mathsf{E}(S)$.
Then using the distributive law, we have that $e\bar{f} \vee f = e \vee f$.
But $e\bar{f}$ and $f$ are orthogonal.
It follows that $a(e \vee f) = a(e\bar{f} \vee f) = ae\bar{f} \vee af$. 
Observe that $(ae\bar{f} \vee af)\mathbf{d}(ae) = ae$.
It follows that $ae, af \leq ae\bar{f} \vee af$. 
On the other hand, if $ae, af \leq x$
then $ae\bar{f} \vee af \leq x$.
We have therefore proved that 
$$a(e \vee f) = ae \vee af$$
where $e$ and $f$ are any idempotents.
Suppose, now, that $a \sim b$.
Then $a \wedge b$ exists by Lemma~\ref{lem:compatibility-meets}.
We have that
$$(a \setminus a \wedge b) \vee b 
= 
a\overline{\mathbf{d}(a)\mathbf{d}(b)} \vee b$$
since, in this case, 
$\mathbf{d}(a \wedge b) = \mathbf{d}(a)\mathbf{d}(b)$
by Lemma~\ref{lem:compatibility-meets}.
But $\overline{\mathbf{d}(a)\mathbf{d}(b)} = \overline{\mathbf{d}(a)} \vee \overline{\mathbf{d}(b)}$.
We now use our result above to deduce that
$$(a \setminus a \wedge b) \vee b = a \overline{\mathbf{d}(b)} \vee b,$$
an orthogonal join.
Observe that  $(a \overline{\mathbf{d}(b)} \vee b)\mathbf{d}(a) = a$,
where we use the fact that $b \mathbf{d}(a) = a \mathbf{d}(b)$ by Lemma~\ref{lem:compatibility-meets},
our result above, and a little Boolean algebra.
We therefore have that $a,b \leq a \overline{\mathbf{d}(b)} \vee b$.
Suppose that $a,b \leq x$.
Then it is routine to check that $a\overline{\mathbf{d}(b)} \vee b \leq x$.
We have therefore proved that $a \vee b$ exists and is equal to the orthogonal join $a \overline{\mathbf{d}(b)} \vee b$.

It remains to show that multiplication distributes over binary joins.
Suppose that $a \sim b$.
It can be proved directly that that $ca \sim cb$.
We prove that $$c(a \vee b) = ca \vee cb.$$
We have that 
$$c(a \vee b) = c(a\overline{\mathbf{d}(b)} \vee b) = ca\overline{\mathbf{d}(b)} \vee cb.$$
Thus 
$$ca \leq ca\overline{\mathbf{d}(b)} \vee cb.$$
Now, 
$$(ca\overline{\mathbf{d}(b)} \vee cb)\mathbf{d}(ca) = ca \overline{\mathbf{d}(b)} \vee cb \mathbf{d}(ca).$$
We now use the fact that $ca \sim cb$ and Lemma~\ref{lem:compatibility-meets} to get
$$ca \overline{\mathbf{d}(b)} \vee cb \mathbf{d}(ca) =  ca \overline{\mathbf{d}(b)} \vee ca \mathbf{d}(cb) \leq ca.$$
It follows that 
$$ca = (ca\overline{\mathbf{d}(b)} \vee cb)\mathbf{d}(ca).$$
We deduce that $ca,cb \leq ca\overline{\mathbf{d}(b)} \vee cb$.
Suppose that $ca, cb \leq x$.
Then it is routine to check that $ca\overline{\mathbf{d}(b)} \vee cb \leq x$.
Whence $ca \vee cb =  ca\overline{\mathbf{d}(b)} \vee cb$
and so $c(a \vee b) = ca \vee cb$.

(2) Suppose that $a \sim b$.
Then $a \vee b = a \overline{\mathbf{d}(b)} \vee b$, which is an orthogonal join.
By assumption, 
$$\theta (a \vee b) =  \theta (a \overline{\mathbf{d}(b)}) \vee \theta (b) = \theta (a) \theta (\overline{\mathbf{d}(b)})) \vee \theta (b).$$
The result now follows since $\theta (\bar{e}) = \overline{\theta (e)}$ where $e$ is any idempotent.
\end{proof}

For the remainder of this section, we shall focus on Boolean inverse monoids.
As we shall prove in Theorem~\ref{them:bundestag}, 
they arise naturally as soon as we try to map inverse monoids
to the multiplicative monoids of rings, such as in representation theory.
Accordingly, we begin with some results about idempotents in rings.
Recall that all our rings will be unital.
If $e$ and $f$ are idempotents in a ring $R$ then we say they are {\em orthogonal}
if $ef = 0$ denoted by $e \perp f$.
A finite set of idempotents is {\em orthogonal} if each distinct pair of elements is orthogonal.
A sum of a finite number of orthogonal idempotents will be called an {\em orthogonal sum}.
More generally, a sum of orthogonal elements taken from an inverse submonoid will be called an {\em orthogonal sum}.

\begin{lemma}\label{lem:ring-idempotents} Let $R$ be a unital ring.
\begin{enumerate}

\item If $e$ is an idempotent in $R$, then $1 - e$ is an idempotent in $R$;
in addition, the idempotents $e$ and $1-e$ are orthogonal.

\item If $e$ and $f$ are orthogonal idempotents then $e + f$ is an idempotent; this can be generalized to
any finite sets of orthogonal idempotents.

\item If $e$ and $f$ are orthogonal idemptents then 
$$1 - (e + f) = (1 - e)(1 - f).$$ 

\item If $e$ and $f$ are commuting idempotents, then 
$$e + f - ef = f + e(1 - f),$$ 
an orthogonal sum.

\item If $e$ and $f$ are commuting idempotents, then 
$$e \circ f = e + f - ef$$ 
is an idempotent.

\item If $e$ and $f$ are commuting idempotents then 
$$(1 - e)(1 - f) = 1 - e \circ f.$$

\item If $e,f,g$ is a set of commuting idempotents, then $e$ commutes with $f \circ g$ and
$g$ commutes with $e \circ f$ and 
$$(e \circ f) \circ g = e \circ (f \circ g).$$

\item If $e_{1}, \ldots, e_{m}$ are orthogonal idempotents, then 
$$1 - \left( \sum_{i=1}^{m} e_{i} \right) = \prod_{i=1}^{m} (1 - e_{i}).$$ 

\item Let $e_{1},\ldots,e_{m}$ be a set of commuting idempotents.
Define 
$$[e_{1},\ldots, e_{m}] = (\ldots ((e_{1} \circ e_{2}) \circ e_{3}) \ldots ) \circ e_{m}.$$
Then 
$$\prod_{i=1}^{m} (1 - e_{i}) =  1 - [e_{1},\ldots, e_{m}].$$

\item Let $e_{1},\ldots,e_{m}$ be a set of commuting idempotents.
Then 
$$[e_{1},\ldots, e_{m}]$$
is defined to be
$$e_{1}(1 - e_{2}) \ldots (1 - e_{m}) + e_{2}(1 - e_{3})\ldots (1 - e_{m}) + \ldots + e_{m-1}(1 - e_{m}) + e_{m},$$
a sum of orthogonal idempotents.

\end{enumerate}
\end{lemma}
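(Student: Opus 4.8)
The whole lemma is ring arithmetic organised around two operations: complementation $e \mapsto 1-e$ and the ``circle'' operation $e \circ f = e + f - ef$ on commuting idempotents. The plan is to dispatch (1)--(7) by direct computation (with (2) generalised by an obvious induction), isolating (6) as the one genuinely load-bearing identity among them: it says that $\circ$ is intertwined with complementation, $(1-e)(1-f) = 1 - e\circ f$. Parts (8), (9), (10) are then short inductions on $m$ whose inductive steps feed on (3), on (6), and on the defining recursion for $[e_1,\ldots,e_m]$, respectively. Throughout I would read ``orthogonal'' for idempotents as $ef = fe = 0$; this is harmless because in every situation below the idempotents in play pairwise commute (and $e$, $1-e$ always commute), so the two one-sided conditions coincide.

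For the computational block: (1) is $(1-e)^{2} = 1 - 2e + e^{2} = 1-e$ together with $e(1-e) = e - e^{2} = 0 = (1-e)e$. For (2), $(e+f)^{2} = e^{2} + ef + fe + f^{2} = e + f$ once $ef = fe = 0$, and the finite case follows by induction since a partial sum of orthogonal idempotents is again an idempotent orthogonal to the next one. Part (3) is $(1-e)(1-f) = 1 - e - f + ef = 1 - (e+f)$ using $ef = 0$. In (4) both sides visibly equal $e + f - ef$, while $f\cdot e(1-f) = fe - fef = 0$ and $e(1-f)\cdot f = ef - ef^{2} = 0$ by commutativity and $f^{2}=f$; here $e(1-f)$ is an idempotent precisely because $e$ and $f$ commute. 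Part (5) is then immediate from (4) and (2), since $e\circ f = f + e(1-f)$ is a sum of two orthogonal idempotents. Part (6) is the expansion $(1-e)(1-f) = 1 - e - f + ef = 1 - (e\circ f)$. For (7), that $e$ commutes with $f\circ g$ (and $g$ with $e\circ f$) is clear because every monomial in the commuting elements $e,f,g$ commutes with every other monomial; associativity can be checked bare-handed --- both $(e\circ f)\circ g$ and $e\circ(f\circ g)$ expand to the symmetric expression $e + f + g - ef - eg - fg + efg$ --- or, more cleanly, by applying (6) twice: $1 - (e\circ f)\circ g = (1-e\circ f)(1-g) = (1-e)(1-f)(1-g) = (1-e)(1-f\circ g) = 1 - e\circ(f\circ g)$.

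For the inductive block: (8) is induction on $m$; in the step, $\prod_{i=1}^{m}(1-e_i) = \bigl(1 - \sum_{i=1}^{m-1} e_i\bigr)(1-e_m)$ by the hypothesis, and since $\sum_{i<m} e_i$ is an idempotent by (2) with $\bigl(\sum_{i<m} e_i\bigr)e_m = 0$, part (3) finishes it. Part (9) is induction on $m$ using (6): from $[e_1,\ldots,e_m] = [e_1,\ldots,e_{m-1}]\circ e_m$ and the fact that $[e_1,\ldots,e_{m-1}]$, being a polynomial in the pairwise commuting $e_1,\ldots,e_{m-1}$, commutes with $e_m$, one gets $1 - [e_1,\ldots,e_m] = (1 - [e_1,\ldots,e_{m-1}])(1-e_m) = \bigl(\prod_{i=1}^{m-1}(1-e_i)\bigr)(1-e_m)$. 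Part (10) is once more induction on $m$: using $a\circ e_m = a(1-e_m) + e_m$ with $a = [e_1,\ldots,e_{m-1}]$, distribute the factor $(1-e_m)$ through the inductive expression for $[e_1,\ldots,e_{m-1}]$ and append the term $e_m$; for orthogonality, note that whenever $j < k$ the $j$-th summand carries the factor $(1-e_k)$ while the $k$-th summand carries $e_k$, so (all factors commuting) their product contains $e_k(1-e_k)=0$, and each summand, being a product of commuting idempotents, is itself an idempotent.

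The main obstacle is not conceptual but bookkeeping: one must track the commutativity hypotheses as they propagate (that $[e_1,\ldots,e_{m-1}]$ commutes with $e_m$, that a partial sum of orthogonal idempotents is idempotent and orthogonal to the remaining terms) and stay disciplined about exactly which one-sided products vanish. Once (6) is in hand, converting $\circ$ into honest multiplication of complements, every remaining identity collapses to a short calculation.
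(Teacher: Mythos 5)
Your proof is correct and follows exactly the route the paper indicates: parts (1)--(7) by direct computation, (8) by part (3) and induction, (9) by part (6) and induction, and (10) by part (4) and induction. The paper leaves all details as "straightforward," and your write-up supplies them accurately, including the commutativity bookkeeping the paper omits.
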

\begin{proof} The proofs of (1)--(7) are straightforward.

(8) By part (3) and induction.

(9) By part (6) and induction.

(10) By part (4) and induction.
\end{proof}

We begin with a lemma which is a special case of the result we want to prove.

\begin{lemma}\label{lem:boolean-ring} Let $E \subseteq R$, where $R$ is a unital ring.
Suppose that $0,1 \in E$ and $E$ is a commutative band under the induced multiplication from $R$.
Then there is a subset $E \subseteq B \subseteq R$
such that $B$ is a Boolean algebra with meet given by the product in the ring.
\end{lemma}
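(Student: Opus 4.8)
The plan is to realize $B$ as the smallest subset of $R$ that contains $E$ and is closed both under the ring multiplication and under the map $a \mapsto 1-a$, and then to recognize this $B$ as a Boolean algebra by verifying Frink's axioms from Lemma~\ref{lem:Boolean algebras}. Concretely, I would set $B_{0} = E$, let $B_{n+1}$ be $B_{n}$ together with all products $ab$ with $a,b \in B_{n}$ and all elements $1-a$ with $a \in B_{n}$, and put $B = \bigcup_{n \geq 0} B_{n}$. Since $0, 1 \in E$ we get $0,1 \in B$, and by construction $E \subseteq B \subseteq R$, as required.

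The one step that needs genuine care is to show, by induction on $n$, that each $B_{n}$ consists of pairwise commuting idempotents. The base case is exactly the hypothesis that $E$ is a commutative band. For the inductive step one uses that a product of commuting idempotents is again an idempotent (if $a^{2}=a$, $b^{2}=b$ and $ab=ba$ then $(ab)^{2} = a^{2}b^{2} = ab$), that $1-a$ is an idempotent whenever $a$ is, by part (1) of Lemma~\ref{lem:ring-idempotents}, and that both of these operations, applied to members of a commuting family, produce elements commuting with the whole family (since $1-a$ commutes with everything that $a$ commutes with). Consequently $(B, \cdot)$ is a commutative band, $B$ is closed under the unary operation $\bar{a} := 1-a$, and $\bar{0} = 1 \in B$. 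Equivalently, one may phrase this as passing to the commutative subring of $R$ generated by $E$, inside which $B$ sits as a set of idempotents.

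It then remains only to check the single biconditional occurring in Frink's axioms, namely that for $a,b \in B$ one has $ab = a$ if and only if $a\bar{b} = 0$. This is the one-line ring computation $a\bar{b} = a(1-b) = a - ab$, so $a\bar{b} = 0 \Leftrightarrow ab = a$; the degenerate cases (for instance $b = 0$, where both conditions become $a = 0$) are immediate. Applying Lemma~\ref{lem:Boolean algebras} to the structure $(B, \cdot, a \mapsto 1-a, 0)$ now yields that $B$ is a Boolean algebra whose meet is the ring product --- which is precisely what is asserted --- and whose join is $a + b = \overline{\bar{a}\,\bar{b}} = 1 - (1-a)(1-b) = a + b - ab$, i.e. the idempotent $a \circ b$ of part (5) of Lemma~\ref{lem:ring-idempotents}. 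There is no real obstacle beyond the inductive bookkeeping of the second paragraph; everything else is routine.
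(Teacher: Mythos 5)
Your proof is correct, and it reaches the same endpoint as the paper --- a commutative band containing $E$, closed under $a \mapsto 1-a$, satisfying Frink's biconditional $ab = a \Leftrightarrow a(1-b) = 0$, to which Lemma~\ref{lem:Boolean algebras} is applied --- but it gets there by a genuinely different construction. The paper builds $B$ explicitly in two layers: first $E' = \{e(1-e_{1})\cdots(1-e_{m})\} \cup E$, then $B = (E')^{\perp}$, the finite orthogonal sums of elements of $E'$; closure under complementation is then a computation using the identities in parts (8)--(10) of Lemma~\ref{lem:ring-idempotents}. You instead take $B$ to be the closure of $E$ under products and $a \mapsto 1-a$ and prove by induction that every stage consists of pairwise commuting idempotents, after which closure under complement is built in and no explicit identities are needed. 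Your route is shorter and uses only part (1) of Lemma~\ref{lem:ring-idempotents} plus elementary observations; the inductive step is sound (products of commuting idempotents are idempotent, $1-a$ is idempotent and commutes with whatever $a$ commutes with, and the new elements commute among themselves). What the paper's heavier construction buys is a normal form for the elements of $B$ --- orthogonal sums of elements of $E'$ --- and it is exactly this normal form, not merely the existence of $B$, that is reused in the proof of Theorem~\ref{them:bundestag}, where $E'$ and $(\mathsf{E}(S)')^{\perp}$ appear explicitly. So your argument fully proves the lemma as stated, but if the lemma is to serve as a stepping stone to Theorem~\ref{them:bundestag} you would still need to extract a comparable description of what the closure actually contains.
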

\begin{proof} If $E$ is any semilattice in $R$, 
define 
$$E' =\{e(1-e_{1}) \ldots (1 - e_{m}) \colon e,e_{1},\ldots, e_{m} \in E\} \cup E.$$
Then it is easy to check that this is a commutative band.
If $E$ is any semilattice in $R$, define $E^{\perp}$ to be the set of all sums of finite sets of orthogonal elements of $E$.
Then it is easy to check that this is a commutative band.
Put $B = (E')^{\perp}$.
This is a commutative band.
To prove that $B$ is a Boolean algebra, we use Lemma~\ref{lem:Boolean algebras}.
We claim that if $e \in B$ then $1 - e \in B$.
An element of $B$ is an orthogonal join of idempotents that belong to $E'$.
Observe that if $e_{1},\ldots, e_{m}$ are any orthogonal idempotents of $B$ then
$1 - \left( \sum_{i=1}^{m} e_{i} \right) = \prod_{i=1}^{m} (1 - e_{i})$
by part (8) of Lemma~\ref{lem:ring-idempotents}
where each $e_{i} \in E'$.
If $e \in E$ then $1 - e \in E'$.
Thus we need only concentrate on idempotents of the form 
$$e(1 - f_{1}) \ldots (1 - f_{n}) \in E'$$ 
where $f_{1},\ldots, f_{n} \in E$.
Then 
$$1 - e(1 - f_{1}) \ldots (1 - f_{n}) = (1 - e) + e[f_{1},\ldots, f_{n}],$$
using part (9) of Lemma~\ref{lem:ring-idempotents},
which is a sum of orthogonal idempotents by part (10) of Lemma~\ref{lem:ring-idempotents}.
We have therefore proved that $B$ is closed under taking complements.
Let $e,f \in B$ and suppose that $ef = e$.
Then $e(1-f) = e - ef = e - e = 0$.
On the other hand, suppose that $e(1-f) = 0$.
Then $e = ef$.
We have that $B$ satisfies Frink's axioms and so $B$ is a Boolean algebra.
\end{proof}

To help us prove our main result, it is useful to have the following two lemmas.
The first says that to check if an inverse monoid is Boolean it is enough to prove that we have orthogonal joins.

\begin{lemma}\label{lem:king} Let $S \subseteq R$, where $R$ is a unital ring.
Suppose that $0,1 \in S$ and that $S$ is an inverse monoid under the induced multiplication from $R$.
In addition, we suppose that $S$ is closed under sums of pairs of orthogonal elements and,
if $e \in S$ is an idempotent, then $1 - e \in S$.
Then $S$ is a Boolean inverse monoid.
\end{lemma}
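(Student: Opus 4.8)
The plan is to verify the three hypotheses of part~(1) of Lemma~\ref{lem:queen} --- that $\mathsf{E}(S)$ is a Boolean algebra under the natural partial order, that all finite orthogonal joins exist in $S$, and that multiplication distributes over such joins --- after which the conclusion is immediate. One preliminary remark should be made first: since $0 \in S$ and $s \cdot 0 = 0 = 0 \cdot s$ in $R$ for every $s \in S$, the ring zero is the zero of the inverse monoid $S$, so the relation $\perp$ of Section~4 (which may be taken to mean $a^{-1}b = 0 = ab^{-1}$, equivalently $\mathbf{d}(a)\mathbf{d}(b) = 0 = \mathbf{r}(a)\mathbf{r}(b)$) refers to the genuine zero of $R$.

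For the first hypothesis, $\mathsf{E}(S)$ is a commutative band containing $0$ and $1$, since idempotents commute in an inverse semigroup; and by assumption $e \mapsto 1-e$ carries $\mathsf{E}(S)$ into itself, because $1-e$ is an idempotent of $R$ orthogonal to $e$ by part~(1) of Lemma~\ref{lem:ring-idempotents}. As $e(1-f) = e - ef$ in $R$, we have $ef = e$ if and only if $e(1-f) = 0$, so $(\mathsf{E}(S), \cdot, e \mapsto 1-e, 0)$ satisfies Frink's axioms; hence $\mathsf{E}(S)$ is a Boolean algebra by Lemma~\ref{lem:Boolean algebras}. Its meet is the ring product, which is the meet for the natural partial order on idempotents, so the Boolean order is precisely the natural partial order. (Alternatively, one may quote Lemma~\ref{lem:boolean-ring} with $E = \mathsf{E}(S)$ and observe that the set $B$ built there reduces to $\mathsf{E}(S)$ under our hypotheses.)

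The substance of the proof is the second hypothesis, which I would establish in the sharper form: \emph{if $a_1, \dots, a_n \in S$ are pairwise orthogonal, then $\sum_i a_i \in S$ and $\sum_i a_i = \bigvee_i a_i$}. Membership in $S$ follows from the hypothesis by induction on $n$, applied also to the (pairwise orthogonal) family $a_1^{-1}, \dots, a_n^{-1}$. Using $a_i a_j^{-1} = 0 = a_i^{-1} a_j$ for $i \neq j$ to discard cross terms, one checks directly that $\sum_i a_i^{-1}$ satisfies the two defining equations for the inverse of $\sum_i a_i$, so by uniqueness $\bigl(\sum_i a_i\bigr)^{-1} = \sum_i a_i^{-1}$ and therefore $\mathbf{d}\bigl(\sum_i a_i\bigr) = \sum_i \mathbf{d}(a_i)$ (and dually for $\mathbf{r}$). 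The same cancellation gives $\bigl(\sum_i a_i\bigr)\mathbf{d}(a_j) = a_j$, so $a_j \leq \sum_i a_i$ for each $j$; and if $a_j \leq x$ for all $j$, then $a_j = x\,\mathbf{d}(a_j)$ by the definition of the natural partial order and Lemma~\ref{lem:carol}, whence $x\,\mathbf{d}\bigl(\sum_i a_i\bigr) = \sum_i x\,\mathbf{d}(a_i) = \sum_i a_i$, i.e.\ $\sum_i a_i \leq x$. Thus $\sum_i a_i$ is the least upper bound.

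For the third hypothesis, let $a_1, \dots, a_n$ be pairwise orthogonal and $c \in S$; by Lemma~\ref{lem:gill} the families $ca_1, \dots, ca_n$ and $a_1 c, \dots, a_n c$ are again pairwise orthogonal, so applying the previous step twice together with the distributive law of $R$,
$$c\Bigl(\bigvee_i a_i\Bigr) = c\sum_i a_i = \sum_i c a_i = \bigvee_i c a_i,$$
and symmetrically on the right. All three hypotheses of Lemma~\ref{lem:queen}(1) now hold, so $S$ is a Boolean inverse monoid. The only delicate point is the middle step: one must keep straight which ``zero'' and which ``orthogonal'' are in play and use the identity $a \perp b \iff a^{-1}b = 0 = ab^{-1}$ cleanly to annihilate all cross terms; the rest is formal.
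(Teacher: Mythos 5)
Your proposal is correct and follows essentially the same route as the paper: verify Frink's axioms for $\mathsf{E}(S)$ and show that orthogonal sums in $R$ are orthogonal joins in $S$, then invoke part~(1) of Lemma~\ref{lem:queen}. You are somewhat more thorough than the paper --- handling arbitrary finite orthogonal families rather than just pairs, computing $(\sum_i a_i)^{-1} = \sum_i a_i^{-1}$ explicitly, and spelling out the distributivity check via Lemma~\ref{lem:gill} --- but the underlying argument is the same.
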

\begin{proof} It is immediate that $\mathsf{E}(S)$ satisfies Frink's axioms and so is a Boolean algebra.
Let $e$ and $f$ be orthogonal idempotents.
Then $e + f = e \vee f$; to see why
observe that $(e + f)e = e$ and $(e + f)f = f$.
Thus $e,f \leq e + f$.
On the other hand, suppose that $e,f \leq i$.
Then $(e + f)i = e + f$.
Thus $e + f \leq i$.
It follows that $e + f = e \vee f$.
Let $a,b \in S$ be arbitrary orthogonal elements.
They are, in particular, compatible.
We prove that $a + b = a \vee b$ and the result then follows by Lemma~\ref{lem:queen}.
Observe that
$$(a + b)\mathbf{d}(a) = a \mathbf{d}(a) + b\mathbf{d}(b) = a \mathbf{d}(a) + a \mathbf{d}(b) = a\mathbf{d}(a) = a$$
where we have used  Lemma~\ref{lem:compatibility-meets}.
Thus $a \leq a + b$.
Similarly $b \leq a + b$.
Suppose that $a,b \leq x$.
Then $a + b = x\mathbf{d}(a) + x\mathbf{d}(b) = x(\mathbf{d}(a) \vee \mathbf{d}(b))$.
Thus $a + b \leq x$.  
We have proved that $a + b = a \vee b$ if $a$ and $b$ are orthogonal.
\end{proof}

Our second result deals with orthogonal sums.

\begin{lemma}\label{lem:biden} Let $S \subseteq R$, where $R$ is a unital ring.
Suppose that $0,1 \in S$ and that $S$ is an inverse monoid under the induced multiplication from $R$.
Define $S^{\perp}$ to be the sums of all finite orthogonal sets of $S$.
Then $S^{\perp}$ is an inverse monoid and its set of idempotents is $\mathsf{E}(S)^{\perp}$.
\end{lemma}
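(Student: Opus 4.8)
The plan is to establish, in order: that $S^{\perp}$ is closed under the ring multiplication (hence a submonoid of the multiplicative monoid of $R$); that $S^{\perp}$ is a regular semigroup; that $\mathsf{E}(S^{\perp}) = \mathsf{E}(S)^{\perp}$; and that these idempotents commute. The final assertion that $S^{\perp}$ is an inverse monoid then follows at once from Proposition~\ref{prop:regular-commuting-idempotents}. Throughout I shall use that for $a,b \in S$ the relation $a \perp b$ is equivalent to $a^{-1}b = 0 = ab^{-1}$ (so in particular $\mathbf{d}(a)\mathbf{d}(b) = 0 = \mathbf{r}(a)\mathbf{r}(b)$, and the inverses of an orthogonal family form an orthogonal family), that $a\mathbf{d}(a) = a = \mathbf{r}(a)a$, and that the natural partial order is compatible with multiplication (Lemma~\ref{lem:po}(3)); the last of these is what converts "$\,\leq\,$ something that equals $0$" into "$=0$", since the only element below $0$ is $0$.

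First I would prove closure under multiplication. Let $a_{1},\dots,a_{m}$ and $b_{1},\dots,b_{n}$ be finite orthogonal families in $S$. Each $a_{i}b_{j}$ lies in $S$, and in $R$ we have $\left(\sum_{i}a_{i}\right)\left(\sum_{j}b_{j}\right) = \sum_{i,j}a_{i}b_{j}$. The crucial point is that the nonzero products $a_{i}b_{j}$ form an orthogonal family of $S$: for $(i,j)\neq(k,l)$ one computes $(a_{i}b_{j})^{-1}(a_{k}b_{l}) = b_{j}^{-1}(a_{i}^{-1}a_{k})b_{l}$, which is $0$ when $i\neq k$ because $a_{i}^{-1}a_{k}=0$, and which when $i=k,\ j\neq l$ equals $b_{j}^{-1}\mathbf{d}(a_{i})b_{l} \leq b_{j}^{-1}b_{l} = 0$, using that $\mathbf{d}(a_{i})b_{l}\leq b_{l}$; the dual calculation with $\mathbf{r}$ disposes of $(a_{i}b_{j})(a_{k}b_{l})^{-1}$. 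Since $0,1 \in S \subseteq S^{\perp}$, we conclude $S^{\perp}$ is a submonoid of $R$.

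Next I would handle regularity and the idempotents. For $s = \sum_{i=1}^{m}a_{i}\in S^{\perp}$, the element $s' := \sum_{i}a_{i}^{-1}$ lies in $S^{\perp}$; expanding $ss's$ and using $a_{i}a_{j}^{-1}=0$ and $\mathbf{r}(a_{i})a_{j}=0$ for $i\neq j$, only the diagonal survives, so $ss's = \sum_{i}\mathbf{r}(a_{i})a_{i} = s$, and similarly $s'ss' = s'$; thus $S^{\perp}$ is regular. If $e_{1},\dots,e_{m}$ is a finite orthogonal family of idempotents of $S$, then $\left(\sum_{i}e_{i}\right)^{2} = \sum_{i,j}e_{i}e_{j} = \sum_{i}e_{i}$ (the off-diagonal terms vanishing, as orthogonal idempotents have product $0$), so $\mathsf{E}(S)^{\perp}\subseteq\mathsf{E}(S^{\perp})$. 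Conversely, if $s = \sum_{i}a_{i}\in S^{\perp}$ is idempotent, then killing cross terms by orthogonality gives $\mathbf{r}(a_{i})s = a_{i} = s\mathbf{d}(a_{i})$ for each $i$, whence
$$a_{i} = \mathbf{r}(a_{i})s\mathbf{d}(a_{i}) = \mathbf{r}(a_{i})s^{2}\mathbf{d}(a_{i}) = \big(\mathbf{r}(a_{i})s\big)\big(s\mathbf{d}(a_{i})\big) = a_{i}a_{i},$$
so each $a_{i}$ is an idempotent of $S$ and $s\in\mathsf{E}(S)^{\perp}$; hence $\mathsf{E}(S^{\perp}) = \mathsf{E}(S)^{\perp}$. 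Finally, for $e = \sum_{i}e_{i}$ and $f = \sum_{j}f_{j}$ in $\mathsf{E}(S)^{\perp}$ we get $ef = \sum_{i,j}e_{i}f_{j} = \sum_{i,j}f_{j}e_{i} = fe$ by Proposition~\ref{prop:idempotents-commute}, so $S^{\perp}$ is a regular monoid with commuting idempotents, hence an inverse monoid.

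The hard part is not conceptual but the orthogonality bookkeeping — in particular the inclusion $\mathsf{E}(S^{\perp})\subseteq\mathsf{E}(S)^{\perp}$, where each summand $a_{i}$ of an idempotent $s$ is recovered by sandwiching with $\mathbf{r}(a_{i})$ and $\mathbf{d}(a_{i})$, and the repeated appeal to Lemma~\ref{lem:po}(3) to pass from "$\leq 0$" to "$=0$". I would also note explicitly at the start that, as $R$ is a ring, these finite sums are well-defined regardless of the order of the summands, and that distinct nonzero products $a_{i}b_{j}$ really are distinct (if $a_{i}b_{j} = a_{k}b_{l}\neq 0$ with $(i,j)\neq(k,l)$ then $(a_{i}b_{j})^{-1}(a_{k}b_{l}) = \mathbf{d}(a_{i}b_{j})\neq 0$, contradicting the computation above), so that no double-counting occurs when $\sum_{i,j}a_{i}b_{j}$ is viewed as a sum over an orthogonal set.
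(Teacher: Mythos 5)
Your proof is correct, and its overall architecture --- closure of $S^{\perp}$ under the ring product, regularity via termwise inverses, identification of $\mathsf{E}(S^{\perp})$ with $\mathsf{E}(S)^{\perp}$, and then an appeal to commuting idempotents plus Proposition~\ref{prop:regular-commuting-idempotents} --- is the same as the paper's. Two steps are executed differently. For closure the paper simply invokes Lemma~\ref{lem:gill}, whereas you verify orthogonality of the products $a_{i}b_{j}$ by direct computation (and also check that no double-counting occurs); that is only a difference of economy. The substantive difference is in the inclusion $\mathsf{E}(S^{\perp}) \subseteq \mathsf{E}(S)^{\perp}$: the paper multiplies $s^{2}=s$ on one side only by $a_{1}^{-1}$ and passes from $s^{2}a_{1}^{-1} = a_{1}a_{1}^{-1}$ to $a_{1}^{2}a_{1}^{-1} = a_{1}a_{1}^{-1}$, which silently discards the cross terms $a_{i}\mathbf{r}(a_{1})$ for $i \neq 1$; these need not vanish for orthogonal elements, since orthogonality controls $\mathbf{d}(a_{i})\mathbf{d}(a_{1})$ and $\mathbf{r}(a_{i})\mathbf{r}(a_{1})$ but not $\mathbf{d}(a_{i})\mathbf{r}(a_{1})$ (in $\mathcal{I}(X)$ take $a_{1}\colon 1 \mapsto 2$ and $a_{i}\colon 2 \mapsto 4$, which are orthogonal yet satisfy $a_{i}\mathbf{r}(a_{1}) = a_{i}$). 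Your two-sided sandwich $a_{i} = \mathbf{r}(a_{i})\,s\,\mathbf{d}(a_{i}) = \bigl(\mathbf{r}(a_{i})s\bigr)\bigl(s\,\mathbf{d}(a_{i})\bigr) = a_{i}^{2}$ kills the cross terms on each side using only the genuine orthogonality relations $a_{i}^{-1}a_{j} = 0 = a_{j}a_{i}^{-1}$, so your version of this step is the tighter one. No gaps.
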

\begin{proof} Let $a,b \in S^{\perp}$.
Then the fact that $ab \in S^{\perp}$ follows by Lemma~\ref{lem:gill}.
Observe that if $a = a_{1} + \ldots + a_{n}$, an orthogonal sum of elements of $S$,
then defining $a^{-1} = a_{1}^{-1}+ \ldots + a_{n}^{-1}$ we get that $a = aa^{-1}a$.
Thus $S^{\perp}$ is a regular semigroup
where, for example, $aa^{-1} =  a_{1}a_{1}^{-1} + \ldots + a_{n}a_{n}^{-1}$.
Suppose that $a = a_{1} + \ldots + a_{n}$ is an idempotent.
Then $a^{2} = a$ and so $a^{2}a_{1}^{-1} = a_{1}a_{1}^{-1}$.
It follows that $a_{1}^{2}a_{1}^{-1} = a_{1}a_{1}^{-1}$.
We deduce that $a_{1}$ is an idempotent.
In a similar way, we may deduce that each of $a_{1},\ldots,a_{n}$ is an idempotent
and so $a$ is a sum of orthogonal idempotents.
We have therefore provded that $\mathsf{E}(S^{\perp}) = \mathsf{E}(S)^{\perp}$.
It now follows that $S^{\perp}$ is an inverse monoid since it is regular and the idempotents commute.
\end{proof}

We can now prove the more general result which shows us that Boolean inverse monoids arise naturally.

\begin{theorem}\label{them:bundestag} 
Let $S \subseteq R$, where $R$ is a unital ring.
Suppose that $0,1 \in S$ and $S$ is an inverse monoid under the induced multiplication from $R$.
Then there is a subset $S \subseteq T \subseteq R$
such that $T$ is a Boolean inverse monoid.
\end{theorem}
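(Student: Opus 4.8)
The plan is to produce $T$ in two stages: first fatten the semilattice of idempotents of $S$ up to a Boolean algebra while dragging the whole semigroup along, and then close up under orthogonal sums.

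First I would apply Lemma~\ref{lem:boolean-ring} to $E:=\mathsf{E}(S)$, which is a commutative band containing $0$ and $1$ because idempotents in an inverse semigroup commute. This yields a Boolean algebra $B$ with $\mathsf{E}(S)\subseteq B\subseteq R$ whose meet is the ring product and whose complement is $e\mapsto 1-e$; in particular, since orthogonal idempotents of $B$ add to their join (here $e+f=e\vee_{B}f$ by De~Morgan, using Lemma~\ref{lem:ring-idempotents}), $B$ is closed under finite orthogonal sums, i.e.\ $B^{\perp}=B$. A point I will need repeatedly is that $B$ is ``$S$-conjugation-invariant'': for $t\in S$, conjugation $b\mapsto tbt^{-1}$ carries $\{b\in B:b\leq\mathbf{d}(t)\}$ into $\{b\in B:b\leq\mathbf{r}(t)\}$. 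This follows from the explicit description of $B$ in Lemma~\ref{lem:boolean-ring} together with Lemma~\ref{lem:conjugation-of-idpts}: a generator $e_{0}(1-f_{1})\cdots(1-f_{m})$ of $B$, normalised so that all the $e_{0},f_{i}$ lie below $\mathbf{d}(t)$, conjugates to $(te_{0}t^{-1})\prod_{i}(1-tf_{i}t^{-1})$, a product of conjugates of idempotents of $S$, hence again a generator; and conjugation respects orthogonal sums.

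Next, set $S_{1}:=SB=\{se:s\in S,\ e\in B\}$; every element of $S_{1}$ can be written as $se$ with $e\leq\mathbf{d}(s)$, since $se=s(\mathbf{d}(s)e)$ and $\mathbf{d}(s)e\in B$. I claim $S_{1}$ is an inverse monoid with $\mathsf{E}(S_{1})=B$. Closure under multiplication comes from ``passing idempotents of $B$ through elements of $S$'': for $e\in B$ and $t\in S$, since $e$ commutes with $\mathbf{r}(t)\in B$ one has $et=(tt^{-1})et=t\bigl(t^{-1}(\mathbf{r}(t)e)t\bigr)$, and $t^{-1}(\mathbf{r}(t)e)t\in B$ by conjugation-invariance, so $et=te'$ with $e'\in B$ and hence $se\cdot tf=(st)(e'f)\in SB$. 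Regularity is immediate: if $a=se$ with $e\leq\mathbf{d}(s)$ then $es^{-1}\in SB$ and $(se)(es^{-1})(se)=se\,\mathbf{d}(s)\,e=se$. By Proposition~\ref{prop:regular-commuting-idempotents} it then suffices to check that idempotents of $S_{1}$ commute, and for that I would show they all lie in the commutative band $B$. This is the crux of the whole proof, and it is a Lallement-type statement: given an idempotent $a=se$ with $e\leq\mathbf{d}(s)$, multiplying $se\,se=se$ on the left by $s^{-1}$ and using $e\leq\mathbf{d}(s)$ gives $ese=e$; moreover $f:=ses^{-1}$ is an idempotent of $B$ (conjugate of $e\leq\mathbf{d}(s)$ by $s$). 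The plan is then to deduce $es=se$ --- the inverse-semigroup avatar of ``$s$ is the identity on the domain of $e$, hence commutes with $e$'' --- whence $a=se=(se)e=ese=e\in B$. Establishing $es=se$ from $ese=e$ in a general ring is exactly where the difficulty lies; I expect this to be the main obstacle, and an alternative that sidesteps it is to build the abstract inverse monoid $\widehat{S}$ of pairs $(s,e)$ with $e\in B$, $e\leq\mathbf{d}(s)$ (suitably identified), whose semilattice of idempotents is a copy of $B$, observe that $(s,e)\mapsto se$ is a surjective homomorphism onto $S_{1}$, and invoke Lemma~\ref{lem:homomorphisms} and part (3) of Lemma~\ref{lem:hm-inverse} to conclude simultaneously that $S_{1}$ is inverse and that $\mathsf{E}(S_{1})=B$.

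Finally I would set $T:=S_{1}^{\perp}$. By Lemma~\ref{lem:biden} applied to the inverse monoid $S_{1}$, $T$ is an inverse monoid with $\mathsf{E}(T)=\mathsf{E}(S_{1})^{\perp}=B^{\perp}=B$; so $\mathsf{E}(T)$ is a Boolean algebra, $T$ contains $0$ and $1$, $T$ is closed under sums of orthogonal pairs by construction, and if $e\in\mathsf{E}(T)=B$ then $1-e=\bar{e}\in B\subseteq T$. Lemma~\ref{lem:king} now tells us that $T$ is a Boolean inverse monoid, and $S\subseteq SB=S_{1}\subseteq S_{1}^{\perp}=T$, which is what was to be shown.
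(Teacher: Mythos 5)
Your overall architecture is the paper's own: enlarge $\mathsf{E}(S)$ via the construction in Lemma~\ref{lem:boolean-ring}, multiply $S$ by the enlarged set of idempotents, close under orthogonal sums, and finish with Lemma~\ref{lem:biden} and Lemma~\ref{lem:king}. The only cosmetic difference is that the paper takes the intermediate semigroup to be $SE'$ and passes to $(E')^{\perp}$ only at the orthogonal-sum stage, whereas you take $S_{1}=SB$ with $B=(E')^{\perp}$ straight away; nothing hinges on this. You also supply details the paper omits, namely closure of $SB$ under multiplication via conjugation-invariance of $B$, and regularity.

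The one genuine gap is the step you yourself flag: that every idempotent of $SB$ lies in $B$, equivalently that the idempotents of the regular monoid $SB$ commute. As written your argument does not establish this, and the fallback via an abstract inverse monoid $\widehat{S}$ of pairs is not constructed --- verifying that $\widehat{S}$ is an inverse monoid with semilattice of idempotents $B$ is the same problem in disguise. (To be fair, the paper simply asserts the corresponding claim for $SE'$.) The gap can be closed with exactly the tools you already have, and you got most of the way there: from $a=se$ idempotent with $e\leq\mathbf{d}(s)$ you correctly derive $ese=e$ and $f:=ses^{-1}\in B$. The missing observation is that $ef=(ese)s^{-1}=es^{-1}$, so $b:=es^{-1}=ef$ lies in the commutative band $B$. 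Now $ba=es^{-1}se=e\mathbf{d}(s)e=e$, whence $be=b(ba)=b^{2}a=ba=e$; but also $be=efe=e^{2}f=ef=b$ since $e$ and $f$ commute. Hence $b=e$, that is $es^{-1}=e$, and therefore $a=se=s(es^{-1})=ses^{-1}=f\in B$. This shows $\mathsf{E}(SB)=B$ directly --- there is no need to prove $es=se$ in the ring first; it falls out afterwards --- and the rest of your argument then goes through.
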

\begin{proof} Put $E = \mathsf{E}(S)$.
Define $E'$ as in the proof of Lemma~\ref{lem:boolean-ring}.
It is easy to check that $E'$ is closed under conjugation by elements of $S$.
Put $S' = SE'$.
Then $S'$ is a regular monoid and the set of idempotents of $S'$ is precisely the set $E'$.
It follows by Proposition~\ref{prop:idempotents-commute}, 
that $S'$ is also an inverse monoid.
Define $T$ to be the sums of finite orthogonal subsets of $S'$.
Then $T$ is an inverse monoid whose set of idempotents is precisely the set $(\mathsf{E}(S)')^{\perp}$,
using the notation of the proof of Lemma~\ref{lem:boolean-ring}.
This is proved using Lemma~\ref{lem:biden}.
It now follows by Lemma~\ref{lem:king} that $T$ is a Boolean inverse monoid. 
\end{proof}

\section{The underlying groupoid}

The product we have defined on the symmetric inverse monoid $\mathcal{I}(X)$ is not the only one,
nor perhaps even the most obvious one, that we might define.
Given partial bijections $f$ and $g$, we could define $fg$ only when the domain of $f$ is equal to the range of $g$.
When we do this, we are regarding $f$ and $g$ as being functions rather than as partial functions.
With respect to this `restricted product', $\mathcal{I}(X)$ becomes a groupoid.
What we have done for the special case of the symmetric inverse monoids,
we can also do for arbitrary inverse semigroups,
but first we review the basics of groupoid theory we shall need.

Categories are usually regarded as categories of structures with their morphisms.
They can, however, also be regarded as algebraic structures in their own right,
no different from groups, rings and fields except that the binary operation
is only partially defined.
We shall define categories from this purely algebraic point of view.
Let $C$ be a set equipped with a partial binary operation 
which we shall denote by $\cdot$ or by concatenation.  
If $x,y \in C$ and the product $x \cdot y$ is defined we write 
$\exists x \cdot y$.  
An element $e \in C$ is called an {\em identity}
if 
$\exists e \cdot x$ implies $e \cdot x=x$, 
and $\exists x \cdot e$ implies $x \cdot e=x$. 
The set of identities of $C$ is denoted $C_{o}$,
where the subscript `o' stands for `object'.
The pair $(C, \cdot )$ is said to be a {\em category} 
if the following  axioms hold:  
\begin{description}

\item[{\rm (C1)}] $x \cdot (y \cdot z)$ exists 
if and only if $(x \cdot y) \cdot z$ exists, in which case they are equal. 

\item[{\rm (C2)}] $x \cdot (y \cdot z)$ exists if and only if 
$x \cdot y$ and $y \cdot z$ exist. 

\item[{\rm (C3)}] For each $x \in C$ there exist identities 
$e$ and $f$ such that $\exists x \cdot e$ and $\exists f \cdot x$. 

\end{description}
From axiom (C3), it follows that the identities $e$ and $f$ 
are uniquely determined by $x$. 
We write $e = {\bf d} (x)$ and $f = {\bf r} (x)$, where 
${\bf d}(x)$ is the {\em domain} identity and ${\bf r}(x)$ 
is the {\em range} identity.\footnote{As you will see, there is no contradiction with the notation we introduced earlier.}
Observe that $\exists x \cdot y$ if
and only if ${\bf d} (x) = {\bf r} (y)$. 
The elements of a category are called {\em arrows}.
We say that the arrow $x$ {\em starts at} $\mathbf{d}(x)$ and {\em ends at} $\mathbf{r}(x)$.
If $C$ is a category and $e$ and $f$ identities 
in $C$ then we put 
$$\mbox{\rm hom}(e,f) 
= \{x \in C \colon \: {\bf d}(x) = f \mbox{ and } {\bf r}(x) = e\},$$
the set of {\em arrows from $f$ to $e$}.
Subsets of $C$ of the form $\mbox{\rm hom}(e,f)$
are called {\em hom-sets}.\index{hom-set}
We also put $\mbox{\rm end}(e) = \mbox{\rm hom}(e,e)$,
the {\em local monoid at $e$}.
We define {\em subcategories} in the obvious way.
Viewed in this light, a catgeory is a monoid with many identities since
the categories with exactly one identity are precisely the monoids.

We shall not need arbitary categories but those that generalize groups.
A category $C$ is said to be a {\em groupoid} 
if
for each $x \in C$ there is an element $x^{-1}$ such that
$x^{-1}x = {\bf d}(x)$ and $xx^{-1} = {\bf r}(x)$.
The element $x^{-1}$ is unique with these properties. 
In the case of groupoids, the local monoids are, in fact, {\em local groups}.
It can happen that a groupoid consists entirely of its local units;
in this case, we say that the groupoid is a {\em union of groups}.
A groupoid with exactly one identity is a group.
Two elements $x$ and $y$ of a groupoid are said to be 
{\em connected}
if there is an arrow starting at ${\bf d}(x)$
and ending at ${\bf d}(y)$.
This defines an equivalence relation on the groupoid whose equivalence classes 
are called the {\em connected components}\footnote{The use of the word `connected' here is unfortunate. It has nothing to do with topology. The groupoids here are discrete.} of the groupoid.
A groupoid with one connected component is said to be {\em connected}. 
Every groupoid can be written as a disjoint union of connected groupoids.
We say that a groupoid is {\em principal} if for any identities $e$ and $f$
there is at most one arrow from $e$ to $f$.

\begin{lemma}\label{lem:principal-lg} Let $G$ be a groupoid.
Then $G$ is principal if and only if all its local groups are trivial.
\end{lemma}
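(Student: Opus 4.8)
The plan is to argue both directions directly from the axioms (C1)--(C3) for a category together with the defining property of a groupoid, with essentially all of the work going into the converse.

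For the ``only if'' direction I would simply observe that if $e$ is any identity of $G$, then $e$ is itself an arrow from $e$ to $e$, so $e\in\mbox{\rm end}(e)=\mbox{\rm hom}(e,e)$; if $G$ is principal there is at most one such arrow, whence $\mbox{\rm end}(e)=\{e\}$ and the local group at $e$ is trivial. This is immediate.

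For the ``if'' direction, suppose every local group of $G$ is trivial and let $x,y$ be arrows with $\mathbf{d}(x)=\mathbf{d}(y)=f$ and $\mathbf{r}(x)=\mathbf{r}(y)=e$; the goal is $x=y$. First I would record, from $x^{-1}x=\mathbf{d}(x)$ and $xx^{-1}=\mathbf{r}(x)$, that $\mathbf{d}(x^{-1})=e$ and $\mathbf{r}(x^{-1})=f$. Hence the product $x^{-1}y$ is defined (its factors meet at the common identity $e$) and satisfies $\mathbf{d}(x^{-1}y)=f=\mathbf{r}(x^{-1}y)$, so $x^{-1}y\in\mbox{\rm hom}(f,f)$, the local group at $f$. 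By hypothesis this group is trivial, so $x^{-1}y=f$. Multiplying on the left by $x$ and using associativity (axioms (C1) and (C2)) gives $x=xf=x(x^{-1}y)=(xx^{-1})y=ey=y$, where I use $f=\mathbf{d}(x)$, $e=\mathbf{r}(x)=\mathbf{r}(y)$ and the identity properties of $f$ and $e$. This shows there is at most one arrow from $f$ to $e$, i.e.\ $G$ is principal.

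The only delicate point is the bookkeeping forced by composition being only partially defined: at each step one must check that the product written down is actually defined, by matching the relevant domain and range identities, and that the rearrangement $x(x^{-1}y)=(xx^{-1})y$ is licensed by (C1)--(C2). Beyond that the argument is a short chain of one-line consequences of the definitions, so I anticipate no real obstacle.
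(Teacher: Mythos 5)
Your proof is correct and is essentially the paper's own argument: the forward direction is immediate, and for the converse the paper likewise forms the composite $y^{-1}x$ (your $x^{-1}y$), notes it lies in a local group and hence equals the relevant identity, and cancels to get $x=y$. The only difference is your more explicit bookkeeping of when the partial products are defined, which the paper leaves implicit.
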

\begin{proof} If a groupoid is principal, it is clear that the local groups are trivial.
Suppose, now, that the local groups are trivial.
We prove that the groupoid is principal.
Suppose that $x$ and $y$ are arrows that start at $e$ and end at $f$.
Then $y^{-1}x$ begins and ends at $e$.
This means that, under our assumption, $y^{-1}x = e$.
We deduce that $x = y$.
\end{proof}

We define {\em subgroupoids} in the obvious way.

\begin{example}\label{ex:principal}
{\em Let $X$ be a set.
The set $X \times X$ becomes a groupoid when we define
$\mathbf{d}(x,y) = (y,y)$, $\mathbf{r}(x,y) = (x,x)$ and $(x,y)^{-1} = (y,x)$;
define a partial product by $(x,y)(y,z) = (x,z)$.
Now, let $\sim$ be an equivalence relation on $X$.
Define $G$ to consist of those ordered pairs $(x,y)$ where $x \sim y$.
It is easy to check that $G$ is a subgroupoid of $X \times X$. 
It is, in fact, a principal groupoid.
If $G$ is an arbitrary principal groupoid, then it defines an equivalence relation
on the set $X = G_{o}$.
In fact, principal groupoids and equivalence relations are different ways of defining
the same thing.}
\end{example}

At this point, category theorists should look away since we shall convert a category into a semigroup.
If $C$ is a category as we have defined it above, 
then we can convert it into a semigroup with zero by adjoining a zero 
and defining all undefined products to be zero.
We denote the semigroup with zero that arises by $C^{0}$.

Now we can return to inverse semigroups.
Let $S$ be an arbitrary inverse semigroup.
Define the {\em restricted product}\footnote{Sometimes referred to as the {\em trace product}.} of two elements $s$ and $t$ in $S$
to be $s \cdot t = st$ if $s^{-1}s = tt^{-1}$ and undefined otherwise.
The following result simply tells us that what we expect to happen actually does happen.

\begin{proposition}\label{prop:restricted-product} Every inverse semigroup $S$ is 
a groupoid with respect to its restricted product.
\end{proposition}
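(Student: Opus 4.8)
The plan is to verify the three category axioms (C1)--(C3) together with the existence of inverses, where the partial product is the restricted product $s\cdot t$ defined exactly when $\dom(s)=\ran(t)$, i.e. when $s^{-1}s = tt^{-1}$. First I would identify the identities of the resulting partial algebra: I claim they are precisely the idempotents of $S$. Indeed, if $e$ is an idempotent then $\exists e\cdot x$ forces $e = xx^{-1}=\ran(x)$, so $e\cdot x = \ran(x)x = x$ by the identity $\ran(a)a=a$ noted after Lemma~\ref{lem:local}; dually for $\exists x\cdot e$. Conversely an identity $f$ satisfies $\exists f\cdot f$ (since $\dom(f)=f=\ran(f)$ for an idempotent, and in general one checks $f\cdot f = f$ would force $f$ idempotent), so the identity set $S_o$ is exactly $\mathsf{E}(S)$; this also immediately gives (C3), since for any $s$ we have $\exists s\cdot \dom(s)$ and $\exists \ran(s)\cdot s$, and these identities are $\mathbf{d}(s)=s^{-1}s$ and $\mathbf{r}(s)=ss^{-1}$, consistent with the groupoid-theoretic notation by the footnote in the paper.

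Next I would check (C2): $x\cdot(y\cdot z)$ exists iff $x\cdot y$ and $y\cdot z$ exist. The product $y\cdot z$ exists iff $\dom(y)=\ran(z)$, in which case $\mathbf{d}(y\cdot z)=\mathbf{d}(z)$ and $\mathbf{r}(y\cdot z)=\mathbf{r}(y)$ --- these domain/range computations are the routine part and follow from $\mathbf{d}(yz)=z^{-1}(y^{-1}y)z = z^{-1}zz^{-1}z = \dom(z)$ using $\dom(y)=\ran(z)$ and that idempotents are their own inverse. Then $x\cdot(y\cdot z)$ exists iff $\dom(x)=\ran(y\cdot z)=\ran(y)$, which is exactly the condition for $x\cdot y$ to exist; and given that, $\dom(y)=\ran(z)$ is exactly the condition for $y\cdot z$. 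So (C2) holds. For (C1), both $x\cdot(y\cdot z)$ and $(x\cdot y)\cdot z$ exist precisely under the same pair of conditions $\dom(x)=\ran(y)$ and $\dom(y)=\ran(z)$ (using that $\dom(x\cdot y)=\dom(y)$ and $\ran(y\cdot z)=\ran(y)$), and when they exist both equal the underlying product $xyz$ in $S$ by associativity of $S$; hence they are equal.

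Finally, inverses: for $s\in S$ the element $s^{-1}$ of the inverse semigroup satisfies $\dom(s^{-1})=\ran(s)$ and $\ran(s^{-1})=\dom(s)$, so both restricted products $s^{-1}\cdot s$ and $s\cdot s^{-1}$ are defined, and they equal $s^{-1}s = \mathbf{d}(s)$ and $ss^{-1}=\mathbf{r}(s)$ respectively, which are the required identities. This establishes that $(S,\cdot)$ is a groupoid. I do not expect any genuine obstacle here --- the proposition is of the ``what we expect actually happens'' variety --- but the one point requiring a little care is pinning down that the identities of the restricted-product structure are exactly $\mathsf{E}(S)$ (so that the abstract ${\bf d},{\bf r}$ of the category coincide with the previously-defined $\mathbf{d},\mathbf{r}$), since everything else then reduces to the domain/range bookkeeping above plus associativity in $S$.
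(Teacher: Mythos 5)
Your proposal is correct and follows essentially the same route as the paper: verify that idempotents act as identities, check (C1)--(C3) via the key computation that $\mathbf{r}(y\cdot z)=\mathbf{r}(y)$ and $\mathbf{d}(x\cdot y)=\mathbf{d}(y)$ when the restricted products are defined, and observe that $s^{-1}$ supplies the groupoid inverse. The only addition beyond the paper's argument is your (slightly sketchy, but easily repaired via $f\cdot\mathbf{d}(f)=\mathbf{d}(f)$) converse that every identity is an idempotent, which is not needed for the axioms.
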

\begin{proof}
We begin by showing that all idempotents
of $S$ are identities of $(S,\cdot)$.
Let $e \in S$ be an idempotent and suppose that $e \cdot x$ is defined.  
Then $e = xx^{-1}$ and $e \cdot x = ex$.  
But $ex = (xx^{-1})x = x$.  
Similarly, if $x \cdot e$ is defined then it is equal to $x$.  
We now check that the axioms (C1), (C2) and (C3) hold.

Axiom (C1) holds: suppose that $x \cdot (y \cdot z)$ is defined.  
Then 
$$x^{-1}x = (y \cdot z)(y \cdot z)^{-1} 
\mbox{ and } y^{-1}y = zz^{-1}.$$  
But 
$$(y \cdot z)(y \cdot z)^{-1} = yzz^{-1}y^{-1} = yy^{-1}.$$
Hence $x^{-1}x = yy^{-1}$, and so
$x \cdot y$ is defined.
Also $(xy)^{-1}(xy) = y^{-1}y = zz^{-1}$.
Thus $(x\cdot y) \cdot z$ is defined.
It is clear that $x \cdot (y \cdot z)$ is equal to
$(x \cdot y) \cdot z$. 
A similar argument shows that if $(x \cdot y) \cdot z$ exists 
then $x \cdot (y \cdot z)$ 
exists and they are equal. 

Axiom (C2) holds: suppose that $x \cdot y$ and $y \cdot z$ are defined.  
We show that $x \cdot (y \cdot z)$ is defined.  
We have that $x^{-1}x = yy^{-1}$ and $y^{-1}y = zz^{-1}$. 
Now 
$$(yz)(yz)^{-1} = y(zz^{-1})y^{-1} = y(y^{-1}y)y^{-1} = yy^{-1} = x^{-1}x.$$
Thus $x \cdot (y \cdot z)$ is defined.
The proof of the converse is straightforward.

Axiom (C3) holds: for each element $x$ we have that 
$x \cdot (x^{-1}x)$ is defined,
and we have seen that idempotents of $S$ are identities.  
Thus we put ${\bf d}(x) = x^{-1}x$.  
Similarly, we put $xx^{-1} = {\bf r}(x)$. 
It is now clear that $(S,\cdot)$ is a category.
The fact that it is a groupoid is immediate.
\end{proof}

We call $(S,\cdot)$ the {\em (underlying) groupoid} of $S$.
We can use the underlying groupoid to reveal something of the structure of inverse semigroups.

\begin{example}{\em We can interpret Lemma~\ref{lem:union-of-groups} in terms of the structure of the underlying groupoid. 
An inverse semigroup is a Clifford semigroup if and only if
its underlying groupoid is a union of groups.}
\end{example}

In the light of Proposition~\ref{prop:restricted-product},
it is now natural to picture an element $a$ of an inverse semigroup as follows:
$$\mathbf{d}(a) \stackrel{a}{\longrightarrow} \mathbf{r}(a).$$
where we now call $\mathbf{d}(a)$ the {\em domain idempotent} of $a$ and
$\mathbf{r}(a)$ is the {\em range idempotent of $a$}.
The underlying groupoid structure more or less does away with the need to deal directly with Green's relations.
If $e$ and $f$ are idempotents we write $e\, \mathscr{D}\, f$ to mean 
that there exists an element $a$ such that $\mathbf{d}(a) = e$ and $\mathbf{r}(a) = f$.
We write $a\, \mathscr{D}\, b$ to mean that  $\mathbf{d}(a)\, \mathscr{D}\, \mathbf{d}(b)$.
The relation $\mathscr{D}$ really is Green's relation  $\mathscr{D}$;
by the same token $a \, \mathscr{L} \, b$ if and only if $\mathbf{d}(a) = \mathbf{d}(b)$  
and $a \, \mathscr{R} \, b$ if and only if $\mathbf{r}(a) = \mathbf{r}(b)$.
We define $a \, \mathscr{H} \, b$ if and only if  $a \, \mathscr{L} \, b$ and $a \, \mathscr{R} \, b$;
in other words, $a$ and $b$ belong to the same {\em hom-set}.
An inverse semigroup is said to be {\em bisimple} if its underlying groupoid consists of one connected component.
An inverse semigroup with zero is said to be {\em $0$-bisimple} if its underlying groupoid consists of two connected components.

In passing from an inverse semigroup to its underlying groupoid, we do lose some information.
As the following result shows, the information that is lost is encoded by the natural partial order.

\begin{lemma}\label{lem:restricted-product} Let $S$ be an inverse semigroup.
Then for any $s,t \in S$ there exist elements $s' \leq s$ and $t' \leq t$ such that
$st = s' \cdot t'$ where the product on the right is the restricted product.
\end{lemma}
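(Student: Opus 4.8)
The plan is to obtain $s'$ and $t'$ by \emph{trimming} $s$ and $t$ with a single well-chosen idempotent, namely the ``overlap'' idempotent
$$e = \mathbf{d}(s)\mathbf{r}(t) = s^{-1}s\,tt^{-1},$$
which is an idempotent because idempotents commute in an inverse semigroup (Proposition~\ref{prop:idempotents-commute}). I would then set $s' = se$ and $t' = et$. Since $e$ is an idempotent, Lemma~\ref{lem:carol} gives $s' = se \leq s$ (it is of the form ``$s$ times an idempotent'') and likewise $t' = et \leq t$ (it is of the form ``an idempotent times $t$''), so both candidates lie below the elements we want.

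Next I would check that the restricted product $s' \cdot t'$ is actually defined, i.e.\ that $\mathbf{d}(s') = \mathbf{r}(t')$. The key point here is that $e \leq \mathbf{d}(s)$ and $e \leq \mathbf{r}(t)$, so $e\,\mathbf{d}(s) = e = e\,\mathbf{r}(t)$. Using this together with $e^{-1} = e$ (Lemma~\ref{lem:inverse-of-idempotent}), Lemma~\ref{lem:inverse-of-product}, and commutativity of idempotents,
$$\mathbf{d}(s') = (se)^{-1}(se) = e\,s^{-1}s\,e = e\,\mathbf{d}(s)\,e = e,$$
and by the symmetric computation
$$\mathbf{r}(t') = (et)(et)^{-1} = e\,tt^{-1}\,e = e\,\mathbf{r}(t)\,e = e.$$
Hence $\mathbf{d}(s') = e = \mathbf{r}(t')$, so $s' \cdot t'$ is defined.

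Finally I would verify that this restricted product reproduces $st$: by definition $s' \cdot t' = s't'$, and
$$s't' = (se)(et) = se^{2}t = set = s\,(s^{-1}s\,tt^{-1})\,t = (ss^{-1}s)(tt^{-1}t) = st,$$
where the last step uses only $e^{2}=e$, the commuting of the two idempotents $s^{-1}s$ and $tt^{-1}$, and the defining identities $ss^{-1}s = s$ and $tt^{-1}t = t$.

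I do not expect any real obstacle: the whole argument is a short computation. The only genuine insight is recognising that the correct idempotent to trim by is precisely $\mathbf{d}(s)\mathbf{r}(t)$ --- the idempotent that measures the failure of $s^{-1}s = tt^{-1}$, i.e.\ exactly the obstruction to forming the restricted product of $s$ and $t$ directly. Everything else is forced: multiplying by an idempotent moves an element down in the natural partial order (Lemma~\ref{lem:carol}), idempotents commute (Proposition~\ref{prop:idempotents-commute}), and $e$ lies below both $\mathbf{d}(s)$ and $\mathbf{r}(t)$ by construction.
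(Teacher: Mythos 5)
Your proposal is correct and is exactly the paper's argument: the paper also sets $e = \mathbf{d}(s)\mathbf{r}(t)$, $s' = se$, $t' = et$, and observes $\mathbf{d}(s') = e = \mathbf{r}(t')$ and $st = s't'$. You have simply written out the verifications that the paper leaves as an ``observe''.
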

\begin{proof} Put $e = \dom (s) \ran (t)$ and define $s' = se$ and $t' = et$.
Observe that $\dom (s') = e$ and $\ran (t') = e$ and that $st = s't'$.
\end{proof}

It is possible to formalize the idea of a groupoid equipped with a partial order
in such a way that the original semigroup product can be recaptured.
This is the approach that Ehresmann took to studying inverse semigroups.
See \cite[Chapter]{Lawson} for exactly how this is done.\footnote{This groupoid, which is discrete, is quite different from the one that Paterson constructs \cite{Paterson}.}

We shall now deal with the missing $\mathscr{J}$-relation.
First, we need a slight extension of Lemma~\ref{lem:restricted-product}.

\begin{lemma}\label{lem:ext-restricted-product} Let $S$ be an inverse semigroup.
Then $abc = a' \cdot b' \cdot c'$, which is a restricted product where $a' \leq a$, $b' \leq b$ and $c' \leq c$. 
\end{lemma}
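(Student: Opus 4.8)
The plan is to derive this by applying Lemma~\ref{lem:restricted-product} twice, peeling a factor off the \emph{right-hand} end first and keeping careful track of the domain idempotents so that the three resulting factors assemble into a single restricted product. Throughout, $\cdot$ denotes the restricted product of Proposition~\ref{prop:restricted-product}, and I will use the elementary groupoid fact that whenever $x \cdot y$ is defined one has $\dom(x \cdot y) = \dom(y)$.

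First I would apply Lemma~\ref{lem:restricted-product} to the pair $(ab,c)$. This yields an element $u \leq ab$ and an element $c' \leq c$ with $abc = u \cdot c'$, a restricted product; in particular $\dom(u) = \ran(c')$. Following the proof of that lemma, writing $e = \dom(ab)\ran(c)$ we have $u = (ab)e = a(be)$, and $be \leq b$ by Lemma~\ref{lem:carol}. Now I apply Lemma~\ref{lem:restricted-product} a second time, to the pair $(a, be)$: this produces $a' \leq a$ and $b' \leq be$ with $a(be) = a' \cdot b'$, a restricted product. Since $b' \leq be \leq b$, the three factors satisfy $a' \leq a$, $b' \leq b$ and $c' \leq c$, and at this stage $abc = u \cdot c' = (a' \cdot b') \cdot c'$.

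It then remains only to check that $b' \cdot c'$ is itself a restricted product, i.e. that $\dom(b') = \ran(c')$. But $u = a' \cdot b'$ is a restricted product, so $\dom(u) = \dom(b')$; combining this with $\dom(u) = \ran(c')$ from the first application gives $\dom(b') = \ran(c')$ as required. Hence, by axioms (C1) and (C2) for the underlying groupoid of $S$, the triple restricted product $a' \cdot b' \cdot c'$ is defined and equals $(a' \cdot b') \cdot c' = u \cdot c' = abc$, which is the assertion.

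The only subtle point — and the one place where the argument could go wrong — is the order of the two applications: if one first split off $bc$ and only afterwards peeled $a$ off the left, the second restriction would shrink the domain idempotent of the middle factor, and it would in general no longer glue onto $c'$. Splitting $(ab,c)$ first pins down $\dom(u) = \ran(c')$ once and for all, and the second application cannot disturb this because $u = a' \cdot b'$ is still a restricted product. Everything else is routine bookkeeping with the natural partial order; one could alternatively give a bare-hands proof by setting $f = \dom(b)\ran(c)$, $g = \dom(a)\ran(bf)$ and $a' = ag$, $b' = gbf$, $c' = fc$, but the two-step argument above is cleaner.
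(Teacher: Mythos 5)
Your proof is correct and follows essentially the same route as the paper's: bracket as $(ab)c$, peel off $c'$ with Lemma~\ref{lem:restricted-product}, then split the remaining left factor a second time. You are in fact slightly more careful than the paper, which justifies the step $\dom(b') = \ran(c')$ only by an appeal to associativity of the groupoid product, whereas you verify it explicitly via $\dom(u) = \dom(b')$ and $\dom(u) = \ran(c')$.
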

\begin{proof} Because of associativity, it doesn't matter how we bracket.
We write $abc = (ab)c$.
Put $d = ab$.
Then $dc = d' \cdot c'$.
But $d' = ab\mathbf{r}(c)$.
Thus $(a(b\mathbf{r}(c))) \cdot c'$.
Now write $a(b\mathbf{r}(c))  = a' \cdot b'$
where $b' \leq b\mathbf{r}(c) \leq b$.
We have therefore written $abc = a' \cdot b' \cdot c'$ using the fact that the multiplication in a groupoid is associative.
\end{proof}
We can now describe the $\mathscr{J}$-relation on inverse semigroups.

\begin{lemma}\label{lem:jay} Let $S$ be an inverse semigroup.
Then $SaS \subseteq SbS$ if and only if $a \, \mathscr{D} \, b' \leq b$ for some element $b' \in S$.
\end{lemma}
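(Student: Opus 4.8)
The plan is to translate the two-sided ideal containment $SaS \subseteq SbS$ into a statement about the underlying groupoid, using the factorization of products as restricted products provided by Lemma~\ref{lem:ext-restricted-product}. First I would prove the easy direction: suppose $a \,\mathscr{D}\, b' \leq b$ for some $b'$. Since $a\,\mathscr{D}\,b'$, there is an element $c$ with $\mathbf{d}(c) = \mathbf{d}(a)$ and $\mathbf{r}(c)=\mathbf{r}(b')$, and from the groupoid structure one checks that $a = \mathbf{r}(a)\, a = (a c^{-1})\, c$ realizes $a$ as something of the form $ub'v$ after absorbing the relevant idempotents — concretely, $b' \leq b$ means $b' = bf$ for an idempotent $f$, and $b' = b'\mathbf{d}(b')b'{}^{-1}b'$ sits inside $SbS$, while $a$ lies in $Sb'S$ because $a\,\mathscr{D}\,b'$ forces $a$ and $b'$ to generate the same principal two-sided ideal at the groupoid level. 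So $a \in Sb'S \subseteq SbS$, hence $SaS \subseteq SbS$.

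For the forward direction, assume $SaS \subseteq SbS$. In particular $a = a\,\mathbf{d}(a) \in S^{1}aS^{1}$, and since $a \in SaS$ would need an element to the left and right, I would instead note $a = \mathbf{r}(a)a\mathbf{d}(a)$ and use that $a \in S b S$ (adjoining units if necessary, but since $a = (ab^{-1})b(\text{something})$ works out, this is not an issue once $a \in S^1 b S^1$ and we regularize). Write $a = xby$ for some $x,y \in S$. Now apply Lemma~\ref{lem:ext-restricted-product} to the product $xby$: we get $a = xby = x' \cdot b' \cdot y'$ where $x' \leq x$, $b' \leq b$, and $y' \leq y$, and the right-hand side is a restricted product in the underlying groupoid. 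A restricted product is invertible in the groupoid sense, so $\mathbf{r}(x') = \mathbf{r}(a)$, $\mathbf{d}(y') = \mathbf{d}(a)$, and $\mathbf{d}(x') = \mathbf{r}(b')$, $\mathbf{r}(y') = \mathbf{d}(b')$. Then $x'{}^{-1} \cdot a \cdot y'{}^{-1} = b'$ exhibits $a$ and $b'$ in the same $\mathscr{D}$-class: the element $x'{}^{-1} a y'{}^{-1}$ (a restricted product) is an arrow from $\mathbf{d}(b')$-side to $\mathbf{r}(b')$-side equal to $b'$, and reading off domains and ranges gives $\mathbf{d}(a)\,\mathscr{D}\,\mathbf{d}(b')$, i.e. $a\,\mathscr{D}\,b'$. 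Since $b' \leq b$, this is exactly what we want.

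The main obstacle I anticipate is bookkeeping around one-sided versus two-sided ideals and the use of $S^{1}$: starting from $SaS \subseteq SbS$ one must be slightly careful to produce a genuine factorization $a = xby$ with $x,y \in S$ (not $S^1$), which requires absorbing the idempotents $\mathbf{r}(a)$ and $\mathbf{d}(a)$ — e.g. replacing $x$ by $\mathbf{r}(a)x$ and $y$ by $y\mathbf{d}(a)$, which does not change the product $xby$ since $\mathbf{r}(a)a = a = a\mathbf{d}(a)$. Once that is set up, the rest is a routine application of Lemma~\ref{lem:ext-restricted-product} together with the fact that restricted products are groupoid-invertible, so one can conjugate $a$ to $b'$ by the $x'$ and $y'$ pieces. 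I would also double-check the converse direction similarly translates cleanly: $a\,\mathscr{D}\,b'$ with $b'\leq b$ gives $a = g\cdot b' \cdot h$ for restricted-product-invertible $g,h$, hence $a \in Sb'S$, and $b' = be$ for an idempotent $e$ gives $b' \in SbS$, so $SaS \subseteq Sb'S \subseteq SbS$.
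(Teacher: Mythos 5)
Your proof is correct and follows essentially the same route as the paper: the forward direction factors $a = xby$ and applies Lemma~\ref{lem:ext-restricted-product} to obtain $a = x'\cdot b'\cdot y'$ with $b' \leq b$ and reads off $a\,\mathscr{D}\,b'$ from the groupoid arithmetic, while the converse conjugates $b'$ back to $a$ by groupoid-invertible elements to get $a \in Sb'S \subseteq SbS$. The $S^{1}$ bookkeeping you worry about is handled exactly as you suggest, by absorbing $\mathbf{r}(a)$ and $\mathbf{d}(a)$ so that $a = \mathbf{r}(a)a\mathbf{d}(a) \in SaS \subseteq SbS$ gives a genuine factorization $a = xby$ with $x,y \in S$.
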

\begin{proof} Suppose, first, that  $SaS \subseteq SbS$.
Then $a = xby$ for some $x,y \in S$.
By Lemma~\ref{lem:ext-restricted-product}, 
we may write $a = x' \cdot b' \cdot y'$.
You can check that $a \, \mathscr{D} \, b'$.
But $b' \leq b$.
We have therefore proved one direction.
To prove the converse,
suppose that $a \, \mathscr{D} \, b' \leq b$ for some element $b' \in S$.
Let $\stackrel{x}{\mathbf{d}(a) \longrightarrow \mathbf{d}(b')}$.
Put $y = b' \cdot x \cdot a^{-1}$.
Then $a = y^{-1} \cdot b' \cdot x$.
Similarly, $b' = y \cdot a \cdot x^{-1}$
It follows that $SaS = Sb'S$.
Clearly, $Sb'S \subseteq SbS$.
We have therefore shown that $SaS \subseteq SbS$.
\end{proof}

We can use the theory we have developed to generalize Lemma~\ref{lem:unique-idempotents}.

\begin{lemma}\label{lem:groupoids-as-inverse} 
Let $S$ be an inverse semigroup with zero.
Then $S$ is isomorphic to a groupoid with a zero adjoined if and only if
the natural partial order is equality on the set $S \setminus \{0\}$.
\end{lemma}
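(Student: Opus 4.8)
The plan is to read ``$S$ is isomorphic to a groupoid with a zero adjoined'' as: there is a groupoid $G$ with $S \cong G^{0}$, where $G^{0}$ is the semigroup-with-zero construction from the paragraph preceding Proposition~\ref{prop:restricted-product}. First I would record that $G^{0}$ really is an inverse semigroup with zero for any groupoid $G$: regularity is immediate from $x \cdot x^{-1} \cdot x = x$, and uniqueness of inverses follows because a nonzero solution $y$ of $xyx = x$, $yxy = y$ (with $x \neq 0$) forces $x\cdot y$ and $y\cdot x$ to be defined in $G$, whence $y = x^{-1}$ by cancellation in the groupoid. This legitimises speaking of the natural partial order on $G^{0}$.

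For the forward direction, let $G$ be a groupoid and work inside $G^{0}$. For nonzero $s,t$, since $s^{-1}s = \mathbf{d}(s)$ is an identity of the groupoid, $ts^{-1}s = t\,\mathbf{d}(s)$, which equals $t$ when $\mathbf{d}(t) = \mathbf{d}(s)$ and equals $0$ otherwise. Hence $s \leq t$ with $s,t \neq 0$ forces $s = t$, so the natural partial order on $G^{0}$ is equality on $G^{0}\setminus\{0\}$; this property transports along any isomorphism, giving the ``only if'' part.

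For the converse, assume the natural partial order of $S$ is equality on $S\setminus\{0\}$. By Proposition~\ref{prop:restricted-product}, $(S,\cdot)$ is a groupoid in which $0$ is an identity sitting in its own connected component; I would set $G = S\setminus\{0\}$ with the restricted product. One checks $G$ is a subgroupoid: it is closed under the restricted product (if $s^{-1}s = tt^{-1}$ with $s,t\neq 0$ then $\mathbf{r}(st) = ss^{-1}\neq 0$, so $st\neq 0$), closed under inverses (if $s\neq 0$ then $s^{-1}\neq 0$), and contains $\mathbf{d}(s) = s^{-1}s$ and $\mathbf{r}(s)$ for each of its elements, using that $a = 0$ iff $\mathbf{d}(a) = 0$. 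Define $\psi\colon S \to G^{0}$ by $\psi(0) = 0$ (the adjoined zero) and $\psi(s) = s$ for $s\neq 0$; this is a bijection, and the only thing to verify is that for nonzero $s,t$ the ordinary product $st$ in $S$ agrees with $\psi(s)\psi(t)$ in $G^{0}$. If $s^{-1}s = tt^{-1}$, then $st$ is the restricted product and both sides equal $st$. Otherwise I must show $st = 0$: by Lemma~\ref{lem:restricted-product} there are $s'\leq s$ and $t'\leq t$ with $st = s'\cdot t'$ a restricted product (so $\mathbf{d}(s') = \mathbf{r}(t')$); if $st\neq 0$ then $s',t'\neq 0$, and since the order is equality off $0$ we get $s' = s$, $t' = t$, hence $s^{-1}s = tt^{-1}$, a contradiction. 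So $st = 0 = \psi(s)\psi(t)$, and $\psi$ is an isomorphism.

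I expect no step to be genuinely hard; the one point needing care is the last case of the converse — showing that the semigroup product is forced to be $0$ precisely when the restricted product is undefined — which is exactly where the hypothesis and Lemma~\ref{lem:restricted-product} are used. I would also note two cosmetic points: the edge case $G = \varnothing$ (then $S = \{0\}$, handled trivially) and the harmless identification of $0\in S$ with the adjoined zero of $G^{0}$.
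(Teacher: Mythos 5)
Your proof of the ``if'' direction is essentially identical to the paper's: the paper also takes the underlying groupoid with the zero component removed and uses Lemma~\ref{lem:restricted-product} to show every product in $S$ is either a restricted product or zero. The paper explicitly proves only that direction, so your ``only if'' argument (that in $G^{0}$ a nonzero $s\leq t$ forces $s=t\,\mathbf{d}(s)=t$), together with the preliminary check that $G^{0}$ is an inverse semigroup so that the natural partial order makes sense, correctly supplies what the paper leaves as an exercise; both additions check out.
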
   
\begin{proof} We prove one direction only.
Let $S$ be an inverse semigroup with zero such that the natural partial order is equality on the set $S \setminus \{0\}$.
Let $G$ be the underlying groupoid of $S$ with the component $\{0\}$ removed.
There is an obvious bijection between $S$ and $G^{0}$.
We prove that this is a homomorphism.
Let $a,b \in S$ be any non-zero elements.
Then $ab = a' \cdot b'$ where $a' \leq a$ and $b' \leq b$ by Lemma~\ref{lem:restricted-product}.
As a result of our assumption on the natural partial order,
either $a' = a$ and $b' = b$, in which case the product is a groupoid product,
or at least one of $a'$ and $b'$ is equal to zero --- in which case $ab = 0$.
Thus a product in $S$ is either equal to zero or a restricted product but not both.
\end{proof}

In addition to the underlying groupoid, we may sometimes be able to associate another, smaller, groupoid to an inverse semigroup with zero.
Let $S$ be an inverse semigroup with zero.
An element $s \in S$ is said to be an {\em atom} if $t \leq s$ implies that $t = 0$ or $t = s$.

\begin{lemma}\label{lem:atomic-groupoid} 
Let $S$ be an inverse semigroup.
\begin{enumerate}
\item If $x$ is an atom then $x^{-1}$ is an atom. 

\item If $x$ is an atom then both $\mathbf{d}(x)$ and $\mathbf{r}(x)$ are atoms

\item Suppose that  $\mathbf{d}(x)$ is an atom then $x$ is an atom;
similarly, if $\mathbf{r}(x)$ is an atom then $x$ is an atom.

\item If $x$ and $y$ are atoms and the restricted product $x \cdot y$ is defined then $x \cdot y$ is an atom.

\item  If $x$ and $y$ are distinct compatible atoms then $x \perp y$.

\end{enumerate}
\end{lemma}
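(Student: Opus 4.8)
The plan is to dispatch the five parts in turn, using only Lemma~\ref{lem:po}(2), Lemma~\ref{lem:trump}, Lemma~\ref{lem:compatibility-meets} and Proposition~\ref{prop:restricted-product}. Throughout I take ``atom'' to mean a \emph{nonzero} element with nothing strictly between it and $0$, so that for an atom $x$ the principal order ideal $x^{\downarrow}$ is exactly the two-element set $\{0,x\}$; I also use repeatedly that $a = 0$ if and only if $\mathbf{d}(a) = 0$ if and only if $\mathbf{r}(a) = 0$. Part (1) is then immediate: if $t \leq x^{-1}$ then $t^{-1} \leq (x^{-1})^{-1} = x$ by Lemma~\ref{lem:po}(2) (and Lemma~\ref{lem:inverse-of-inverse}), so $t^{-1} \in \{0,x\}$ and hence $t \in \{0, x^{-1}\}$, proving $x^{-1}$ is an atom.

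For parts (2) and (3) the single observation is that Lemma~\ref{lem:trump} furnishes an order-isomorphism $x^{\downarrow} \to \mathbf{d}(x)^{\downarrow}$, $b \mapsto \mathbf{d}(b)$, which sends $0$ to $0$ and $x$ to $\mathbf{d}(x)$; hence $x^{\downarrow}$ has exactly two elements precisely when $\mathbf{d}(x)^{\downarrow}$ does, i.e. $x$ is an atom precisely when $\mathbf{d}(x)$ is an atom, and the same argument with $\mathbf{r}$ handles $\mathbf{r}(x)$. Part (2) is the forward reading and part (3) the backward reading of this equivalence. For part (4): when $x \cdot y$ is defined we have $\mathbf{d}(x) = \mathbf{r}(y)$, and $x \cdot y$ is a genuine groupoid arrow with $\mathbf{d}(x \cdot y) = \mathbf{d}(y)$ by Proposition~\ref{prop:restricted-product}; since $y$ is an atom, $\mathbf{d}(y)$ is an atom by (2), so in particular $\mathbf{d}(x \cdot y) = \mathbf{d}(y) \neq 0$, whence $x \cdot y \neq 0$, and then $x \cdot y$ is an atom by (3).

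For part (5), suppose $x \sim y$ with $x \neq y$ and both atoms. By Lemma~\ref{lem:compatibility-meets} the meet $x \wedge y$ exists, and it lies below both $x$ and $y$; being an atom each forces $x \wedge y \in \{0,x\}$ and $x \wedge y \in \{0,y\}$, so $x \wedge y = 0$ because $x \neq y$. Applying Lemma~\ref{lem:compatibility-meets}(1) once more, $\mathbf{d}(x \wedge y) = \mathbf{d}(x)\mathbf{d}(y)$ and $\mathbf{r}(x \wedge y) = \mathbf{r}(x)\mathbf{r}(y)$; both sides vanish since $x \wedge y = 0$, so $\mathbf{d}(x) \perp \mathbf{d}(y)$ and $\mathbf{r}(x) \perp \mathbf{r}(y)$, which is precisely $x \perp y$.

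The only genuine point requiring care — the ``main obstacle'', such as it is — is ensuring the convention that atoms are nonzero is in force and that the order-isomorphism of Lemma~\ref{lem:trump} really does match $0$ with $0$ (so that a two-element order ideal corresponds to a two-element order ideal). Once that is granted, each part reduces either to counting elements of a two-element principal order ideal or to one application of the meet formulas, with no substantive computation.
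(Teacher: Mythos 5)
Your proof is correct, but it is organised differently from the paper's. For parts (2) and (3) the paper argues directly: if $e \leq \mathbf{d}(x)$ then $xe \leq x$, so $xe$ is $0$ or $x$, whence $e$ is $0$ or $\mathbf{d}(x)$; and conversely if $y \leq x$ then $\mathbf{d}(y) \leq \mathbf{d}(x)$ forces $y = 0$ or $y = x$. You instead invoke the order-isomorphism $x^{\downarrow} \cong \mathbf{d}(x)^{\downarrow}$ of Lemma~\ref{lem:trump}, which yields both directions in one stroke once you note that it matches $0$ with $0$ and top with top; this is a clean economy, at the cost of leaning on a slightly heavier earlier lemma. For part (4) the paper again computes directly (any $z \leq xy$ has the form $x(y\mathbf{d}(z))$, and atomicity of $y$ finishes it), whereas you reduce it to (2) and (3) via the groupoid identity $\mathbf{d}(x \cdot y) = \mathbf{d}(y)$ — a genuinely shorter route. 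For part (5) the paper shows $\mathbf{d}(x)$ and $\mathbf{d}(y)$ are atoms, deduces that a nonzero product would force $\mathbf{d}(x) = \mathbf{d}(y)$, and then applies Lemma~\ref{lem:karen} to get the contradiction $x = y$; you instead observe that $x \wedge y$ exists by Lemma~\ref{lem:compatibility-meets}, must be $0$ since $x \neq y$ are atoms, and then read off $\mathbf{d}(x)\mathbf{d}(y) = 0 = \mathbf{r}(x)\mathbf{r}(y)$ from the meet formulas. Both are valid; yours bypasses Lemma~\ref{lem:karen} but uses the same underlying meet machinery. Your explicit care about the convention that atoms are nonzero (which the paper leaves implicit, though it relies on it, e.g.\ in the proof of Theorem~\ref{them:main-finite}) is well placed.
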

\begin{proof} (1) Immediate from the properties of the inverse.

(2) We prove that $\mathbf{d}(x)$ is an atom;
the proof that $\mathbf{r}(x)$ is an atom is similar.
Suppose that $e \leq \mathbf{d}(x)$.
Then $xe \leq x$.
It follows that either $xe = 0$ or $xe = x$.
Suppose, first, that  $xe = 0$.
Then $e\mathbf{d}(x) = 0$ and so $e = 0$.
Alternatively, if $xe = x$ then $e\mathbf{d}(x) = \mathbf{d}(x)$ and so $e = \mathbf{d}(x)$.
This shows that $\mathbf{d}(x)$ is an atom.

(3) We prove that if $\mathbf{d}(x)$ is an atom then $x$ is an atom;
the proof of the other statement is analogous.
Suppose that $y \leq x$.
Then $\mathbf{d}(y) \leq \mathbf{d}(x)$.
Since $\mathbf{d}(x)$ is an atom either $\mathbf{d}(y) = 0$ or $\mathbf{d}(y) = \mathbf{d}(x)$.
It follows that either $y = 0$ or $y = x$.
We have therefore proved that $x$ is an atom.  

(4) We are given that $x$ and $y$ are atoms and that $x \cdot y$ exists.
Observe first that $x \cdot y \neq 0$.
Let $z \leq xy$.
Then $z = x (y\mathbf{d}(z))$.
Now, $y\mathbf{d}(z) = 0$ or $y\mathbf{d}(z) = y$, since $y$ is an atom.
If the former then $z = 0$ and if the latter then $z = xy$.
We have therefore proved that $x \cdot y$ is an atom.

(5) Let $x$ be an atom.
Then by part (2) above it follows that $\mathbf{d}(x)$ is an atom.
Similarly, $\mathbf{d}(y)$ is an atom.
If the product $\mathbf{d}(x)\mathbf{d}(y)$ is non-zero
then in fact $\mathbf{d}(x) = \mathbf{d}(y)$.
But $x \sim y$ and so $x = y$ by Lemma~\ref{lem:karen}, which contradicts our assumption that $x$ and $y$ are distinct.
It follows that $\mathbf{d}(x) \perp \mathbf{d}(y)$.
A similar argument shows that $\mathbf{r}(x) \perp \mathbf{r}(y)$
from which it follows that $x \perp y$.
\end{proof}

It follows by Lemma~\ref{lem:atomic-groupoid},
that the set of atoms of $S$, if non-empty, forms a groupoid, which we shall call the {\em atomic groupoid} of $S$ 
and denote by $\mathsf{A}(S)$. 

\begin{example}
{\em The finite symmetric inverse monoid $\mathcal{I}(X)$ has an interesting atomic groupoid.
It consists of those partial bijections the domains of which contain exactly one element of $X$.
This groupoid is isomorphic to the groupoid $X \times X$ defined in Example~\ref{ex:principal}.}
\end{example}

We shall describe  all finite Boolean inverse monoids in terms of groupoids.
The reader will recall that the finite Boolean algebras are isomorphic to the powerset Boolean algebras
defined on the finite set of atoms.
We shall replace finite sets by finite groupoids.
We need some definitions first.
If $A$ and $B$ are subsets of a category $C$ then $AB$ is the set of all products
$ab$ where $a \in A$, $b \in B$ and $\mathbf{d}(a) = \mathbf{r}(b)$.
If $A$ is a subset of a groupoid then $A^{-1}$ is the set of all $a^{-1}$ where $a \in A$.
We first show how to construct finite Boolean inverse monoids from finite groupoids.
If $G$ is a groupoid then a subset $A \subseteq G$ is said to be a {\em local bisection}
if both $AA^{-1}$ and $A^{-1}A$ consist entirely of identities.
You can check that a subset $A \subseteq G$ is a local bisection if $a,b \in A$ and $\mathbf{d}(a) = \mathbf{d}(b)$ 
(respectively, $\mathbf{r}(a) = \mathbf{r}(b)$) 
then $a = b$, 

\begin{proposition}\label{prop:groupoids} 
Let $G$ be a finite groupoid.
Then $\mathsf{K}(G)$, the set of all local bisections of $G$ under subset multiplication,
is a finite Boolean inverse monoid, the set of atoms of which forms a groupoid isomorphic to $G$.
\end{proposition}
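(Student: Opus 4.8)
The plan is to check, in order, that $\mathsf{K}(G)$ is a monoid, that it is an inverse monoid, that it is Boolean, and finally that its atomic groupoid is $G$. Everything rests on one observation: for a local bisection $A$ the defining condition says precisely that $AA^{-1} = \mathbf{r}(A) := \{\mathbf{r}(a) \colon a \in A\}$ and $A^{-1}A = \mathbf{d}(A) := \{\mathbf{d}(a) \colon a \in A\}$, together with the fact that subset multiplication (in $G$, extended by declaring undefined products empty) is associative and distributes over arbitrary unions of subsets. For the monoid structure I would first check that $\mathsf{K}(G)$ is closed under subset products: $(AB)(AB)^{-1} = A(BB^{-1})A^{-1} \subseteq AA^{-1} \subseteq G_{o}$, since $BB^{-1}$ consists of identities and right-multiplying $A$ by a set of identities can only shrink it, and dually $(AB)^{-1}(AB) \subseteq G_{o}$. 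Associativity is automatic, $\varnothing$ and $G_{o}$ lie in $\mathsf{K}(G)$, and $G_{o}$ is a two-sided identity because $AG_{o} = \{a\mathbf{d}(a) \colon a \in A\} = A$; as $G$ is finite, $\mathsf{K}(G)$ is a finite monoid with zero $\varnothing$.

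Next I would verify inverseness. A direct computation gives $AA^{-1}A = A$ (the inclusion $\supseteq$ from $a = (aa^{-1})a$; the inclusion $\subseteq$ because a factor $ab^{-1}c$ with $b,c \in A$ and $\mathbf{r}(b) = \mathbf{r}(c)$ forces $b = c$), and likewise $A^{-1}AA^{-1} = A^{-1}$, so $\mathsf{K}(G)$ is regular with $A^{-1}$ an inverse of $A$. Every idempotent has the form $AA^{-1} = \mathbf{r}(A) \subseteq G_{o}$, and conversely every subset of $G_{o}$ is idempotent with $EF = E \cap F$; hence the idempotents commute, so $\mathsf{K}(G)$ is an inverse monoid by Proposition~\ref{prop:regular-commuting-idempotents}, with $\mathsf{E}(\mathsf{K}(G))$ the powerset Boolean algebra of $G_{o}$ (complement $E \mapsto G_{o} \setminus E$, bottom $\varnothing$, top $G_{o}$).

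For Booleanness, Lemma~\ref{lem:carol} shows that the natural partial order on $\mathsf{K}(G)$ is ordinary set inclusion: $A \leq B$ gives $A = B\,\mathbf{d}(A) \subseteq B$, and $A \subseteq B$ gives $B\,\mathbf{d}(A) = A$ because $B$ is a local bisection. A short calculation identifies $A \sim B$ with the condition that $A \cup B$ is again a local bisection, and in that case $A \vee B = A \cup B$; hence every finite compatible family has a join (its union), and since subset multiplication distributes over unions, multiplication distributes over finite joins — together with the Boolean algebra of idempotents this makes $\mathsf{K}(G)$ a Boolean inverse monoid (one may instead cite Lemma~\ref{lem:queen}(1) applied to orthogonal joins). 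Because the order is inclusion, the atoms are exactly the singletons $\{g\}$, $g \in G$ (each is a local bisection since $\{g\}\{g\}^{-1} = \{\mathbf{r}(g)\}$, and any nonzero $A \leq \{g\}$ must equal $\{g\}$), and by Lemma~\ref{lem:atomic-groupoid} they form the groupoid $\mathsf{A}(\mathsf{K}(G))$ under the restricted product; then $g \mapsto \{g\}$ is a bijection $G \to \mathsf{A}(\mathsf{K}(G))$ carrying $\mathbf{d}$, $\mathbf{r}$, inversion and composition across, for $\{g\} \cdot \{h\}$ is defined iff $\mathbf{d}(g) = \mathbf{r}(h)$, i.e. iff $gh$ exists in $G$, and then equals $\{gh\}$ — so it is an isomorphism of groupoids, which is the last claim.

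There is no single deep step here: the care goes into handling the partial composition of $G$ correctly when checking closure under subset multiplication and the identity $AA^{-1}A = A$, and into pinning down that the natural partial order is plain inclusion, after which the Boolean structure follows almost formally. The one verification worth doing slowly is that the compatibility relation coincides with ``the union is a local bisection''; granted that, distributivity comes for free because subset multiplication always commutes with unions.
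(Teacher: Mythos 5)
Your proposal is correct and follows essentially the same route as the paper's proof, which simply lists the same sequence of facts (closure under products, $A=AA^{-1}A$, idempotents are the subsets of $G_{o}$ with intersection as product, regular plus commuting idempotents implies inverse, the order is inclusion, $A\sim B$ iff $A\cup B$ is a local bisection, atoms are singletons) and leaves the verifications to the reader. You have supplied those verifications correctly, including the two points that genuinely need care — the identity $AA^{-1}A=A$ via the local-bisection property and the identification of the natural partial order with inclusion.
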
 
\begin{proof} You can check that the product of two local bisections is a local bisection.
If $A$ is a local bisection, then so is $A^{-1} =  \{a^{-1} \colon a \in A\}$ 
and $A = AA^{-1}A$.
The idempotents are just the subsets of $G_{o}$ and the
product of two idempotents is just the intersection of these two sets.
It follows that $\mathsf{K}(G)$ is an inverse semigroup since it is a regular semigroup with commuting idempotents.
It is a monoid with identity $G_{o}$ and has a zero $\varnothing$.
It has a Boolean algebra of idempotents.
Observe that $A \leq B$ if and only if $A \subseteq B$.
You can check that $A \sim B$ if and only if $A \cup B$ is a local bisection.
It is now easy to check that $\mathsf{K}(G)$ is a Boolean inverse monoid.
The atoms are the singleton sets $\{g\}$ and form a groupoid
isomorphic to $G$.
\end{proof}

We shall now go in the opposite direction.
Our first result is an immediate consequence of finiteness.

\begin{lemma}\label{lem:atoms} Let $S$ be a finite Boolean inverse monoid.
Then each non-zero element is above an atom.
\end{lemma}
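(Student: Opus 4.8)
The plan is to observe that, by its very definition, an atom of $S$ is nothing but a minimal element of the poset $S \setminus \{0\}$ with respect to the natural partial order: $s$ is an atom precisely when $s \neq 0$ and the only elements $\leq s$ are $0$ and $s$ itself. Since $\leq$ is a genuine partial order by part (1) of Lemma~\ref{lem:po}, and $S$ is finite, every non-empty subset of $S$ has a minimal element, and this is all we shall need. The Boolean hypothesis plays no role here; the result really is, as the surrounding text says, an immediate consequence of finiteness.

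Concretely, I would argue as follows. Fix a non-zero element $a \in S$ and set $X = \{ t \in S : t \leq a \text{ and } t \neq 0 \}$. Since $a \leq a$ and $a \neq 0$, we have $a \in X$, so $X$ is non-empty; as $S$ is finite, so is $X$, and hence $X$ has an element $b$ that is minimal in $X$ with respect to the restriction of $\leq$. I claim $b$ is an atom. Suppose $t \leq b$. By transitivity of the natural partial order (part (1) of Lemma~\ref{lem:po}) we get $t \leq a$. If $t \neq 0$ then $t \in X$ and $t \leq b$, so minimality of $b$ forces $t = b$. Thus $t \leq b$ implies $t = 0$ or $t = b$, which is exactly the definition of $b$ being an atom. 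Since $b \in X$ we have $b \leq a$, completing the proof.

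There is essentially no obstacle to overcome: the only point worth flagging is that one should be sure the natural partial order is honestly antisymmetric and transitive (so that "minimal element" behaves as expected), but that was already recorded in Lemma~\ref{lem:po}. One could equivalently phrase the argument by picking a descending chain $a = t_0 > t_1 > \cdots$ of non-zero elements, which must terminate by finiteness, its last term being the desired atom below $a$; I would prefer the minimal-element formulation as it is cleaner and avoids invoking chains.
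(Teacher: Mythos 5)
Your proof is correct and is precisely the "immediate consequence of finiteness" that the paper alludes to (the paper gives no written proof): a minimal element of the finite non-empty poset $\{t \leq a : t \neq 0\}$ is an atom below $a$. You are also right that the Boolean hypothesis is not needed here, only that $S$ is finite with a zero.
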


We now connect elements with the atoms beneath them.
Let $a \in S$.
Define $\theta (a) = a^{\downarrow} \cap \mathsf{A}(S)$.
Observe that $\theta (0) = \varnothing$. 

\begin{lemma}\label{lem:two} 
Let $S$ be a Boolean inverse semigroup.
For each $a \in S$, the set $\theta (a)$ is a local bisection of the groupoid $\mathsf{A}(S)$.
\end{lemma}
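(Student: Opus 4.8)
The plan is to reduce everything to the elementwise characterization of local bisections recorded just before Proposition~\ref{prop:groupoids}: a subset $A$ of the groupoid $\mathsf{A}(S)$ is a local bisection as soon as any two of its members with a common domain, or with a common range, are forced to be equal. So the real task is to show that two atoms lying below $a$ that share a domain (or a range) must coincide.

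First I would note that $\theta(a) = a^{\downarrow} \cap \mathsf{A}(S)$ is contained in $a^{\downarrow}$, and that $a^{\downarrow}$ is a compatible subset of $S$ (observed immediately after the definition of the compatibility relation, and ultimately a consequence of Lemma~\ref{lem:bounded-above}). Hence any $x, y \in \theta(a)$ form a pair of compatible atoms. By part (5) of Lemma~\ref{lem:atomic-groupoid}, distinct compatible atoms are orthogonal, so either $x = y$ or $x \perp y$.

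Next I would eliminate the orthogonal alternative whenever domains or ranges agree. If $x \perp y$ then, by the definition of orthogonality, $\mathbf{d}(x)\mathbf{d}(y) = 0$ and $\mathbf{r}(x)\mathbf{r}(y) = 0$. Suppose $\mathbf{d}(x) = \mathbf{d}(y)$; then $\mathbf{d}(x) = \mathbf{d}(x)\mathbf{d}(y) = 0$, whence $x = 0$, contradicting that $x$ is an atom (atoms are non-zero, and $a = 0$ iff $\mathbf{d}(a) = 0$). The same computation with ranges handles $\mathbf{r}(x) = \mathbf{r}(y)$. Therefore in either case $x = y$, so $\theta(a)$ satisfies the criterion and is a local bisection of $\mathsf{A}(S)$; and if $\theta(a) = \varnothing$, for instance when $a = 0$, the conclusion is trivial.

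I do not anticipate a genuine obstacle: all the weight has already been carried by Lemma~\ref{lem:bounded-above} (which makes $a^{\downarrow}$ compatible) and by part (5) of Lemma~\ref{lem:atomic-groupoid}. The only point requiring a little care is to make sure the ``$AA^{-1}$ and $A^{-1}A$ consist of identities'' formulation of a local bisection really is equivalent to the elementwise statement about shared domains and ranges, but since the text explicitly records that equivalence I would simply cite it rather than re-derive it. Note also that Booleanness of $S$ plays no role in the argument beyond ensuring that $S$ has a zero, so that atoms and orthogonality make sense; the proof goes through for any inverse semigroup with zero.
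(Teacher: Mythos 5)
Your proof is correct, but it takes a slightly heavier route than the paper's. The paper's own proof of this lemma is essentially one line: if $x,y\leq a$ and $\mathbf{d}(x)=\mathbf{d}(y)$, then $x=a\mathbf{d}(x)=a\mathbf{d}(y)=y$ (this is the same observation as in part (1) of Lemma~\ref{lem:one}, and it uses nothing about atoms at all --- it holds for any two elements below $a$). You instead pass through the compatibility of $a^{\downarrow}$ and part (5) of Lemma~\ref{lem:atomic-groupoid} to conclude that distinct elements of $\theta(a)$ are orthogonal, and then rule out orthogonality when domains or ranges coincide. That is a valid argument and not circular (part (5) rests on Lemma~\ref{lem:karen}, not on the present lemma), but it invokes atomicity where none is needed; the direct computation is both shorter and more general. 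Your closing remarks are sound: the empty case is trivial, and Booleanness genuinely plays no role here beyond guaranteeing a zero --- in the paper it is used only for the incidental non-emptiness observation via Lemma~\ref{lem:atoms}, which is not required for $\theta(a)$ to be a local bisection.
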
 
\begin{proof} If $a = 0$ then $\theta (a) = \varnothing$.
If $a \neq 0$ then it is above at least one atom by Lemma~\ref{lem:atoms} and so is non-empty.
Let $x,y \in \theta (a)$ such that $\mathbf{d}(x) = \mathbf{d}(y)$.
Then $x = y$.
Dually, if $\mathbf{r}(x) = \mathbf{r}(y)$ then $x = y$.
\end{proof}

If $S$ is a finite Boolean inverse monoid, then by Lemma~\ref{lem:atoms} we may define a function $\theta \colon S \rightarrow \mathsf{K}(\mathsf{A}(S))$
by $\theta (0) = \varnothing$ and  $\theta (a) = a^{\downarrow} \cap \mathsf{A}(S)$.
It is quite rare that we can say anything about the structure of finite semigroups belonging to some class.
Thus the following result is a pleasant surprise.

\begin{theorem}[The structure of finite Boolean inverse monoids]\label{them:main-finite} Let $S$ be a finite Boolean inverse monoid.
Then $S$ is isomorphic to the Boolean inverse monoid $\mathsf{K}(\mathsf{A}(S))$.
\end{theorem}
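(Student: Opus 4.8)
The plan is to prove that the map $\theta \colon S \to \mathsf{K}(\mathsf{A}(S))$ given by $\theta(a) = a^{\downarrow} \cap \mathsf{A}(S)$, which lands in $\mathsf{K}(\mathsf{A}(S))$ by Lemma~\ref{lem:two}, is an isomorphism of Boolean inverse monoids. The linchpin is an atomicity lemma: \emph{every element of $S$ is the orthogonal join of the atoms beneath it}. Given $a \neq 0$, the atoms below $a$ all lie in $a^{\downarrow}$, hence are pairwise compatible, hence pairwise orthogonal by part (5) of Lemma~\ref{lem:atomic-groupoid}; since $S$ is finite and has all finite orthogonal joins, $a_{0} := \bigvee \theta(a)$ exists and $a_{0} \leq a$. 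If $a \setminus a_{0} \neq 0$, then it lies above some atom $x$ by Lemma~\ref{lem:atoms}; but $x \leq a$ forces $x \leq a_{0}$, while $x \leq a \setminus a_{0}$ forces $x \perp a_{0}$ by part (2) of Lemma~\ref{lem:chicken}, so $\mathbf{d}(x) = \mathbf{d}(x)\mathbf{d}(a_{0}) = 0$, a contradiction. Hence $a \setminus a_{0} = 0$ and $a = a_{0}$ by part (2) of Lemma~\ref{lem:chicken}.

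Injectivity of $\theta$ is then immediate, since $\theta(a) = \theta(b)$ gives $a = \bigvee \theta(a) = \bigvee \theta(b) = b$. For surjectivity, let $A$ be a local bisection of $\mathsf{A}(S)$. Distinct members of $A$ have distinct domains and distinct ranges, and these domains and ranges are atoms of the Boolean algebra $\mathsf{E}(S)$ by part (2) of Lemma~\ref{lem:atomic-groupoid}; since distinct atoms of a Boolean algebra are orthogonal, the elements of $A$ are pairwise orthogonal, so $a := \bigvee A$ exists. Clearly $A \subseteq \theta(a)$. Conversely, if $x$ is an atom with $x \leq a$, then $x = x \wedge a = \bigvee_{y \in A}(x \wedge y)$ by the distributive law (Lemma~\ref{lem:domains}, extended to finite joins by induction); each $x \wedge y$ lies below the atoms $x$ and $y$, hence equals $0$, $x$, or $y$, and they cannot all be $0$, so $x \wedge y = x = y$ for some $y \in A$, i.e. $x \in A$. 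Thus $\theta(a) = A$, and combined with the atomicity lemma this also shows $\theta$ is order-preserving with order-preserving inverse.

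It remains to show $\theta(ab) = \theta(a)\theta(b)$ (subset product in $\mathsf{K}$). The inclusion $\theta(a)\theta(b) \subseteq \theta(ab)$ is routine: if $x \leq a$ and $y \leq b$ are atoms with $\mathbf{d}(x) = \mathbf{r}(y)$, then $x \cdot y$ is an atom by part (4) of Lemma~\ref{lem:atomic-groupoid} and $x \cdot y = xy \leq ab$. For the reverse, take an atom $z \leq ab$; since $\mathbf{r}(z) \leq \mathbf{r}(ab) \leq \mathbf{r}(a)$, set $x := \mathbf{r}(z)a \leq a$, so that $\mathbf{r}(x) = \mathbf{r}(z)$ is an atom and hence $x$ is an atom by part (3) of Lemma~\ref{lem:atomic-groupoid}, with $g := \mathbf{d}(x)$ an atom by part (2); a short calculation with commuting idempotents gives $g \leq \mathbf{r}(b)$, so $y := gb \leq b$ has $\mathbf{r}(y) = g$, is therefore an atom, and the restricted product $x \cdot y$ is defined. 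Then $x \cdot y \leq ab$ and $\mathbf{r}(x \cdot y) = \mathbf{r}(z)$, so $z \sim x \cdot y$ by Lemma~\ref{lem:bounded-above} and hence $z = x \cdot y$ by Lemma~\ref{lem:karen}. With $\theta(0) = \varnothing$ and $\theta(1) = \mathsf{E}(S) \cap \mathsf{A}(S) = \mathsf{A}(S)_{o}$, the identity of $\mathsf{K}(\mathsf{A}(S))$, this makes $\theta$ a bijective homomorphism of monoids-with-zero, hence a semigroup isomorphism; since the natural partial order, and therefore binary joins and complements, are defined purely algebraically, $\theta$ is an isomorphism of Boolean inverse monoids. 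I expect the factoring step in this last argument --- extracting atoms $x \leq a$ and $y \leq b$ with $x \cdot y = z$ for a given atom $z \leq ab$ --- to be the main obstacle; the remaining work is bookkeeping with atoms and the Boolean structure.
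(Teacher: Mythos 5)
Your proposal is correct and follows essentially the same route as the paper: the same map $\theta(a) = a^{\downarrow}\cap\mathsf{A}(S)$, injectivity via $a = \bigvee\theta(a)$ using the relative complement $a\setminus a_{0}$, surjectivity via the distributive law applied to $x\wedge\bigvee A$, and the homomorphism property via factoring an atom below $ab$ as a restricted product of atoms below $a$ and $b$ (which you carry out explicitly where the paper invokes Lemma~\ref{lem:restricted-product}). No gaps.
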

\begin{proof} It remains to show that $\theta$ (as defined above) is an isomorphism of semigroups.
First, $\theta$ is a homomorphism.
Let $x$ be an atom such that $x \leq ab$.
Then $x = a(b\mathbf{d}(x))$.
Thus by  Lemma~\ref{lem:restricted-product}, we may write
$x = a' \cdot b'$ where $a' \leq a$ and $b' \leq b$.
It is easy to check that $a'$ and $b'$ are themselves atoms.
We have therefore proved that $\theta (ab) \subseteq \theta (a)\theta (b)$.
Conversely, let $x \in \theta (a)$ and $y \in \theta (b)$ such that the restricted product $x \cdot y$ is defined.
Then $x \cdot y = xy \leq ab$.
But the restricted product of atoms is an atom and so we have proved the first claim.

It remains to prove that $\theta$ is a bijection.
We show first that $a = \bigvee \theta (a)$.
Put $b = \bigvee \theta (a)$.
Then, clearly, $b \leq a$.
Suppose that $b \neq a$.
It is here that we use the Boolean structure.
Then $a \setminus b \neq 0$.
It follows by Lemma~\ref{lem:atoms} that $a \setminus b$ is above an atom $x$.
But then $x \leq a$ and so $x \leq b$ (by definition).
Thus $x \leq b, a \setminus b$ which implies that $x = 0$.
But atoms are non-zero.
It follows that $a = \bigvee \theta (a)$
and so $\theta$ is an injection.

Now let $A \in \mathsf{K}(\mathsf{A}(S))$.
We don't lose any generality by assuming that it is non-empty.
Then $A$ is a set of compatible elements.
Put $a = \bigvee A$.
Clearly, $A \subseteq \theta (a)$.
Let $x$ be an atom such that $x \leq a$.
Then $x = \bigvee_{a \in A} (x \wedge a)$ by part (2) of Lemma~\ref{lem:domains}.
Remembering that both $x$ and $a$ are atoms,
it follows that $x = a$ for some $a \in A$.
We have therefore proved that $\theta (a) = A$.
\end{proof}

We now have a complete description of the finite Boolean inverse monoids:
as a result of Proposition~\ref{prop:groupoids} and Theorem~\ref{them:main-finite},
they are precisely the inverse monoids of the form $\mathsf{K}(G)$ where $G$ is a finite groupoid.
We can apply the theory we have developed to the representation theory of {\em arbitrary} finite inverse monoids,
although a more elementary account can be found in \cite[Chapter 9]{Steinberg2016}.
We prove first that every finite inverse monoid $S$ can be embedded into the Boolean
inverse monoid constructed from its underlying groupoid.

\begin{lemma}\label{lem:one} Let $S$ be a finite inverse monoid with underlying groupoid $G$.
\begin{enumerate}
\item Let $a \in S$. Then $a^{\downarrow}$ is a local bisection of $G$.
\item We have that $(ab)^{\downarrow} = a^{\downarrow}b^{\downarrow}$.
\end{enumerate}
\end{lemma}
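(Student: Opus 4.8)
The plan is to handle the two parts in turn, drawing on the properties of the natural partial order and of the restricted product that are already established; in fact finiteness of $S$ plays no role in either argument and is present only because this is the setting in which the lemma will be used. Recall that as a set the underlying groupoid $G$ coincides with $S$, so $a^{\downarrow}$ is literally a subset of $G$.

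\emph{Part (1).} I would use the criterion for local bisections recorded just before Proposition~\ref{prop:groupoids}: a subset $A$ of a groupoid is a local bisection exactly when any two members of $A$ with the same domain identity, and any two with the same range identity, coincide. So let $x,y \in a^{\downarrow}$. Since $x,y \le a$, Lemma~\ref{lem:bounded-above} gives that $x^{-1}y$ and $xy^{-1}$ are idempotents, that is, $x \sim y$. If moreover $\mathbf{d}(x) = \mathbf{d}(y)$, then $x = y$ by Lemma~\ref{lem:karen}, and similarly if $\mathbf{r}(x) = \mathbf{r}(y)$. Hence $a^{\downarrow}$ is a local bisection of $G$.

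\emph{Part (2).} I would prove the two inclusions separately. For $a^{\downarrow}b^{\downarrow} \subseteq (ab)^{\downarrow}$: if $z = x \cdot y$ is a restricted product with $x \le a$ and $y \le b$, then $z = xy$, and $xy \le ab$ because the natural partial order is compatible with multiplication (part (3) of Lemma~\ref{lem:po}); thus $z \in (ab)^{\downarrow}$. For the reverse inclusion, let $z \le ab$, so that $z = (ab)\mathbf{d}(z) = a\bigl(b\,\mathbf{d}(z)\bigr)$. Put $t = b\,\mathbf{d}(z)$; then $t \le b$ and $z = at$. Applying Lemma~\ref{lem:restricted-product} to the pair $a,t$ yields $a' \le a$ and $t' \le t \le b$ with $at = a' \cdot t'$, a restricted product. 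Since $at = z$, this writes $z$ as an element of $a^{\downarrow}b^{\downarrow}$, and the two inclusions together give $(ab)^{\downarrow} = a^{\downarrow}b^{\downarrow}$.

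The argument is short once the earlier lemmas are available; the one place that needs care is the reverse inclusion of part (2), where one must feed Lemma~\ref{lem:restricted-product} the pair $a,\, b\,\mathbf{d}(z)$ rather than $a,b$, so that the restricted product it produces equals $z$ itself and not merely $ab$.
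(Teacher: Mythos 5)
Your proof is correct and follows essentially the same route as the paper: part (1) is the same observation that an element of $a^{\downarrow}$ is determined by its domain (or range) idempotent, and part (2) splits into the same two inclusions, with the reverse inclusion obtained by factoring $z = a(b\,\mathbf{d}(z))$ into a restricted product of elements below $a$ and $b$. If anything, your appeal to Lemma~\ref{lem:restricted-product} there is slightly more careful than the paper's bare decomposition $c = (\mathbf{r}(c)a)(b\mathbf{d}(c))$, since it guarantees that the factorization really is a restricted product, as the definition of the subset product $a^{\downarrow}b^{\downarrow}$ requires.
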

\begin{proof} (1) Let $x,y \leq a$ such that $\mathbf{d}(x) = \mathbf{d}(y)$.
Then it is immediate that $x = y$.
Similar reasoning shows that if  $x,y \leq a$ such that $\mathbf{r}(x) = \mathbf{r}(y)$ then $x = y$.
We have therefore shown that the set $a^{\downarrow}$ is a local bisection of $G$.

(2) Suppose that $x \leq a$ and $y \leq b$.
Then $xy \leq ab$.
On the other hand, if $c \leq ab$ then we can write
$c = (\mathbf{r}(c)a)(b\mathbf{d}(c))$ and so $x \in a^{\downarrow}b^{\downarrow}$.
\end{proof}

Let $S$ be an arbitrary inverse monoid with underlying groupoid $G$.
An element $a$ with the property that $a^{\downarrow} = \{a\}$, a singleton set, will be said to be {\em at the bottom}.
Define $\beta \colon S \rightarrow \mathsf{K}(G)$ by
$\beta (a) = a^{\downarrow}$. 
Then by Lemma~\ref{lem:one}, $\beta$ is an injective homomorphism of inverse semigroups.
The elements in $1^{\downarrow}$ are precisely the idempotents,
which are the identities of $G$; the set of identities of $G$ is the monoid identity of $\mathsf{K}(G)$.
Thus the homomorphism is a monoid homomorphism.
The above result, on its own, doesn't take us very far because we already know that
every finite inverse monoid can be embedded in a finite Boolean inverse monoid.
The key point of this embedding is that the map $\beta \colon S \rightarrow \mathsf{K}(G)$ has a special property described by the following proposition.

\begin{proposition}\label{prop:entourage} Let $S$ be a finite inverse monoid with underlying groupoid $G$
and let $\alpha \colon S \rightarrow T$ be any monoid homomorphism to a Boolean inverse monoid $T$.
Then there is a unique morphism of Boolean inverse monoids $\gamma \colon \mathsf{K}(G) \rightarrow T$
such that $\gamma \beta = \alpha$.
\end{proposition}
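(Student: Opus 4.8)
The plan is to prove the universal property in two steps: uniqueness, which amounts to showing that $\mathsf{K}(G)$ is generated as a Boolean inverse monoid by $\beta(S)$; and existence, by writing down the forced candidate for $\gamma$ and checking it is a morphism with $\gamma\beta=\alpha$. For uniqueness, I would first observe that every local bisection $A$ equals the orthogonal join $\bigvee_{a\in A}\{a\}$ of its singletons, so it suffices to show that each atom $\{a\}$ of $\mathsf{K}(G)$ lies in the Boolean inverse submonoid generated by $\beta(S)$; this follows by induction on $|a^{\downarrow}|$. If $y_{1},\dots,y_{k}$ are the elements of $G$ maximal among those strictly below $a$, then $\beta(a)=\{a\}\vee\beta(y_{1})\vee\dots\vee\beta(y_{k})$, and this join is orthogonal because $\mathbf{d}(x)\ne\mathbf{d}(a)$ and $\mathbf{r}(x)\ne\mathbf{r}(a)$ for every $x<a$ (by the definition of the natural partial order); hence $\{a\}=\beta(a)\setminus(\beta(y_{1})\vee\dots\vee\beta(y_{k}))$ by Lemma~\ref{lem:chicken}, and the $\beta(y_{i})$ lie in the generated submonoid by the inductive hypothesis. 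Since a morphism of Boolean inverse monoids out of $\mathsf{K}(G)$ is thereby determined by its restriction to $\beta(S)$, and $\gamma\beta=\alpha$ forces $\gamma(\beta(a))=\alpha(a)$, there is at most one such $\gamma$.

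For existence, the computation above dictates the definition. For $a\in G$ set $\gamma_{0}(a)=\alpha(a)\setminus\bigvee\{\alpha(y):y\in G,\ y<a\}$; this is meaningful because the finitely many elements $\alpha(y)$ all lie below $\alpha(a)$ (homomorphisms are isotone, Lemma~\ref{lem:po}), hence are pairwise compatible (Lemma~\ref{lem:bounded-above}), so their join exists in the Boolean inverse monoid $T$ and lies below $\alpha(a)$, making the relative complement defined via Lemma~\ref{lem:chicken}. Extend $\gamma$ to local bisections by $\gamma(A)=\bigvee_{a\in A}\gamma_{0}(a)$. The point that makes this legitimate is that if $a\ne b$ lie in a common local bisection --- equivalently, writing $e=\mathbf{d}(a)$ and $f=\mathbf{d}(b)$, the idempotents $e,f$ are distinct and so are $\mathbf{r}(a),\mathbf{r}(b)$ --- then $\gamma_{0}(a)\perp\gamma_{0}(b)$ in $T$. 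Indeed $\mathbf{d}(\gamma_{0}(a))=\alpha(e)\,\overline{\bigvee\alpha(\mathbf{d}(y))}$ by Lemma~\ref{lem:chicken} and Lemma~\ref{lem:domains}, where by Lemma~\ref{lem:trump} the idempotents $\mathbf{d}(y)$ (for $y<a$ maximal) are precisely the maximal idempotents of $S$ strictly below $e$; since distinct idempotents $e,f$ of $S$ always satisfy $ef<e$ or $ef<f$, and idempotents commute, $\mathbf{d}(\gamma_{0}(a))\mathbf{d}(\gamma_{0}(b))=\alpha(ef)\,\overline{(\cdots)}\,\overline{(\cdots)}$ with $\alpha(ef)$ absorbed by one of the two complements, so it is $0$; the range side is symmetric. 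Thus each defining join of $\gamma$ is orthogonal, hence exists, and the same bookkeeping shows $\gamma$ preserves orthogonal joins and sends $\varnothing$ to $0$ and $G_{o}$ to $1$.

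It remains to check $\gamma\beta=\alpha$ and that $\gamma$ is a homomorphism. For the former, induction on $|a^{\downarrow}|$ gives $\gamma(\beta(a))=\bigvee_{x\le a}\gamma_{0}(x)$; splitting off $x=a$ and grouping the remaining $x$ under the maximal $y_{i}<a$, the inductive hypothesis turns this into $\gamma_{0}(a)\vee\bigvee_{i}\alpha(y_{i})=(\alpha(a)\setminus\bigvee_{i}\alpha(y_{i}))\vee\bigvee_{i}\alpha(y_{i})=\alpha(a)$ by Lemma~\ref{lem:chicken}. Since $\gamma$ already preserves orthogonal joins, $0$ and $1$, Lemma~\ref{lem:queen} reduces the morphism property to multiplicativity (preservation of inverses is then automatic by Lemma~\ref{lem:hm-inverse}); and multiplicativity, using that multiplication in $T$ distributes over the orthogonal joins defining $\gamma(A)$ and $\gamma(B)$, reduces to the atom-level identity $\gamma(\{a\}\cdot\{b\})=\gamma_{0}(a)\gamma_{0}(b)$. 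When $\mathbf{d}(a)\ne\mathbf{r}(b)$ both sides vanish by the absorption argument already used; when $\mathbf{d}(a)=\mathbf{r}(b)$ one must prove $\gamma_{0}(ab)=\gamma_{0}(a)\gamma_{0}(b)$ for the restricted product $ab$, and this is the main obstacle. To attack it I would use Lemma~\ref{lem:restricted-product} to express the elements of $S$ below $ab$ in terms of elements below $a$ and below $b$, together with the order-isomorphisms of Lemma~\ref{lem:trump}, so as to identify the ``freshly added'' part of $\alpha(ab)$ with the product of the freshly added parts of $\alpha(a)$ and $\alpha(b)$; everything else in the verification is routine manipulation of relative complements in $T$.
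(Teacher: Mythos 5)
Your construction coincides with the paper's: the same candidate $\gamma_{0}(a)=\alpha(a)\setminus\bigvee_{y<a}\alpha(y)$ on singletons, extension by joins over the elements of a local bisection, verification of $\gamma\beta=\alpha$ by induction on the height of $a$, and uniqueness via the observation that every local bisection is obtained from the sets $\beta(a)$ by joins and relative complements. In two respects you are more careful than the printed proof: you actually check that the pieces $\gamma_{0}(a)$, $a\in A$, are pairwise orthogonal in $T$ (the paper only remarks that $\{a\}\sim\{b\}$ holds in $\mathsf{K}(G)$, which does not by itself licence forming the join in $T$ before $\gamma$ is known to be a homomorphism), and you notice that multiplicativity of $\gamma$ is not automatic. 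The latter is also where your argument stops being a proof: you reduce everything to the atom-level identity $\gamma_{0}(a\cdot b)=\gamma_{0}(a)\gamma_{0}(b)$ for a restricted product and then only sketch an attack on it. The gap is real but completable, and your sketch is the right one. Concretely, with $e=\mathbf{d}(a)=\mathbf{r}(b)$, $u=\bigvee_{g<e}\alpha(g)$ and $v=\bigvee_{h<\mathbf{d}(b)}\alpha(h)$, Lemma~\ref{lem:chicken} and Lemma~\ref{lem:trump} give $\gamma_{0}(a)=\alpha(a)\overline{u}$ and $\gamma_{0}(b)=\alpha(b)\overline{v}$; passing $\overline{u}$ through $\alpha(b)$ by Lemma~\ref{lem:pass-through} yields $\overline{u}\,\alpha(b)=\alpha(b)\,\alpha(b)^{-1}\overline{u}\,\alpha(b)$, and since conjugation by $\alpha(b)$ sends $u$ to $v$ (Lemma~\ref{lem:trump} applied to $b$, plus distributivity) and sends the relative complement of $u$ in $\alpha(e)^{\downarrow}$ to the relative complement of $v$ in $\alpha(\mathbf{d}(b))^{\downarrow}$ (Lemma~\ref{lem:chicken}), one gets $\overline{u}\,\alpha(b)=\alpha(b)\overline{v}$ and hence $\gamma_{0}(a)\gamma_{0}(b)=\alpha(ab)\overline{v}=\gamma_{0}(ab)$, using $\mathbf{d}(ab)=\mathbf{d}(b)$. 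It is worth recording that the paper is no more forthcoming at exactly this point: its proof checks join-preservation and $\gamma(a^{\downarrow})=\alpha(a)$ and then simply asserts that this ``proves that we have a monoid homomorphism,'' so the step you isolated as the main obstacle is precisely the step the printed proof leaves to the reader.
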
 
\begin{proof} Our first step is to define the function  $\gamma \colon \mathsf{K}(G) \rightarrow T$.
Define $\gamma (\varnothing) = 0$.
If $a$ is any element of $S$ then $\{a\}$ is an element of $\mathsf{K}(G)$.
Let the set of elements strictly below $a$ in $S$ be $\{a_{1}, \ldots, a_{m}\}$.
This set could well be empty if $a$ is at the bottom --- this will not cause us any problems.
Since $a_{1},\ldots, a_{m} \leq a$,
these elements are pairwise compatible.
It follows that $\alpha (a_{1}),\ldots, \alpha (a_{m})$ is a compatible subset of $T$.
Thus the join $\alpha (a_{1}) \vee \ldots \vee \alpha (a_{m})$ exists;
if the set of elements strictly below $a$ is empty then this join is just $0$.
It is clearly less than $\alpha (a)$ and so we may form the element
$\alpha (a) \setminus (\alpha (a_{1}) \vee \ldots \vee \alpha (a_{m}))$.
On the basis of the above, define 
$$\gamma (\{a\}) = \alpha (a) \setminus (\alpha (a_{1}) \vee \ldots \vee \alpha (a_{m})).$$
If $A$ is a local bisection which is neither empty nor a singleton set,
define 
$$\gamma (A) = \bigvee_{a \in A} \gamma (\{a\}).$$
This makes sense, for if
$a,b \in A$, then $\{a\}, \{b\} \subseteq A$ and so $\{a\} \sim \{b\}$ in $\mathsf{K}(G)$.
This completes the definition of $\gamma$.

We show that $\gamma$ preserves binary joins.
If $A \sim B$ in $\mathsf{K}(G)$ then $A \vee B = A \cup B$.
From the definition of $\gamma$, it is clear that $\gamma$ preserves binary joins.
It maps the empty set to the zero by definition.

We prove that $\gamma (a^{\downarrow}) = \alpha (a)$.
This proves that we have a monoid homomorphism
and that $\gamma \beta = \alpha$.
Define the {\em height} of $a$ in $S$ to be the length of a chain of maximum length from $a$.
Those elements with height zero are precisely those which are at the bottom.
If $a$ has height zero then 
$$\gamma (a^{\downarrow}) = \alpha (a).$$  
We assume that we have proved that $\gamma (b^{\downarrow}) = \alpha (b)$ for all elements $b$ of height at most $n$.
Let $a$ be an element of height $n + 1$.
Let $b_{1},\ldots, b_{m}$ be all the elements immediately below $a$.
Then, by the induction hypothesis, we have that
$$\gamma (b_{i}^{\downarrow}) = \alpha (b_{i}).$$
Let the elements strictly less than $a$ be $a_{1},\ldots, a_{m}$.
These include the elements $b_{1},\ldots, b_{m}$, for example, so in general
each element $a_{i}$ is beneath one of the $b_{j}$.
It follows that we can write 
$$\alpha (a) \setminus (\alpha (a_{1}) \vee \ldots \vee \alpha (a_{m}))$$
as 
$$\alpha (a) \setminus (\alpha (b_{1}e_{1}) \vee \ldots \vee \alpha (b_{n}e_{n}))$$
where the idempotents $\alpha (e_{j})$ gather together by means of a join 
all the idempotents that arise from showing that $a_{i} \leq b_{j}$ for various $i$.
By definition
$$\gamma (a^{\downarrow}) = \alpha (a) \setminus (\alpha (a_{1}) \vee \ldots \vee \alpha (a_{m})) \vee \gamma (\{a_{1}\}) \vee \ldots \vee \gamma (\{a_{m}\})$$
but we can write 
$$\gamma (a^{\downarrow}) = \gamma (\{a\}) \vee \gamma (\{a_{1},\ldots, a_{m}\}^{\downarrow}).$$
But 
$$\{a_{1},\ldots, a_{m}\}^{\downarrow} = b_{1}^{\downarrow} \cup \ldots \cup b_{n}^{\downarrow}$$
and so 
$$\gamma (\{a_{1},\ldots, a_{m}\}^{\downarrow}) = \gamma (b_{1}^{\downarrow}) \vee \ldots \vee \gamma (b_{n}^{\downarrow}).$$
Thus 
$$\gamma (\{a_{1},\ldots, a_{m}\}^{\downarrow}) = \alpha (b_{1}) \vee \ldots \vee \alpha (b_{n})$$
using the induction hypothesis.
Thus
$$\gamma (a^{\downarrow}) = \alpha (a) \setminus (\alpha (a_{1}) \vee \ldots \vee \alpha (a_{m})) \vee \alpha (b_{1}) \vee \ldots \vee \alpha (b_{n}.)$$
By our argument above
$$\gamma (a^{\downarrow}) = \alpha (a) \setminus (\alpha (b_{1}e_{1}) \vee \ldots \vee \alpha (b_{n}e_{n})) \vee \alpha (b_{1}) \vee \ldots \vee \alpha (b_{n}).$$
This is equal to 
$$\alpha (a) \setminus (\alpha (b_{1}e_{1}) \vee \ldots \vee \alpha (b_{n}e_{n})) 
\vee
( \alpha (b_{1}e_{1}) \vee \ldots \vee \alpha (b_{n}e_{n}))                        )
\vee \alpha (b_{1}) \vee \ldots \vee \alpha (b_{n})$$
which is just 
$$\alpha (a) \vee  \alpha (b_{1}) \vee \ldots \vee \alpha (b_{n})$$
which is equal to $\alpha (a)$.

We now prove uniqueness.
Suppose that $\gamma' \colon \mathsf{K}(S) \rightarrow T$ is a morphism of Boolean inverse monoids
such that $\gamma' \beta = \alpha$.
We show that $\gamma' = \gamma$.
It is immediate from our assumption on $\gamma'$ that for all elements of height zero
we have that $\gamma (a) = \gamma' (a)$.
So, let $a \in S$ be any  element which does not have height zero. 
Let $\{a_{1},\ldots, a_{m}\}$ be the set of all elements strictly less than $a$.
Then 
$$\{a_{1},\ldots, a_{m}\} = a_{1}^{\downarrow} \cup \ldots \cup a_{m}^{\downarrow}.$$
It follows that 
$$\gamma' (\{a_{1},\ldots, a_{m}\}) = \gamma' (a_{1}^{\downarrow}) \vee \ldots \vee \gamma' (a_{m}^{\downarrow}).$$
But this is just equal to 
$$\gamma' (\beta (a_{1})) \vee \ldots \vee \gamma' (\beta (a_{m}))$$
which is equal to 
$$\gamma (\beta (a_{1})) \cup \ldots \cup \gamma (\beta (a_{m}))$$ 
by assumption.
We have therefore proved that 
$$\gamma' (\{a_{1},\ldots, a_{m}\}) = \gamma (\{a_{1},\ldots, a_{m}\}).$$
Observe that $\{a\} =  a^{\downarrow}\setminus  \{a_{1},\ldots, a_{m}\}$.
Thus 
$$\gamma' (\{a\}) = \gamma' (a^{\downarrow}) \setminus  \gamma'(\{a_{1},\ldots, a_{m}\}).$$
But 
$$\gamma' (a^{\downarrow}) = \gamma (a^{\downarrow})$$ 
by assumption,
and 
$$\gamma'(\{a_{1},\ldots, a_{m}\}) = \gamma (\{a_{1},\ldots, a_{m}\})$$
by what we proved above.
It follows that $\gamma' (\{a\}) = \gamma (\{a\})$ for all elements $a \in S$.
Now, let $A$ be any non-empty local bisection.
Then, since $\gamma'$ is a morphism of Boolean inverse monoids, 
we have that $\gamma' (A) = \bigvee_{a \in A} \gamma' (\{a\})$.
It follows that $\gamma' = \gamma$.
\end{proof}

We now apply our results to the study of the representation theory of finite inverse monoids.
The starting point is to say what we mean by the representation theory of groupoids.
Let $G$ be a finite groupoid.
Then we get a semigroup with zero $G^{0}$ by adjoining a zero;
in fact, this is a special kind of inverse semigroup by Lemma~\ref{lem:groupoids-as-inverse}.
We can therefore consider homomorphisms $\theta \colon G^{0} \rightarrow R$
to the multiplicative monoid of the unital ring $R$.
Each identity $e \in G$ gives rise to an idempotent $\theta (e)$ in the ring $R$.
If $e$ and $f$ are distinct identities of the groupoid $G$ then $\theta (e)$ and $\theta (f)$ are orthogonal.
By assumption, the groupoid $G$ is finite.
Put $f = \sum_{e \in G_{o}} \theta (e)$.
Then $f$ is an idempotent in $R$.
Consider the ring $fRf = \{a \in R \colon faf = a\}$.
This has identity $f$.
Let $g \in G$ be an arbitary element of $G$.
Let $e,e'$ be the identities such that $a = e'ae$.
Then $\theta (a) = \theta (e') \theta (a) \theta (e)$.
But for any identity $e$, we have that $\theta (e) = f\theta (e)f$.
It follows that $\theta (a) \in fRf$.
We therefore define a {\em representation} of a finite groupoid $G$ in a ring $R$
to be a semigroup with zero homomorphism $\theta \colon G^{0} \rightarrow R$ 
such that $1 = \sum_{e \in G_{o}} \theta (e)$.

We shall also need the following definition.
Let $S$ be a Boolean inverse monoid.
A monoid homorphism $\phi \colon S \rightarrow R$ to the multiplicative monoid
of the ring $R$ which maps zero to zero is said to be {\em additive} 
if $\phi (a \vee b) = \phi (a) + \phi (b)$, whenever $a \perp b$.
We refer the reader to part (2) of Lemma~\ref{lem:queen} for the rationale for this definition.

The following theorem was proved in a different way in \cite[Theorem 9.3]{Steinberg2016}.

\begin{theorem}[Representation theory of finite inverse monoids]\label{them: representation-theory-of-finite-inverse-semigroups}
Let $S$ be a finite inverse monoid with underlying groupoid $G$
and let $R$ be a unital ring.
Then there is a bijective correspondence between the set of representations of $S$ in $R$ 
and the set of representations of the finite groupoid $G$ in $R$.
\end{theorem}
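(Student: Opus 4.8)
Here a \emph{representation of $S$ in $R$} means a homomorphism from $S$ into the multiplicative monoid of $R$, so in particular the identity of $S$ goes to $1_R$. The plan is to factor the desired correspondence through the Boolean inverse monoid $\mathsf{K}(G)$ and the embedding $\beta\colon S\rightarrow\mathsf{K}(G)$, $\beta(a)=a^{\downarrow}$, of Lemma~\ref{lem:one} and Proposition~\ref{prop:entourage}. Call a homomorphism $\Phi\colon\mathsf{K}(G)\rightarrow R$ into the multiplicative monoid of $R$ an \emph{additive morphism} if $\Phi(\varnothing)=0$ and $\Phi(A\vee B)=\Phi(A)+\Phi(B)$ whenever $A\perp B$. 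I shall construct two bijections
$$\{\text{representations of }S\text{ in }R\}\ \longleftrightarrow\ \{\text{additive morphisms }\mathsf{K}(G)\rightarrow R\}\ \longleftrightarrow\ \{\text{representations of }G\text{ in }R\}$$
and obtain the theorem by composing them. I use throughout that, by Proposition~\ref{prop:groupoids} and Theorem~\ref{them:main-finite}, the atoms of $\mathsf{K}(G)$ form a copy of $G$ via $g\mapsto\{g\}$ and every $A\in\mathsf{K}(G)$ is the join $\bigvee_{g\in A}\{g\}$ of the atoms below it, a join which is moreover \emph{orthogonal} because distinct atoms below a common element are orthogonal by part~(5) of Lemma~\ref{lem:atomic-groupoid}; hence every additive morphism $\Phi$ satisfies $\Phi(A)=\sum_{g\in A}\Phi(\{g\})$.

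The right-hand bijection is the routine one. Given an additive morphism $\Phi$, put $\theta(g)=\Phi(\{g\})$ for $g\in G$ and $\theta(0)=0$; then $\theta\colon G^{0}\rightarrow R$ is a homomorphism of semigroups with zero, since $\{g\}\{h\}$ equals $\{gh\}$ or $\varnothing$ according as $\mathbf{d}(g)=\mathbf{r}(h)$ or not, and $\sum_{e\in G_{o}}\theta(e)=\Phi\!\left(\bigvee_{e\in G_{o}}\{e\}\right)=\Phi(G_{o})=1$ by additivity over the pairwise orthogonal idempotent atoms whose join is $1_{\mathsf{K}(G)}$. Conversely, from a representation $\theta$ of $G$ define $\tilde\theta(A)=\sum_{g\in A}\theta(g)$; the local bisection property — distinct pairs $(g,h)\in A\times B$ with $\mathbf{d}(g)=\mathbf{r}(h)$ yield distinct products $gh\in AB$ — makes $\tilde\theta$ a monoid homomorphism, and it is patently additive. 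Using $A=\bigvee_{g\in A}\{g\}$ once more, these constructions are visibly mutually inverse.

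For the left-hand bijection, one direction is formal: an additive morphism $\Phi$ restricts along $\beta$ to a monoid homomorphism $\Phi\beta\colon S\rightarrow R$. For the other direction, let $\alpha\colon S\rightarrow R$ be a representation. Then $\alpha(S)$ is an inverse monoid by Lemma~\ref{lem:homomorphisms}, so $\alpha(S)\cup\{0_{R},1_{R}\}$ is an inverse submonoid of $R$ containing $0$ and $1$; by Theorem~\ref{them:bundestag} there is a Boolean inverse monoid $T$ with $\alpha(S)\subseteq T\subseteq R$, and — crucially — the $T$ furnished by that theorem is built so that orthogonal joins in $T$ are computed as ring sums in $R$ (this is exactly what is proved inside the proof of Lemma~\ref{lem:king}). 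Since $\alpha$ is now a monoid homomorphism from $S$ to a Boolean inverse monoid, Proposition~\ref{prop:entourage} gives a morphism of Boolean inverse monoids $\gamma\colon\mathsf{K}(G)\rightarrow T$ with $\gamma\beta=\alpha$. Let $\Phi_{\alpha}$ be $\gamma$ followed by the inclusion $T\hookrightarrow R$. As $\gamma$ preserves inverses, zero and binary joins, it sends orthogonal pairs to orthogonal pairs of $T$ and orthogonal joins to orthogonal joins of $T$, which are ring sums; hence $\Phi_{\alpha}$ is an additive morphism with $\Phi_{\alpha}\beta=\alpha$.

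It remains to show $\alpha\mapsto\Phi_{\alpha}$ and $\Phi\mapsto\Phi\beta$ are mutually inverse, and the single non-formal point — the step I expect to be the main obstacle — is \textbf{uniqueness of the additive extension}: if $\Phi,\Psi\colon\mathsf{K}(G)\rightarrow R$ are additive morphisms with $\Phi\beta=\Psi\beta$, then $\Phi=\Psi$. Granting this, $\Phi_{\alpha}\beta=\alpha$ settles one round trip, while for the other both $\Phi_{\Phi\beta}$ and $\Phi$ are additive morphisms extending $\Phi\beta$ along $\beta$ and hence coincide; in particular $\Phi_{\alpha}$ is independent of the choice of $T$. To prove uniqueness it suffices, by the identity $\Phi(A)=\sum_{g}\Phi(\{g\})$, to show $\Phi(\{g\})=\Psi(\{g\})$ for all $g\in G$, and I do this by induction on the \emph{height} of $g$ in $S$, namely the length of a longest chain descending from $g$, as in the proof of Proposition~\ref{prop:entourage}. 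If $g$ is at the bottom then $\{g\}=g^{\downarrow}=\beta(g)$ and $\Phi(\{g\})=\Phi\beta(g)=\Psi\beta(g)=\Psi(\{g\})$. For the inductive step, writing $h$ for the elements of $S$ strictly below $g$, one checks in $\mathsf{K}(G)$ that
$$\beta(g)=g^{\downarrow}=\{g\}\vee\Bigl(\,\bigvee_{h<g}\{h\}\,\Bigr),\qquad\{g\}\perp\bigvee_{h<g}\{h\},$$
the orthogonality because, were $\mathbf{r}(h)=\mathbf{r}(g)$ for some $h<g$, then $h\sim g$ (Lemma~\ref{lem:bounded-above}) would force $h=g$ by Lemma~\ref{lem:karen} — and similarly for $\mathbf{d}$. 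Applying $\Phi$ and using additivity,
$$\Phi(\{g\})=\Phi\beta(g)-\sum_{h<g}\Phi(\{h\}),$$
and the same with $\Psi$. Each such $h$ has strictly smaller height, so $\Phi(\{h\})=\Psi(\{h\})$ by induction, and $\Phi\beta(g)=\Psi\beta(g)$ by hypothesis; hence $\Phi(\{g\})=\Psi(\{g\})$. This completes the induction, the two bijections, and the proof.
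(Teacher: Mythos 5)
Your proof is correct and follows essentially the same route as the paper: factor the correspondence through the Boolean inverse monoid $\mathsf{K}(G)$ via $\beta$, using Theorem~\ref{them:bundestag} and Proposition~\ref{prop:entourage} on one side and the orthogonal atom decomposition on the other. The one place you go beyond the paper is in explicitly proving uniqueness of the additive extension $\mathsf{K}(G)\rightarrow R$ by induction on height --- a point the paper asserts but does not spell out, since Proposition~\ref{prop:entourage} only gives uniqueness for morphisms of Boolean inverse monoids into a fixed $T$ rather than for additive ring-valued maps --- and your argument there is sound.
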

\begin{proof} Let $\theta \colon S \rightarrow R$ be a monoid homomorphism to the multiplicative monoid of the ring $R$.
The image $\theta (S)$ is an inverse submonoid of the multiplicative monoid of the ring $R$ by Lemma~\ref{lem:homomorphisms}
and so $\theta (S)^{0}$, the inverse monoid $\theta (S)$ with the zero of the ring $R$ adjoined, is an inverse monoid with zero.
Thus by Theorem~\ref{them:bundestag}, there is a Boolean inverse monoid $T$
such that $\theta (S)^{0} \subseteq T \subseteq R$.
By Proposition~\ref{prop:entourage}, 
there is therefore a unique morphism of Boolean inverse monoids $\phi \colon \mathsf{K}(G) \rightarrow T$
such that $\phi \beta = \theta$.
We want to regard $\phi$ as a map from $\mathsf{K}(G)$ to the ring $R$.
This is an additive homomorphism.
We have proved that every homomorphism $\theta \colon S \rightarrow R$
gives rise to an additive homomorphism $\phi \colon \mathsf{K}(G) \rightarrow R$.
On the other hand, given an additive homorphism $\phi \colon \mathsf{K}(G) \rightarrow R$,
we can construct a monoid homomorphism $\phi \beta \colon S \rightarrow R$. 
This leads to a bijective correspondence between representations of $S$ in $R$ and additive homomorphisms of $\mathsf{K}(G)$ in $R$.

We described all finite Boolean inverse monoids in Theorem~\ref{them:main-finite}.
In what follows, therefore, we may assume that $S$ is a finite Boolean inverse monoid with atomic groupoid $G$.
Let $\theta \colon S \rightarrow R$ be an additive homomorphism of $S$.
Then, by restriction, we get a semigroup homomorphism $\theta'$ from $G^{0}$ to the ring $R$.
Since $S$ is a finite Boolean inverse monoid, the identity of $S$ is an orthogonal join of the atomic idempotents.
It follows that the sum of the idempotents $\theta' (e)$, where $e \in G_{o}$,
is equal to the identity of $R$.
Thus, we have defined a representation of the groupoid $G$.
We now go in the opposite direction.
Let $S$ be a finite Boolean inverse monoid with atomic groupoid $G$
and suppose that there a representation $\theta' \colon G^{0} \rightarrow R$.
We shall now define a homomorphism $\theta$ of $S$ that extends $\theta'$.
Let $a \in S$.
Define $\theta \colon S \rightarrow R$ by
$\theta (a) = \theta' (a_{1}) + \ldots + \theta' (a_{m})$
where $a_{1}, \ldots, a_{m} \leq a$
are all the atoms below $a$; we can assume this is an orthogonal set by part (5) of Lemma~\ref{lem:atomic-groupoid}
This is an additive homomorphism of $S$. 
We therefore have a bijective correspondence between the additive homorphisms of a Boolean inverse monoid
and the representations of its atomic groupoid.

If we put our two results together, then we have established a bijective correspondence between representations
of an inverse monoid $S$ and the representations of its underlying groupoid $G$.
\end{proof}

\section{Fundamental inverse semigroups}

In Section 3, we introduced the Clifford semigroups.
These are the inverse semigroups in which every element is central.
Living inside every inverse semigroup is a Clifford semigroup.
For every inverse semigroup $S$, define $\mathsf{Z}(\mathsf{E}(S))$, the {\em centralizer of the idempotents},
to be set of all elements of $S$ which commute with every idempotent.
Then $\mathsf{Z}(\mathsf{E}(S))$ is a wide inverse subsemigroup of $S$ which is Clifford.
If $\mathsf{Z}(\mathsf{E}(S)) = \mathsf{E}(S)$ we say the inverse semigroup is {\em fundamental}.
Fundamental inverse semigroups are important.
In this section, we shall study them in more detail.

To do this, we shall need the following.
Define the relation $\mu$ on an arbitrary inverse semigroup by
$$(s,t) \in \mu \Leftrightarrow (\forall e \in \mathsf{E}(S))(ses^{-1} = tet^{-1}).$$
It is routine to check that this is a congruence.

\begin{lemma}\label{lem:idpt-sep-mu} Let $S$ be an inverse semigroup.
\begin{enumerate}
\item If $(s,t) \in \mu$ then $\mathbf{r}(s) = \mathbf{r}(t)$ and $\mathbf{d}(s) = \mathbf{d}(t)$.
It follows that $\mu \subseteq \mathscr{H}$.
\item In the definition, we may restrict to those idempotents in $\mathbf{d}(s)^{\downarrow}$ 
\item If $(e,f) \in \mu$, where $e$ and $f$ are idempotents, then $e = f$.
\end{enumerate}
\end{lemma}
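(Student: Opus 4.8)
The plan is to prove the three parts in sequence, with part~(1) carrying the weight and parts~(2) and~(3) then being short.

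For part~(1) I would first record two routine facts: for any $u\in S$ and $e\in\mathsf{E}(S)$, the conjugate $ueu^{-1}$ is an idempotent (Lemma~\ref{lem:conjugation-of-idpts}) with $ueu^{-1}\leq\mathbf{r}(u)$ — since $ueu^{-1}\cdot uu^{-1}=ue(u^{-1}uu^{-1})=ueu^{-1}$ — and $u\mathbf{d}(u)u^{-1}=uu^{-1}=\mathbf{r}(u)$ because $u\mathbf{d}(u)=u$. Now assume $(s,t)\in\mu$. Putting $e=\mathbf{d}(s)$ in $ses^{-1}=tet^{-1}$ gives $\mathbf{r}(s)=t\mathbf{d}(s)t^{-1}\leq\mathbf{r}(t)$, and since the defining condition for $\mu$ is symmetric in $s$ and $t$ we also get $\mathbf{r}(t)\leq\mathbf{r}(s)$, hence $\mathbf{r}(s)=\mathbf{r}(t)$. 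For the domains, multiply $ses^{-1}=tet^{-1}$ on the left by $s^{-1}$ and on the right by $s$; the left side becomes $\mathbf{d}(s)e\mathbf{d}(s)=\mathbf{d}(s)e$ (idempotents commute), so $\mathbf{d}(s)e=s^{-1}tet^{-1}s$ for every idempotent $e$. Taking $e=\mathbf{d}(t)$ and using $tt^{-1}=\mathbf{r}(t)=\mathbf{r}(s)=ss^{-1}$, the right side collapses to $s^{-1}ss^{-1}s=\mathbf{d}(s)$, so $\mathbf{d}(s)\mathbf{d}(t)=\mathbf{d}(s)$, i.e.\ $\mathbf{d}(s)\leq\mathbf{d}(t)$; by symmetry $\mathbf{d}(s)=\mathbf{d}(t)$. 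Finally, recalling that $a\,\mathscr{R}\,b\iff\mathbf{r}(a)=\mathbf{r}(b)$ and $a\,\mathscr{L}\,b\iff\mathbf{d}(a)=\mathbf{d}(b)$, this gives $\mu\subseteq\mathscr{R}\cap\mathscr{L}=\mathscr{H}$.

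For part~(2) the key observation is that $sfs^{-1}=s(\mathbf{d}(s)f)s^{-1}$ for every idempotent $f$: writing $s=s\mathbf{d}(s)$ and $s^{-1}=\mathbf{d}(s)s^{-1}$ yields $sfs^{-1}=s\bigl(\mathbf{d}(s)f\mathbf{d}(s)\bigr)s^{-1}$, and $\mathbf{d}(s)f\mathbf{d}(s)=\mathbf{d}(s)f$ since idempotents commute, with $\mathbf{d}(s)f\in\mathbf{d}(s)^{\downarrow}$. Thus the whole family $\{\,sfs^{-1}:f\in\mathsf{E}(S)\,\}$ is determined by its restriction to $\mathbf{d}(s)^{\downarrow}$. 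Combined with part~(1) — which forces $\mathbf{d}(s)=\mathbf{d}(t)$ once $(s,t)\in\mu$ — this shows that $(s,t)\in\mu$ holds precisely when $\mathbf{d}(s)=\mathbf{d}(t)$ and $ses^{-1}=tet^{-1}$ for all idempotents $e\leq\mathbf{d}(s)$: given the latter, for an arbitrary idempotent $f$ one has $sfs^{-1}=s(\mathbf{d}(s)f)s^{-1}$ and $tft^{-1}=t(\mathbf{d}(s)f)t^{-1}$, which are equal by hypothesis.

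Part~(3) is then immediate: if $e,f\in\mathsf{E}(S)$ and $(e,f)\in\mu$, then $\mathbf{r}(e)=\mathbf{r}(f)$ by part~(1), while $\mathbf{r}(e)=ee^{-1}=e^{2}=e$ using $e^{-1}=e$ (Lemma~\ref{lem:inverse-of-idempotent}), and likewise $\mathbf{r}(f)=f$, so $e=f$. The only genuine subtlety is in part~(2): one must not read it as allowing the quantifier to be cut down to $\mathbf{d}(s)^{\downarrow}$ on its own, since a test over subidempotents of $\mathbf{d}(s)$ does not by itself entail $\mathbf{d}(s)=\mathbf{d}(t)$; the restriction is legitimate only because part~(1) supplies that equality. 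Everything else reduces to one-line identities using commutativity of idempotents together with $s=ss^{-1}s$ and $s^{-1}=s^{-1}ss^{-1}$.
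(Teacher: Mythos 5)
Your proof is correct, and for the range equality and part~(3) it is essentially the paper's argument: put $e=\mathbf{d}(s)$, observe $s\mathbf{d}(s)s^{-1}=\mathbf{r}(s)$ while $t\mathbf{d}(s)t^{-1}\leq\mathbf{r}(t)$, and appeal to symmetry. Where you diverge is the domain equality: the paper gets $\mathbf{d}(s)=\mathbf{d}(t)$ in one line by noting that $\mu$, being a congruence on an inverse semigroup, satisfies $(s,t)\in\mu\Rightarrow(s^{-1},t^{-1})\in\mu$, and then reapplying the range result; you instead conjugate the defining identity by $s^{-1}$ and specialise to $e=\mathbf{d}(t)$. Your route is more self-contained (it does not presuppose that congruences on inverse semigroups respect inversion), at the cost of a slightly longer computation. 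Your caveat on part~(2) is also a genuine improvement on the paper's one-line justification (``simply by multiplying by $\mathbf{d}(s)$''): the condition ``$ses^{-1}=tet^{-1}$ for all $e\in\mathbf{d}(s)^{\downarrow}$'' taken alone is strictly weaker than membership in $\mu$ (e.g.\ in $\mathcal{I}_{2}$ take $s=1_{\{1\}}$ and $t=1_{\{1,2\}}$), so the restricted quantifier is only equivalent in the presence of $\mathbf{d}(s)=\mathbf{d}(t)$, which part~(1) supplies and which is exactly how the lemma is later used inside an $\mathscr{H}$-class in the Munn representation theorem.
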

\begin{proof}
(1) If $(s,t) \in \mu$ then $\mathbf{r}(s) \leq \mathbf{r}(t)$.
Symmetry now delivers the answer.
To prove the second claim, observe that if $(s,t) \in \mu$ then $(s^{-1},t^{-1}) \in \mu$.
We can now use the first claim to prove the second claim.
It follows that $\mu \subseteq \mathscr{H}$.

(2) We have used all idempotents in the definition of $\mu$ but we may restrict 
to those idempotents in $\mathbf{d}(s)^{\downarrow}$ simply by mulitplying by $\mathbf{d}(s)$. 

(3) Immediate.
\end{proof}

By part (3) of Lemma~\ref{lem:idpt-sep-mu}, 
it follows that $\mu$ is an {\em idempotent-separating} congruence.
In fact, we have the following.

\begin{lemma}\label{lem:largest} 
$\mu$ is the largest idempotent-separating congruence on $S$.
\end{lemma}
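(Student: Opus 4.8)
The plan is to reduce the statement to a single containment: since part~(3) of Lemma~\ref{lem:idpt-sep-mu} already tells us that $\mu$ is itself an idempotent-separating congruence, it is enough to show that \emph{every} idempotent-separating congruence $\rho$ on $S$ satisfies $\rho \subseteq \mu$.

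So I would take an idempotent-separating congruence $\rho$ and an arbitrary pair $(s,t) \in \rho$, and aim to verify the defining condition $ses^{-1} = tet^{-1}$ for all $e \in \mathsf{E}(S)$. The first step is to note that $(s^{-1},t^{-1}) \in \rho$ as well: the natural homomorphism $\rho^{\natural}\colon S \rightarrow S/\rho$ is a homomorphism of inverse semigroups, so by part~(1) of Lemma~\ref{lem:hm-inverse} it sends $s^{-1}$ to $\rho^{\natural}(s)^{-1}$ and $t^{-1}$ to $\rho^{\natural}(t)^{-1}$, and these agree because $\rho^{\natural}(s) = \rho^{\natural}(t)$. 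Now fix any idempotent $e$. Using that $\rho$ is a congruence and that $(s,t),(e,e),(s^{-1},t^{-1})$ all lie in $\rho$, multiplying gives $(ses^{-1},\,tet^{-1}) \in \rho$. By Lemma~\ref{lem:conjugation-of-idpts}, both $ses^{-1}$ and $tet^{-1}$ are idempotents, so since $\rho$ is idempotent-separating they must be equal. As $e$ was arbitrary, $(s,t) \in \mu$, whence $\rho \subseteq \mu$.

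I do not expect a genuine obstacle here; the argument is short. The only point needing care — and the place where the inverse-semigroup structure is essential rather than just that of a semigroup — is the passage from $(s,t) \in \rho$ to $(s^{-1},t^{-1}) \in \rho$, which relies on the quotient $S/\rho$ being inverse (Lemma~\ref{lem:homomorphisms}) so that inverses are preserved by $\rho^{\natural}$.
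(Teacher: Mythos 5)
Your proof is correct and follows essentially the same route as the paper: take any idempotent-separating congruence $\rho$, deduce $(ses^{-1},tet^{-1})\in\rho$ for each idempotent $e$, and conclude equality since both are idempotents. The only difference is that you make explicit the step $(s,t)\in\rho \Rightarrow (s^{-1},t^{-1})\in\rho$, which the paper leaves implicit; that is a reasonable bit of added care rather than a departure.
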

\begin{proof}
Let $\rho$ be any idempotent-separating congruence on $S$ and let $(s,t) \in \rho$.
Let $e$ be any idempotent.
Then $(ses^{-1},tet^{-1}) \in \rho$
but $\rho$ is idempotent-separating and so $ses^{-1} = tet^{-1}$.
It follows that $(s,t) \in \mu$.
Thus we have shown that $\rho \subseteq \mu$. 
\end{proof}

We can now explain the connection between the congruence $\mu$ and fundamental inverse semigroups.

\begin{lemma}\label{lem:stormy} Let $S$ be an inverse semigroup.
Then $S$ is fundamental if and only if $\mu$ is the equality relation
\end{lemma}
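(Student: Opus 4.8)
The plan is to prove the biconditional directly, using the fact that $\mu$ is a congruence together with the characterization of $\mathsf{Z}(\mathsf{E}(S))$ as the centralizer of the idempotents. First I would recall what each side means: $S$ is fundamental iff $\mathsf{Z}(\mathsf{E}(S)) = \mathsf{E}(S)$, i.e.\ the only elements commuting with every idempotent are the idempotents themselves; and $\mu$ is the equality relation iff whenever $ses^{-1} = tet^{-1}$ for all idempotents $e$, we have $s = t$.

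For the forward direction, suppose $S$ is fundamental and $(s,t) \in \mu$. By part (1) of Lemma~\ref{lem:idpt-sep-mu} we have $\mathbf{d}(s) = \mathbf{d}(t)$ and $\mathbf{r}(s) = \mathbf{r}(t)$; call these $\mathbf{d}$ and $\mathbf{r}$. The natural candidate element to examine is $s^{-1}t$ (or $t^{-1}s$): I would show that $s^{-1}t$ commutes with every idempotent, hence lies in $\mathsf{Z}(\mathsf{E}(S)) = \mathsf{E}(S)$, so it is an idempotent, and then deduce $s \sim t$ in the sense of the compatibility relation, at which point Lemma~\ref{lem:karen} (using $\mathbf{d}(s) = \mathbf{d}(t)$) forces $s = t$. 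To see that $s^{-1}t$ commutes with idempotents, take $e \in \mathsf{E}(S)$ and compute $(s^{-1}t)e(s^{-1}t)^{-1} = s^{-1}(tet^{-1})s$; using $(s,t)\in\mu$ this equals $s^{-1}(ses^{-1})s = s^{-1}ses^{-1}s = \mathbf{d}\, e\, \mathbf{d}$, which is symmetric in $s$ and $t$, so $s^{-1}t$ and its inverse conjugate $e$ to the same idempotent — from which one checks $s^{-1}t$ commutes with $e$ (this is the step needing a little care, essentially showing a $\mathscr{H}$-related-to-an-idempotent element that conjugates every idempotent symmetrically must be idempotent; alternatively one shows directly $e s^{-1}t = s^{-1}t e$ by a short manipulation). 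This verification is the main obstacle and the one place where a genuine (if short) calculation is unavoidable.

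For the converse, suppose $\mu$ is equality and let $a \in \mathsf{Z}(\mathsf{E}(S))$, so $ae = ea$ for all idempotents $e$. I would show $(a, \mathbf{d}(a)) \in \mu$, which by hypothesis gives $a = \mathbf{d}(a) \in \mathsf{E}(S)$, proving $\mathsf{Z}(\mathsf{E}(S)) \subseteq \mathsf{E}(S)$; the reverse inclusion is trivial since idempotents commute. To check $(a,\mathbf{d}(a)) \in \mu$, fix an idempotent $e$ and compute $aea^{-1}$: since $a$ commutes with $e$ we get $aea^{-1} = eaa^{-1} = e\,\mathbf{r}(a)$, while $\mathbf{d}(a)\, e\, \mathbf{d}(a) = e\,\mathbf{d}(a)$ (idempotents commute); so I must reconcile $e\,\mathbf{r}(a)$ with $e\,\mathbf{d}(a)$. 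This works because $a \in \mathsf{Z}(\mathsf{E}(S))$ lies in the Clifford subsemigroup $\mathsf{Z}(\mathsf{E}(S))$, where $\mathbf{d}(a) = \mathbf{r}(a)$ by Lemma~\ref{lem:characterization-clifford} (the text already noted $\mathsf{Z}(\mathsf{E}(S))$ is a wide inverse subsemigroup which is Clifford). Hence $aea^{-1} = e\,\mathbf{r}(a) = e\,\mathbf{d}(a) = \mathbf{d}(a)\,e\,\mathbf{d}(a)$ for all $e$, giving $(a,\mathbf{d}(a)) \in \mu$ and so $a = \mathbf{d}(a)$, completing the proof.

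Overall the proof is short: the forward direction hinges on manufacturing the element $s^{-1}t$ and showing it centralizes the idempotents, then invoking Lemma~\ref{lem:karen}; the converse hinges on the already-established fact that $\mathsf{Z}(\mathsf{E}(S))$ is Clifford so that domain and range idempotents coincide there. I expect the only real friction to be the bookkeeping in verifying that an element conjugating every idempotent in the same way as some idempotent must itself be idempotent, but this follows cleanly from $\mu$-equivalence with an idempotent together with part (3) of Lemma~\ref{lem:idpt-sep-mu}.
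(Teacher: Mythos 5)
Your proof is correct and follows essentially the same route as the paper's: in the forward direction both arguments manufacture a single element from $s$ and $t$ (you use $s^{-1}t$, the paper uses $st^{-1}$), show via the $\mu$-relation that it centralizes every idempotent, and invoke fundamentality to conclude it is an idempotent; in the converse both show that an element commuting with all idempotents is $\mu$-related to one of its own idempotents. Two small tidying points: to invoke Lemma~\ref{lem:karen} you need $s \sim t$, which requires $st^{-1}$ (not merely $s^{-1}t = (t^{-1}s)^{-1}$) to be an idempotent --- this follows by the symmetric computation using $(s^{-1},t^{-1}) \in \mu$ --- and in the converse, comparing $a$ with $\mathbf{r}(a)$ instead of $\mathbf{d}(a)$ (as the paper does) gives $aea^{-1} = e\,\mathbf{r}(a) = \mathbf{r}(a)\,e\,\mathbf{r}(a)$ immediately and avoids the appeal to the Clifford property of $\mathsf{Z}(\mathsf{E}(S))$.
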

\begin{proof} Suppose that $S$ is fundamental.
We prove that $\mu$ is the equality relation.
Let $(s,t) \in \mu$.
Then $(st^{-1},tt^{-1}) \in \mu$.
Let $e$ be any idempotent.
Then $(st^{-1})e(st^{-1})^{-1} = tt^{-1}ett^{-1}$.
It follows that 
$$st^{-1}e = e\mathbf{r}(t)st^{-1} = e\mathbf{r}(s)st^{-1} = est^{-1}.$$
We have therefore proved that $st^{-1}$ is central.
By assumption it must be an idempotent.
It follows that $st^{-1} = tt^{-1}$.
Thus $st^{-1}t = t$ but $\mathbf{d}(t) = \mathbf{d}(s)$ and so $s = t$.
To prove the connverse, suppose that $\mu$ is the equality relation.
Let $s$ commute with every idempotent.
Then $(s,ss^{-1}) \in \mu$.
Thus, by assumption, $s = ss^{-1}$ and so $s$ is an idempotent.
\end{proof}

We can easily construct fundamental inverse semigroups.

\begin{lemma}\label{lem:pink} Let $S$ be an inverse semigroup.
Then $S/\mu$ is fundamental.
\end{lemma}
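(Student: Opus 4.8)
The plan is to invoke Lemma~\ref{lem:stormy}: an inverse semigroup is fundamental precisely when its congruence $\mu$ is the equality relation, so it suffices to show that the congruence $\mu'$ on $S/\mu$ (the analogue of $\mu$, built from conjugation of idempotents of $S/\mu$) is trivial. Write $\theta = \mu^{\natural} \colon S \rightarrow S/\mu$ for the natural homomorphism $s \mapsto \mu(s)$.

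First I would record the facts about $\theta$ that make the argument run. Since $\mu$ is idempotent-separating by part (3) of Lemma~\ref{lem:idpt-sep-mu}, the restriction $\theta \mid \mathsf{E}(S)$ is injective; by parts (2) and (3) of Lemma~\ref{lem:hm-inverse} we have $\mathsf{E}(S/\mu) = \theta(\mathsf{E}(S))$, so every idempotent of $S/\mu$ is of the form $\theta(e)$ for some $e \in \mathsf{E}(S)$; and $\theta(s^{-1}) = \theta(s)^{-1}$ by part (1) of the same lemma.

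Now suppose $(\theta(s),\theta(t)) \in \mu'$, i.e.\ $\theta(s)\,\theta(e)\,\theta(s)^{-1} = \theta(t)\,\theta(e)\,\theta(t)^{-1}$ for every idempotent $e$ of $S$ (it is enough to test on these by the previous paragraph). Rewriting via the homomorphism property and part (1) of Lemma~\ref{lem:hm-inverse}, this says $\theta(ses^{-1}) = \theta(tet^{-1})$ for all $e \in \mathsf{E}(S)$. But $ses^{-1}$ and $tet^{-1}$ are idempotents of $S$ by Lemma~\ref{lem:conjugation-of-idpts}, and $\theta$ is idempotent-separating, so $ses^{-1} = tet^{-1}$ for every idempotent $e$. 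By definition of $\mu$ this gives $(s,t) \in \mu$, that is $\theta(s) = \theta(t)$. Hence $\mu'$ is the equality relation on $S/\mu$, and $S/\mu$ is fundamental by Lemma~\ref{lem:stormy}.

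There is essentially no obstacle; the only point that needs care is that the idempotents of $S/\mu$ used to test membership in $\mu'$ are all images of idempotents of $S$, since otherwise the computation could not be pulled back to $S$ — and this is exactly what parts (2) and (3) of Lemma~\ref{lem:hm-inverse} supply. (Alternatively one could argue via Lemma~\ref{lem:largest}: the preimage under $\theta$ of the maximum idempotent-separating congruence on $S/\mu$ is an idempotent-separating congruence on $S$ containing $\mu$, hence equal to $\mu$, so it collapses to equality in $S/\mu$; but the direct computation above is shorter and self-contained.)
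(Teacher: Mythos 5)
Your proof is correct and follows essentially the same route as the paper's: test the congruence on idempotents of $S/\mu$, all of which lift to idempotents of $S$, pull the conjugation identity back through the natural map, and use that $\mu$ is idempotent-separating to conclude $ses^{-1} = tet^{-1}$ for every $e$, whence $(s,t) \in \mu$. The only difference is that you make explicit the supporting citations (Lemma~\ref{lem:hm-inverse}, Lemma~\ref{lem:stormy}) that the paper leaves implicit.
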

\begin{proof}
Suppose that $\mu (s)$ and $\mu (t)$ are $\mu$-related in $S/\mu$.
Every idempotent in $S/\mu$ is of the form $\mu (e)$ where $e \in E(S)$.
Thus
$$\mu (s)\mu (e)\mu (s)^{-1} = \mu (t)\mu (e) \mu (t)^{-1}$$
so that $\mu (ses^{-1}) = \mu (tet^{-1})$.
But both $ses^{-1}$ and $tet^{-1}$ are idempotents, so that
$ses^{-1} = tet^{-1}$ for every $e \in E(S)$.
Thus $(s,t) \in \mu$.
\end{proof}

The symmetric inverse monoid is constructed from an arbitrary set.
We now show how to construct an inverse semigroup from a meet semilattice.
Let $(E,\leq )$ be a meet semilattice, and denote by $T_{E}$ be the set of all order isomorphisms between the principal order ideals of $E$.  
Clearly, $T_{E}$ is a subset of $\mathcal{I}(E)$.
In fact we have the following.

\begin{lemma}\label{lem:Munn-semigroup} The set $T_{E}$ is an inverse subsemigroup of $\mathcal{I}(E)$
whose semilattice of idempotents is isomorphic to E.
\end{lemma}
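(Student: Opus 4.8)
The plan is to verify closure of $T_E$ under composition of partial functions and under the inverse operation of $\mathcal{I}(E)$, and then to identify the idempotents of $T_E$ explicitly.

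The one fact that does the real work is the following: if $\alpha \colon e^{\downarrow} \to f^{\downarrow}$ is an order isomorphism and $k \leq e$, then $\alpha$ restricts to an order isomorphism of $k^{\downarrow}$ onto $\alpha(k)^{\downarrow}$; this holds because $\alpha$ and $\alpha^{-1}$ are both isotone, so $x \leq k$ if and only if $\alpha(x) \leq \alpha(k)$, whence $\alpha(k^{\downarrow}) = \{y \in f^{\downarrow} : y \leq \alpha(k)\} = \alpha(k)^{\downarrow}$. Together with the observation that the intersection of two principal order ideals is again one, namely $e^{\downarrow} \cap g^{\downarrow} = (e \wedge g)^{\downarrow}$ (immediate from the defining property of the meet), this yields closure under composition. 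Indeed, for $\alpha, \beta \in T_E$ the domain of $\alpha\beta$ is the $\beta$-preimage of a principal order ideal lying inside the range of $\beta$, hence a principal order ideal; the range of $\alpha\beta$ is the $\alpha$-image of a principal order ideal lying inside the domain of $\alpha$, hence a principal order ideal; and on its domain $\alpha\beta$ is a composite of two order isomorphisms between principal order ideals, hence an order isomorphism. So $\alpha\beta \in T_E$. Closure under inverses is immediate from the definition of an order isomorphism, whose inverse is again one, and the partial-function inverse of $\alpha$ is precisely the inverse of $\alpha$ in $\mathcal{I}(E)$. Hence $T_E$ is a subsemigroup of $\mathcal{I}(E)$ closed under inverses, i.e. an inverse subsemigroup, uniqueness of inverses being inherited from $\mathcal{I}(E)$.

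For the idempotents: those of $\mathcal{I}(E)$ are the partial identities $1_A$, so the idempotents of $T_E$ are exactly the $1_{e^{\downarrow}}$ with $e \in E$ — each such $1_{e^{\downarrow}}$ is trivially an order isomorphism $e^{\downarrow} \to e^{\downarrow}$, and conversely an idempotent of $T_E$ has a principal order ideal as domain. The map $e \mapsto 1_{e^{\downarrow}}$ is then a bijection $E \to \mathsf{E}(T_E)$ — injective because $e$ is the greatest element of $e^{\downarrow}$ — and it is an isomorphism of semilattices, since $1_{e^{\downarrow}} \cdot 1_{f^{\downarrow}} = 1_{e^{\downarrow} \cap f^{\downarrow}} = 1_{(e \wedge f)^{\downarrow}}$ shows that meets are carried to products (equivalently, $e \leq f$ iff $e^{\downarrow} \subseteq f^{\downarrow}$ iff $1_{e^{\downarrow}} \leq 1_{f^{\downarrow}}$). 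Therefore $\mathsf{E}(T_E) \cong E$. The only step that is not routine bookkeeping is closure under composition, and the difficulty there is entirely absorbed by the two highlighted facts: that a meet produces the intersection of principal order ideals, and that order isomorphisms send principal order ideals to principal order ideals.
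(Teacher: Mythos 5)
Your proof is correct. The paper states this lemma without proof (it falls under the author's blanket remark that unproved results are exercises), so there is no official argument to compare against; but your two key facts --- that $e^{\downarrow}\cap g^{\downarrow}=(e\wedge g)^{\downarrow}$ and that an order isomorphism between principal order ideals carries $k^{\downarrow}$ onto $\alpha(k)^{\downarrow}$ --- are exactly the computations the paper itself performs inside the proof of the Munn representation theorem when it shows $\mbox{dom}(\delta_{s}\delta_{t})=\delta_{t}^{-1}((s^{-1}s)^{\downarrow}\cap(tt^{-1})^{\downarrow})=((st)^{-1}st)^{\downarrow}$, so your route is the intended one. The identification of $\mathsf{E}(T_{E})$ with the maps $1_{e^{\downarrow}}$ and the verification that $e\mapsto 1_{e^{\downarrow}}$ is a semilattice isomorphism are also complete and correct.
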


The semigroup $T_{E}$ is called the {\em Munn semigroup} of the semilattice $E$. 
We can now construct inverse semigroups having specific semilattices of idempotents.

\begin{theorem}[Munn representation theorem]\label{them:MRT}
Let $S$ be an inverse semigroup.  
Then there is an idempotent-separating homomorphism 
$\delta \colon S \rightarrow T_{\mathsf{E}(S)}$ 
whose image is a wide inverse subsemigroup of $T_{\mathsf{E}(S)}$.
The kernel of $\delta$ is $\mu$.
\end{theorem}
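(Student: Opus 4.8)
The plan is to build $\delta$ explicitly, mimicking the construction of $\lambda$ in the Wagner--Preston theorem (Theorem~\ref{them:W-P}) but using the semilattice $E = \mathsf{E}(S)$ in place of $S$. For each $s \in S$, let $\delta_{s}$ be the map with domain the principal order ideal $\mathbf{d}(s)^{\downarrow}$ of $E$ given by $\delta_{s}(e) = ses^{-1}$. First I would check that $\delta_{s} \in T_{E}$: each $ses^{-1}$ is an idempotent by Lemma~\ref{lem:conjugation-of-idpts}, and for $e \leq \mathbf{d}(s)$ a short calculation with commuting idempotents gives $ses^{-1} \leq ss^{-1} = \mathbf{r}(s)$, so $\delta_{s}$ maps $\mathbf{d}(s)^{\downarrow}$ into $\mathbf{r}(s)^{\downarrow}$. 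It is order-preserving, since $e_{1} = e_{1}e_{2}$ yields $se_{1}s^{-1} = se_{1}e_{2}s^{-1} = (se_{1}s^{-1})(se_{2}s^{-1})$ after inserting $s^{-1}s$ and using that idempotents commute; and $\delta_{s^{-1}}$ is a two-sided inverse, because $\delta_{s^{-1}}\delta_{s}(e) = (s^{-1}s)e(s^{-1}s) = e$ for $e \leq \mathbf{d}(s)$ and dually. Hence $\delta_{s}$ is an order-isomorphism $\mathbf{d}(s)^{\downarrow} \to \mathbf{r}(s)^{\downarrow}$, so we may define $\delta(s) = \delta_{s}$.

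The main work is to prove $\delta$ is a homomorphism, $\delta_{s}\delta_{t} = \delta_{st}$; I expect this to be the chief obstacle, being the exact analogue of the domain-bookkeeping in the proof of Theorem~\ref{them:W-P}. The domain of $\delta_{s}\delta_{t}$ is $\{e \leq \mathbf{d}(t) : tet^{-1} \leq \mathbf{d}(s)\}$, and I would show this equals $\mathbf{d}(st)^{\downarrow}$, noting that $\mathbf{d}(st) = t^{-1}(s^{-1}s)t$ is an idempotent below $\mathbf{d}(t)$. The key identities: if $tet^{-1} \leq s^{-1}s$, then conjugating by $t^{-1}$ and using $t^{-1}te = e = et^{-1}t$ gives $e\,t^{-1}(s^{-1}s)t = e$, i.e. $e \leq \mathbf{d}(st)$; conversely if $e = e\,\mathbf{d}(st)$, then conjugating back by $t$ and using that $s^{-1}s$ commutes with $tt^{-1}$ gives $tet^{-1} = tet^{-1}(s^{-1}s)$. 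Once the domains agree, the effects agree at once: for such $e$, $\delta_{s}\delta_{t}(e) = s(tet^{-1})s^{-1} = (st)e(st)^{-1} = \delta_{st}(e)$. Thus $\delta$ is a homomorphism of inverse semigroups, and by Lemma~\ref{lem:hm-inverse} its image is an inverse subsemigroup of $T_{E}$.

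It then remains to identify the idempotents and the kernel. If $e \in E$ then $\delta_{e}(f) = efe = f$ for all $f \leq e$, so $\delta_{e} = 1_{e^{\downarrow}}$, the identity map on $e^{\downarrow}$. Since the idempotents of $T_{E}$ are exactly the identity maps on principal order ideals of $E$ (every idempotent of $\mathcal{I}(E)$ is an identity map), $\delta$ carries $E$ bijectively onto $\mathsf{E}(T_{E})$; in particular $\delta$ is idempotent-separating and its image is wide. Finally, $(s,t) \in \ker\delta$ means $\delta_{s} = \delta_{t}$, that is, $\mathbf{d}(s)^{\downarrow} = \mathbf{d}(t)^{\downarrow}$ and $ses^{-1} = tet^{-1}$ for all $e \leq \mathbf{d}(s)$; by parts (1) and (2) of Lemma~\ref{lem:idpt-sep-mu} this is precisely the condition $(s,t) \in \mu$ (one inclusion is immediate from that lemma; for the other, reduce a general idempotent $e$ to $\mathbf{d}(s)e\mathbf{d}(s) \leq \mathbf{d}(s) = \mathbf{d}(t)$ using $ses^{-1} = s\big(\mathbf{d}(s)e\mathbf{d}(s)\big)s^{-1}$). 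Hence $\ker\delta = \mu$, which completes the proof.
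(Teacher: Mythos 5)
Your proposal is correct and follows essentially the same route as the paper's proof: the same definition of $\delta_{s}(e)=ses^{-1}$ on $\mathbf{d}(s)^{\downarrow}$, the same verification that $\delta_{s}$ is an order-isomorphism with inverse $\delta_{s^{-1}}$, the same domain computation establishing $\delta_{s}\delta_{t}=\delta_{st}$, and the same identifications of the idempotents of $T_{\mathsf{E}(S)}$ and of $\ker\delta$ with $\mu$. The only differences are cosmetic: you compute $\operatorname{dom}(\delta_{s}\delta_{t})$ directly as a set rather than as $\delta_{t}^{-1}$ of an intersection of principal order ideals, and you spell out the reduction of a general idempotent to one below $\mathbf{d}(s)$ in the kernel argument, which the paper leaves as ``immediate from the definition.''
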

\begin{proof}
For each $s \in S$  
define the function 
$$\delta _{s} \colon \mathbf{d}(s)^{\downarrow} \rightarrow  \mathbf{r}(s)^{\downarrow}$$ 
by $\delta_{s}(e) = ses^{-1}$.
We first show that $\delta_{s}$ is well-defined.  
Let $e \leq  s^{-1}s$.  
Then $ss^{-1}\delta_{s}(e) = \delta_{s}(e)$, 
and so $\delta_{s}(e) \leq  ss^{-1}$.  
To show that $\delta_{s}$ is isotone, let $e \leq f \in (s^{-1}s)^{\downarrow}$.  
Then
$$\delta_{s}(e)\delta_{s}(f) = ses^{-1}sfs^{-1} 
= sefs^{-1} = \delta_{s}(e).$$
Thus $\delta_{s}(e) \leq \delta_{s}(f)$.
Consider now the function
$\delta_{s^{-1}} \colon  (ss^{-1})^{\downarrow} \to (s^{-1}s)^{\downarrow}$.
This is isotone by the argument above.  
For each $e \in  (s^{-1}s)^{\downarrow}$, we have that 
$$\delta_{s^{-1}}(\delta_{s}(e)) = \delta_{s^{-1}}(ses^{-1}) 
= s^{-1}ses^{-1}s = e.$$
Similarly, $\delta_{s}(\delta_{s^{-1}}(f)) = f$ for each $f \in  (ss^{-1})^{\downarrow}$. 
Thus $\delta_{s}$ and $\delta_{s^{-1}}$ are mutually inverse, 
and so $\delta_{s}$ is an order isomorphism.

Define $\delta \colon S \rightarrow T_{\mathsf{E}(S)}$
by $\delta (s) = \delta_{s}$.
To show that $\delta$ is a homomorphism,  
we begin by calculating $\mbox{dom}(\delta_{s} \delta_{t})$
for any $s,t \in S$.
We have that
$$\mbox{dom} (\delta_{s} \delta_{t})  
=  \delta^{-1}_{t} ((s^{-1}s)^{\downarrow} ~ \cap ~ (tt^{-1})^{\downarrow}) 
= \delta^{-1}_{t} ((s^{-1}stt^{-1})^{\downarrow}).$$
But $\delta^{-1}_{t} = \delta_{t^{-1}}$ and so 
$$\mbox{dom} (\delta_{s}  \delta_{t}) 
= ((st)^{-1}st)^{\downarrow} = \mbox{dom}(\delta_{st}).$$
If $e \in \mbox{dom}\,\delta_{st}$ then 
$$\delta_{st}(e) = (st)e(st)^{-1} = s(tet^{-1})s^{-1}
= \delta_{s}(\delta_{t}(e)).$$  
Hence $\delta_{s} \delta_{t} = \delta_{st}$.  

To show that $\delta$ is idempotent-separating, suppose that 
$\delta (e) = \delta (f)$ where $e$ and $f$ are idempotents of $S$.  
Then $\mbox{dom}\,\delta (e) = \mbox{dom}\,\delta (f)$.  
Thus $e = f$.  

The image of $\delta$ is a wide inverse
subsemigroup of $T_{\mathsf{E}(S)}$
because every idempotent in $T_{\mathsf{E}(S)}$ is of the form
$1_{[e]}$ for some $e \in E(S)$,
and $\delta_{e} = 1_{[e]}$.

Suppose that $\delta(s) = \delta (t)$.
Then $(s,t) \in \mathscr{H}$.
It is now immediate from the definition that, precisely, $(s,t) \in \mu$.
\end{proof}

The Munn representation should be contrasted with the Wagner-Preston representation:
the Wagner-Preston was injective whereas the Munn representation has a non-trivial kernel.
Fundamental inverse semigroups arise in the following way.

\begin{theorem}\label{them:white}  
Let $S$ be an inverse semigroup.  
Then $S$ is fundamental if and only if 
$S$ is isomorphic to a wide inverse subsemigroup 
of the Munn semigroup $T_{\mathsf{E}(S)}$.
\end{theorem}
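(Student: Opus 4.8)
The plan is to prove both implications, leaning heavily on the Munn representation theorem (Theorem~\ref{them:MRT}) and the characterization of fundamental semigroups via $\mu$ (Lemma~\ref{lem:stormy}).

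First I would treat the easier direction: suppose $S$ is isomorphic to a wide inverse subsemigroup of $T_{\mathsf{E}(S)}$. By Lemma~\ref{lem:Munn-semigroup}, the semilattice of idempotents of $T_{\mathsf{E}(S)}$ is isomorphic to $\mathsf{E}(S)$, and a wide inverse subsemigroup shares this semilattice of idempotents. So it suffices to show that the Munn semigroup $T_{E}$ of \emph{any} meet semilattice $E$ is fundamental, and that fundamentality passes to wide inverse subsemigroups. For the latter, I would note that if $U$ is a wide inverse subsemigroup of $T$ and $u \in U$ commutes with every idempotent of $U$, then since $\mathsf{E}(U) = \mathsf{E}(T)$, $u$ commutes with every idempotent of $T$; if $T$ is fundamental this forces $u$ to be an idempotent, so $U$ is fundamental. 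For the former, I would show directly that in $T_{E}$, if an order-isomorphism $\alpha \colon [e] \to [e]$ (the map must fix domain and range idempotent since it commutes with all $1_{[f]}$) satisfies $\alpha \cdot 1_{[f]} = 1_{[f]} \cdot \alpha$ for all $f$, then $\alpha(f) = f$ for all $f \leq e$, hence $\alpha = 1_{[e]}$ is idempotent. Concretely, $\alpha 1_{[f]} = 1_{[f]}\alpha$ means $\alpha$ restricted to $[e \wedge f]$ has image $[e \wedge f]$, i.e. $\alpha(e \wedge f) = e \wedge f$; taking $f = g$ for arbitrary $g \leq e$ gives $\alpha(g) = g$.

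Second, the converse: suppose $S$ is fundamental. Apply the Munn representation theorem to get the idempotent-separating homomorphism $\delta \colon S \to T_{\mathsf{E}(S)}$ with $\ker(\delta) = \mu$ and image a wide inverse subsemigroup of $T_{\mathsf{E}(S)}$. By Lemma~\ref{lem:stormy}, $S$ fundamental means $\mu$ is the equality relation, so $\delta$ is injective, hence an isomorphism onto its image, which is a wide inverse subsemigroup of $T_{\mathsf{E}(S)}$. This direction is essentially immediate once the Munn representation theorem is invoked.

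The main obstacle is really the verification that $T_E$ is fundamental — that is the one computational step that is not merely a citation. The rest is bookkeeping: matching up $\mathsf{E}(S)$ with the idempotents of the Munn semigroup, checking that ``wide'' is inherited, and reading off injectivity of $\delta$ from $\mu$ being trivial. I would present the proof as: (1) reduce ``$S \cong$ wide inverse subsemigroup of $T_{\mathsf{E}(S)}$ $\Rightarrow$ fundamental'' to ``$T_E$ is fundamental'' plus ``fundamentality is inherited by wide inverse subsemigroups'', (2) prove $T_E$ is fundamental by the explicit argument above, (3) conclude the forward direction via Theorem~\ref{them:MRT} and Lemma~\ref{lem:stormy}.
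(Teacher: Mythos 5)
Your proposal is correct. The forward direction (fundamental implies embeddable) is exactly the paper's: invoke Theorem~\ref{them:MRT} and read off injectivity of $\delta$ from $\mu$ being equality via Lemma~\ref{lem:stormy}. For the converse you package the argument differently from the paper. The paper takes $(\alpha,\beta)\in\mu$ in the wide inverse subsemigroup $S\subseteq T_E$ and uses the conjugation identity $\alpha 1_{[e]}\alpha^{-1}=1_{[\alpha(e)]}$ to conclude $\alpha(e)=\beta(e)$ for all $e$ in the common domain, hence $\alpha=\beta$, so $\mu$ is trivial on $S$ directly. You instead split the claim into two pieces: $T_E$ itself is fundamental (shown via the centralizer-of-idempotents definition, using $\alpha 1_{[f]}=1_{[f]}\alpha\Rightarrow\alpha(e\wedge f)=e\wedge f$), and fundamentality passes to wide inverse subsemigroups because wideness means the subsemigroup sees every idempotent of the ambient semigroup. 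The two routes rest on the same computation --- that an order-isomorphism between principal order ideals of $E$ is determined by how it interacts with the idempotents $1_{[f]}$ --- with conjugation in the paper playing the role that commutation plays for you. Your decomposition has the small advantage of isolating the reusable facts that $T_E$ is always fundamental and that fundamentality is inherited by wide inverse subsemigroups; the paper's version is marginally shorter because it never needs to discuss $T_E\setminus S$. One point you gloss over but should make explicit: that an element of $T_E$ commuting with all idempotents must map some $[e]$ to itself requires the short argument that $\mathbf{d}(\alpha)=\mathbf{r}(\alpha)$ for any element of the centralizer of the idempotents (multiply $\alpha=\alpha\mathbf{d}(\alpha)=\mathbf{d}(\alpha)\alpha$ and $\alpha=\mathbf{r}(\alpha)\alpha=\alpha\mathbf{r}(\alpha)$ appropriately), as in the first half of Lemma~\ref{lem:characterization-clifford}.
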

\begin{proof}
Let $S$ be a fundamental inverse semigroup. 
By Theorem~\ref{them:MRT},
there is a homomorphism $\delta \colon S \rightarrow T_{\mathsf{E}(S)}$
such that $\mbox{ker}(\delta) = \mu$.
By assumption, $\mu$ is the equality congruence by Lemma~\ref{lem:stormy}, 
and so $\delta$
is an injective homomorphism.
Thus $S$ is isomorphic to its image in $T_{\mathsf{E}(S)}$,
which is a wide inverse subsemigroup.

Conversely, let $S$ be a wide inverse subsemigroup of a Munn
semigroup $T_{E}$.
Clearly, we can assume that $E = \mathsf{E}(S)$.
We calculate the maximum idempotent-separating congruence of $S$. 
Let $\alpha,\beta \in S$ and suppose that $(\alpha,\beta) \in \mu$ in $S$.
Then $\mbox{dom}(\alpha) = \mbox{dom}(\beta)$.
Let $e \in \mbox{dom}(\alpha)$.
Then $1_{[e]} \in S$, since $S$ is a wide inverse subsemigroup of $T_{\mathsf{E}(S)}$.
By assumption
$\alpha 1_{[e]} \alpha^{-1} = \beta 1_{[e]} \beta^{-1}$.
It is easy to check that 
$1_{[\alpha (e)]} = \alpha 1_{[e]} \alpha^{-1}$
and
$1_{[\beta (e)]} = \beta 1_{[e]}  \beta^{-1}$.
Thus $\alpha (e) = \beta (e)$. 
Hence $\alpha = \beta$, and so $S$ is fundamental.
\end{proof}

The following is a special case of an argument due to Wagner.

\begin{example}{\em Let $(X,\tau)$ be a $T_{0}$-space.
We prove that $\mathcal{I}(X,\tau)$ is fundamental.
Let $f \in \mathcal{I}(X,\tau)$ be a non-idempotent.
Then there is an element $x \in X$ such that $f(x) \neq x$.
Since $X$ is $T_{0}$ there is an open set $U$ such that
either 
$f(x) \in U$ and $x \notin U$
or 
$f(x) \notin U$ and $x \in U$.
In either event, the elements $f1_{U}$ and $1_{U}f$ are not equal,
where $1_{u}$ is an idempotent.
It follows that $\mathcal{I}(X,\tau)$ is fundamental.}
\end {example}

As an example of fundamental inverse semigroups, we can easily construct the fundamental finite Boolean inverse monoids.

\begin{proposition}\label{prop:bordeaux1}
A finite Boolean inverse monoid is fundamental if and only its groupoid of atoms is principal.
\end{proposition}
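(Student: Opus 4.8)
The plan is to use the structure theorem for finite Boolean inverse monoids to reduce the problem to the monoids $\mathsf{K}(G)$, and then to compute the centralizer of the idempotents directly. By Theorem~\ref{them:main-finite}, a finite Boolean inverse monoid $S$ is isomorphic to $\mathsf{K}(\mathsf{A}(S))$, and by Proposition~\ref{prop:groupoids} the groupoid of atoms of $\mathsf{K}(G)$ is isomorphic to $G$; so it suffices to prove that $\mathsf{K}(G)$ is fundamental if and only if the finite groupoid $G$ is principal.

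Next I would identify $\mathsf{Z}(\mathsf{E}(\mathsf{K}(G)))$. Recall that the idempotents of $\mathsf{K}(G)$ are precisely the subsets of $G_{o}$, and that for a local bisection $A$ and an idempotent $e \subseteq G_{o}$ one has $eA = \{a \in A \colon \mathbf{r}(a) \in e\}$ and $Ae = \{a \in A \colon \mathbf{d}(a) \in e\}$. Letting $e = \{x\}$ range over the singleton subsets of $G_{o}$, and using that $A$ is a local bisection (so each fibre of $\mathbf{d}$ and each fibre of $\mathbf{r}$ meets $A$ in at most one arrow), one sees that $eA = Ae$ for every idempotent $e$ if and only if $\mathbf{d}(a) = \mathbf{r}(a)$ for every $a \in A$; that is, $A$ lies in the centralizer of the idempotents exactly when $A$ consists entirely of loops of $G$. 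The converse implication, that any such $A$ commutes with every idempotent, is immediate from the displayed formulas, since distinct fibres of $\mathbf{d}$ then agree with the corresponding fibres of $\mathbf{r}$.

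Finally I would compare $\mathsf{Z}(\mathsf{E}(\mathsf{K}(G)))$ with $\mathsf{E}(\mathsf{K}(G))$. If every loop of $G$ is an identity, then any set of loops is a subset of $G_{o}$, so the centralizer coincides with $\mathsf{E}(\mathsf{K}(G))$ and $\mathsf{K}(G)$ is fundamental. Conversely, if $a$ is a loop that is not an identity, then $\{a\}$ is a local bisection consisting of a single loop, hence lies in the centralizer, but it is not an idempotent; so $\mathsf{K}(G)$ is not fundamental. Thus $\mathsf{K}(G)$ is fundamental if and only if every loop of $G$ is an identity, which by Lemma~\ref{lem:principal-lg} is precisely the condition that $G$ be principal.

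I do not expect a serious obstacle; the only point requiring care is keeping straight what the idempotents of $\mathsf{K}(G)$ are and how they multiply with a local bisection, so that the reduction to the ``loop'' condition holds on the nose.
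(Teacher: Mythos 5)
Your proof is correct, but it takes a genuinely different route from the paper's. You invoke the structure theorem (Theorem~\ref{them:main-finite}) together with Proposition~\ref{prop:groupoids} to reduce everything to the concrete model $\mathsf{K}(G)$, and then you compute $\mathsf{Z}(\mathsf{E}(\mathsf{K}(G)))$ explicitly: the formulas $eA = \{a \in A \colon \mathbf{r}(a) \in e\}$ and $Ae = \{a \in A \colon \mathbf{d}(a) \in e\}$ are right, testing against singleton idempotents does show that the centralizer consists exactly of the local bisections made up of loops, and the final comparison with $\mathsf{E}(\mathsf{K}(G))$ via Lemma~\ref{lem:principal-lg} is clean. (A minor remark: the local-bisection property is not actually needed for the ``loops'' characterization --- set equality of $\{x\}A$ and $A\{x\}$ already forces each arrow of $A$ ending at $x$ to start at $x$ --- but citing it does no harm.) The paper, by contrast, argues directly inside an abstract finite Boolean inverse monoid: for the forward direction it shows that a loop $a$ at an atomic idempotent satisfies $fa = af$ for every idempotent $f$ by the dichotomy $fa \in \{0, a\}$, and for the converse it decomposes a central element as an orthogonal join of atoms $a = \bigvee a_{i}$ and deduces $\mathbf{d}(a_{i}) = \mathbf{r}(a_{i})$ from commutation with the $\mathbf{r}(a_{j})$. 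What your approach buys is a shorter, more transparent verification once the structure theorem is in hand; what the paper's approach buys is independence from Theorem~\ref{them:main-finite}, using only the elementary atom lemmas. Since the structure theorem is proved in the preceding section without reference to fundamentality, your reduction introduces no circularity and is perfectly legitimate.
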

\begin{proof} 
Let $S$ be fundamental. 
We shall prove that the groupoid of atoms is principal by proving that the local groups are trivial.
Suppose that 
$e \stackrel{a}{\longrightarrow} e$ 
where $e$ is an atom and an idempotent.
We shall prove that $a$ is an idempotent.
Then $a$ is an atom.
Let $f$ be any idempotent.
Then $fa \leq a$.
It follows that $fa = 0$ or $fa = a$.
Suppose that $fa = 0$.
Then $fe = 0$ and so $af = 0$.
Thus $fa = af$.
Suppose now that $fa = a$.
Then $fe = e = fe$ and so $af = a$.
It follows again that $fa = af$.
We have therefore proved that $a$ commutes with every idempotent,
but under our assumption that the inverse semigroup is fundamental,
we deduce that $a = e$. 

Conversely, suppose that $e \stackrel{a}{\longrightarrow} e$, where $e$ is an atomic idempotent, implies that $a = e$.
We shall prove that our semigroup is fundamental.
Let $a$ commute with all idempotents.
We prove that $a$ is an idempotent.
We can write $a = \bigvee_{i=1}^{m} a_{i}$ where the $a_{i}$ are atoms.
We prove that $\mathbf{d}(a_{i}) = \mathbf{r}(a_{i})$ for all $i$ from which the result follows.
Since $\mathbf{r}(a_{j})a = a \mathbf{r}(a_{j})$,
by assumption,
we have that
$a_{j} = \bigvee_{i=1}^{m}a_{i}\mathbf{r}(a_{j})$.
But $a_{i}\mathbf{r}(a_{j}) \leq a_{j}$.
Thus either $a_{i}\mathbf{r}(a_{j}) = 0$ or $a_{i}\mathbf{r}(a_{j}) = a_{j}$
since $a_{j}$ is an atom.
But $a_{i}\mathbf{r}(a_{j}) \leq a_{i}$.
It follows that $a_{i}\mathbf{r}(a_{j}) = a_{i}$ and so $a_{i} = a_{j}$.
Thus $a_{j}\mathbf{r}(a_{j}) = a_{j}$.
Hence $\mathbf{d}(a_{j}) \leq \mathbf{r}(a_{j})$ and so $\mathbf{d}(a_{j}) = \mathbf{r}(a_{j})$, since
both $\mathbf{d}(a_{j})$ and $\mathbf{r}(a_{j})$ are atoms.
By assumption $a_{j}$ is an idempotent.
It follows that $a$ is an idempotent.
\end{proof}

The above result can be used to obtain a more explicit description
of the finite fundamental Boolean inverse monoids.

\begin{theorem}\label{them:finite} Let $S$ be a finite Boolean inverse monoid.
Then it is fundamental if and only if $S$ is isomorphic to a finite direct products $\mathscr{I}_{n_{1}} \times \ldots \times \mathscr{I}_{n_{r}}$.
\end{theorem}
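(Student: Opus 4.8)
The plan is to combine the structure theorem for finite Boolean inverse monoids (Theorem~\ref{them:main-finite}) with the characterization of fundamentality in Proposition~\ref{prop:bordeaux1}, reducing the statement to an elementary fact about finite principal groupoids together with two small bookkeeping lemmas about the functor $\mathsf{K}$.

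First suppose $S$ is fundamental. By Proposition~\ref{prop:bordeaux1} the atomic groupoid $G = \mathsf{A}(S)$ is principal, and by Theorem~\ref{them:main-finite} we have $S \cong \mathsf{K}(G)$. A finite groupoid is the disjoint union of its connected components, say $G = G_1 \sqcup \cdots \sqcup G_r$, and each $G_i$ is a connected principal groupoid. A connected principal groupoid has exactly one arrow between any two of its identities, so if $X_i$ denotes its set of identities and $n_i = |X_i|$ then $G_i \cong X_i \times X_i$, the pair groupoid of Example~\ref{ex:principal}. Since a groupoid with several components has no arrows between distinct components, a local bisection of $\bigsqcup_i G_i$ is the same data as a tuple of local bisections of the $G_i$, with multiplication, inverses, meets and joins all computed componentwise; hence $\mathsf{K}\!\left(\bigsqcup_i G_i\right) \cong \prod_i \mathsf{K}(G_i)$. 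Finally $\mathsf{K}(X_i \times X_i) \cong \mathcal{I}(X_i) \cong \mathscr{I}_{n_i}$: indeed the atomic groupoid of $\mathcal{I}(X_i)$ is $X_i \times X_i$ (the Example following Lemma~\ref{lem:atomic-groupoid}), so this is just Theorem~\ref{them:main-finite} applied to the finite Boolean inverse monoid $\mathcal{I}(X_i)$. Putting these together, $S \cong \mathsf{K}(G) \cong \prod_i \mathsf{K}(X_i \times X_i) \cong \mathscr{I}_{n_1} \times \cdots \times \mathscr{I}_{n_r}$.

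For the converse, suppose $S \cong \mathscr{I}_{n_1} \times \cdots \times \mathscr{I}_{n_r}$. Since $\mathscr{I}_{n_i} \cong \mathsf{K}(X_i \times X_i)$ and, as observed above, $\mathsf{K}$ carries a finite disjoint union of groupoids to the corresponding finite direct product, we get $S \cong \mathsf{K}\!\left( X_1 \times X_1 \sqcup \cdots \sqcup X_r \times X_r \right)$. By Proposition~\ref{prop:groupoids} this is a finite Boolean inverse monoid whose groupoid of atoms is $X_1 \times X_1 \sqcup \cdots \sqcup X_r \times X_r$, a disjoint union of pair groupoids and hence principal. Therefore $S$ is fundamental by Proposition~\ref{prop:bordeaux1}. (Alternatively, one may argue directly: each $\mathscr{I}_{n_i}$ is fundamental because the discrete space on $n_i$ points is $T_0$, and the maximum idempotent-separating congruence of a finite direct product is the product of those of the factors, hence is the equality relation; now apply Lemma~\ref{lem:stormy}.)

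The only points requiring genuine verification are the two bookkeeping facts used in both directions: that $\mathsf{K}$ turns a finite disjoint union of groupoids into the corresponding finite direct product of Boolean inverse monoids, and the identification $\mathsf{K}(X \times X) \cong \mathcal{I}(X)$. I expect the first to be the mildly fiddly step --- one has to check that a subset of $\bigsqcup_i G_i$ is a local bisection precisely when each of its traces on the $G_i$ is, and that all the structure is inherited componentwise --- but neither is hard, and the conceptual weight of the theorem is carried entirely by Theorem~\ref{them:main-finite} and Proposition~\ref{prop:bordeaux1}.
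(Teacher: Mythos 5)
Your proposal is correct and follows essentially the same route as the paper: both directions rest on Proposition~\ref{prop:bordeaux1} together with Theorem~\ref{them:main-finite}, the decomposition of a finite principal groupoid into connected pair groupoids, the fact that $\mathsf{K}$ turns disjoint unions into direct products, and the identification $\mathsf{K}(X \times X) \cong \mathcal{I}(X)$. The only cosmetic difference is that the paper proves the easy direction via ``a product of fundamental inverse semigroups is fundamental,'' which you relegate to a parenthetical alternative while running the groupoid argument in both directions; either works.
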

\begin{proof}
It can be checked that the product of two fundamental inverse semigroups
is itself a fundamental inverse semigroup, 
and that the product of two Boolean inverse monoids is again a Boolean inverse monoid.
So, one direction is easy to prove.
Let, now, $S$ be a fundamental finite Boolean inverse monoid.
Then by Proposition~\ref{prop:bordeaux1}, $S$ is isomorphic to a Boolean inverse monoid
of the form $\mathsf{K}(G)$ where $G$ is a principal groupoid.
Now, $G = \bigcup_{i=1}^{i=m}H_{i}$ is a finite disjoint union of connected principal groupoids.
It can be checked that $\mathsf{K}(G) \cong \mathsf{K}(H_{1}) \times \ldots \times \mathsf{K}(H_{m})$.
Let $H$ be any finite connected principal groupoid with set of identities $X$.
Then $\mathsf{K}(H) \cong \mathcal{I}(X)$.  
The result now follows.
\end{proof}

The above result can be specialized to characterize the finite symmetric inverse monoids.
Let $S$ be a Boolean inverse monoid.
A semigroup ideal $I \subseteq S$ is said to be an {\em additive ideal} if $a,b \in I$
and $a \sim b$ implies that $a \vee b \in I$.
Clearly, both $\{0\}$ and $S$ itself are additive ideals.
If these are the only ones we say that $S$ is {\em $0$-simplifying.}

We now have the following theorem which can be derived from Theorem~\ref{them:finite} 
(or see \cite{Lawson2012}).

\begin{theorem} The finite fundamental $0$-simplifying Boolean inverse monoids
are precisely the finite symmetric inverse monoids.
\end{theorem}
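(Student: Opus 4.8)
The plan is to deduce everything from Theorem~\ref{them:finite}. That theorem says a finite Boolean inverse monoid is fundamental precisely when it is isomorphic to a finite direct product $\mathcal{I}_{n_{1}} \times \ldots \times \mathcal{I}_{n_{r}}$; discarding any trivial factor $\mathcal{I}_{0} = \{0\}$ (which changes nothing up to isomorphism) we may assume each $n_{i} \geq 1$. So the only remaining work is to decide exactly which of these products are $0$-simplifying, and I claim the answer is: precisely those with $r = 1$ (together with the degenerate $r = 0$ case, the trivial monoid, which is $\mathcal{I}(\varnothing) = \mathcal{I}_{0}$). Combined with Theorem~\ref{them:finite} this is exactly the assertion that the finite fundamental $0$-simplifying Boolean inverse monoids are the $\mathcal{I}_{n}$. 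The converse inclusion — that each $\mathcal{I}_{n}$ really does have all three properties — is then also in hand, since $\mathcal{I}_{n}$ is finite, is a Boolean inverse monoid, and is fundamental because it is $\mathcal{I}(X,\tau)$ for $X$ discrete, hence $T_{0}$, so the example following Theorem~\ref{them:white} applies.

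First I would show that a product with $r \geq 2$ is not $0$-simplifying. Here one simply exhibits the additive ideal $I = \mathcal{I}_{n_{1}} \times \{0\} \times \ldots \times \{0\}$: it is a two-sided semigroup ideal because multiplication in the product is componentwise and therefore preserves the zero coordinates, and it is additive because joins in the product are also computed componentwise; it is not $\{0\}$ since $n_{1} \geq 1$, and it is not all of $S$ since $n_{2} \geq 1$. Hence a proper nonzero additive ideal exists.

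Next I would show that each $\mathcal{I}_{n} = \mathcal{I}(X)$ with $1 \leq n = |X|$ is $0$-simplifying. Let $I$ be a nonzero additive ideal and pick $0 \neq f \in I$; choose $x \in \operatorname{dom}(f)$ and set $y = f(x)$. Then $f 1_{\{x\}}$ is the atom $\{(x,y)\}$, and it lies in $I$ since $I$ is a right ideal. For any atom $\{(u,v)\}$ of $\mathcal{I}(X)$ we have $\{(u,v)\} = \{(y,v)\}\{(x,y)\}\{(u,x)\}$, so $\{(u,v)\} \in I$ because $I$ is a two-sided ideal; equivalently, the atomic groupoid $\mathsf{A}(\mathcal{I}(X)) \cong X \times X$ is connected, so the ideal generated by one atom contains every atom. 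Finally, an arbitrary nonzero $s \in \mathcal{I}(X)$ is the join of the finitely many atoms below it, and these atoms are pairwise orthogonal (distinct compatible atoms are orthogonal by part (5) of Lemma~\ref{lem:atomic-groupoid}), so iterated application of additivity — using that an orthogonal join stays orthogonal to any element orthogonal to each summand — puts $s$ in $I$. Hence $I = \mathcal{I}(X)$.

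Assembling the pieces: a finite fundamental $0$-simplifying Boolean inverse monoid $S$ is, by Theorem~\ref{them:finite}, isomorphic to $\mathcal{I}_{n_{1}} \times \ldots \times \mathcal{I}_{n_{r}}$ with every $n_{i} \geq 1$, and the first step forces $r \leq 1$, so $S \cong \mathcal{I}_{n}$ for some $n$; conversely each $\mathcal{I}_{n}$ has the three properties by the remarks above and the second step. I do not expect a serious obstacle: the theorem is essentially a corollary of the structure theorem Theorem~\ref{them:finite}, and the only points needing care are the bookkeeping around trivial factors and the routine induction that uses additivity to pass from atoms to general elements.
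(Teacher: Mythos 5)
Your proof is correct and follows exactly the route the paper intends: the paper states only that the theorem ``can be derived from Theorem~\ref{them:finite}'' and omits the derivation, and your argument supplies precisely that derivation (a coordinate ideal kills products with more than one nontrivial factor, and connectedness of the atomic groupoid plus closure of additive ideals under finite compatible joins shows each $\mathcal{I}_{n}$ is $0$-simplifying). The details check out, including the handling of trivial factors.
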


The above theorem suggests that the groups of units of fundamental $0$-simplifying Boolean inverse monoids
should be regarded as generalizations of finite symmetric inverse monoids.

\section{Congruence-free inverse semigroups with zero}

An inverse semigroup is said to be {\em congruence-free} if its only congruences are equality and the universal congruence.
In this section, we shall characterize those inverse semigroups with zero which are congruence-free.\footnote{Douglas Munn once remarked to me that this was one of the few instances 
where the theory for inverse semigroups with zero was easier than it was for the one without.}
We begin by ruling out the existence of various kinds of congruence.
An inverse semigroup with zero $S$ is said to be {\em $0$-simple} if the only ideals are $\{0\}$ and $S$.
The following characterization uses Lemma~\ref{lem:jay}
where we describe the $\mathscr{J}$-relation in terms of the $\mathscr{D}$-relation and the natural partial order.

\begin{lemma} Let $S$ be an inverse semigroup with zero.
Then $S$ is $0$-simple if and only if for any two non-zero 
idempotents $e$ and $f$ in $S$ there exists an idempotent $i$ such that
$e\, \mathcal{D}  \,i \leq f$ 
and an idempotent $j$ such that 
$f\, \mathcal{D}  \,j \leq e$. 
\end{lemma}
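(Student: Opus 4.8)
The plan is to reduce everything to Lemma~\ref{lem:jay}, which already describes containment of principal ideals $SbS\subseteq SaS$ in terms of the $\mathscr{D}$-relation and the natural partial order; the remaining work is simply translating between ``$0$-simple'' and the condition in the statement.

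First I would record the elementary reformulation of $0$-simplicity. In an inverse semigroup with zero, $S$ is $0$-simple if and only if $SaS=S$ for every non-zero $a$: indeed $SaS$ is an ideal and $a=aa^{-1}\,a\,a^{-1}a\in SaS$, so if $a\neq 0$ then $SaS\neq\{0\}$, hence $SaS=S$ by $0$-simplicity; conversely any non-zero ideal $I$ contains some $a\neq 0$, whence $S=SaS\subseteq I$. This in turn is equivalent to: $SbS\subseteq SaS$ for all non-zero $a,b$. (For the non-obvious implication: if $SbS\subseteq SaS$ for all $b\neq 0$, then for every $c\neq 0$ we have $c\in ScS\subseteq SaS$, and $0\in SaS$ trivially, so $SaS=S$.)

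Next, for fixed non-zero $a,b$, I would apply Lemma~\ref{lem:jay} (with the roles of $a$ and $b$ interchanged) to get $SbS\subseteq SaS$ if and only if $b\,\mathscr{D}\,a'\leq a$ for some $a'\in S$. Then I would turn ``$\exists\,a'\leq a$ with $b\,\mathscr{D}\,a'$'' into an idempotent statement: $b\,\mathscr{D}\,a'$ means $\mathbf{d}(b)\,\mathscr{D}\,\mathbf{d}(a')$, and $a'\leq a$ forces $\mathbf{d}(a')\leq\mathbf{d}(a)$, while conversely every idempotent $i\leq\mathbf{d}(a)$ is realized as $\mathbf{d}(ai)$ with $ai\leq a$ (since $ai\leq a$ by Lemma~\ref{lem:carol} and $\mathbf{d}(ai)=i a^{-1}a i=i\mathbf{d}(a)i=i$ using commuting idempotents). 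Hence the condition is equivalent to: there is an idempotent $i\leq\mathbf{d}(a)$ with $\mathbf{d}(b)\,\mathscr{D}\,i$.

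Finally I would let $a,b$ range over all non-zero elements, so that $\mathbf{d}(a)$ and $\mathbf{d}(b)$ range exactly over the non-zero idempotents of $S$ (each non-zero idempotent $e$ being $\mathbf{d}(e)$). Combining the previous steps, $0$-simplicity is equivalent to: for all non-zero idempotents $e,f$ there is an idempotent $i$ with $e\,\mathscr{D}\,i\leq f$; the symmetric condition giving $j$ with $f\,\mathscr{D}\,j\leq e$ is just this statement with $e$ and $f$ swapped, so it holds automatically, and conversely either half alone suffices to run the argument in reverse. The only (mild) obstacle is keeping the bookkeeping straight — the $0$-element edge cases in the first reformulation and the $a'\leftrightarrow i$ correspondence — since the genuinely $\mathscr{D}$-theoretic content is already carried by Lemma~\ref{lem:jay}.
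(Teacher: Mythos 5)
Your proposal is correct and takes essentially the same route as the paper: both directions reduce to Lemma~\ref{lem:jay} together with the observation that $0$-simplicity is equivalent to the coincidence of all non-zero principal two-sided ideals, and the passage between elements and their domain idempotents is handled the same way (the paper via the fact that idempotents form an order ideal, you via the explicit computation $\mathbf{d}(ai)=i$). The only cosmetic difference is that you organize the converse as a chain of equivalences through principal ideals, whereas the paper runs a direct contradiction argument with an arbitrary non-zero ideal.
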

\begin{proof} Suppose first that $S$ is $0$-simple.
Let $e$ and $f$ be any two non-zero idempotents.
Observe that $e \in SeS$ and $f \in SfS$.
So, both $SeS$ and $SfS$ are not equal to $\{0\}$.
It follows that $S = SeS = SfS$. 
Thus $e \,\mathscr{J}\, f$.
We now use Lemma~\ref{lem:jay} to deduce the result where we have used the fact that the idempotents 
form an order-ideal in an inverse semigroup.
We now prove the converse.
Let $I \neq \{0\}$ be any non-zero ideal of $S$.
Suppose that $I \neq S$.
Let $a \in S\setminus I$.
Observe that $\mathbf{d}(a) \in  S\setminus I$ because if $\mathbf{d}(a) \in I$ then $a = a \mathbf{d}(a) \in I$,
since $I$ is an ideal.
Let $b \in I$ be any non-zero element.
Then $\mathbf{d}(b) \in I$ since $I$ is an ideal.
By assumption, $\mathbf{d}(a)$ and $\mathbf{d}(b)$ are nonzero idempotents.
It follows by the assumption and Lemma~\ref{lem:jay}, that $S\mathbf{d}(a)S = S\mathbf{d}(b)S$.
This implies that $\mathbf{d}(a) \in I$ and so $a \in I$, which is a contradiction.
It follows that, in fact, $I = S$.
\end{proof}

Let $S$ be any inverse semigroup with zero. 
Define
$$(s,t) \in \xi_{S} \Leftrightarrow (\forall a,b \in S)(asb = 0 \Leftrightarrow atb = 0).$$
It is left to the reader to check that this really is a congruence.
We shall denote it by $\xi$ when the semigroup it is defined on is clear.
In the case where $S$ is a meet-semilattice, the above definition simplifies somewhat.
Let $E$ be a meet-semilattice with zero, the operation of which is denoted by concatenation.
Then $(e,f) \in \xi$ if and only ($\forall g \in E$)($eg = 0$ if and only if $fg = 0$).
A congruence $\rho$ is said to be {\em $0$-restricted} if the $\rho$-class containing $0$ is just $0$.
We now have the following characterization of the congruence $\xi$.

\begin{lemma}\label{lem:emily} Let $S$ be an inverse semigroup with zero.
The congruence $\xi$ is the maximum $0$-restricted congruence on $S$.
\end{lemma}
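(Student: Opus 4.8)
The plan is to verify two things: that $\xi$ is itself $0$-restricted, and that it contains every other $0$-restricted congruence. Since the fact that $\xi$ is a congruence has been left to the reader, I would take it for granted here.

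First I would check that $\xi$ is $0$-restricted. Suppose $(s,0) \in \xi$. By definition, this means that for all $a,b \in S$ we have $asb = 0$ if and only if $a0b = 0$; but $a0b = 0$ always holds, so in fact $asb = 0$ for all $a,b \in S$. The trick is to choose $a$ and $b$ so that $asb$ recovers $s$: take $a = \mathbf{r}(s)$ and $b = \mathbf{d}(s)$. Then $asb = \mathbf{r}(s)\, s\, \mathbf{d}(s) = s$, using the identities $\mathbf{r}(s)s = s = s\mathbf{d}(s)$ noted after the definition of $\mathbf{d}$ and $\mathbf{r}$. Hence $s = 0$, which shows that the $\xi$-class of $0$ is $\{0\}$.

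Next I would show that $\xi$ is the largest $0$-restricted congruence. Let $\rho$ be any $0$-restricted congruence on $S$ and let $(s,t) \in \rho$. To prove $(s,t) \in \xi$, fix $a,b \in S$ and suppose $asb = 0$. Since $\rho$ is a congruence, $(asb, atb) \in \rho$, so $(0, atb) \in \rho$; because $\rho$ is $0$-restricted, this forces $atb = 0$. By the symmetric argument, $atb = 0$ implies $asb = 0$. Thus $asb = 0 \Leftrightarrow atb = 0$ for all $a,b$, which is exactly the condition $(s,t) \in \xi$. Therefore $\rho \subseteq \xi$, and combined with the first part this establishes that $\xi$ is the maximum $0$-restricted congruence.

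I do not expect a genuine obstacle here; the only point requiring a moment's care is the choice $a = \mathbf{r}(s)$, $b = \mathbf{d}(s)$ in the first part, which is what lets us conclude $s = 0$ from "$asb = 0$ for all $a,b$." The rest is a routine application of the congruence property together with the definition of $0$-restrictedness.
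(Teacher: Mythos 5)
Your proposal is correct and follows essentially the same route as the paper: the same choice $a = ss^{-1}$, $b = s^{-1}s$ to show $\xi$ is $0$-restricted, and the same use of the congruence property plus $0$-restrictedness to show $\rho \subseteq \xi$ for any $0$-restricted congruence $\rho$. Nothing to add.
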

\begin{proof}
Observe, first, that $\xi$ is $0$-restricted.
Suppose that $(s,0) \in \xi$.
By definition, for all $a,b \in S$ we have that $asb = 0$ if and only of $a0b = 0$.
However, if we put $a = ss^{-1}$ and $b = s^{-1}s$ then we deduce that $s = 0$.
Now, let $\rho$ be any $0$-restricted congruence on $S$ and let $s\, \rho \, t$.
Suppose that $asb = 0$.
Then $asb \, \rho  \, atb$. 
Since $\rho$ is $0$-restricted, we have that $atb = 0$.
Thus $asb = 0$ implies that $atb = 0$.
By symmetry, we deduce that $a \, \xi \, b$.
Thus $\rho \subseteq \xi$, as required.
\end{proof}

Our next result shows, amongst other things, the relationship between $\mu$ and $\xi$.

\begin{lemma}\label{lem:fiona} Let $S$ be an inverse semigroup with zero.
\begin{enumerate}

\item $\mu \subseteq \xi$.

\item The congruence $\xi$ restricted to $\mathsf{E}(S)$ is precisely $\xi_{\mathsf{E}(S)}$.

\end{enumerate}
\end{lemma}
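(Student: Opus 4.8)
The plan is to prove the two parts separately, in each case reducing a statement about arbitrary elements to one about idempotents, using repeatedly that in an inverse semigroup with zero $x=0 \iff \mathbf{r}(x)=0 \iff \mathbf{d}(x)=0$, and that idempotents commute (Proposition~\ref{prop:idempotents-commute}).

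For part (1), take $(s,t)\in\mu$ and fix $a,b\in S$. I would compute, using Lemma~\ref{lem:inverse-of-product},
\[ \mathbf{r}(asb) = asb(asb)^{-1} = as(bb^{-1})s^{-1}a^{-1}. \]
Since $bb^{-1}\in\mathsf{E}(S)$, the defining property of $\mu$ gives $s(bb^{-1})s^{-1} = t(bb^{-1})t^{-1}$; substituting this yields $\mathbf{r}(asb) = at(bb^{-1})t^{-1}a^{-1} = \mathbf{r}(atb)$. Hence $asb=0 \iff \mathbf{r}(asb)=0 \iff \mathbf{r}(atb)=0 \iff atb=0$, which is exactly $(s,t)\in\xi$. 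So $\mu\subseteq\xi$.

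For part (2), write $\xi|_{\mathsf{E}(S)}$ for the restriction of $\xi=\xi_S$ to $\mathsf{E}(S)\times\mathsf{E}(S)$. The inclusion $\xi|_{\mathsf{E}(S)}\subseteq\xi_{\mathsf{E}(S)}$ is a one-line check: given idempotents $e,f$ with $(e,f)\in\xi_S$ and any $g\in\mathsf{E}(S)$, specialise the condition defining $\xi_S$ to $a=b=g$; since idempotents commute, $geg=eg$ and $gfg=fg$, so $eg=0 \iff fg=0$, i.e. $(e,f)\in\xi_{\mathsf{E}(S)}$. For the reverse inclusion the key observation is that for any $a,b\in S$ and any idempotent $e$,
\[ aeb=0 \iff eg=0, \qquad \text{where } g = \mathbf{d}(a)\mathbf{r}(b) = a^{-1}a\,bb^{-1}\in\mathsf{E}(S). \]
I would prove the forward direction by multiplying $aeb=0$ on the left by $a^{-1}$ and on the right by $b^{-1}$ and rearranging the commuting idempotents to get $ea^{-1}a\,bb^{-1}=0$; and the converse by multiplying $eg=0$ on the left by $a$ and on the right by $b$, using $aa^{-1}a=a$, $bb^{-1}b=b$ and commutativity of idempotents to recover $aeb$. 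Granting this, if $(e,f)\in\xi_{\mathsf{E}(S)}$ then for all $a,b$ we have $aeb=0 \iff eg=0 \iff fg=0 \iff afb=0$ with the same $g$, so $(e,f)\in\xi_S$, giving $\xi_{\mathsf{E}(S)}\subseteq\xi|_{\mathsf{E}(S)}$.

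I do not expect a serious obstacle here: part (1) and the first inclusion of part (2) are essentially immediate from the definitions, and the only step requiring a little care is isolating the idempotent $g=\mathbf{d}(a)\mathbf{r}(b)$ in part (2) and verifying the equivalence $aeb=0 \iff eg=0$ cleanly; once that is in hand, the matching of the two relations on $\mathsf{E}(S)$ is purely formal.
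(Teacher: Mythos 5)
Your proposal is correct and follows essentially the same route as the paper: part (1) reduces to comparing $\mathbf{r}(asb)$ and $\mathbf{r}(atb)$ via the conjugation identity defining $\mu$ with $e=bb^{-1}$, and part (2) hinges on the equivalence $aeb=0 \Leftrightarrow e\,a^{-1}a\,bb^{-1}=0$ obtained by multiplying by $a^{-1}$ and $b^{-1}$ and commuting idempotents, exactly as in the paper's proof. The only cosmetic difference is that you verify $\mathbf{r}(asb)=\mathbf{r}(atb)$ by direct computation rather than by first invoking that $\mu$ is a congruence to get $asb\,\mu\,atb$.
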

\begin{proof} 
(1) Let $s \, \mu \, t$.
We prove that $s \, \xi \, t$.
Suppose that $asb = 0$ for $a,b \in S$.
We shall prove that $atb = 0$.
However, $asb \, \mu \, atb$.
By definition, for all idempotents $e$, we have that
$(asb)e(asb)^{-1} = (atb)e(atb)^{-1}$.
Choose $e = b^{-1}b$.
It follows that 
$\mathbf{r}(asb) = \mathbf{r}(atb)$.
We are given that $asb = 0$ and so $\mathbf{r}(asb) = 0$.
It follows that $\mathbf{r}(atb) = 0$ and so $atb = 0$.
By symmetry, this shows that $s \, \xi \, t$.

(2) Let $e$ and $f$ be idempotents.
To say that $(e,f) \in \xi$ means that for all $a,b \in S$ we have that
$aeb = 0$ if and only if $afb = 0$.
It is clear that  $(e,f) \in \xi_{\mathsf{E}(S)}$.
Suppose, now, that  $(e,f) \in \xi_{\mathsf{E}(S)}$.
We prove that $(e,f) \in \xi$ in $S$.
Let $aeb = 0$.
Then $a^{-1}aebb^{-1} = 0$.
Thus $a^{-1}a bb^{-1} e = 0$ and so, by assumption, $a^{-1}a bb^{-1} f = 0$.
Hence $a^{-1}a f bb^{-1} = 0$ and so $afb = 0$.
The reverse direction is proved similarly.
\end{proof}

We now link $\xi$ being the equality relation on the meet-semilattice $E$
with the property of $E$ being $0$-disjunctive which we introduced 
just before Lemma~\ref{lem:clifford-infinitesimal}. 

\begin{lemma}\label{lem:cathy} Let $E$ be a meet semilattice with zero.
Then  $\xi$ is the equality relation on $E$ if and only if $E$ is $0$-disjunctive.
\end{lemma}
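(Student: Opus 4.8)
The plan is to prove both implications by unwinding the definitions of $\xi$ and of $0$-disjunctivity directly, using only elementary semilattice arithmetic (recall $a\le b$ means $a=ab$, and $ab\le a,b$). The only nontrivial ingredient is a short case analysis in one of the two directions; no auxiliary results beyond the definitions are needed.

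First I would record two preliminary observations. For idempotents $e,f$ the condition $(e,f)\in\xi$ on a meet-semilattice says exactly that $eg=0\Leftrightarrow fg=0$ for every $g\in E$. Taking $g=f$ shows that if $e=0$ then $ef=0$ forces $f=0$; so in the substantive part of either argument we may assume $e,f\neq 0$, and then taking $g=e$ forces $ef\neq 0$. Also note the trivial half: if $f\le e$ then $eg=0$ always implies $fg=fg\le eg=0$.

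For ``$0$-disjunctive $\Rightarrow$ $\xi$ is equality'': suppose $(e,f)\in\xi$ with $e\neq f$; I would derive a contradiction. Set $d=ef$, so $d\neq 0$ and $d\le e,f$. If $d=e$, then $0\neq e<f$; applying $0$-disjunctivity to $e<f$ yields $0\neq g\le f$ with $eg=0$, but $g\le f$ gives $fg=g\neq 0$, contradicting $(e,f)\in\xi$. The case $d=f$ is symmetric. Otherwise $0\neq d<e$ and $0\neq d<f$; applying $0$-disjunctivity to $d<e$ produces $0\neq g\le e$ with $dg=0$. Then $eg=g\neq 0$, so $fg\neq 0$ by $(e,f)\in\xi$; yet $fg\le g\le e$, hence $e(fg)=fg$, while $e(fg)=(ef)g=dg=0$, forcing $fg=0$ --- a contradiction. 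Hence $e=f$, and $\xi$ is the equality relation.

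For the converse, assume $\xi$ is equality and take any $0\neq f<e$; I want $0\neq g\le e$ with $fg=0$. If no such $g$ existed, I would check $(f,e)\in\xi$: the implication $eg=0\Rightarrow fg=0$ is the trivial half noted above, and conversely if $fg=0$ but $eg\neq 0$, then $h:=eg$ satisfies $0\neq h\le e$ and $fh=(fe)g=fg=0$, contradicting the supposed non-existence of such an element. So $(f,e)\in\xi$, whence $f=e$, contradicting $f<e$; therefore $E$ is $0$-disjunctive. The step I expect to need the most care is the third case of the first implication (the ``incomparable'' case $d<e$, $d<f$): one must apply disjunctivity on the correct side and then push $fg$ back down below $e$ to collapse it to $0$; everything else is routine.
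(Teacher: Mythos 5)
Your proof is correct and follows essentially the same route as the paper's: the converse direction is verified identically (testing $(f,e)\in\xi$ by pushing a hypothetical witness $eg$ back below $e$), and the forward direction likewise applies $0$-disjunctivity to $ef<e$ to manufacture a separating element $g$. The only cosmetic difference is that you argue directly from the definition of $\xi$ via a three-way case split on $d=ef$, whereas the paper shortcuts the case analysis by using that $\xi$ is a $0$-restricted congruence to pass from $e\,\xi\,f$ to $e\,\xi\,ef$ and then compares $ef$ with $e$ and with $f$ by symmetry.
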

\begin{proof} Suppose first that $\xi$ is the equality relation on $E$.
We prove that $E$ is $0$-disjunctive.
Suppose that $0 < f < e$
and assume that there does not exist any $0 \neq g \leq e$ such that $fg = 0$.
If $ei = 0$ then clearly $fi = 0$.
Suppose that $fi = 0$.
Then $fi = fei = 0$ and so $f(ei) = 0$.
Clearly, $ei  \leq e$ and so by our assumption above, we must have $ei = 0$.
It follows that $(e,f) \in \xi$.
But this implies that $e = f$, which is a contradiction.

We assume that $E$ is $0$-disjunctive and prove that $\xi$ is the equality relation.
Suppose that $(e,f) \in \xi$, where $e$ and $f$ are both non-zero.
Then  $e\, \xi \, ef$ and so  $ef \neq 0$ since $\xi$ is $0$-restricted. 
Suppose that $ef \neq e$.
Then $0 < ef < e$.
Then, by assumption, there exists $0 \neq g \leq e$ such that $(ef)g = 0$.
But, clearly, $g = eg \neq 0$.
However, $e\, \xi \, ef$ and so we have a contradiction. 
It follows that $ef = e$.
Similarly, $ef = f$ and so $e = f$, as required.
\end{proof}

We are nearly at our goal.

\begin{lemma}\label{lem:liz} Let $S$ be an inverse semigroup with zero.
Then $\xi$ is the equality relation if and only if $\mathsf{E}(S)$ is $0$-disjunctive and $S$ is fundamental.
\end{lemma}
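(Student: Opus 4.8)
The plan is to route the whole argument through the three congruences $\mu$, $\xi$ and $\xi_{\mathsf{E}(S)}$, assembling Lemma~\ref{lem:fiona}, Lemma~\ref{lem:cathy}, Lemma~\ref{lem:stormy} and Lemma~\ref{lem:largest}; no fresh computation should be required. The one observation to keep in mind is that, by the definition of an idempotent-separating congruence together with part (2) of Lemma~\ref{lem:fiona}, the congruence $\xi$ on $S$ is idempotent-separating precisely when $\xi_{\mathsf{E}(S)}$ is the equality relation on $\mathsf{E}(S)$, and by Lemma~\ref{lem:cathy} this holds precisely when $\mathsf{E}(S)$ is $0$-disjunctive.

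For the forward implication I would assume that $\xi$ is the equality relation on $S$. Since $\mu \subseteq \xi$ by part (1) of Lemma~\ref{lem:fiona}, the congruence $\mu$ is then also the equality relation, so $S$ is fundamental by Lemma~\ref{lem:stormy}. Restricting $\xi$ to $\mathsf{E}(S)$ and applying part (2) of Lemma~\ref{lem:fiona}, the congruence $\xi_{\mathsf{E}(S)}$ is the equality relation on $\mathsf{E}(S)$, whence $\mathsf{E}(S)$ is $0$-disjunctive by Lemma~\ref{lem:cathy}.

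For the converse I would assume that $\mathsf{E}(S)$ is $0$-disjunctive and that $S$ is fundamental. By Lemma~\ref{lem:cathy} the congruence $\xi_{\mathsf{E}(S)}$ is the equality relation, so by part (2) of Lemma~\ref{lem:fiona} the congruence $\xi$ on $S$ separates idempotents; that is, $\xi$ is an idempotent-separating congruence. Lemma~\ref{lem:largest} then gives $\xi \subseteq \mu$, and combining this with $\mu \subseteq \xi$ from part (1) of Lemma~\ref{lem:fiona} yields $\xi = \mu$. Finally, since $S$ is fundamental, $\mu$, and hence $\xi$, is the equality relation by Lemma~\ref{lem:stormy}.

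I do not expect a genuine obstacle here; the only point needing care is the bookkeeping that identifies ``$\xi$ is idempotent-separating'' with ``$\xi_{\mathsf{E}(S)}$ is trivial'', which is exactly what part (2) of Lemma~\ref{lem:fiona} supplies. An alternative, more hands-on route for the converse would start from a pair $(s,t) \in \xi$ and use $0$-disjunctivity applied to $\mathbf{d}(s),\mathbf{d}(t)$ and to $\mathbf{r}(s),\mathbf{r}(t)$ to force first $\mathbf{d}(s) = \mathbf{d}(t)$ and $\mathbf{r}(s) = \mathbf{r}(t)$ and then $s = t$ using fundamentality; but passing through $\mu$ is shorter and reuses Lemma~\ref{lem:largest} cleanly.
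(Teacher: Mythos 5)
Your proposal is correct and follows essentially the same route as the paper: both directions are assembled from Lemma~\ref{lem:fiona}, Lemma~\ref{lem:cathy}, Lemma~\ref{lem:largest} and the characterization of fundamental semigroups via $\mu$, with the converse hinging on the observation that $0$-disjunctivity forces $\xi$ to be idempotent-separating and hence contained in $\mu$. The only cosmetic difference is that you cite Lemma~\ref{lem:stormy} explicitly where the paper leaves the step ``$\mu$ equality $\Leftrightarrow$ fundamental'' implicit.
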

\begin{proof} Suppose first that $\xi$ is the equality relation. 
Then $\mathcal{E}(S)$ is $0$-disjunctive
by Lemma~\ref{lem:fiona} and Lemma~\ref{lem:cathy},
and it is fundamental by Lemma~\ref{lem:fiona}.
To prove the converse, suppose that $\mathsf{E}(S)$ is $0$-disjunctive and $S$ is fundamental.
Then by Lemma~\ref{lem:fiona} and Lemma~\ref{lem:cathy},  
$\xi$ restricted to $\mathsf{E}(S)$ is the equality relation and so $\xi$ is idempotent-separating.
It follows by Lemma~\ref{lem:largest} that $\xi \subseteq \mu$.
But $S$ is fundamental and so $\mu$ is the equality relation thus $\xi$ is the equality relation.
\end{proof}

We may now state the characterization of congruence-free inverse semigroups with zero.

\begin{theorem}[Congruence-free inverse semigroups with zero] 
An inverse semigroup with zero $S$ is congruence-free if and only if
$S$ is fundamental, $0$-simple and $\mathsf{E}(S)$ is $0$-disjunctive.
\end{theorem}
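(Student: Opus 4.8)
The plan is to prove both implications by leveraging two ``extremal'' congruences already analysed in the previous section: the maximum $0$-restricted congruence $\xi$ of Lemma~\ref{lem:emily}, and, for the treatment of ideals, the Rees congruences recalled in the introduction. Throughout I may assume $S \neq \{0\}$, since the one-element inverse semigroup is trivially congruence-free and trivially fundamental, $0$-simple, and ($0$-disjunctive vacuously), so the biconditional holds there.

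For the ``if'' direction, suppose $S$ is fundamental, $0$-simple, and $\mathsf{E}(S)$ is $0$-disjunctive. By Lemma~\ref{lem:liz}, the congruence $\xi$ is then the equality relation. Now let $\rho$ be an arbitrary congruence on $S$ and split into two cases. If $\rho$ is $0$-restricted, then $\rho \subseteq \xi$ by Lemma~\ref{lem:emily}, and since $\xi$ is equality, so is $\rho$. If $\rho$ is not $0$-restricted, then there is some $s \neq 0$ with $(s,0) \in \rho$, so the set $I = \rho(0)$ is an ideal of $S$ (this always holds, as noted in the introduction) containing the non-zero element $s$; since $S$ is $0$-simple, $I = S$, meaning every element of $S$ is $\rho$-related to $0$, so $\rho$ is the universal congruence. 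Hence $S$ is congruence-free.

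For the ``only if'' direction, suppose $S$ is congruence-free. The congruence $\xi$ is $0$-restricted by Lemma~\ref{lem:emily}, hence (as $S \neq \{0\}$) it is not the universal congruence, because for the universal congruence on a semigroup with more than one element the class of $0$ is all of $S$. Therefore $\xi$ must be the equality relation, and Lemma~\ref{lem:liz} then yields at once that $S$ is fundamental and that $\mathsf{E}(S)$ is $0$-disjunctive. It remains to check that $S$ is $0$-simple: let $I$ be an ideal with $I \neq \{0\}$ and form the associated Rees congruence $\rho_I$. Since $S$ is congruence-free, $\rho_I$ is either equality or universal; it cannot be equality, since $I$ contains the two distinct elements $0$ and some non-zero element; so $\rho_I$ is universal, which forces $I = S$. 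Thus $S$ is $0$-simple, completing the proof.

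I do not anticipate a genuine obstacle: the theorem is essentially a synthesis of Lemmas~\ref{lem:emily} and~\ref{lem:liz} together with the elementary facts that $\rho(0)$ is always an ideal and that every ideal gives rise to a Rees congruence. The only points needing a little care are the bookkeeping around the degenerate semigroup $\{0\}$ and, in both directions, the clean case split on whether the congruence in question is $0$-restricted, which is precisely what makes ``$\rho \subseteq \xi$'' and ``$\rho$ universal'' jointly exhaustive.
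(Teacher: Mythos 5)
Your proof is correct and follows essentially the same route as the paper: both directions rest on $\xi$ being the maximum $0$-restricted congruence (Lemma~\ref{lem:emily}), the equivalence in Lemma~\ref{lem:liz}, and the observation that a congruence whose $0$-class is a proper ideal must be $0$-restricted by $0$-simplicity. The only cosmetic difference is that you invoke Lemma~\ref{lem:liz} to extract both ``fundamental'' and ``$0$-disjunctive'' from $\xi$ being equality in the forward direction, where the paper cites $\mu$ and Lemma~\ref{lem:cathy} separately; your handling of the Rees congruences and the degenerate case is careful and fine.
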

\begin{proof}
Suppose that $S$ is congruence-free. 
Then $\mu$ is equality, 
there are no non-trivial ideals and $\xi$ is equality.
Thus $S$ is fundamental, $0$-simple and $\mathsf{E}(S)$ is $0$-disjunctive by Lemma~\ref{lem:cathy}

To prove the converse, suppose that $S$ is fundamental, $0$-simple and $\mathsf{E}(S)$ is $0$-disjunctive.
Let $\rho$ be a congruence on $S$ which is not the universal relation.
Then $\rho (0)$ is an ideal which is not $S$.
Thus this ideal must be equal to $\{0\}$.
It follows that $\rho$ is a $0$-restricted congruence and so $\rho \subseteq \xi$ by Lemma~\ref{lem:emily}. 
Now we use the fact that the semigroup is fundamental together with Lemma~\ref{lem:liz}, 
to deduce that $\xi$ is the equality congruence and so $\rho$ is the equality congruence.
\end{proof}

\section{Further reading and examples}

My interest in inverse semigroups developed as a result of John Fountain's undergraduate course in semigroups at the University of York, UK.
It was deepened by attending the seminar at Oxford organized by Peter Cameron, Wilfrid Hodges, Angus Macintyre, and Peter Neumann,
in which partial automorphisms and the work of Fra\"{\i}ss\'e
played a central role.
The connections between Fra\"{\i}ss\'e's work and inverse semigroups
was clarified in \cite{TSBB} though based on the work of Benda to be found \cite{Benda}.
Partial automorphisms also play a significant role in Fra\"{\i}ss\'e's approach to the study of relational structures \cite{Fraisse}.
The work of Ehresmann on pseudogroups \cite{E} convinced me that inverse semigroups really
were worth studying.
I was, however, bugged by the question
of whether there were `natural' examples of inverse semigroups.
But what makes an area of mathematics `natural'?
I can think of at least two answers to this question:
the existence of deep problems and the proliferation of good examples.
When I embarked on writing my book \cite{Lawson},
I was primarily motivated by finding good examples.
The work of Kellendonk \cite{K1995, K1997a, K1997b}
and
Girard \cite{GLR}, as mediated by Peter Hines \cite{Hines},
provided such good examples.
Since that time, it has become clear that partial bijections, and therefore inverse semigroups, arise
naturally in many different parts of mathematics.
There are also deep questions such as whether every finite inverse monoid has a finite $F$-inverse cover \cite{ABO}
or what can be said about finite semigroups with commuting idempotents \cite{Ash}.
In this section, I shall concentrate on sources of good examples, since
these look like promising growth points for the theory.

If there is one area of mathematics which makes essential use of 
inverse semigroups, it is in the theory of $C^{\ast}$-algebras.
There are so many books and papers that are relevant here that I will only mention a few to get you started.
The connection between inverse semigroups and $C^{\ast}$-algebras was first explored in \cite{K} and \cite{R}.
A rather more recent book on this topic is \cite{Paterson} which will repay reading.
There are many, many papers on this topic.
A good place to start is \cite{Exel} and \cite{FMY}
but it is worth checking out \cite[page 45]{Putnam}
which is on the special case of Cantor minimal systems.
Let me, here, give one example of the theory of $C^{\ast}$-algebras leading to new semigroup theory.
A monoid is said to be {\em finitely aligned} if the intersection of any two principal right ideals
is a finitely generated right ideal.
This concept was first defined within the theory of $C^{\ast}$-algebras in \cite{RS} and made extensive use of in \cite{Spielberg2014, Spielberg2018}.
However, it seems to have been first defined within semigroup theory in \cite{VARG}, quite early on,
but was not picked up by workers in inverse semigroup theory;
this is but one illustration of the importance of mathematicians in different fields talking to one another.
See also the more recent \cite{CG}.
It would be most naturally applied in \cite{CP}.

The connection between inverse semigroups and $C^{\ast}$-algebras has come to be mediated by what are now termed
{\em Steinberg algebras}; see \cite{Steinberg2010, SS}. 
Even where no direct connection between inverse semigroups and $C^{\ast}$-algebras
appears to exist, the theory of $C^{\ast}$-algebras has sometimes led to new developments in the theory
of inverse semigroups. A good example is Steinberg's notion of strong Morita equivalence of inverse semigroups
motivated by the corresponding theory for $C^{\ast}$-algebras \cite{Steinberg2009}.

The connections between inverse semigroups and quasi-crystals were first spelt out by
Kellendonk \cite{K1995, K1997a, K1997b}.
The inverse semigroup perspective on the topological groupoids that Kellendonk
constructs were first clarified by Lenz \cite{Lenz} and then developed in \cite{LMS}.
Aspects of this connection were described in \cite{LL}.
This led to what we might term `non-commutative Stone duality'
which connects, in particular, Boolean inverse monoids with a class of \'etale groupoids.
For a survey of non-commutative Stone duality, see \cite{Lawson2022}.

I once asked the late Pieter Hofstra, an expert in topos theory, why I kept seeing inverse semigroups everyone.
I don't know whether Pieter was being polite, but he explained that it was because \'etendues were everywhere,
where an \'etendue is a particular kind of topos.
This seems to me a particularly fruitful area of research.
We refer the reader to \cite{FH2021, FH2022}.

If the theory of inverse semigroups is to be more fully developed,
then there will be a need to study invariants associated with inverse semigroups.
Motivated by the Banach-Tarski Paradox \cite{CSGH} and the theory of $K_{0}$-groups \cite{Goodearl},
the type monoid was introduced in \cite{KLLR, Wallis} and its theory developed by Wehrung \cite{Wehrung}.
Type monoids are commutative refinement monoids and provide the first example of invariants of inverse semigroups.
Another source of examples of invariants of inverse semigroups
are cohomology theories.
These have barely been developed;
see the work of Lausch \cite{L} and, particularly, Loganathan \cite{Log}.
An application of the extension theory of inverse semigroups in the case of Von Neumann algebras can be found in \cite{DFP}.
MV-algebras form a natural class of invariants for the class
of factorizable Boolean inverse monoids $S$ in which $S/\mathscr{J}$ is a lattice.
MV-algebras are another generalization of Boolean algebras
but it  was proved in \cite{LS} that all countable MV-algebras arise in this way
and this was generalized to all MV-algebras by Wehrung \cite{Wehrung} using very different methods.
Thus every MV-algebra arises from a particular kind of Boolean inverse monoid.
The theory of MV-algebras goes back to Tarski's book \cite{Tarski},
which is also noteworthy in that partial automorphisms of structures
are singled out in \cite[Theorem 11.6]{Tarski}.
MV-algebras are studied in their own right in \cite{CDM, Mundici, M3}
but their applications to the study of certain kinds of Boolean inverse monoids
has so far not been developed.

I have indicated that certain kinds of groups can be constructed from inverse semigroups.
See \cite{Matui12, Matui13} for how groups arise as so-called `topological full groups' of certain kinds of 
groupoids; under non-commutative Stone duality, this means that the groups arise as the groups of units
of certain kinds of Boolean inverse monoids.
The Thompson groups form an important class of such groups.
They, too, can be constructed from inverse monoids \cite{JL, LV, LSV}
generalizing some initial work in \cite{Birget}.
Groups with a different flavour arise from what Dehornoy calls `geometry monoids'.
This is explained in his book \cite{PD} but the explicit connection with inverse monoids is made in \cite{Lawson2006}.

If I were to single out one class of inverse semigroups that justifies the field
it would be the polycyclic inverse monoids introduced by Nivat and Perrot \cite{NP}.
They were generalized by Perrot \cite{Perrot, Perrot1972}.
As I pointed out in \cite{Lawson2008}, Perrot's thesis contains the first definition 
of self-similar group actions \cite{Nek2005} and was notivated entirely by inverse semigroup theory.
My paper was subsequently generalized by Alistair Wallis \cite{Wallis};
see also \cite{LW2015, LW2017}.
The polycyclic inverse monoids are intimately related to the classical Thompson groups \cite{Lawson2007, Lawson2007b, Lawson2021}.
The representation theory (by means of partial bijections) of the polycyclic inverse monoids
is the subject of \cite{Jones, JL1, Lawson2009} and was motivated by \cite{BJ} and by \cite{Kawamura}, 
which on the face of it have nothing to do with the polycyclic inverse monoids.
The polycyclic inverse monoids arise from the free monoid.
Analogous inverse monoids arise from free categories \cite{AH, JL}.
There is a natural connection between such inverse semigroups and Leavitt path algebras \cite{AAM}.
This is part of the general topic of studying algebras of various kinds associated with inverse semigroups.
This goes back to pioneering work of Douglas Munn \cite{Munn} and it reaches its apotheosis in the applications
of inverse semigroups to the theory of $C^{\ast}$-algebras mentioned above.
This is a subject in its own right, but we mention in passing \cite{DM} and \cite{LV2021}, which are relevant.

To conclude, I have not said anything about the classical theory of inverse semigroups.
Here, I would simply highlight the papers \cite{Ash1979, M0, Meakin} with the suggestion that there is more to do here.
I have also said nothing about free inverse semigroups.
These are introduced in my book \cite[Chapter 6]{Lawson} but a much more recent paper on their structure is \cite{Gray}.
In passing, I would like to mention that free inverse monoids can also be constructed using 
the doubly pointed pattern classes of Kellendonk applied to the graph of the free group regadred as a `dendritic tiling' \cite{K1997a}.
In fact, it was this example which convinced me that Kellendonk was on to something.
The study of free inverse monoids leads naturally to the study of presentations of inverse semigroups.
A recent paper on this topic is \cite{Gray2020}.


\end{document}